\documentclass[a4paper,11pt]{amsart}

\usepackage[all]{xy}

\usepackage[utf8]{inputenc}							
\usepackage[T1]{fontenc}
\usepackage[english]{babel}

\usepackage{amsfonts}								
\usepackage{amsmath}
\usepackage{amssymb}
\usepackage{amsthm}
\usepackage{mathtools}

\setcounter{tocdepth}{3}							
\makeatletter										
\def\l@subsection{\@tocline{2}{0pt}{2.5pc}{5pc}{}}	%
\makeatother										%

\usepackage{graphicx}								
\usepackage{tikz}
\usepackage{tikz-cd}

\usepackage{hyperref}								
	\hypersetup{colorlinks=false, urlcolor=black, linkcolor=black}
\DeclareRobustCommand{\SkipTocEntry}[5]{}

\usepackage{enumitem}								


\renewcommand{\phi}{\varphi}

\newcommand{\N}{\mathbb{N}}						
\newcommand{\Z}{\mathbb{Z}}						
\newcommand{\R}{\mathbb{R}}						



\newcommand{\eps}{\varepsilon}					

\DeclareMathOperator{\mass}{\mathbf{M}}

\newcommand{\dd}								
	{\mathop{}\!\mathrm{d}}						
\newcommand{\ddn}[1]							
	{\mathop{}\!\mathrm{d^{#1}}}

\newcommand{\abs}[1]							
	{\left| #1 \right|}
\newcommand{\smallabs}[1]						
	{\lvert #1 \rvert}	
\newcommand{\norm}[1]							
	{\left\lVert #1 \right\rVert}	
\newcommand{\smallnorm}[1]						
	{\lVert #1 \rVert}						
\newcommand{\ip}[2]								
	{\left< #1 , #2 \right>}

\DeclareMathOperator{\proj}{pr}					
\DeclareMathOperator{\vol}{vol}					
\DeclareMathOperator{\spt}{spt}					
\DeclareMathOperator*{\esssup}{ess\,sup}		
\DeclareMathOperator{\diam}{diam}				
\DeclareMathOperator{\len}{len}					

\let\Im\relax
\DeclareMathOperator{\Im}{Im}					


\newcommand{\loc}{\mathrm{loc}}

\DeclareMathOperator{\hausd}{\mathcal{H}}

\newcommand{\cA}{\mathcal{A}}
\newcommand{\cB}{\mathcal{B}}

\newcommand{\cW}{\mathcal{W}}

\newtheorem{thm}{Theorem}[section]{\bf}{\it}
\newtheorem{lemma}[thm]{Lemma}
\newtheorem{prop}[thm]{Proposition}
\newtheorem{cor}[thm]{Corollary}

\theoremstyle{definition}
\newtheorem{defn}[thm]{Definition}

\theoremstyle{remark}
\newtheorem{rem}[thm]{Remark}

\numberwithin{equation}{section}

\begin{document}

\title[Notes on quasiregular maps between manifolds]{Notes on quasiregular maps between Riemannian manifolds}
\author{Ilmari Kangasniemi}

\maketitle

\begin{abstract}
	These notes provide an exposition on obtaining the well-known standard results of quasiregular maps on Riemannian manifolds, given the corresponding theory in the Euclidean setting. We recall several different approaches to first-order Sobolev spaces between Riemannian manifolds, and show that they result in equivalent definitions of quasiregular maps. We explain how e.g.\ Reshetnyak's theorem, degree and local index theory, and the quasiregular change of variables formula are transferred into the manifold setting from Euclidean spaces. Finally, we conclude with a proof of the basic fact that pull-backs with quasiregular maps preserve Sobolev differential forms of the conformal exponent.
\end{abstract}

\section{Introduction}

In these notes, we present a detailed exposition on how to obtain the basic analytical theory of quasiregular maps between Riemannian manifolds, given the corresponding theory of quasiregular maps between Euclidean spaces.

Higher-dimensional quasiregular and quasiconformal mappings are generally studied in three main contexts: on domains in $\R^n$, on oriented Riemannian manifolds, and in sufficiently regular metric measure spaces. The Euclidean theory is discussed in multiple detailed introductory monographs: see e.g.\ Rickman \cite{Rickman_book}, Reshetnyak \cite{Reshetnyak-book}, Iwaniec and Martin \cite{Iwaniec-Martin_book}, and Gehring, Martin and Palka \cite{Gehring-Martin-Palka_book}. We also mention the closely related texts by Hencl and Koskela \cite{Hencl-Koskela_book} and Heinonen, Kilpeläinen and Martio \cite{Heinonen-Kilpelainen-Martio_book}, as well as the monograph on the planar theory by Astala, Iwaniec and Martin \cite{Astala-Iwaniec-Martin_Book}. 

The metric version of the theory is more recent. The quasiconformal side of the metric theory is better established, and an introduction to the theory can be found in the monograph of Heinonen \cite{Heinonen_book}; we also mention the related monograph by Heinonen, Koskela, Shanmugalingam and Tyson \cite{Heinonen-Koskela-Shanmugalingam-Tyson}. The non-homeomorphic side of the theory is currently under active development; see e.g.\ \cite{Kirsila_GenManifoldsMFD}, \cite{Guo-Williams} and \cite{Lindquist-Pankka}.

However, while the Riemannian theory has existed for almost as long as the Euclidean one, introductory material for it appears far more uncommon. The aforementioned monographs \cite{Reshetnyak-book} and \cite{Gehring-Martin-Palka_book} contain some discussion of the manifold theory, but are regardless overwhelmingly focused on the Euclidean theory. One detailed account is in a paper by Suominen \cite{Suominen_QC-Manifold}, but it is limited to the quasiconformal case and uses a modulus approach. The more typical analytic approach is considerd in the monograph of Holopainen and Pankka \cite{Holopainen-Pankka_Notes} and in lecture notes by Pankka \cite{Pankka_Lectnotes_Fribourg}; however, these monographs are less detailed in order to cover more deep results. 

A possible reason for this apparent shortage of introductory material is that much of the basic Riemannian theory is relatively easy to derive directly from the Euclidean one using charts. However, the required prior information and the countless small details in this process of arriving at the Riemannian theory make an exposition easily long enough to encompass an entire set of notes.

The purpose of these notes is to provide exactly that: a detailed account of how to build the foundation for the Riemannian version of the analytical theory of quasiregular maps. We do not aim for a wide variety of deep results. Instead, we try to be as thorough as possible with the details, in hopes of allowing the reader to work with the Riemannian quasiregular theory without second-guessing the validity of every basic detail. Indeed, the text is largely motivated by the countless small questions the author had to sort out when learning the subject.

The first part of these notes is mainly built around the fact that the analytic definition of quasiregular maps has a first order Sobolev space, which in the Riemannian setting would have to be defined for mappings between two manifolds. Various authors use different equivalent approaches to this part of the definition in the Riemannian setting. Hence, we go over several of these possible approaches in detail, and show their equivalence.

The other main part of the notes is an exposition on how to transfer the following fundamental parts of the analytic theory to the manifold setting: Reshetnyak's theorem, normal domains, degree and local index theory, Lusin conditions, and the quasiregular change of variables theorem. Finally, we conclude with the quasiregular pull-back formula for Sobolev differential forms of the conformal exponent. In the process of reaching this result, we also end up discussing Sobolev differential forms in general.

\subsection*{Structure of these notes}

In Sections \ref{sect:diff_geo_prelims} and \ref{sect:meas_diff_geo_prelims}, we recall necessary parts of differential and Riemannian geometry for setting up the theory. In particular, Section \ref{sect:diff_geo_prelims} is a very brief recap of the smooth Riemannian geometry used in the notes, mainly to clarify the precise notation and terminology used. Section \ref{sect:meas_diff_geo_prelims} on the other hand reviews measurable Riemannian geometry, where slightly more detail is given due to us finding the textbook references for this part of the theory to be less commonplace. 

In Section \ref{sect:eucl_sobolev_prelims}, we review the necessary part of the theory of first order Sobolev spaces in the Euclidean setting. Due to its importance to us, special attention is placed placed on the Sobolev chain rule with $C^1$-smooth Lipschitz and bilipschitz maps. In the following Section \ref{sect:sobolev_manifold_domain}, we then define first order Sobolev maps with a Riemannian manifold domain. 

Sections \ref{sect:sobolev_cont_functions}, \ref{sect:sobolev_nash_embedding} and \ref{sect:sobolev_intrinsic_derivative} then catalogue various approaches to first order Sobolev functions for which the target space is also a Riemannian manifold. In Section \ref{sect:sobolev_cont_functions}, we consider a simpler case in which the map under consideration is already known to be continuous, in which case we may use a similar chart-based approach as is used in moving to manifold domains. In Section \ref{sect:sobolev_nash_embedding}, we discuss an approach using the Nash embedding theorem, which yields an embedding of the target manifold into a higher dimensional Euclidean space. Finally, in Section \ref{sect:sobolev_intrinsic_derivative}, we discuss an approach based on post-composition with compactly supported $C^1$-functionals, which has been studied in detail in \cite{Convent-VanSchaftingen_Sobolev}. In particular, it is shown in Theorem \ref{thm:local_sobolev_eq_of_defs} that these approaches all yield the same first order locally Sobolev functions whenever applicable.

Before moving on to the quasiregular theory, we spend Section \ref{sect:Sobolev_forms} discussing Sobolev differential forms, and present proofs for some of their basic properties. Afterwards, we move on to the quasiregular theory in Section \ref{sect:definitions_of_QR}, where we state several definitions of quasiregular maps between manifolds and show their equivalence. The main result of that section, Theorem \ref{thm:equivalence_of_definitions}, is as follows.
\begin{thm}
	The following four approaches to the definition of a quasiregular map $f$ between oriented Riemannian $n$-manifolds $M, N$ are all equivalent.
	\begin{itemize}
		\item $f$ is a continuous $W^{1,n}_\loc(M, N)$-map with $\abs{Df(x)}^n \leq K J_f(x)$ for almost every $x$, where $W^{1,n}_\loc(M, N)$ is defined by any of the following methods:
		\begin{itemize}
			\item by using bilipschitz charts and the continuity of $f$ to check whether $f \in W^{1,n}_\loc(M, N)$;
			\item by defining $W^{1,n}_\loc(M, N)$ using a Nash embedding;
			\item by defining $W^{1,n}_\loc(M, N)$ using the theory in \cite{Convent-VanSchaftingen_Sobolev}.
		\end{itemize}
		\item For every $x \in M$, $K' > K$, and $L > 1$, we find orientation preserving $L$-bilipschitz charts $\phi$ and $\psi$ at neighborhoods of $x$ and $f(x)$, respectively, such that the composition $\psi \circ f \circ \phi^{-1}$ is well defined and $K'$-quasiregular in the Euclidean sense.
	\end{itemize}
\end{thm}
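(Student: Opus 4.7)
The first observation is that the equivalence of the three variants of the first bullet point is already contained in the preceding Theorem \ref{thm:local_sobolev_eq_of_defs}, which identifies the three definitions of $W^{1,n}_\loc(M,N)$ for continuous maps. The distortion inequality $\abs{Df(x)}^n \leq K J_f(x)$ is phrased purely in terms of the Riemannian norm of the weak differential and the Riemannian Jacobian, both of which are intrinsic invariants of a Sobolev map. Hence the three analytic definitions agree, and the only remaining task is to match them with the chart characterization in the second bullet point.

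For the implication from analytic to chart, I fix $x \in M$, $K' > K$, and $L > 1$. Using normal coordinates centered at $x$ and $f(x)$, rescaled if necessary, I would produce orientation preserving $L$-bilipschitz charts $\phi$ and $\psi$ at small enough neighborhoods of $x$ and $f(x)$; this is possible because the Riemannian metric in normal coordinates tends to the Euclidean one at the origin, so the bilipschitz constant can be made arbitrarily close to $1$ by shrinking the chart domain. Continuity of $f$ ensures that $g := \psi \circ f \circ \phi^{-1}$ is well defined on some open subset of $\R^n$ after further shrinking. The bilipschitz Sobolev chain rule from Section \ref{sect:eucl_sobolev_prelims}, combined with the bilipschitz-chart definition of $W^{1,n}_\loc(M,N)$, gives $g \in W^{1,n}_\loc$ in the Euclidean sense. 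Comparing Euclidean operator norms and Jacobians to their Riemannian counterparts via the pointwise bounds $\smallnorm{D\phi}, \smallnorm{D\phi^{-1}} \leq L$ and $L^{-n} \leq J_\phi \leq L^n$ (and similarly for $\psi$), a direct computation yields $\abs{Dg}^n \leq L^{4n} K \, J_g$ almost everywhere. Choosing $L$ small enough that $L^{4n} K \leq K'$ gives the desired Euclidean $K'$-quasiregularity.

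For the reverse implication, the existence of such bilipschitz charts at every point immediately covers $M$ by bilipschitz coordinate neighborhoods in which $f$ is a continuous Euclidean $W^{1,n}_\loc$ map, so the chart-based definition of $W^{1,n}_\loc(M,N)$ shows $f \in W^{1,n}_\loc(M,N)$, and the same definition then transfers us back to either of the other two analytic formulations. For the distortion bound, the analogous computation in reverse gives $\abs{Df(x)}^n \leq L^{4n} K' \, J_f(x)$ at almost every $x$ in the chart domain; since this estimate holds for every $K' > K$ and $L > 1$ at a given $x$, taking the infimum over these parameters produces $\abs{Df(x)}^n \leq K J_f(x)$ almost everywhere. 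The main technical obstacle throughout is careful bookkeeping of the $L$-factors introduced when comparing Euclidean and Riemannian norms and Jacobians, and ensuring that the bilipschitz Sobolev chain rule applies in both directions; both of these are essentially packaged into Theorem \ref{thm:local_sobolev_eq_of_defs} and the Euclidean Sobolev preliminaries, so the argument ultimately reduces to clean distortion arithmetic.
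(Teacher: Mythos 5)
Your proposal is correct and follows essentially the same route as the paper: Theorem \ref{thm:local_sobolev_eq_of_defs} handles the three analytic definitions, and the chart characterization is matched by composing with $L$-bilipschitz (exponential-map) charts and tracking the distortion constants, arriving at the same $L^{4n}$ factor in both directions and then letting $K' \to K$, $L \to 1$ for the converse.
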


Following this statement of definitions, we then begin transferring the basic analytic properties of quasiregular maps from the Euclidean setting to the manifold setting. The main topic of Section \ref{sect:reshetnyaks_theorem} is to transfer Reshetnyak's theorem to manifolds.

\begin{thm}[Reshetnyak]
	Let $f \colon M \to N$ be a non-constant quasiregular map between connected, oriented Riemannian $n$-manifolds. Then $f$ is discrete, open, and sense-preserving.
\end{thm}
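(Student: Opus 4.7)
The plan is to reduce to the classical Euclidean Reshetnyak theorem via the chart-based characterization established in Theorem \ref{thm:equivalence_of_definitions}. The three conclusions --- discreteness, openness, and sense-preservation --- are all local properties, so it suffices to verify each in a neighborhood of every point of $M$, provided we can arrange that the relevant chart representations are themselves non-constant.

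I would introduce the set $U \subset M$ consisting of those $x$ admitting an open neighborhood on which $f$ is constant. By definition $U$ is open, and the first task is to show it is closed. Given $x \in \overline{U}$, pick orientation-preserving bilipschitz charts $\phi$ and $\psi$ at $x$ and $f(x)$ so that $g = \psi \circ f \circ \phi^{-1}$ is Euclidean $K'$-quasiregular on a neighborhood of $\phi(x)$; after shrinking we may take its domain to be a Euclidean ball, and hence connected. If $g$ were non-constant, the Euclidean Reshetnyak theorem would force $g$ to be open, so $g$ could not be locally constant at any point of its domain, contradicting the fact that points of $U$ accumulate at $x$. Hence $g$ is constant on the chart, so $f$ is constant on a neighborhood of $x$ and $x \in U$.

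By connectedness of $M$, the clopen set $U$ is either empty or all of $M$; the second alternative would force $f$ to be locally constant and therefore constant on the connected manifold $M$, contradicting the hypothesis. So $U = \emptyset$, and for every $x \in M$ the chart representation $g$ produced above is non-constant on a connected Euclidean domain. The Euclidean Reshetnyak theorem then says that $g$ is discrete, open, and sense-preserving. Conjugating $g$ by the orientation-preserving bilipschitz homeomorphisms $\phi$ and $\psi$ transports all three properties back to $f$ on the underlying neighborhood in $M$, yielding the conclusion.

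The main technical point that merits care is ensuring that sense-preservation, unlike discreteness and openness, transfers cleanly through the conjugation. Since a continuous map is sense-preserving exactly when its local topological degree is positive on every sufficiently small precompact normal neighborhood, one should verify that composition with orientation-preserving homeomorphisms --- which have local degree $+1$ --- preserves this positivity via the multiplicativity of the local degree. This is the only step that is not immediate from the Euclidean result plus soft topology, and I expect it to be the main obstacle, since it requires the manifold version of local degree theory that the preceding sections of the notes develop.
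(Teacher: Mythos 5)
Your proposal is correct and follows essentially the same route as the paper: reduce all three properties to the Euclidean Reshetnyak theorem via positively oriented bilipschitz charts (Theorem \ref{thm:bilipschitz_charts}), transferring discreteness and openness directly and sense-preservation through the fact that the pushforwards by orientation-preserving charts preserve the local homology generators (Theorem \ref{thm:Reshetnyak_on_mflds} and Proposition \ref{prop:Reshetnyak_sense_preserving_on_manifolds}). Your clopen argument ruling out locally constant chart representations is a welcome refinement: the paper's proof simply asserts that $\psi \circ f \circ \phi^{-1}$ is non-constant, and your argument supplies the missing justification.
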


Deriving the manifold version of Reshetnyak's theorem from the Euclidean version is straightforward and easy. Instead, most of the work comes from discussing the foundations of local topological orientations and local index theory, which is the language in which the sense-preserving part of Reshetnyak's theorem is usually given. Moreover, Section \ref{sect:reshetnyaks_theorem} also contains introductions to several standard topological tools of quasiregular analysis, such as normal domains and global degree theory.

For the following Section \ref{sect:change_of_variables}, the overarching goal is to transfer the change of variables formula for quasiregular maps to the manifold setting. For this purpose, we also end up discussing multiplicity, branching and the Lusin properties.

\begin{thm}[Change of variables]
	Let $M$ and $N$ be connected, oriented Riemannian $n$-manifolds, and let $f \colon M \to N$ be a non-constant quasiregular map. Suppose that $\omega$ is a locally integrable differential $n$-form on $N$. Then $f^* \omega$ is locally integrable, and we have a local integral transformation formula
	\[
		\int_U f^* \omega = \int_N N(f, \cdot, U) \omega,
	\]
	where $N(f, \cdot, U)$ is the multiplicity function.
\end{thm}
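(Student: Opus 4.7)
The plan is to reduce the statement to the Euclidean change of variables formula for quasiregular maps, which is part of the Euclidean theory presupposed by the notes. The main bridge is Theorem \ref{thm:equivalence_of_definitions}, which supplies, around any point $x \in M$, orientation-preserving bilipschitz charts $\phi$ at $x$ and $\psi$ at $f(x)$ making $g = \psi \circ f \circ \phi^{-1}$ quasiregular on an open subset of $\R^n$ in the Euclidean sense; Reshetnyak's theorem ensures that $f$ is discrete, so $N(f, \cdot, U)$ is well-defined.

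First I would construct, using the Lindelöf property of $M$ and continuity of $f$, a countable family of chart pairs $\{(\phi_k, \psi_k)\}$ with domains $W_k \subset M$ and $V_k \subset N$ satisfying $f(W_k) \subset V_k$, with $g_k := \psi_k \circ f \circ \phi_k^{-1}$ Euclidean quasiregular on $\phi_k(W_k)$, and with the $W_k$ covering $U$. Setting $E_k := (W_k \cap U) \setminus \bigcup_{j<k} W_j$ produces a disjoint measurable partition of $U$ with each $E_k$ sitting inside a single chart. On each piece, writing $(\psi_k^{-1})^* \omega = h_k \, dy_1 \wedge \cdots \wedge dy_n$ on $\psi_k(V_k)$, naturality of pullback identifies $(\phi_k^{-1})^* f^* \omega$ with $(h_k \circ g_k) J_{g_k} \, dx_1 \wedge \cdots \wedge dx_n$, and applying the Euclidean change of variables formula to $g_k$ with integrand $h_k$ on $\phi_k(E_k)$ yields, after transporting back through the bilipschitz homeomorphisms and using that these preserve multiplicity, the local identity
\[
    \int_{E_k} f^* \omega = \int_{V_k} N(f, \cdot, E_k) \, \omega.
\]

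Summing over $k$ and using the set-additivity $\sum_k N(f, \cdot, E_k) = N(f, \cdot, U)$ then produces the global identity. Local integrability of $f^* \omega$ follows by first running the same argument with $\omega$ replaced by a non-negative form dominating it (for instance, $|h_k|$ times a fixed volume form on $N$ within each chart), whose pullback then controls $|f^* \omega|$ and justifies the sum--integral interchange by dominated convergence. The main obstacle I anticipate is organizational rather than conceptual: one must ensure that the chart pairs $(\phi_k, \psi_k)$ can be chosen so that $f(W_k) \subset V_k$ holds on all of $W_k$, not merely near the base point, which requires shrinking via continuity of $f$; and one must track multiplicity carefully through the bilipschitz transfer in order to legitimately identify $N(g_k, \cdot, \phi_k(E_k))$ with $N(f, \cdot, E_k)$ composed with $\psi_k$. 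These are routine but intricate steps, and constitute the bulk of what must actually be written down.
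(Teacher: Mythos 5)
Your proposal is correct and follows essentially the same route as the notes: reduce to the Euclidean quasiregular change of variables via bilipschitz chart pairs, disjointify a countable cover of $U$ into measurable pieces each sitting in one chart pair, use additivity of the counting function $N(f,\cdot,\cdot)$ over the partition together with monotone convergence, and prove the non-negative case first to obtain both the integrability of $f^*\omega$ and the signed formula. The only divergence is that the notes excise the branch set $\cB_f$ and build the partition from injectivity neighborhoods, which then forces an almost-everywhere additivity argument on the null set $f\cB_f$; your direct partition of all of $U$ sidesteps this, since the Euclidean formula already accounts for multiplicity at branch points.
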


Finally, in Section \ref{sect:quasiregular_pullback}, we discuss pull-backs of differential forms by quasiregular maps. This is perhaps the most characteristic part of the quasiregular theory on higher dimensional Riemannian manifolds, as similar concepts are more seldom seen in the Euclidean setting. Our final goal is a complete proof of the following basic result.

\begin{thm}
	Let $M$ and $N$ be connected, oriented Riemannian $n$-manifolds, and let $f \colon M \to N$ be a non-constant quasiregular map. Suppose that $\omega \in L^{n/k}_\loc(\wedge^k N)$ and that $\omega$ has a weak exterior derivative $d\omega \in L^{n/(k+1)}_\loc(\wedge^{k+1} N)$, where we intepret $n/0 = \infty$ in the case $k = 0$. Then $f^* \omega \in L^{n/k}_\loc(\wedge^k M)$, and $f^* d\omega \in L^{n/(k+1)}_\loc(\wedge^{k+1} M)$ is the weak exterior derivative of $f^* \omega$.
\end{thm}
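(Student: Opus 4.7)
The plan is to proceed in two main stages: first establish the integrability of $f^*\omega$, and then prove the weak derivative identity by a density argument reducing to smooth forms.

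For the integrability, I would start with the pointwise operator bound $\abs{f^*\omega(x)} \leq \abs{Df(x)}^k \abs{\omega(f(x))}$ valid a.e. for any measurable $k$-form $\omega$, combined with the quasiregular distortion inequality $\abs{Df}^n \leq K J_f$. Raising to the power $n/k$ gives
\[
	\abs{f^*\omega(x)}^{n/k} \leq K \cdot J_f(x) \cdot \abs{\omega(f(x))}^{n/k} \quad \text{a.e. } x \in M.
\]
Integrating over a relatively compact $U \Subset M$ and invoking the change of variables theorem stated above yields the bound $\int_U \abs{f^*\omega}^{n/k} \leq K \int_N N(f,y,U) \abs{\omega(y)}^{n/k} \dd y$. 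Since $N(f,\cdot,U)$ is bounded by local degree theory and supported in the relatively compact set $f(\overline U)$, this finishes the $L^{n/k}_\loc$-statement. The same estimate applied to $d\omega$ (with exponent $n/(k+1)$) shows $f^*(d\omega) \in L^{n/(k+1)}_\loc(\wedge^{k+1} M)$, so all terms in the desired identity are well-defined.

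The remaining claim is the weak identity $d(f^*\omega) = f^*(d\omega)$. The plan is to approximate. Using mollification in local charts on $N$ together with a partition of unity, I would construct smooth forms $\omega_j$ on $N$ with compact support such that $\omega_j \to \omega$ in $L^{n/k}_\loc(\wedge^k N)$ and $d\omega_j \to d\omega$ in $L^{n/(k+1)}_\loc(\wedge^{k+1} N)$. The first-stage estimates, applied to $\omega - \omega_j$ and to $d\omega - d\omega_j$, give $f^*\omega_j \to f^*\omega$ in $L^{n/k}_\loc(\wedge^k M)$ and $f^*(d\omega_j) \to f^*(d\omega)$ in $L^{n/(k+1)}_\loc(\wedge^{k+1} M)$. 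Consequently it suffices to establish the weak identity $d(f^*\omega_j) = f^*(d\omega_j)$ for each $j$, since both sides of the pairing against a fixed test form $\eta \in C^\infty_c(\wedge^{n-k-1} M)$ then pass to the limit.

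The heart of the argument, and the main obstacle, is therefore to verify the identity for a smooth $\omega$. I would reduce to the Euclidean setting by covering $\spt \eta$ with finitely many bilipschitz charts $\phi \colon U \to \R^n$ on which $f(U)$ lies in a single bilipschitz chart $\psi$ on $N$ with $g := \psi \circ f \circ \phi^{-1}$ a Euclidean quasiregular map; this is possible by the chart-based definition of quasiregularity summarised in the equivalence theorem. Using the pull-back compatibility of bilipschitz charts for Sobolev forms, which was established in the Sobolev forms section, the claim on $U$ reduces to the corresponding Euclidean statement for $g$ and the smooth form $(\psi^{-1})^* \omega$. In Euclidean coordinates $\omega$ locally reads $\sum_I a_I \dd y^{i_1} \wedge \cdots \wedge \dd y^{i_k}$, whence
\[
	f^*\omega = \sum_I (a_I \circ g) \, \dd g^{i_1} \wedge \cdots \wedge \dd g^{i_k},
\]
with $g^{i}$ the Sobolev component functions. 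Taking the weak exterior derivative factor by factor, the Sobolev chain rule for $C^1$ functionals applied to $a_I$ combined with the classical Leibniz rule $d(u\,\dd g^I) = \dd u \wedge \dd g^I$ valid for $u = a_I \circ g \in W^{1,n}_\loc$ — justified by Hölder's inequality on the product $\abs{\dd u} \cdot \abs{Dg}^k \in L^{n/(k+1)}_\loc$ — yields exactly $d(f^*\omega) = f^*(d\omega)$. Patching the local identities together via the partition of unity finishes the argument.
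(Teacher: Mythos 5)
For $k \geq 1$ your plan is essentially the paper's proof: the integrability via the pointwise bound $\abs{f^*\omega} \lesssim \abs{Df}^k\,\abs{\omega\circ f}$, the distortion inequality, and the change of variables formula is exactly Proposition \ref{prop:qr_local_lp_estimate}; the reduction of the weak identity to smooth forms by mollification, and the treatment of the smooth case by writing $f^*\omega = \sum_I (a_I\circ g)\,dg^{i_1}\wedge\dots\wedge dg^{i_k}$ in bilipschitz charts and applying the Sobolev chain rule together with the Leibniz rule for Sobolev forms, is the content of Lemma \ref{lem:qr_Sobolev_pullback_eucl}, Lemma \ref{lem:smooth_form_sobolev_pullback_eucl} and Proposition \ref{prop:wedges_of_sobolev_forms}. (The paper localizes to normal domains $U_f(x,r)$ and mollifies in the Euclidean chart rather than on $N$, but that is only a reorganization; your appeal to boundedness of $N(f,\cdot,U)$ for precompact $U$ is sound.)

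The genuine gap is the case $k=0$. Your approximation step requires $\omega_j \to \omega$ in $L^{n/k}_\loc = L^{\infty}_\loc$, and smooth functions are \emph{not} dense in $L^\infty$: a mollification of a discontinuous bounded function does not converge uniformly, so for a general $\omega \in W^{d,\infty,n}_\loc(\wedge^0 N)$ no such sequence $\omega_j$ exists, and your limit passage in the term $\int_M f^*\omega_j\wedge d\eta$ is not justified as stated. The paper handles this case separately: it approximates the (cut-off) function $\omega'$ only in the $W^{1,n}$-norm, pulls back, and recovers $L^n$-convergence of the functions $f^*\omega_j'$ from the $L^n$-convergence of their differentials via the Gagliardo--Nirenberg--Sobolev inequality (Theorem \ref{thm:sobolev_eucl_inequality}) applied on a normal domain $U_f(x,r)$, where properness of $f\vert_{U_f(x,r)}$ guarantees compact supports; completeness of $W^{1,n}$ then gives a limit, which is identified with $f^*\omega'$ almost everywhere using the Lusin $(N^{-1})$ condition. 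A lighter repair of your own scheme is also possible for $k=0$: mollifications satisfy $\abs{\omega_j}\le\norm{\omega'}_\infty$ and $\omega_j\to\omega'$ a.e., so $\omega_j\circ f\to\omega'\circ f$ a.e.\ by the Lusin $(N^{-1})$ property, and dominated convergence handles the pairing against $d\eta$ while the $L^n$-estimate handles the $f^*d\omega_j$ term. Either way, the $k=0$ case needs an argument you have not supplied.
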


\subsection*{Thanks}

The author extends his thanks to Pekka Pankka for his continued guidance on the topic. Moreover, the author thanks everyone who he has discussed the topic of these notes with. A special mention goes to Ville Tengvall, who helped by pointing out some existing texts on the topic.

During the time of writing these notes, the author has been employed first by the University of Helsinki in Finland, and later by Syracuse University in the USA.


\tableofcontents


\section{Smooth differential geometry}\label{sect:diff_geo_prelims}

In this section we state the necessary prerequisites of smooth Riemannian geometry, including our notation of choice. We assume that the reader is familiar with the theory. For details, we refer to e.g.\ \cite{Lee_RiemGeo} and \cite{Klingenberg_RiemannGeoBook}.

\subsection{Smooth manifolds}

A topological space $M$ is a \emph{topological $n$-manifold} if it is Hausdorff, second countable, and if every $x \in M$ has a neighborhood homeomorphic to $\R^n$. A \emph{chart} on a topological manifold $M$ is a topological embedding $\phi \colon U \to \R^n$, where $U \subset M$ and $\phi U \subset \R^n$ are open and nonempty.

A \emph{smooth atlas} $\cA$ on a topological manifold $M$ is a collection of charts on $M$, such that the domains of the charts in $\cA$ cover $M$, and for every pair of charts $\phi \colon U \to \R^n$ and $\psi \colon V \to \R^n$ in $\cA$ with $U \cap V \neq \emptyset$, the \emph{transition function} $\phi \circ \psi^{-1} \colon \psi(U \cap V) \to \phi(U \cap V)$ is a smooth diffeomorphism. A \emph{smooth differential $n$-manifold} is then a topological $n$-manifold equipped with a maximal smooth atlas.

Given a smooth manifold $M$, we denote its \emph{tangent space} at a point $x \in M$ by $T_x M$, and the resulting \emph{tangent bundle} by $TM$. Similarly, we denote the \emph{cotangent space} at $x \in M$ by $T^*_x M$, and the \emph{cotangent bundle} by $T^* M$. The $k$-th \emph{exterior power} of the tangent bundle is denoted $\wedge^k TM$, and similarly $\wedge^k T^*M$ for the cotangent bundle.

We denote by $C(M, N)$ the set of continuous functions $f \colon M \to N$ between two smooth manifolds $M$ and $N$. We also abbreviate $C(M, \R) = C(M)$. Moreover, a function $f \in C(M, N)$ is \emph{$k$ times differentiable} for $k \in \{1, 2, \dots, \infty\}$ if, for every $x \in M$, we can find charts $\phi \colon U \to \R^n$ and $\psi \colon V \to \R^m$ on $M$ and $N$, respectively, such that $x \in U$, $fU \subset V$, and $\psi \circ f \circ \phi^{-1}$ is a $k$ times differentiable map between Euclidean domains. The space of $k$ times differentiable maps from $M$ to $N$ is denoted $C^k(M, N)$, with again $C^k(M) = C^k(M, \R)$. We also refer to $\infty$ times differentiable maps as \emph{smooth} maps.

If $M$ and $N$ are smooth Riemannian manifolds and $f \in C^1(M, N)$, then we denote the \emph{differential map} of $f$ between tangent bundles by $Df \colon TM \to TN$. The restriction of this map to an individual tangent space at $x \in M$ is denoted $Df(x) \colon T_x M \to T_{f(x)} N$. The resulting induced map between the $k$-th exterior powers of the tangent spaces is denoted by $\wedge^k Df \colon \wedge^k TM \to \wedge^k TN$.

A \emph{differential $k$-form} $\omega$ on a smooth manifold $M$ is a section $\omega \colon M \to \wedge^k T^*M$, and the value of $\omega$ at a point $x \in M$ is denoted $\omega_x \in \wedge^k T_x^*M$. Since $\wedge^k T^*M$ also has a smooth manifold structure, we obtain classes of continuous, and $l$-times differentiable differential forms. We denote the space of continuous $k$-forms on $M$ by $C(\wedge^k M)$, and the space of $l$ times differentiable $k$-forms on $M$ by $C^l(\wedge^k M)$.

The \emph{exterior derivative} of differential forms is denoted by $d \colon C^\infty(\wedge^k M) \to C^\infty(\wedge^{k+1} M)$. If $f \in C^1(M, N)$ where $M$ and $N$ are smooth manifolds, then the \emph{pull-back} of a differential form $\omega \colon N \to \wedge^k T^*N$ is the differential form $f^*\omega \colon N \to \wedge^k T^*N$ given by
\[
	(f^* \omega)_x = \omega_{f(x)} \circ (\wedge^k Df(x)).
\]
The \emph{support} of a differential $k$-form $\omega$ on $M$ is given by
\[
	\spt(\omega) = \overline{\{x \in M : \omega_x \neq 0\}},
\]
and we use the sub-index 0 to denote spaces of compactly supported differential forms; for instance, $C^\infty_0(\wedge^k M)$ denotes the space of compactly supported smooth differential $k$-forms on $M$.

\subsection{Riemannian manifolds}

A \emph{Riemannian metric} $\ip{\cdot}{\cdot}$ is a smooth choice of inner product $\ip{\cdot}{\cdot}_x$ at every tangent space $T_xM$. A \emph{Riemannian manifold} $M$ is then a smooth manifold equipped with a Riemannian metric.

Suppose then that $M$ is a Riemannian manifold and $x \in M$. Then there exists a standard duality isomorphism $\iota_x \colon T_x M \to T_x^* M$ defined by
\[
(\iota_x(v))(w) = \ip{v}{w}_x
\]
for $v, w \in T_x M$. The isomorphism $\iota_x$ induces an inner product on $T^*_x M$ by 
\[
	\ip{\alpha}{\beta}_x = \ip{\iota_x^{-1}(\alpha)}{\iota_x^{-1}(\beta)}_x
\]
for $\alpha, \beta \in T^*_x M$.

Moreover, the inner products $\ip{\cdot}{\cdot}_x$ on $T_x M$ and $T^*_x M$, $x \in M$, induce a \emph{Grassmann inner product} $\ip{\cdot}{\cdot}_x$ on the exterior powers $\wedge^k T_xM$ and $\wedge^k T^*_x M$. The Grassmann inner product on $\wedge^k T_xM$ (or alternatively on $\wedge^k T^*_x M$) is the unique inner product which satisfies
\[
	\ip{v_1\wedge \cdots \wedge v_k}{w_1\wedge \cdots \wedge w_k}_x 
	= \det \left[\ip{v_i}{w_j}_x\right]_{ij}
\]
for all $v_i, w_i \in T_xM$ (or alternatively for all $v_i, w_i \in T_x^* M$), $i \in \{1, \dots, k\}$.

For all of the spaces $T_x M$, $T_x^*M$, $\wedge^k T_x M$ and $\wedge^k T_x^* M$, we denote the norm induced by the inner product $\ip{\cdot}{\cdot}_x$ by $\abs{\cdot}_x$. We also usually omit the sub-index $x$ from $\ip{\cdot}{\cdot}_x$ and $\abs{\cdot}_x$ if it can be understood from the rest of the expression.

For a continuously differentiable path $\gamma \in C^1([0,1], M)$, we denote by $\len \gamma$ the \emph{length} of $\gamma$, defined by
\[
\len \gamma = \int_0^1 \abs{\gamma'(t)} \dd t.
\]
We denote by $d_M$ the \emph{Riemannian distance} on $M$, where $d_M(x,y)$ is the infimal length of a $C^1$-smooth path connecting the points $x, y \in M$. Moreover, for every $x \in M$, we denote the \emph{exponential map} by $\exp_x \colon B \to M$, where $B = B_{T_x M}(0, r) \subset T_x M$. We recall in particular that the exponential map is smooth if $r$ is sufficiently small, and that $D \exp_x(0) \colon T_{0} T_x M \to T_x M$ is an isometry when $T T_x M$ is equipped with its natural induced Riemannian metric.

\subsection{Volume and orientation}

A smooth $n$-manifold $M$ is \emph{orientable} if there exists a maximal smooth atlas $\cA$ on $M$ whose transition functions have nonnegative Jacobian determinants. Such an atlas is an \emph{orientation} of $M$, and $M$ is called \emph{oriented} if such an atlas has been fixed. Charts contained in the orientation of $M$ are called \emph{positively oriented}.

For an oriented Riemannian $n$-manifold $M$, we denote the \emph{volume form} induced by the orientation and the Riemannian metric by $\vol_M \in C^\infty(\wedge^n M)$. We denote by $J_f \colon M \to \R$ the \emph{Jacobian determinant}, or \emph{Jacobian} for short, of a function $f \in C^1(M, N)$ between two oriented $n$-manifolds. In the case where $M$ and $N$ are not Euclidean, we define $J_f$ by the relation $f^* \vol_N = J_f \vol_M$. Equivalently, for $x \in M$, $J_f(x)$ is the determinant of the matrix of $Df(x)$ when written in positively oriented orthonormal bases of $T_xM$ and $T_{f(x)} N$, where an ordered basis $e_1, \dots, e_n$ of $T_x M$ is \emph{positively oriented} if $\vol_M(e_1 \wedge \dots \wedge e_n) > 0$.

Moreover, for an oriented Riemannian $n$-manifold $M$, we define the \emph{integral} of a continuous $n$-form $\omega = g \vol_M$ in the domain of a positively oriented smooth chart $\phi \colon U \to \R^n$ on $M$ by
\[
	\int_U \omega = \int_{\phi U} (g \circ \phi^{-1})(x) J_{\phi^{-1}}(x) \dd x.
\]
The integral of $\omega$ over the entire manifold $M$ is then defined using a smooth partition of unity.

Due to its importance for these notes, we recall a version of \emph{Stokes' theorem}, as well as a consequence of it. For details, see e.g.\ \cite[Theorem 14.9]{Lee_RiemGeo}.

\begin{thm}[Stokes]\label{thm:stokes}
	Let $M$ be an oriented Riemannian $n$-manifold (without boundary), and let $\omega \in C^\infty_0(\wedge^{n-1} M)$. Then
	\[
		\int_M d\omega = 0.
	\]
\end{thm}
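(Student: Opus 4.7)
The plan is to reduce Stokes' theorem on $M$ to the standard computation on $\R^n$ via charts and a partition of unity. First, using the compactness of $\spt \omega$, I would choose a finite collection of positively oriented smooth charts $\phi_i \colon U_i \to \R^n$ whose domains cover $\spt \omega$, together with a smooth partition of unity $\{\chi_i\}$ subordinate to $\{U_i\}$. Writing $\omega = \sum_i \chi_i \omega$, each summand $\chi_i \omega$ lies in $C^\infty_0(\wedge^{n-1} U_i)$. By the definition of the integral over $M$ via partitions of unity, and by linearity of $d$ together with the identity $\sum_i d\chi_i = d(\sum_i \chi_i) = d(1) = 0$ on $\spt \omega$, it suffices to establish the theorem for a single compactly supported smooth $(n-1)$-form in the domain of one positively oriented chart.

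Next, I would transfer the problem to Euclidean space. If $\omega \in C^\infty_0(\wedge^{n-1} U)$ for a positively oriented chart $\phi \colon U \to \R^n$, then since $d$ commutes with pull-backs and the integral is defined chart-wise via $\int_U \eta = \int_{\phi U} (\phi^{-1})^* \eta$, the equality $\int_M d\omega = 0$ reduces to showing
\[
    \int_{\R^n} d\alpha = 0
\]
for every $\alpha \in C^\infty_0(\wedge^{n-1} \R^n)$, where $\alpha = (\phi^{-1})^* \omega$ is extended by zero outside $\phi U$.

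For this Euclidean statement, I would write $\alpha = \sum_{i=1}^n (-1)^{i-1} f_i \, dx_1 \wedge \cdots \wedge \widehat{dx_i} \wedge \cdots \wedge dx_n$ with $f_i \in C^\infty_0(\R^n)$, so that
\[
    d\alpha = \left( \sum_{i=1}^n \frac{\partial f_i}{\partial x_i} \right) dx_1 \wedge \cdots \wedge dx_n.
\]
Applying Fubini's theorem and the fundamental theorem of calculus to each term, integrating $\partial f_i / \partial x_i$ first in the $x_i$ variable and using that $f_i$ has compact support, each contribution vanishes, so $\int_{\R^n} d\alpha = 0$.

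The main obstacle is less a mathematical difficulty than a bookkeeping one: one must be careful that the partition of unity argument is compatible with the chart-wise definition of the integral on $M$, and that orientation conventions match so that the pull-back by positively oriented charts preserves the sign of the integral. Once these are arranged, the Euclidean case is a routine Fubini computation.
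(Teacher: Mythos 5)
Your argument is correct. Note that the paper does not prove this theorem at all --- it is stated as a recalled fact with a citation to \cite[Theorem 14.9]{Lee_RiemGeo} --- and what you have written is precisely the standard proof found there (specialized to the boundaryless case): a finite partition of unity subordinate to positively oriented charts covering $\spt \omega$, reduction via linearity of $d$ and of the integral to a single compactly supported $(n-1)$-form in one chart, and then the Fubini/fundamental-theorem-of-calculus computation in $\R^n$. The two bookkeeping points you flag are indeed the only places where care is needed, and both are consistent with the paper's conventions: the chart-wise definition $\int_U \eta = \int_{\phi U} (\phi^{-1})^* \eta$ for positively oriented $\phi$ is exactly how the integral is set up in Section \ref{sect:diff_geo_prelims}, and $d$ commuting with the smooth pull-back $(\phi^{-1})^*$ is classical.
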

\begin{cor}\label{cor:stokes_corollary}
	Let $M$ be an oriented Riemannian $n$-manifold (without boundary), and let $\omega \in C^\infty(\wedge^{k} M)$, where $k \in \{0, \dots, n-1\}$. Then for every $\eta \in C^\infty_0(\wedge^{n-k-1} M)$, we have
	\[
		\int_M \omega \wedge d\eta = (-1)^{k+1} \int_M d\omega \wedge \eta.
	\]
\end{cor}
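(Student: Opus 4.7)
The plan is to deduce the corollary directly from Stokes' theorem (Theorem \ref{thm:stokes}) by applying it to the wedge product $\omega \wedge \eta$, using the graded Leibniz rule for the exterior derivative.

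First I would observe that since $\omega \in C^\infty(\wedge^k M)$ and $\eta \in C^\infty_0(\wedge^{n-k-1} M)$, the wedge product $\omega \wedge \eta$ is a smooth $(n-1)$-form on $M$. Its support satisfies $\spt(\omega \wedge \eta) \subset \spt(\eta)$, which is compact, so $\omega \wedge \eta \in C^\infty_0(\wedge^{n-1} M)$, placing us within the hypotheses of Theorem \ref{thm:stokes}.

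Next I would invoke the standard graded Leibniz rule for the exterior derivative, which gives
\[
    d(\omega \wedge \eta) = d\omega \wedge \eta + (-1)^k \omega \wedge d\eta.
\]
Since both sides are smooth $n$-forms with compact support, integrating over $M$ and applying Stokes' theorem to the left-hand side yields
\[
    0 = \int_M d(\omega \wedge \eta) = \int_M d\omega \wedge \eta + (-1)^k \int_M \omega \wedge d\eta.
\]
Solving for $\int_M \omega \wedge d\eta$ and noting that $(-1)^{-k-1} = (-1)^{k+1}$ produces the claimed identity.

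There is no real obstacle: the entire argument reduces to combining Stokes' theorem with the Leibniz rule, both of which are standard and available at this point in the notes. The only very minor points to verify are that $\omega \wedge \eta$ is compactly supported (immediate from $\spt(\omega \wedge \eta) \subset \spt \eta$) and that the sign bookkeeping is carried out correctly.
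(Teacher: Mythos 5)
Your proof is correct and is exactly the derivation the paper intends (and uses later, e.g.\ when motivating the weak exterior derivative): apply Stokes to the compactly supported $(n-1)$-form $\omega \wedge \eta$, expand via the graded Leibniz rule, and rearrange; the sign bookkeeping $(-1)^{-k}=(-1)^{k}$ checks out. No gaps.
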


\section{Measurable Riemannian geometry}\label{sect:meas_diff_geo_prelims}

In this section, we discuss measurable functions and differential forms in the setting of Riemannian geometry. Unlike smooth Riemannian geometry, we do not assume familiarity with this part of the theory. Hence, we will provide proofs and explanations whenever appropriate, although some of the details are necessarily left to the reader due to how significant a diversion they would present.

\subsection{Lipschitz maps and Jacobian estimates}

We begin the section with a discussion on $C^1$-smooth Lipschitz maps between Riemannian manifolds. Although this is still in the realm of smooth Riemannian geometry, it seems to be covered in less detail in most textbooks. Hence, we give a somewhat more detailed account.

Suppose that $(X, d_X)$ and $(Y, d_Y)$ are metric spaces. A map $f \colon X \to Y$ is \emph{$L$-Lipschitz} for $L \geq 0$ if
\[
	d_Y(f(x), f(y)) \leq L d_X(x,y)
\]
for all $x, y \in X$. Moreover, $f$ is \emph{$L$-bilipschitz} for $L > 0$ if
\[
	L^{-1}d_X(x, y) \leq d_Y(f(x), f(y)) \leq L d_X(x,y)
\]
for all $x, y \in X$.

Suppose then that $M$ and $N$ are Riemannian manifolds. For a linear map $L \colon T_x M \to T_y N$, we denote by $\abs{L}$ the \emph{operator norm} of $L$, defined by
\[
	\abs{L} = \sup \{ \abs{L(v)} : \abs{v} = 1 \}.
\]
Moreover, we define the \emph{$l$-operator} by
\[
	l(L) = \inf \{ \abs{L(v)} : \abs{v} = 1 \}.
\]
Suppose then that $f \in C^1(M, N)$. For the lack of a better term, we call $f$ \emph{infinitesimally $L$-Lipschitz} if
\begin{equation}\label{eq:lipschitz_D}
	\abs{Df(x)} \leq L
\end{equation}
for all $x \in M$. Moreover, we call $f$ \emph{infinitesimally $L$-bilipschitz} if it is injective, infinitesimally $L$-lipschitz, and satisfies
\begin{equation}\label{eq:bilipschitz_D}
	l(Df(x)) \geq L^{-1}.
\end{equation}
for all $x \in M$.

The metric space definition is the traditional definition of Lipschitz and bilipschitz maps. However, in practice we use the infinitesimal definition for essentially all of this text. Hence, we need to link these two definitions, which we accomplish by sketching the proof of the following theorem.

\begin{thm}\label{thm:C1_lip_characterization}
	Let $M$ and $N$ be connected Riemannian manifolds, let $f \in C^1(M, N)$, and let $L > 0$. Then $f$ is $L$-Lipschitz with respect to the Riemannian distance if and only if $f$ is infinitesimally $L$-Lipschitz. Moreover, if $M$ and $N$ are of the same dimension, then $f$ is $L$-bilipschitz with respect to the Riemannian distance if and only if $f$ is infinitesimally $L$-bilipschitz.
\end{thm}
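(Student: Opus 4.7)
My plan is to handle the Lipschitz equivalence directly and then reduce the bilipschitz equivalence to it via the inverse function theorem.

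For infinitesimally $L$-Lipschitz $\Rightarrow$ $L$-Lipschitz, I would integrate along $C^1$ paths: given a path $\gamma \colon [0,1] \to M$ from $x$ to $y$, the composition $f \circ \gamma$ satisfies $\abs{(f\circ\gamma)'(t)} = \abs{Df(\gamma(t))\gamma'(t)} \leq L \abs{\gamma'(t)}$, whence $\len(f\circ\gamma) \leq L \len \gamma$; taking the infimum over $\gamma$ yields $d_N(f(x),f(y)) \leq L\, d_M(x,y)$. For the converse, the essential subsidiary fact is that for any $C^1$ curve $\sigma \colon [0,\eps) \to N$ with $\sigma'(0) = w$, the ratio $d_N(\sigma(0),\sigma(t))/t$ tends to $\abs{w}$ as $t \to 0^+$. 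I would prove this using normal coordinates at $\sigma(0)$: since $\exp_{\sigma(0)}$ is a diffeomorphism near $0$ with $D\exp_{\sigma(0)}(0)$ an isometry, the Riemannian distance on a small neighborhood agrees with the Euclidean distance on the tangent space to first order, while $\sigma$ admits an expansion $\sigma(t) = \exp_{\sigma(0)}(tw + o(t))$. Applying this to $\gamma(t) = \exp_x(tv)$ for a unit $v \in T_xM$ and to $f \circ \gamma$, and dividing the Lipschitz inequality by $t$, produces $\abs{Df(x)v} \leq L$ in the limit.

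For the bilipschitz equivalence, the implication global $\Rightarrow$ infinitesimal uses the same limiting argument for both the upper and lower bounds, yielding $\abs{Df(x)} \leq L$ and $l(Df(x)) \geq L^{-1}$ simultaneously, while injectivity is inherited directly from the bilipschitz inequality. For the converse, since $\dim M = \dim N$ and the lower bound $l(Df(x)) \geq L^{-1}$ makes $Df(x)$ invertible at every $x$, the inverse function theorem combined with injectivity promotes $f$ to a diffeomorphism onto the open subset $f(M) \subset N$. Its inverse then satisfies $\abs{Df^{-1}(y)} = 1/l(Df(f^{-1}(y))) \leq L$, so the already-proved Lipschitz direction applied to $f^{-1}$ gives $L^{-1} d_M(x,y) \leq d_{f(M)}(f(x),f(y))$, where $d_{f(M)}$ is the intrinsic Riemannian distance on the image.

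The main obstacle is the gap between $d_{f(M)}$ and the ambient distance $d_N$: a priori, paths in $N$ joining two points of $f(M)$ may leave $f(M)$ and shortcut back, so in general $d_{f(M)} \geq d_N$ on the image, which goes the wrong way for our bound. Closing this gap is where the substantial work lies; I would approach it by covering a near-minimizing $N$-path by evenly covered neighborhoods of the local diffeomorphism $f$, lifting the pieces back to $M$ via local inverses, and assembling a single path in $M$ whose length differs from that of the original by at most a factor of $L$ by the infinitesimal bound. The delicate point is ensuring the lifted pieces concatenate coherently into a well-defined path in $M$, which is where I would expect to spend the most care.
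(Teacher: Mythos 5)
Your first three implications are correct and essentially match the paper's argument. ``Infinitesimally Lipschitz $\Rightarrow$ Lipschitz'' by integrating along paths is exactly the paper's proof. For ``Lipschitz $\Rightarrow$ infinitesimally Lipschitz'' the paper routes through Lemma \ref{lem:infi_bilipschitz_charts}: it conjugates $f$ by infinitesimally $(1+\eps)$-bilipschitz exponential charts onto convex Euclidean balls (where Riemannian and Euclidean distances coincide) and takes Euclidean difference quotients; your normal-coordinate expansion $d_N(\sigma(0),\sigma(t))/t \to \abs{\sigma'(0)}$ is the same idea in a more direct form, and it also gives the lower bound in ``bilipschitz $\Rightarrow$ infinitesimally bilipschitz'' without the paper's detour through invariance of domain and $f^{-1}$.

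The problem is in ``infinitesimally $L$-bilipschitz $\Rightarrow$ $L$-bilipschitz.'' You have correctly located the obstacle, but your proposed repair fails, and in fact the obstacle is not closable. A near-minimizing path in $N$ from $f(x)$ to $f(y)$ need not lie in $fM$ at all, so there are no evenly covered neighborhoods along it and nothing to lift; lifting only ever compares $d_M$ with the intrinsic distance $d_{fM}$, which is the inequality you already had. (The coherence issue you flag as delicate is actually automatic: $f$ is globally injective, so the lift of a path in $fM$ is simply its image under $f^{-1}$.) Concretely, let $M \subset \R^2$ be a thin neighborhood of the arc $\{e^{i\theta} : 0 < \theta < 2\pi - \eps\}$ with the induced metric and let $f$ be the inclusion into $N = \R^2$: this is infinitesimally $1$-bilipschitz, yet the two tips are at ambient distance about $\eps$ and intrinsic distance about $2\pi$, so $f$ is not $L$-bilipschitz for any fixed $L$ once $\eps$ is small. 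What the composition-with-$f^{-1}$ argument genuinely proves is $L^{-1} d_M(x,y) \leq d_{fM}(f(x),f(y))$; the paper's sketch silently identifies $d_{fM}$ with $d_N$ at this step. This is harmless for the theorem's actual use (Theorem \ref{thm:bilipschitz_charts}, where $fM$ is a convex Euclidean ball and the two distances agree), but as a standalone claim the lower bilipschitz bound must be read with respect to the intrinsic distance of the image, and no proof of the literal statement can exist.
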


Before starting the proof proper, we sketch the proof of a lemma on the existence of infinitesimally bilipschitz charts with a small constant $L$.

\begin{lemma}\label{lem:infi_bilipschitz_charts}
	Let $M$ be a smooth Riemannian $n$-manifold, and let $x \in M$. Then, for every $L > 1$, there exists a domain $U \subset M$ containing $x$ and a $C^\infty$-smooth infinitesimally $L$-bilipschitz chart $\phi \colon U \to \R^n$, where $\phi U$ is an Euclidean ball centered around $\phi(x)$.
\end{lemma}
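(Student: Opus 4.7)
The natural strategy is to combine the exponential map at $x$ with an orthonormal frame, and then invert. Choose a linear isometry $\Lambda \colon \R^n \to T_x M$; such a map exists because $(T_x M, \ip{\cdot}{\cdot}_x)$ is an $n$-dimensional inner product space, so sending the standard basis of $\R^n$ to any orthonormal basis of $T_x M$ works. Now take $r_0 > 0$ small enough that $\exp_x$ is a smooth map on the open ball $B = B_{T_x M}(0, r_0)$, and define
\[
\psi := \exp_x \circ \Lambda \colon B(0, r_0) \to M.
\]
Since $\Lambda$ is a Euclidean isometry (viewing $T_0 \R^n$ with the standard inner product and $T_0 T_x M$ with its natural metric coming from $\ip{\cdot}{\cdot}_x$), and since $D\exp_x(0)$ is an isometry by the standard fact recalled at the end of Section \ref{sect:diff_geo_prelims}, the derivative $D\psi(0) = D\exp_x(0) \circ \Lambda$ is a composition of linear isometries and hence itself an isometry. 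In particular $\abs{D\psi(0)} = l(D\psi(0)) = 1$.

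Next I would use smoothness and continuity to promote this infinitesimal equality at $0$ to an almost-isometry estimate on a small ball. Since $\psi$ is smooth, both $v \mapsto \abs{D\psi(v)}$ and $v \mapsto l(D\psi(v))$ are continuous at $v = 0$ (the second because the set of invertible maps is open and $L \mapsto l(L) = \abs{L^{-1}}^{-1}$ is continuous there). So, given $L > 1$, there exists $r \in (0, r_0)$ such that
\[
L^{-1} \leq l(D\psi(v)) \leq \abs{D\psi(v)} \leq L \quad \text{for all } v \in B(0, r).
\]
In particular $D\psi(v)$ is invertible on $B(0, r)$, so by the inverse function theorem applied at $0$ (and shrinking $r$ if necessary to enforce injectivity on a Euclidean ball around $0$), $\psi$ is a $C^\infty$-diffeomorphism from $B(0, r)$ onto an open set $U = \psi(B(0, r)) \subset M$ containing $x$.

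Finally I would set $\phi := \psi^{-1} \colon U \to \R^n$. Then $\phi$ is a smooth chart, $\phi(x) = \Lambda^{-1}(\exp_x^{-1}(x)) = 0$, and $\phi U = B(0, r)$ is a Euclidean ball centered at $\phi(x)$ as required. For the bilipschitz constant: for any $y \in U$, writing $v = \phi(y)$, we have $D\phi(y) = D\psi(v)^{-1}$, and therefore
\[
\abs{D\phi(y)} = l(D\psi(v))^{-1} \leq L, \qquad l(D\phi(y)) = \abs{D\psi(v)}^{-1} \geq L^{-1},
\]
so $\phi$ is infinitesimally $L$-bilipschitz.

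There is no real hard step here; the whole argument is essentially a continuity unfolding of the fact that $D\exp_x(0)$ is an isometry. The most subtle points are bookkeeping: making sure that the Euclidean metric on $\R^n$ matches the induced metric on $T_x M$ through $\Lambda$ so that \emph{both} bounds (operator norm and $l$-operator) can be controlled simultaneously, and choosing $r$ small enough to simultaneously achieve the $L$-bilipschitz bound, to apply the inverse function theorem, and to keep the image a Euclidean ball — all three constraints only force $r$ to be small, so they cause no friction.
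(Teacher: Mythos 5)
Your proof is correct and follows essentially the same route as the paper's sketch: use that $D\exp_x(0)$ is an isometry, shrink the ball by continuity to make $\exp_x$ infinitesimally $L$-bilipschitz, precompose with a linear isometry $\R^n \to T_x M$, and invert. Your version is simply more detailed, in particular in handling injectivity via the inverse function theorem and in checking both the operator-norm and $l$-operator bounds for the inverse.
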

\begin{proof}[Sketch of proof]
	For the exponential map $\exp_x \colon B_{T_x M}(0, r) \to M$, we have that $D \exp_x(0)$ is isometric. Therefore, $\abs{D \exp_x(0)} = l(D \exp_x(0)) = 1$. Since $\exp_x$ is smooth in a neighborhood of $0$, we may by selecting a sufficiently small $r$ assume that $\exp_x$ is infinitesimally $L$-bilipschitz in $B_{T_x M}(0, r)$. We may thus precompose $\exp_x$ with a linear isometry $I \colon \R^n \to T_x M$ and take the inverse of the resulting map, yielding the desired chart $\phi$.
\end{proof}

\begin{proof}[Sketch of proof of Theorem \ref{thm:C1_lip_characterization}]
	There are four implications to check.
	
	\emph{Infinitesimally $L$-Lipschitz implies $L$-Lipschitz:} If $f$ satisfies \eqref{eq:lipschitz_D}, then for any $C^1$-smooth path $\gamma$, we may estimate $\abs{(f \circ \gamma)'(t)} = \abs{Df(\gamma(t)) \gamma'(t)} \leq \abs{Df(\gamma(t))}\abs{\gamma'(t)} \leq L \abs{\gamma'(t)}$. It then follows that $f$ is $L$-Lipschitz in the Riemannian distance.
	
	\emph{Infinitesimally $L$-bilipschitz implies $L$-bilipschitz:} By the previous part, $f$ is $L$-Lipschitz. Since $l(Df(x)) > 0$ for every $x \in M$, and since $M$ and $N$ are of the same dimension, we have by the inverse function theorem that $f$ has a local $C^1$ inverse. Since $f$ is assumed injective, we therefore have a global $C^1$ inverse $f^{-1} \colon fM \to M$ for which $D(f^{-1})(f(x)) = (Df(x))^{-1}$. Now we have $\abs{D(f^{-1})(f(x))} = l(Df(x))^{-1} \leq L$ for every $x \in M$. By the previous part, $f^{-1}$ is $L$-Lipschitz, and consequently $f$ is $L$-bilipschitz.
	
	\emph{$L$-Lipschitz implies infinitesimally $L$-Lipschitz:} Let $x \in M$. We use Lemma \ref{lem:infi_bilipschitz_charts} to select infinitesimally $(1+\eps)$-bilipschitz smooth charts $\phi \colon U \to B_n(0, r)$ and $\psi \colon V \to B_m(0, s)$ such that $x \in U$ and $fU \subset V$. By the previous two parts, the charts are $L$-bilipschitz when the Euclidean balls are equipped with the Riemannian distance. However, since the balls are convex, their Riemannian distance coincides with the Euclidean metric. Hence, $g = \psi \circ f \circ \phi^{-1}$ is a $C^1$-smooth $(1+\eps)^2 L$-Lipschitz map between Euclidean balls.
	
	We now obtain for every unit vector $v$ of $\R^n$ the directional derivative estimate $\abs{\partial_v g(0)} = \lim_{t \to 0^+} \abs{g(tv) - g(0)}/t \leq (1+\eps)^2 L$. Consequently, $\abs{Dg(0)} \leq (1+\eps)^2 L$. We then further conclude by the infinitesimal $(1+\eps)$-bilipschitzness of $\phi$ and $\psi$ that $\abs{Df(x)} \leq l(D\psi(f(x)))^{-1} \abs{Dg(0)} \abs{D\phi(x)} \leq (1+\eps)^4 L$. Letting $\eps \to 0$ yields the result.
	
	\emph{$L$-bilipschitz implies infinitesimally $L$-bilipschitz:} As a bilipschitz map $f$ is injective, and we've already shown that $f$ is infinitesimally $L$-Lipschitz. Since $M$ and $N$ have the same dimension, $f$ is open by invariance of domain. Hence, $f^{-1} \colon fM \to M$ is an $L$-Lipschitz map between connected manifolds, and therefore by the previous part, $f^{-1}$ is infinitesimally $L$-lipschitz. Now, $l(Df(x)) = \abs{D(f^{-1}) (f(x))}^{-1} \geq L^{-1}$ for every $x$, concluding the proof.
\end{proof}

From now on, we will stick to the terms Lipschitz and bilipschitz, even when we're using the infinitesimal properties provided by Theorem \ref{thm:C1_lip_characterization}. We also record the following version of Lemma \ref{lem:infi_bilipschitz_charts}, which will be used repeatedly throughout the text.

\begin{thm}\label{thm:bilipschitz_charts}
	Let $M$ be a smooth Riemannian $n$-manifold, and let $x \in M$. Then, for every $L > 1$, there exists a domain $U \subset M$ containing $x$ and a $C^\infty$-smooth $L$-bilipschitz chart $\phi \colon U \to \R^n$, where $\phi U$ is an Euclidean ball centered around $\phi(x)$. Moreover, if $M$ is oriented, we may select the chart $\phi$ to be positively oriented.
\end{thm}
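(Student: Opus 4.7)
The plan is to refine the construction of Lemma \ref{lem:infi_bilipschitz_charts} so that the chart is not merely infinitesimally bilipschitz but truly $L$-bilipschitz with respect to the Riemannian distance of $M$ itself. By Theorem \ref{thm:C1_lip_characterization}, an infinitesimally $(1+\eps)$-bilipschitz chart $\phi \colon U \to \phi U$ is $(1+\eps)$-bilipschitz between the \emph{intrinsic} Riemannian distances $d_U$ and $d_{\phi U}$. The target $\phi U$ will be chosen as a Euclidean ball, so its intrinsic Riemannian distance coincides with the Euclidean distance (balls in $\R^n$ are convex). The only gap is then that $d_U$, the distance on $U$ viewed as an open submanifold, may exceed the restriction $d_M|_{U \times U}$; one needs to arrange $d_U = d_M|_{U \times U}$.

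To close this gap I would fix $L > 1$, pick $\eps > 0$ with $1 + \eps \leq L$, and invoke the standard Riemannian geometry fact that for every $x \in M$ there exists $r_0 > 0$ such that the normal ball $\exp_x(B_{T_x M}(0, r))$ is a \emph{convex normal neighborhood} for every $r \in (0, r_0]$: any two of its points are joined by a unique minimizing geodesic of $M$ that stays inside this ball. I would then shrink $r$ further if necessary so that $\exp_x$ is additionally smooth and infinitesimally $(1+\eps)$-bilipschitz on $B_{T_xM}(0, r)$, exactly as in the sketch of Lemma \ref{lem:infi_bilipschitz_charts}. Convex normality immediately gives $d_U(y,z) = d_M(y,z)$ for all $y, z \in U$, where $U = \exp_x(B_{T_x M}(0, r))$.

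Letting $I \colon \R^n \to T_x M$ be a linear isometry (orientation preserving when $M$ is oriented), I would set $\phi = (\exp_x \circ I)^{-1} \colon U \to I^{-1}(B_{T_x M}(0, r))$. By construction $\phi$ is smooth, infinitesimally $(1+\eps)$-bilipschitz, and $\phi U$ is a Euclidean ball centered at $\phi(x) = 0$. Applying Theorem \ref{thm:C1_lip_characterization} to $\phi$ as a map between equal-dimensional connected Riemannian manifolds yields
\[
	(1+\eps)^{-1} d_U(y,z) \leq \abs{\phi(y) - \phi(z)} \leq (1+\eps) d_U(y,z)
\]
for all $y, z \in U$. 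Combining this with $d_U = d_M|_{U \times U}$ and $1 + \eps \leq L$ gives the required $L$-bilipschitzness.

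For the orientation claim, $D \exp_x(0)$ is the identity under the canonical identification $T_0 T_x M \cong T_x M$ and hence orientation preserving, so if $I$ is chosen orientation preserving then $\phi^{-1} = \exp_x \circ I$ is orientation preserving, making $\phi$ a positively oriented chart. The only nontrivial ingredient beyond Lemma \ref{lem:infi_bilipschitz_charts} and Theorem \ref{thm:C1_lip_characterization} is the existence of convex normal neighborhoods, which I expect to be the main technical hurdle; the remainder is a careful assembly of already-established results.
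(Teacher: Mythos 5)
Your proof is correct and follows the same route as the paper: Lemma \ref{lem:infi_bilipschitz_charts} produces an infinitesimally $(1+\eps)$-bilipschitz exponential chart, Theorem \ref{thm:C1_lip_characterization} upgrades this to a metric bilipschitz bound, and convexity of the Euclidean target ball identifies its intrinsic distance with the Euclidean one. The one place you go beyond the paper's very terse proof is in distinguishing the intrinsic distance $d_U$ from the restriction $d_M|_{U \times U}$ and closing that gap with convex normal neighborhoods; the paper does not address this point, so your extra step is a legitimate refinement rather than a detour (note that only the upper Lipschitz bound for $\phi$ is actually at stake, since $d_M \leq d_U$ makes the lower bound automatic). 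Your handling of orientation, choosing the isometry $I$ to be orientation preserving, is equivalent to the paper's device of post-composing with a reflection when needed.
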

\begin{proof}
	The bilipschitz chart part follows from Lemma \ref{lem:infi_bilipschitz_charts} and Theorem \ref{thm:C1_lip_characterization}, along with the fact that the Euclidean and Riemannian distances coincide on Euclidean balls. If $M$ is oriented, the resulting chart is either orientation preserving or orientation reversing; in the latter case we may post-compose the chart with a reflection to obtain an orientation preserving one.
\end{proof}

Finally, we discuss how $\abs{\cdot}$ and $l(\cdot)$ give estimates for the Jacobian of a map. This is particularly useful when dealing with bilipschitz maps, but is also used later on in connection with quasiregular maps.

Namely, let $M$ and $N$ be Riemannian $n$-mani\-folds. Let $x \in M$, let $y \in N$, and let $L \colon T_x M \to T_y N$ be a linear map. If we fix orthonormal ordered bases $(e_i) \subset T_x M$ and $(e_i') \subset T_y N$, we may compute $\det L$ with respect to $(e_i)$ and $(e_i')$. However, since no orientation is taken into account, the result is dependent on the choices of $(e_i)$ and $(e_i')$. Nevertheless, the choice of basis only affects the sign of the Jacobian. Hence, the quantity $\abs{\det L}$ obtained is independent on the choice of $(e_i)$ and $(e_i')$.

Recall that, by the singular value decomposition theorem of matrices, there exist orthonormal bases $(e_i)$ and $(e_i')$ of $T_x M$ and $T_{f(x)} N$, respectively, such that for every index $i \in \{1, \dots, n\}$ we have $L e_i = \sigma_i e_i'$ for some $\sigma_i \in [0, \infty)$. We may also assume that $\sigma_1 \geq \dots \geq \sigma_n$. It follows that $\abs{L} = \sigma_1$, $l(L) = \sigma_n$, and $\abs{\det L} = \sigma_1 \cdots \sigma_n$. 

Hence, we obtain the estimate
\[
	l(L)^n \leq \abs{\det L} \leq \abs{L}^n.
\]
In particular, if $f \in C^1(M, N)$, then
\begin{equation}\label{eq:l_jacob_opnorm_ineq}
	l(Df(x))^n \leq \abs{J_f(x)} \leq \abs{Df(x)}^n
\end{equation}
for every $x \in M$. If $M$ and $N$ are connected and $f$ is Lipschitz, we therefore have that 
\begin{equation}\label{eq:lipschitz_J}
\abs{J_f(x)} \leq L^n.
\end{equation}
for every $x \in M$. Moreover, if $f$ is also $L$-bilipschitz, then also
\begin{equation}\label{eq:bilipschitz_J}
\abs{J_f(x)} \geq L^{-n}.
\end{equation}
for every $x \in M$.

\subsection{The Lusin conditions}

Let $f \colon U \to V$ be any function between domains $U, V \subset \R^n$. We say that $f$ satisfies the \emph{Lusin $(N)$ condition}, if $fE \subset V$ is a set of (Lebesgue) measure zero whenever $E \subset U$ is a set of measure zero. Similarly, we say that $f$ satisfies the \emph{Lusin $(N^{-1})$ condition} if $f^{-1}F \subset U$ is a set of measure zero whenever $F \subset V$ is a set of measure zero.

Next, we show a major property of the Lusin conditions: that they imply the preservation of measurable sets. 

\begin{lemma}\label{lem:lusin_preserves_measurable}
	Suppose that $f \colon U \to V$ is continuous and satisfies the Lusin $(N)$ condition. Then $fE$ is measurable for every measurable $E \subset U$.
	
	Similarly, suppose that $f \colon U \to V$ is continuous and satisfies the Lusin $(N^{-1})$ condition. Then $f^{-1}F$ is measurable for every measurable $F \subset V$.
\end{lemma}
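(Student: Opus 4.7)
The plan is to use the standard decomposition of Lebesgue measurable sets into a Borel part plus a null part, and then use continuity to control the Borel part and the appropriate Lusin condition to control the null part.

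For the first claim, I would start by recalling that every Lebesgue measurable set $E \subset U$ can be written as $E = A \cup Z$, where $A$ is an $F_\sigma$ set and $Z$ is a null set. This follows from the inner regularity of Lebesgue measure applied to $E \cap B(0,k)$ for each $k \in \N$ and a countable union. Since $U$ is $\sigma$-compact, we can further write $A = \bigcup_{k \in \N} K_k$ where each $K_k$ is compact. Then
\[
	fE = \bigcup_{k \in \N} f K_k \cup fZ,
\]
where each $fK_k$ is compact (hence closed, hence Borel) by continuity of $f$, and $fZ$ is measurable (in fact null, hence measurable by the completeness of Lebesgue measure) by the Lusin $(N)$ condition. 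Therefore $fE$ is measurable as a countable union of measurable sets.

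For the second claim, I would again decompose: every measurable $F \subset V$ can be written as $F = B \cup W$ where $B$ is Borel (for instance $F_\sigma$) and $W$ is null. Since $f$ is continuous, $f^{-1}B$ is a Borel set. By the Lusin $(N^{-1})$ condition, $f^{-1}W$ is null, hence measurable. Thus $f^{-1}F = f^{-1}B \cup f^{-1}W$ is measurable.

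There is no significant obstacle; the argument is essentially bookkeeping once one has the measurable-equals-Borel-plus-null decomposition and the observation that continuous images of compact sets are closed. The only mildly delicate point is ensuring the $F_\sigma$ decomposition produces a countable union of \emph{compact} sets (so that their images under the merely continuous map $f$ are automatically closed), which is handled by intersecting with a $\sigma$-compact exhaustion of $U$.
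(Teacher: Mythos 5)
Your proof is correct and follows essentially the same route as the paper: decompose the measurable set into an $F_\sigma$ part (refined to a countable union of compacts via $\sigma$-compactness of the domain) plus a null part, handle the former by continuity and the latter by the relevant Lusin condition. No gaps.
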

\begin{proof}
	For the first claim, let $E \subset U$ be measurable. Then $E = E_0 \cup E_1$, where $E_1$ is a $F_\sigma$ set, that is, a countable union of closed sets, and $E_0$ has measure zero. Since $f$ is continuous, it maps compact sets to compact sets. Therefore, since every closed subset of $U$ is a countable union of compact sets, $fE_1$ is $F_\sigma$, and therefore measurable. Since $f$ satisfies the Lusin $(N)$ condition, $fE_0$ is measurable. Consequently, $fE = fE_1 \cup fE_0$ is measurable.
	
	For the second claim, we proceed similarly. Let $F \subset V$ be measurable, in which case $F = F_0 \cup F_1$ where $F_1$ is a $F_\sigma$-set and $F_0$ is of measure zero. Since $f$ is continuous, $f^{-1} C$ is closed for every closed $C \subset V$, and hence $f^{-1} F_1$ is $F_\sigma$. Moreover, by the Lusin $(N^{-1})$-condition, $f^{-1} F_0$ is measurable, and the claim therefore follows.
\end{proof}

We observe that Lipschitz maps between Euclidean domains satisfy the Lusin $(N)$ condition. Indeed, if $f \colon U \to V$ is $L$-lipschitz, then for every $p \in (0, \infty)$ and every set $E \subset U$, we have $\hausd^p(fE) \leq L^p \hausd^p(E)$, where $\hausd^p$ is the $p$-dimensional Hausdorff outer measure. Since $\hausd^n$ is equivalent with the Lebesgue (outer) measure $m_n$ on Euclidean domains, the Lusin $(N)$ condition for Lipschitz maps follows.

From this observation, it also follows that locally Lipschitz maps satisfy the Lusin $(N)$ condition. Consequently, we obtain that $C^1$-maps satisfy the Lusin $(N)$ condition, as they are locally Lipschitz. In particular, if $f \colon U \to V$ is a $C^1$ diffeomorphism, then it satisfies both the Lusin $(N)$ and $(N^{-1})$ conditions, and consequently by Lemma \ref{lem:lusin_preserves_measurable} preserves measurable sets.

\subsection{Measurable sets on Riemannian manifolds}

By the previous discussion, we may define Lebesgue measurable sets on a smooth manifold as follows.

\begin{defn}\label{def:manifold_meas_set}
	Let $M$ be a smooth differential $n$-manifold without boundary, and let $E \subset M$ be a set. Since Lebesgue measurable sets are preserved in diffeomorphisms, the following two conditions are equivalent.
	\begin{itemize}
		\item For every chart $\phi \colon U \to \R^n$ on $M$, the set $\phi (E \cap U) \subset \R^n$ is Lebesgue measurable.
		\item For every $x \in M$, there exists a chart $\phi \colon U \to \R^n$ on $M$ for which $x \in U$ and $\phi (E \cap U) \subset \R^n$ is Lebesgue measurable.
	\end{itemize}
	Hence, if either of these conditions hold, we say that $E \subset M$ is (Lebesgue) \emph{measurable}.
\end{defn}

With the definition of measurable sets, we also obtain a definition for measurable functions with a manifold domain. Recall that, if $U \subset \R^n$ is a domain and $X$ is a topological space, then a function $f \colon U \to X$ is called (Lebesgue) measurable if $f^{-1} U$ is (Lebesgue) measurable for every open $U \subset X$.

\begin{defn}\label{def:manifold_meas_function}
	Let $M$ be a smooth differential $n$-manifold without boundary, let $X$ be a topological space, and let $f \colon M \to X$ be a function. It follows from Definition \ref{def:manifold_meas_set} that the following conditions are equivalent.
	\begin{itemize}
		\item For every open set $U \subset X$, the set $f^{-1}U \subset M$ is measurable.
		\item For every chart $\phi \colon U \to \R^n$ on $M$, the map $f \circ \phi^{-1} \colon \phi U \to X$ is measurable.
		\item For every $x \in M$, there exists a chart $\phi \colon U \to \R^n$ on $M$ for which $x \in U$ and $f \circ \phi^{-1} \colon \phi U \to X$ is measurable.
	\end{itemize}
	If any of these conditions hold, we say that $f$ is (Lebesgue) \emph{measurable}.
\end{defn}

Moreover, we also obtain a several equivalent definitions for measurable differential forms. We leave the verification that these are equivalent to the reader: it essentially follows from the fact that $\wedge^k T^* M$ has an atlas consisting of pull-back maps $(\phi^{-1})^* \colon \wedge^k T^* U \to \wedge^k T^* \phi(U) \cong \phi(U) \times \R^{\binom{n}{k}}$ where $\phi \colon U \to \R^n$ is a chart of $M$.

\begin{defn}\label{def:manifold_meas_form}
	Let $M$ be a smooth differential $n$-manifold without boundary, and let $\omega$ be a differential $k$-form on $M$, $k \in \{0, \dots, n\}$. Then $\omega$ is a section $M \to \wedge^k T^* M$. It then follows that the following conditions are equivalent.
	\begin{itemize}
		\item The map $\omega \colon M \to \wedge^k T^* M$ is measurable.
		\item For every chart $\phi \colon U \to \R^n$ on $M$, the pull-back $(\phi^{-1})^* \omega$ has Lebesgue measurable coordinate functions when written in the standard basis of $\wedge^k \R^n$.
		\item For every $x \in M$, there exists a chart $\phi \colon U \to \R^n$ on $M$ for which $x \in U$, and the pull-back $(\phi^{-1})^* \omega$ has Lebesgue measurable coordinate functions when written in the standard basis of $\wedge^k \R^n$.
	\end{itemize}
	If any of these conditions hold, we say that the $k$-form $\omega$ is (Lebesgue) \emph{measurable}. We denote the vector space of all measurable $k$-forms on $M$ by $\Gamma(\wedge^k M)$.
\end{defn}

Before moving on to discuss integration, we point out one more characterization of measurable differential forms which turns out particularly useful. Namely, note that a differential $k$-form $\omega \colon M \to \wedge^k T^* M$ also induces a map $\omega \colon \wedge^k TM \to \R$.

\begin{lemma}\label{lem:two_measurability_defs_for_forms}
	A differential form $\omega \colon M \to \wedge^k T^* M$ is measurable if and only if it is measurable as a map $\omega \colon \wedge^k TM \to \R$.
\end{lemma}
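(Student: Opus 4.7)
The plan is to reduce the statement to a local Euclidean check using charts. Fix a chart $\phi \colon U \to V \subset \R^n$ on $M$. It induces the coordinate frames $\{\partial_{x^I}\}_{|I|=k}$ on $\wedge^k TU$ and $\{dx^I\}_{|I|=k}$ on $\wedge^k T^*U$, and thus a chart $\psi \colon \wedge^k TU \to V \times \R^m$ on the smooth manifold $\wedge^k TM$, where $m = \binom{n}{k}$. Writing $\omega = \sum_I a_I\, dx^I$ on $U$, measurability of $\omega$ as a section is, by Definition \ref{def:manifold_meas_form}, equivalent to each $b_I = a_I \circ \phi^{-1}$ being Lebesgue measurable on $V$. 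On the other hand, $\widetilde\omega \circ \psi^{-1} \colon V \times \R^m \to \R$ is given by the formula $F(y,\xi) = \sum_I b_I(y)\, \xi_I$, and by Definition \ref{def:manifold_meas_function} the measurability of $\widetilde\omega$ as a function $\wedge^k TM \to \R$ is equivalent to $F$ being Lebesgue measurable on $V \times \R^m$. Hence the lemma reduces to the equivalence: each $b_I$ is measurable on $V$ if and only if $F$ is measurable on $V \times \R^m$.

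The forward implication is straightforward. If each $b_I$ is measurable on $V$, then $(y,\xi) \mapsto b_I(y)$ is measurable on $V \times \R^m$ (the preimage of a measurable $E \subset V$ under the projection to $V$ is simply $E \times \R^m$, which is Lebesgue measurable), and $(y,\xi) \mapsto \xi_I$ is continuous. Products and finite sums of measurable real-valued functions are measurable, so $F$ is measurable.

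For the reverse direction, I would use the slice theorem of Fubini–Tonelli. Assume $F$ is Lebesgue measurable on $V \times \R^m$. Then for almost every $\xi \in \R^m$, the slice $F(\cdot, \xi) \colon V \to \R$ is Lebesgue measurable. Let $S \subset \R^m$ be this full-measure set of ``good'' slices. The subset of tuples $(\xi^{(1)}, \dots, \xi^{(m)}) \in S^m$ for which the matrix $(\xi^{(j)}_I)_{j,I}$ is invertible is the intersection of a set of full measure with an open dense set in $(\R^m)^m$, and is hence nonempty. Choose such a tuple. Each function $y \mapsto F(y, \xi^{(j)}) = \sum_I \xi^{(j)}_I b_I(y)$ is Lebesgue measurable on $V$, and inverting the matrix $(\xi^{(j)}_I)$ expresses each $b_I$ as a fixed finite linear combination of these measurable functions. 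Hence every $b_I$ is Lebesgue measurable on $V$, completing the equivalence.

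The main obstacle is precisely the reverse direction: one cannot simply substitute $\xi = e_J$ into $F$ and conclude that $b_J(y) = F(y, e_J)$ is measurable in $y$, because Lebesgue measurable functions on a product space need not have measurable slices at every fixed second coordinate — and the composition of a continuous map (the constant section $y \mapsto (y, e_J)$) with a Lebesgue measurable function is not in general Lebesgue measurable. The Fubini slice theorem combined with the linear-algebraic averaging over an invertible family of ``good'' $\xi$-values circumvents this subtlety.
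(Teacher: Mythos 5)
Your proof is correct and follows essentially the same route as the paper: reduce to the Euclidean/chart picture, observe the easy direction via products and sums of measurable functions, and handle the converse with the Fubini--Tonelli slice theorem plus the fact that a full-measure set of ``good'' slice parameters contains a basis (equivalently, an invertible matrix of parameters) so that the coefficient functions can be recovered as fixed linear combinations of measurable slices. Your explicit remark about why one cannot simply substitute $\xi = e_J$ is exactly the subtlety the paper's argument is designed to avoid.
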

\begin{proof}[Sketch of proof]
	Consider the case where $M$ is a domain in $\R^n$. In this case, $\wedge^k TM \cong M \times \R^{\binom{n}{k}}$ canonically, and $\omega$ can be written as $\omega = \sum_I \omega_I \eps_I$, where $\eps_I$ are standard basis elements of $k$-forms in $\R^n$. The ``only if'' -direction follows since the map $M \times \R^{\binom{n}{k}} \to \R$ induced by $\omega$ is given by $(x, y) \mapsto \sum_I \omega_I(x) y_I$; this is clearly measurable if $\omega_I \colon M \to \R$ are measurable. 
	
	For the ``if'' -direction, we use the fact that for every measurable $g \colon M \times \R^k \to \R$, the slice $g(\cdot, y) \colon M \to \R$ is measurable for a.e.\ $y \in \R^k$. Note that this fact is usually considered a part of the Fubini--Tonelli theorem. Hence, $x \mapsto \sum_I \omega_I(x) y_I$ is measurable for almost every $y \in \R^{\binom{n}{k}}$. Since $\R^{\binom{n}{k}}$ is spanned by its every full measure subset, taking linear combinations of the maps $x \mapsto \sum_I \omega_I(x) y_I$ yields that every $\omega_I$ is measurable, and hence $\omega$ is measurable.
	
	For the general case, the result now holds for all pull-backs $(\phi^{-1})^* \omega$ where $\phi \colon U \to \R^n$ is a chart. Since $\wedge^k D\phi^{1} \colon \wedge^k T\phi U \to \wedge^k TU$ is a $C^1$ diffeomorphism, it satisfies both Lusin conditions, and hence $\omega\vert_U \colon \wedge^k TU \to \R$ is measurable if and only if $(\phi^{-1})^* \omega = \omega\vert_U \circ \wedge^k D\phi^{1}$ is measurable. The claim then follows by considering a cover of $M$ with charts.
\end{proof}

\subsection{Integration and $L^p$ spaces}

Suppose then that $M$ is a Riemannian $n$-manifold, and let $\phi \colon U \to \R^n$ be a smooth chart on $M$. Without the assumption that $M$ is orientable, we cannot determine the Jacobian $J_{\phi^{-1}}$ of $\phi^{-1}$. However, we may still define the absolute value of the Jacobian $\abs{J_{\phi^{-1}}} \in C(\phi U, [0, \infty))$. Indeed, the range $U$ of $\phi^{-1}$ is an orientable manifold, and therefore there exist exactly two volume forms on it corresponding to the Riemannian metric $\ip{\cdot}{\cdot}$, each yielding the same Jacobian with a different sign.

Now, let $f \colon M \to [0, \infty]$ be a non-negative measurable function. By using the chart $\phi$, we may define the Lebesgue integral of $f$ over $U$ by
\[
	\int_U f \dd m_n = \int_{\phi U} (f \circ \phi^{-1}) \abs{J_{\phi^{-1}}} \dd m_n.
\]
By the Euclidean change of variables formula, the expression $\int_U f \dd m_n$ is in fact independent on the choice of chart $\phi$. Now, a unique integral
\[
	\int_M f \dd m_n \in [0, \infty]
\]
is obtained by using a locally finite partition of unity. Hence, we have obtained a Lebesgue measure $m_n$ on our Riemannian manifold $M$; we note that this measure is also known as the \emph{Riemannian measure} of $M$.

Consequently, we obtain Banach spaces $L^p(M)$ of $L^p$-integrable measurable real functions on $M$, where $1 \leq p \leq \infty$. Note that two measurable functions $f, g \colon M \to \R$ are considered to represent the same element in $L^p(M)$ if they differ only in a set of $m_n$-measure zero. We denote the $L^p$-norm of a measurable function $f \colon M \to \R$ by $\norm{f}_p$. We also obtain the spaces $L^p(M, \R^m)$ of functions with $L^p$-integrable coordinate functions.

More specifically to the manifold case, the Riemannian metric also allows us to define $L^p$-spaces of differential forms. We consider two $k$-forms $\omega, \omega' \in \Gamma(\wedge^k M)$ equivalent if, when considered as as maps $M \to \wedge^k T^*M$, they are equal outside a set of $m_n$-measure zero. The space $L^p(\wedge^k M)$ then consists of equivalence classes of $\omega \in \Gamma(\wedge^k M)$ satisfying $\abs{\omega} \in L^p(M)$, where $\abs{\omega}$ denotes the measurable function $M \to \R$ given by $x \mapsto \abs{\omega_x}_x$ for $x \in M$. The $L^p$-norm of a differential $k$-form $\omega \in \Gamma(\wedge^k M)$ is hence given by 
\[
	\norm{\omega}_p = \left( \int_M \abs{\omega}^p \dd m_n \right)^\frac{1}{p}
\]
for $1 \leq p < \infty$, and by
\[
	\norm{\omega}_\infty = \esssup_{x \in M} \abs{\omega_x}_x.
\]

Integration of functions with respect to $m_n$ can be extended to functions $f \in L^1(M)$ by considering positive and negative parts. Moreover, if $M$ is oriented, then every $\omega \in \Gamma(\wedge^n M)$ is of the form $\omega = g \vol_M$ for some measurable real function $g$. In particular, $\abs{\omega} = \abs{g}$, and therefore $\norm{\omega}_1 = \norm{g}_1$. Hence, if $\omega \in L^1(\wedge^n M)$, then we may define
\[
	\int_M \omega = \int_M g \dd m_n.
\]
In conclusion, we may integrate $L^1$-integrable measurable $n$-forms on an oriented $n$-manifold.


\section{First order Euclidean Sobolev spaces}\label{sect:eucl_sobolev_prelims}

Before starting to discuss first order Sobolev spaces on manifolds, we first review the necessary parts of the theory in Euclidean spaces. For most of the basic theory we only state the results, referring the reader to one of the many introductory texts on the topic for further details; e.g.\ \cite{Adams-Fournier_Sobolev}. However, we will examine chain rules with $C^1$-differentiable Lipschitz maps in detail, due to their importance to us. 

\subsection{Sobolev maps in $\R^n$}

Let $U \subset \R^n$ be open set, and let $f \in L^1_\loc(U)$. Then for $i=1, \ldots, n$, a measurable function $\partial_i f \in L^1_\loc(U)$ is the \emph{$i$-th weak partial derivative of $f$} if, for every $\phi \in C^\infty_0(U)$ we have
\[
	\int_U (\partial_i f)\cdot \phi \dd m_n = -\int_U  f \cdot (\partial_i \phi) \dd m_n.
\]
Suppose then that $f \in L^1_\loc(U, \R^m)$ is such that all weak partial derivatives $\partial_i f_j$ of $f$ exist, where $f_j$ stand for the coordinate functions of $f$. Then $f$ has a \emph{weak derivative} $\nabla f$, which is the measurable matrix-valued function
\[
	\nabla f(x) = \begin{bmatrix}
		\partial_1 f_1(x) & \partial_2 f_1(x) & \dots & \partial_n f_1(x)\\
		\partial_1 f_2(x) & \partial_2 f_2(x) & \dots & \partial_n f_2(x)\\
		\vdots & \vdots & \ddots & \vdots\\
		\partial_1 f_m(x) & \partial_2 f_m(x) & \dots & \partial_n f_m(x)
	\end{bmatrix}.
\]
Weak derivatives are unique outside a set of measure zero.

For the purpose of later generalizing to the setting of manifolds, we define several alternate interpretations of the weak derivative with a manifold flair. First, recall that for every $x \in \R^k$, there exists a canonical linear isomorphism $\iota_{k, x} \colon T_{x} \R^k \to \R^k$ which for every $j \in \{1, \dots, k\}$ maps $\partial_j \in T_x \R^k$ to $e_j \in \R^k$. These maps $\iota_{k, x}$ can then be combined into a single map $\iota_{k} \colon T\R^k \to \R^k$.

Then, given a weakly differentiable $f \colon U \to \R^n$ where $U \subset \R^m$, we define the \emph{weak tangent map} $Df \colon TU \to T\R^m$ of $f$, consisting of maps $Df(x) \colon T_x U \to T_{f(x)}\R^m$, by $Df(x) = \iota_{n, f(x)}^{-1} \circ \nabla f(x) \circ \iota_{m, x}$. Moreover, we define the \emph{weak differential} $df \colon TU \to \R^n$ of $f$, consisting of maps $df_x \colon T_x U \to \R^n$, by $df_x = \nabla f(x) \circ \iota_{m, x}$. The following commutative diagram sums up our definitions.
\[\begin{tikzcd}[sep=large]
	T_x\R^m \ar[r, "Df(x)"] \ar[dr, "df_x"]  \ar[d, "\cong"]
	& T_{f(x)}\R^n \ar[d, "\cong"]\\
	\R^m \ar[r, "\nabla f(x)"]
	& \R^n 
\end{tikzcd}\]

The maps $Df(x)$ are characterized by the formula
\[
	Df(x)(\partial_i)_x = \sum_{j=1}^m \left(\partial_i f_j(x) \right) (\partial_j)_{f(x)}.
\]
for almost every $x \in U$. We use the map $df$ mainly in the case $n = 1$, in which case $df$ is a measurable differential 1-form given by
\[
	df = \sum_{i=1}^n (\partial_i f) dx^i.
\]
We also state alternate definitions for $\nabla f$ and $df$ in the following lemma, the proof of which is a straightforward verification left to the reader.

\begin{lemma}\label{lem:weak_derivative_characterizations}
	Suppose that $U \subset \R^n$ and $f \in L^1_\loc(U, \R^m)$. Then $\nabla f$ is a weak derivative of $f$ if and only if
	\[
		\int_U \phi \nabla f \dd m_n = -\int_U f \nabla \phi \dd m_n
	\]
	for all $\phi \in C^\infty_0(U, \R^m)$.
	
	Furthermore, suppose that $m = 1$. Then $df$ is a weak differential of $f$ if and only if
	\[
		\int_U df \wedge \omega = - \int_U fd\omega
	\]
	for all $\omega \in C^\infty_0(\wedge^{n-1} U)$.
\end{lemma}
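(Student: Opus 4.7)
Both equivalences reduce to choosing suitable bases/test-function shapes and unpacking the integrals componentwise. My plan is to prove each direction by testing against a small family of functions sufficient to recover the scalar weak derivative condition.

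\textbf{First equivalence.} Here $\phi$ is an $\R^m$-valued row vector and $\nabla f$ the $m \times n$ Jacobian matrix, so $\phi \nabla f$ is an $n$-vector whose $i$-th entry is $\sum_j \phi_j \partial_i f_j$; likewise the $i$-th entry of $f \nabla \phi$ is $\sum_j f_j \partial_i \phi_j$. Thus the vector equation in the statement is equivalent to the system
\[
\int_U \sum_j \phi_j \, \partial_i f_j \dd m_n = -\int_U \sum_j f_j \, \partial_i \phi_j \dd m_n \qquad (i = 1, \dots, n)
\]
holding for all $\phi \in C^\infty_0(U, \R^m)$. For the forward direction, I would assume each $\partial_i f_j$ is the scalar weak partial derivative of $f_j$, apply the scalar definition to each $\phi_j \in C^\infty_0(U)$, and sum over $j$. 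For the reverse direction, I would insert $\phi = \psi e_j$ for $\psi \in C^\infty_0(U)$ and an arbitrary standard basis vector $e_j \in \R^m$; this collapses the sum to the single term and recovers exactly the scalar weak partial derivative condition for $f_j$, for every $i$ and $j$.

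\textbf{Second equivalence.} Here $m = 1$ and I want to show that the duality formula against test $(n-1)$-forms is equivalent to the existence of all weak partials $\partial_i f$. The key computation is: for $\omega = g \, dx^1 \wedge \cdots \widehat{dx^i} \cdots \wedge dx^n$ with $g \in C^\infty_0(U)$, one has
\[
d\omega = (-1)^{i-1} (\partial_i g) \, dx^1 \wedge \cdots \wedge dx^n,
\qquad
df \wedge \omega = (-1)^{i-1} (\partial_i f) g \, dx^1 \wedge \cdots \wedge dx^n,
\]
where the sign $(-1)^{i-1}$ arises from moving $dx^i$ past $dx^1, \dots, dx^{i-1}$. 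Matching these against the Lebesgue integral via the standard orientation, the identity $\int_U df \wedge \omega = -\int_U f \, d\omega$ for this particular $\omega$ reads, after cancelling the common factor $(-1)^{i-1}$,
\[
\int_U (\partial_i f) g \dd m_n = -\int_U f (\partial_i g) \dd m_n,
\]
which is precisely the scalar weak partial derivative condition for $\partial_i f$ tested against $g$. For the forward direction, I would assume $\partial_i f$ are weak partial derivatives, note that every $\omega \in C^\infty_0(\wedge^{n-1} U)$ is a sum $\omega = \sum_i g_i \, dx^1 \wedge \cdots \widehat{dx^i} \cdots \wedge dx^n$ with $g_i \in C^\infty_0(U)$, and sum the above identity over $i$. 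For the reverse direction, I would test the given global identity against each basis form individually to recover the scalar condition on each $\partial_i f$.

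\textbf{Expected main obstacle.} There is no real obstacle, only bookkeeping: the only thing to be careful about is the uniform sign $(-1)^{i-1}$ that appears on both sides of the second identity and must be verified to cancel. Because the signs are identical in $d\omega$ and in $df \wedge \omega$, they drop out cleanly, and both equivalences reduce to scalar test-function duality by linearity.
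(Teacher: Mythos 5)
Your proof is correct, and it is precisely the "straightforward verification" the paper leaves to the reader: componentwise testing with $\phi = \psi e_j$ for the vector identity, and testing against the basis $(n-1)$-forms $g\, dx^1 \wedge \cdots \widehat{dx^i} \cdots \wedge dx^n$ for the form identity, with the sign $(-1)^{i-1}$ correctly computed and cancelling on both sides. Nothing to add.
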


Suppose $1 \leq p < \infty$. A function $f \colon U \to \R^m$ with a weak derivative is in the \emph{Sobolev space} $W^{1, p}(U, \R^m)$ if
\begin{equation}\label{eq:sobolev_definition_1}
	\int_U \left(\abs{f(x)}^p + \abs{\nabla f(x)}^p\right) \dd m_n(x) < \infty.
\end{equation}
Here, $\abs{\nabla f(x)}$ stands for the operator norm of $\nabla f(x)$, which we note to be equal to $\abs{Df(x)}$. We then define a norm on $W^{1,p}(U, \R^m)$ given by
\[
	\norm{f}_{1,p} = 
	\left( \int_U \left(\abs{f(x)}^p + \abs{\nabla f(x)}^p\right) \dd m_n(x) \right)^\frac{1}{p}
\]
for $f \in W^{1,p}(U, \R^m)$. For the case $p = \infty$, we say that $f \in W^{1, \infty}(U, \R^m)$ if $f$ and all of its weak partial derivatives are essentially bounded, and the norm $\norm{\cdot}_{1,\infty}$ is then defined by 
\[
	\norm{f}_{1,\infty} = \esssup_{x \in U} \left( \abs{f(x)} + \abs{\nabla f(x)} \right).
\] 

We use the shorthand $W^{1,p}(U) = W^{1,p}(U, \R)$. It is easily seen that a function $f$ is in $W^{1,p}(U, \R^m)$ if and only if all its coordinate functions are in $W^{1,p}(U)$. If $V \subset \R^m$, we define the space $W^{1,p}(U, V)$ as the space of all $f \in W^{1,p}(U, \R^n)$ for which $f(x) \in V$ for almost every $x \in U$.

We also define the local versions of Sobolev spaces: if $f \colon U \to V \subset \R^m$ has all first order weak partial derivatives, then $f \in W^{1,p}_\loc(U, V)$ if $\abs{f}$ and $\abs{\nabla f}$ are locally $p$-integrable, or locally essentially bounded in the case $p = \infty$. We note that since weak partial derivatives are by assumption locally $L^1$, the space $W^{1,1}_\loc(U, V)$ is in fact the space of all weakly differentiable $f \in L^1_\loc(U, V)$.

\subsection{Classical properties of Euclidean Sobolev spaces}

We then recall several classical properties of Sobolev spaces, mainly referring to the literature for the proofs. First, we recall that Sobolev spaces are Banach spaces; see e.g.\ \cite[Theorem 3.3]{Adams-Fournier_Sobolev}.

\begin{thm}\label{thm:sobolev_eucl_banach}
	Let $1 \leq p \leq \infty$ and let $U \subset \R^n$ be a domain. Then the space $W^{1,p}(U, \R^m)$ with the norm $\norm{\cdot}_{1,p}$ is complete.
\end{thm}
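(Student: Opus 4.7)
The plan is the standard one for Sobolev completeness in the Euclidean setting, adapted essentially verbatim from classical references; the argument is short once one recognizes that $W^{1,p}$ is built out of $L^p$-spaces which are themselves complete.

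First I would take a Cauchy sequence $(f_k) \subset W^{1,p}(U, \R^m)$ with respect to $\norm{\cdot}_{1,p}$. Directly from the definition of the norm, the sequence $(f_k)$ is Cauchy in $L^p(U, \R^m)$, and for each coordinate function $(f_k)_j$ and each index $i \in \{1, \dots, n\}$, the sequence of weak partial derivatives $(\partial_i (f_k)_j)$ is Cauchy in $L^p(U)$. By the completeness of $L^p$, there exist a limit function $f \in L^p(U, \R^m)$ and limit functions $g_{ij} \in L^p(U)$ such that $f_k \to f$ and $\partial_i (f_k)_j \to g_{ij}$ in the $L^p$-norm.

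The key step, which is really the only non-bookkeeping part of the proof, is to verify that each $g_{ij}$ is in fact the $i$-th weak partial derivative of $f_j$. For this, I would fix any test function $\phi \in C^\infty_0(U)$ and start from the defining identity
\[
    \int_U (\partial_i (f_k)_j) \phi \dd m_n = -\int_U (f_k)_j (\partial_i \phi) \dd m_n,
\]
which holds for each $k$. Both sides are continuous linear functionals in their respective $L^p$-arguments when paired against the fixed $C^\infty_0$ function $\phi$: by Hölder's inequality, $L^p$-convergence of $(f_k)_j \to f_j$ and of $\partial_i (f_k)_j \to g_{ij}$ against the bounded, compactly supported functions $\phi$ and $\partial_i \phi$ yields convergence of both sides. (For $p = \infty$, the same Hölder pairing works since $\phi$ and $\partial_i \phi$ are in $L^1$.) Passing to the limit gives
\[
    \int_U g_{ij} \phi \dd m_n = -\int_U f_j (\partial_i \phi) \dd m_n
\]
for every $\phi \in C^\infty_0(U)$, which is precisely the defining property of the weak partial derivative. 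Hence $\partial_i f_j = g_{ij} \in L^p(U)$, so $f$ has a weak derivative $\nabla f$ with $\nabla f \in L^p$, and therefore $f \in W^{1,p}(U, \R^m)$.

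Finally, I would conclude by observing that $\norm{f_k - f}_{1,p} \to 0$, since $\norm{f_k - f}_p \to 0$ and $\norm{\partial_i (f_k)_j - \partial_i f_j}_p = \norm{\partial_i (f_k)_j - g_{ij}}_p \to 0$ for each $i, j$ by construction. The only care needed is separating the $1 \leq p < \infty$ case from the $p = \infty$ case in the very last step — for the former one uses that $\norm{\cdot}_{1,p}$ is defined as an $L^p$-integral of $\abs{f}$ and $\abs{\nabla f}$ combined, while for the latter one uses the essential supremum — but this is a routine bookkeeping issue rather than a real obstacle. The main conceptual point throughout is simply that weak differentiation is continuous with respect to $L^p$-convergence when tested against $C^\infty_0$ functions, so the Cauchy structure of $W^{1,p}$ transfers cleanly to its constituent $L^p$-spaces.
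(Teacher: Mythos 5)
Your proof is correct and is exactly the standard Riesz--Fischer-plus-weak-limits argument that the paper itself does not reproduce but delegates to the literature (Adams--Fournier, Theorem 3.3). The one point worth being explicit about --- that Cauchyness in $\norm{\cdot}_{1,p}$ controls each entry $\partial_i (f_k)_j$ individually because all norms on the space of $m\times n$ matrices are equivalent --- is implicit in your write-up but harmless, and the Hölder pairing against $\phi,\partial_i\phi\in C^\infty_0(U)$ handles $p=\infty$ as you note.
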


Second is the \emph{Meyers--Serrin theorem} which states that, with the exception of the case $p = \infty$, smooth functions are dense in $W^{1,p}(U, \R^m)$; see e.g.\ \cite[Theorem 3.17]{Adams-Fournier_Sobolev}. The standard proof is by convolution approximation.

\begin{thm}\label{thm:sobolev_eucl_smooth_density}
	Let $1 \leq p < \infty$ and let $U \subset \R^n$ be a domain. Then the space $C^{\infty}(U, \R^m) \cap W^{1,p}(U, \R^m)$ is dense in $W^{1,p}(U, \R^m)$.
\end{thm}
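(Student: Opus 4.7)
The plan is to give the standard mollification proof of the Meyers--Serrin theorem. First I would reduce to the scalar case $m = 1$: a function $f$ lies in $W^{1,p}(U, \R^m)$ if and only if each coordinate $f_j$ lies in $W^{1,p}(U)$, so it suffices to approximate scalar functions. I would then fix $f \in W^{1,p}(U)$ and $\eps > 0$, and aim to construct $g \in C^\infty(U) \cap W^{1,p}(U)$ with $\norm{g - f}_{1,p} < \eps$.

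The core tool is convolution with a standard smooth mollifier $\rho_\delta \in C^\infty_0(B(0, \delta))$. For $h \in W^{1,p}(U)$ compactly supported in a fixed open subset $V \Subset U$, one checks that $\rho_\delta * h$ is smooth, compactly supported in $U$ for all sufficiently small $\delta$, and satisfies $\partial_i (\rho_\delta * h) = \rho_\delta * (\partial_i h)$ as well as $\rho_\delta * h \to h$ in $W^{1,p}$ as $\delta \to 0$; the latter uses that convolution approximates the identity in $L^p$ for $p < \infty$, which is where the hypothesis $p \neq \infty$ enters. Globally, however, $f$ need not be compactly supported, and naive mollification $\rho_\delta * f$ is not even defined near $\partial U$.

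To deal with this I would introduce an exhaustion and a subordinate partition of unity. Define $U_k = \{ x \in U : \dist(x, \R^n \setminus U) > 1/k \text{ and } \abs{x} < k \}$ for $k \geq 1$, together with $U_0 = U_{-1} = \emptyset$, and set $V_k = U_{k+1} \setminus \overline{U_{k-1}}$. Then $\{V_k\}_{k \geq 1}$ is a locally finite open cover of $U$, and there is a smooth partition of unity $\{\psi_k\}$ subordinate to it. For each $k$, the function $\psi_k f$ has compact support inside $V_k \Subset U$, so by the previous paragraph we may pick $\delta_k > 0$ small enough that $\rho_{\delta_k} * (\psi_k f)$ is supported inside a slight enlargement $W_k \subset U_{k+2} \setminus \overline{U_{k-2}}$ and satisfies
\[
	\norm{\rho_{\delta_k} * (\psi_k f) - \psi_k f}_{1, p} < 2^{-k} \eps.
\]
Define $g = \sum_{k \geq 1} \rho_{\delta_k} * (\psi_k f)$. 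Since the enlargements $W_k$ are still locally finite, this sum is locally finite, hence $g \in C^\infty(U)$. Using $f = \sum_k \psi_k f$ and the triangle inequality,
\[
	\norm{g - f}_{1, p} \leq \sum_{k \geq 1} \norm{\rho_{\delta_k} * (\psi_k f) - \psi_k f}_{1, p} < \eps,
\]
which also shows $g \in W^{1,p}(U)$.

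The main obstacle is the bookkeeping in the two previous paragraphs: one must arrange the mollification scales $\delta_k$ so that (i) each $\rho_{\delta_k} * (\psi_k f)$ remains supported in $U$, (ii) the supports remain locally finite so that $g$ is genuinely smooth rather than just a formal series, and (iii) the $W^{1,p}$-errors are summable. Once the exhaustion $\{U_k\}$ with two-step buffer $V_k = U_{k+1} \setminus \overline{U_{k-1}}$ is in place, each $\delta_k$ can be chosen small enough to meet all three requirements simultaneously, and the rest is a standard estimate.
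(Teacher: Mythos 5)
Your proof is correct and is exactly the standard Meyers--Serrin argument by convolution with mollifiers and a partition of unity subordinate to an exhaustion, which is the proof the paper itself alludes to (it cites \cite[Theorem 3.17]{Adams-Fournier_Sobolev} and notes that ``the standard proof is by convolution approximation''). The bookkeeping with the two-step buffer $V_k = U_{k+1}\setminus\overline{U_{k-1}}$ and the choice of $\delta_k$ is handled correctly, so nothing further is needed.
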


Third, the weak derivative and Sobolev spaces behave well with respect to taking maximums. For a proof, see e.g.\ \cite[Theorem 1.20]{Heinonen-Kilpelainen-Martio_book}

\begin{thm}\label{thm:sobolev_eucl_maximum}
	Let $1 \leq p \leq \infty$, let $U \subset \R^n$ be a domain, and let $f, g \in W^{1,p}(U)$ (or $W^{1,p}_\loc(U)$). Then $\max\{f,g\} \in W^{1,p}(U)$ (or $W^{1,p}_\loc(U)$, respectively). Moreover, we have
	\[
		\nabla \max\{f,g\} = \begin{cases}
			\nabla f & \text{a.e.\ in }\{x \in U : f(x) \geq g(x) \},\\
			\nabla g & \text{a.e.\ in }\{x \in U : f(x) \leq g(x) \}.
		\end{cases}
	\]
	 In particular, $\nabla f = \nabla g$ almost everywhere in the set $\{x \in U : f(x) = g(x) \}$.
\end{thm}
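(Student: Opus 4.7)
The plan is to reduce the statement to a chain rule for the positive-part function $t \mapsto t^+ := \max\{t, 0\}$ applied to a single Sobolev function. Since $\max\{f, g\} = g + (f - g)^+$ and Sobolev spaces form a vector space, it suffices to prove that for any $h \in W^{1,p}(U)$, the truncation $h^+$ lies in $W^{1,p}(U)$ with weak gradient $\nabla h^+ = \chi_{\{h > 0\}} \nabla h$ a.e.\ (with $\chi_{\{h > 0\}}$ understood to vanish on $\{h \leq 0\}$). Applying this to $-h$ yields $\nabla h^- = -\chi_{\{h < 0\}} \nabla h$, and the decomposition $h = h^+ - h^-$ then forces $\nabla h = \chi_{\{h \neq 0\}} \nabla h$ a.e., so that $\nabla h = 0$ a.e.\ on $\{h = 0\}$. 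Taking $h = f - g$ delivers the ``in particular'' clause, while the piecewise form of $\nabla (f-g)^+$ combined with $\nabla \max\{f,g\} = \nabla g + \nabla (f-g)^+$ yields the main formula.

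To establish the chain rule for $h^+$, I would fix a $C^\infty$ family $\eta_\eps \colon \R \to \R$ satisfying $\eta_\eps(t) = 0$ for $t \leq 0$, $\eta_\eps(t) = t$ for $t \geq \eps$, $0 \leq \eta_\eps(t) \leq t^+$, and $0 \leq \eta_\eps'(t) \leq 1$. Each $\eta_\eps$ is smooth with globally bounded derivative, so the classical chain rule combined with the Meyers--Serrin theorem (Theorem \ref{thm:sobolev_eucl_smooth_density}) yields $\eta_\eps \circ h \in W^{1,p}(U)$ with $\nabla (\eta_\eps \circ h) = \eta_\eps'(h) \nabla h$: if $h_k \to h$ in $W^{1,p}$ with $h_k$ smooth, then $\eta_\eps(h_k) \to \eta_\eps(h)$ in $L^p$ by the Lipschitz bound on $\eta_\eps$, and $\eta_\eps'(h_k) \nabla h_k \to \eta_\eps'(h) \nabla h$ in $L^p$ by dominated convergence along a pointwise a.e.\ convergent subsequence.

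Next I would let $\eps \to 0$ in the weak derivative identity
\[
	\int_U (\eta_\eps \circ h) \, \partial_i \phi \, \dd m_n = -\int_U \eta_\eps'(h) (\partial_i h) \phi \, \dd m_n
\]
for each $\phi \in C^\infty_0(U)$ and each index $i$. Pointwise, $\eta_\eps(h) \to h^+$ and $\eta_\eps'(h) \to \chi_{\{h > 0\}}$, with the convention $\eta_\eps'(0) = 0$ yielding value zero on $\{h = 0\}$; the integrands are dominated by $|h|\,|\partial_i \phi|$ and $|\partial_i h|\,|\phi|$, both integrable on $\spt \phi$. Dominated convergence then identifies $\chi_{\{h > 0\}} \nabla h$ as the weak gradient of $h^+$, and the pointwise bound $|\nabla h^+| \leq |\nabla h|$ places $h^+$ in $W^{1,p}(U)$.

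The main obstacle is the delicate behaviour of the limit on the zero set $\{h = 0\}$, where the naive ``sign of $h$'' formula is ambiguous; the careful choice of $\eta_\eps$ vanishing on $(-\infty, 0]$ sidesteps this by forcing $\eta_\eps'(h) = 0$ there for every $\eps$, so the limiting gradient vanishes on $\{h = 0\}$ and the decomposition $h = h^+ - h^-$ cleanly recovers the zero-set identity. For $W^{1,p}_\loc(U)$, the entire argument is local and passes through verbatim; the $p = \infty$ case reduces to any finite exponent on bounded subdomains since $W^{1,\infty}_\loc \subset W^{1,p}_\loc$.
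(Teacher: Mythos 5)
The paper does not prove this theorem itself; it defers to \cite[Theorem 1.20]{Heinonen-Kilpelainen-Martio_book}, and your argument is essentially that standard proof: reduce $\max\{f,g\} = g + (f-g)^+$ to a chain rule for the positive part, obtain it by smooth approximation of $t \mapsto t^+$, and recover the level-set identity $\nabla h = 0$ a.e.\ on $\{h=0\}$ from $h = h^+ - h^-$. The logic is sound and all the limit passages (dominated convergence on both sides of the weak-derivative identity, with $\eta_\eps'(h) \to \chi_{\{h>0\}}$ pointwise everywhere because $\eta_\eps$ vanishes on $(-\infty,0]$) go through.

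One small repair: the family $\eta_\eps$ as literally specified does not exist. If $\eta_\eps$ is smooth, vanishes on $(-\infty,0]$ (so $\eta_\eps'(0)=0$), and satisfies $\eta_\eps' \leq 1$, then $\eta_\eps(\eps) = \int_0^\eps \eta_\eps' < \eps$, contradicting $\eta_\eps(t) = t$ for $t \geq \eps$. The fix is standard and harmless: take $\eta_\eps(t) = \int_0^t \theta(s/\eps)\,\dd s$ with $\theta$ smooth, $0 \leq \theta \leq 1$, $\theta = 0$ on $(-\infty,0]$ and $\theta = 1$ on $[1,\infty)$; then $\eta_\eps(t) = t - c_\eps$ for $t \geq \eps$ with $0 \leq c_\eps \leq \eps$, and every property your limit argument actually uses ($0 \leq \eta_\eps \leq t^+$, $0 \leq \eta_\eps' \leq 1$, $\eta_\eps \to t^+$ and $\eta_\eps' \to \chi_{(0,\infty)}$ pointwise) still holds. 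With that adjustment the proof is complete; you could also shortcut the smooth-approximation step for $\eta_\eps \circ h$ by invoking the paper's own postcomposition Lemma \ref{lem:sobolev_postcomposition}, since each $\eta_\eps$ is $C^\infty$, $1$-Lipschitz, and fixes $0$.
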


Next we state several versions of Sobolev inequalities and embedding theorems which we need. We begin with a version of the \emph{Gagliardo--Nirenberg--Sobolev inequality} for compactly supported Sobolev functions. This inequality is usually stated in the form $\norm{u}_{s^*} \leq C(n, p) \norm{\nabla u}_s$, where $u \in C^\infty_0(\R^n)$, $s \in [1, n)$ and $s^* = (s^{-1} - n^{-1})^{-1}$; see e.g.\ \cite[Theorem 4.31]{Adams-Fournier_Sobolev}. The version we state here follows from this by smooth approximation, and by estimating $\norm{\nabla u}_s \leq [m_n(\spt u)]^{s^{-1} - p^{-1}} \norm{\nabla u}_p$ where $s$ is such that $s^* = q$.

\begin{thm}\label{thm:sobolev_eucl_inequality}
	Let $U \subset \R^n$ be a bounded domain, and let $p, q \in [1, \infty)$ be such that $1/q \geq 1/p - 1/n$.  Suppose that $u \in W^{1, p}(U, \R^m)$ and that $\spt u$ is compactly contained in $U$. Then
	\[
	\norm{u}_q \leq C(n, q) m_n(U)^{\frac{1}{q} - \frac{1}{p} + \frac{1}{n}} \norm{\nabla u}_p.
	\]
\end{thm}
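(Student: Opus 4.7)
My plan is to reduce the statement to the classical Gagliardo--Nirenberg--Sobolev inequality for smooth compactly supported functions, and then absorb the mismatches between the exponents using Hölder's inequality on the compact support. Since $\spt u$ is compactly contained in $U$, extending $u$ by zero gives $\tilde u \in W^{1,p}(\R^n, \R^m)$ of compact support, and mollification produces a sequence $u_k \in C^\infty_0(\R^n, \R^m)$ supported in a fixed compact subset of $U$ with $u_k \to \tilde u$ in $W^{1,p}$. The vector-valued classical GNS inequality itself follows from the scalar version either coordinatewise or via $\smallabs{\nabla \smallabs{u}} \leq \smallabs{\nabla u}$, so no essential loss occurs in assuming scalar-valued targets.

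The next step is to choose an intermediate exponent $s \in [1, n)$ satisfying $s \leq p$ and $s^* := ns/(n-s) \geq q$; the latter rearranges to $s \geq qn/(q+n)$. So I need $\max(1, qn/(q+n)) \leq \min(p, n)$ with strict inequality on the right. A direct check shows that $qn/(q+n) \leq p$ reduces exactly to $1/q \geq 1/p - 1/n$ when $p < n$ and is automatic when $p \geq n$, so the hypothesis guarantees the existence of such an $s$, with $s$ depending only on $n$ and $q$.

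Applying the classical inequality to each $u_k$ gives $\norm{u_k}_{s^*} \leq C(n,s) \norm{\nabla u_k}_s$; passing to the limit via Fatou on the left, and $L^p$-convergence on the common compact support (which upgrades to $L^s$-convergence because $s \leq p$) on the right, yields
\[
	\norm{u}_{s^*} \leq C(n,s) \norm{\nabla u}_s.
\]
Two applications of Hölder's inequality, using $\spt u,\, \spt \nabla u \subset \overline{\spt u} \subset U$, then give
\[
	\norm{u}_q \leq m_n(U)^{1/q - 1/s^*} \norm{u}_{s^*}
	\quad\text{and}\quad
	\norm{\nabla u}_s \leq m_n(U)^{1/s - 1/p} \norm{\nabla u}_p.
\]
Combining the three inequalities, and using $1/s^* = 1/s - 1/n$, the total exponent of $m_n(U)$ comes out to
\[
	\left(\tfrac{1}{q} - \tfrac{1}{s^*}\right) + \left(\tfrac{1}{s} - \tfrac{1}{p}\right) = \tfrac{1}{q} - \tfrac{1}{p} + \tfrac{1}{n},
\]
matching the statement. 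The only non-routine point is the existence of an admissible intermediate exponent $s$; given the explicit check above, there is no genuine obstacle.
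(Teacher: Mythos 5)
Your proof is correct and follows essentially the same route the paper sketches: reduce to the classical Gagliardo--Nirenberg--Sobolev inequality by smooth approximation and absorb the exponent mismatch with H\"older's inequality on the bounded support. Your only deviation is choosing $s$ with $s^* \geq q$ rather than $s^* = q$ and compensating with a second H\"older application; this is a sensible refinement, since the choice $s^* = q$ would force $s < 1$ when $q < n/(n-1)$, whereas your choice always keeps $s \in [1, n)$.
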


We then give a local embedding theorem, which follows by first multiplying $u \in W^{1,p}_\loc(U, \R^m)$ with $\eta \in C^\infty_0(U)$ which is identically 1 on a neighborhood of a point of $U$, and by then using Theorem \ref{thm:sobolev_eucl_inequality}.

\begin{thm}\label{thm:sobolev_eucl_embedding_local}
	Let $U \subset \R^n$ be a domain, and let $p, q \in [1, \infty)$ be such that $1/q \geq 1/p - 1/n$. Then $W^{1,p}_\loc(U, \R^m) \subset L^{q}_\loc(U, \R^m)$.
\end{thm}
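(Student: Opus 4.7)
The plan is to localize $u$ via multiplication by a smooth cutoff, as suggested in the remark preceding the statement, and then apply the global inequality in Theorem \ref{thm:sobolev_eucl_inequality} to the resulting compactly supported function. Fix an arbitrary point $x_0 \in U$. First I would choose nested bounded open neighborhoods $W$ and $V$ of $x_0$ with $\overline{W} \subset V$ and $\overline{V} \subset U$, together with a cutoff $\eta \in C^\infty_0(U)$ satisfying $\eta \equiv 1$ on $W$ and $\spt \eta \subset V$. Since any compact subset of $U$ can be covered by finitely many such neighborhoods $W$, establishing $u \in L^q(W, \R^m)$ for each $x_0$ suffices to conclude $u \in L^q_\loc(U, \R^m)$.

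Next I would set $v = \eta u$ and verify that $v \in W^{1,p}(V, \R^m)$ with $\spt v \subset \spt \eta$, which is compactly contained in $V$. By Theorem \ref{thm:sobolev_eucl_smooth_density} and the standard approximation argument (or directly from Lemma \ref{lem:weak_derivative_characterizations} by integrating against test functions and invoking the classical product rule for smooth factors), the weak derivative of $v$ satisfies
\[
	\nabla v(x) = \eta(x) \nabla u(x) + u(x) \otimes \nabla \eta(x)
\]
for almost every $x \in V$. Since $u \in W^{1,p}_\loc(U, \R^m)$ implies that both $\abs{u}$ and $\abs{\nabla u}$ lie in $L^p(\spt \eta)$, and since $\eta$ and $\nabla \eta$ are bounded, this shows $\abs{v}, \abs{\nabla v} \in L^p(V)$.

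Now I would apply Theorem \ref{thm:sobolev_eucl_inequality} to $v$ on the bounded domain $V$, obtaining
\[
	\norm{v}_q \leq C(n, q)\, m_n(V)^{\frac{1}{q} - \frac{1}{p} + \frac{1}{n}} \norm{\nabla v}_p < \infty,
\]
where the exponent $1/q - 1/p + 1/n \geq 0$ is well-defined thanks to our hypothesis $1/q \geq 1/p - 1/n$. Since $v = u$ on $W$, this yields $\norm{u}_{L^q(W, \R^m)} \leq \norm{v}_q < \infty$, completing the verification.

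The argument presents no genuine obstacle; the only point demanding mild care is justifying the product rule $\nabla(\eta u) = \eta \nabla u + u \otimes \nabla \eta$ in the weak sense, which is handled via a straightforward integration-by-parts identity against test functions $\phi \in C^\infty_0(V, \R^m)$ using that $\eta \phi \in C^\infty_0(U, \R^m)$. Once this is in place, the cutoff reduces the local statement to the global inequality already at hand.
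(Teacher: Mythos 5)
Your proposal is correct and follows essentially the same route the paper indicates: multiply by a smooth cutoff that is identically $1$ near the given point so the product lies in $W^{1,p}$ with compact support, and then invoke the Gagliardo--Nirenberg--Sobolev inequality of Theorem \ref{thm:sobolev_eucl_inequality}. The extra details you supply (the weak Leibniz rule $\nabla(\eta u) = \eta\,\nabla u + u \otimes \nabla\eta$ and the covering argument) are exactly the standard fillings-in the paper leaves implicit.
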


We point out a consequence of the embedding theorem \ref{thm:sobolev_eucl_embedding_local} which is of some importance to us later on.

\begin{cor}\label{cor:sobolev_base_exponent}
	Let $U \subset \R^n$ be a domain, and let $p \in [1, \infty)$. Suppose that $f \in W^{1,1}_\loc(U, \R^m)$ and $\abs{Df} \in L^p_\loc(U)$. Then $f \in W^{1,p}_\loc(U, \R^m)$.
\end{cor}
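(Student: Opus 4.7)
By the definition of $W^{1,p}_\loc$ recalled in the paper, the hypothesis $f \in W^{1,1}_\loc(U, \R^m)$ already guarantees the existence of all weak first-order partial derivatives of $f$, while $\abs{\nabla f} \in L^p_\loc(U)$ is assumed. Consequently, the only remaining condition to verify is $\abs{f} \in L^p_\loc(U)$; in other words, I need to upgrade the local integrability of $f$ itself from exponent $1$ to exponent $p$, without altering its weak derivatives.

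My plan is to bootstrap via the local Sobolev embedding of Theorem \ref{thm:sobolev_eucl_embedding_local}. Starting from $f \in W^{1,1}_\loc(U, \R^m)$, the embedding yields $f \in L^{q_1}_\loc(U, \R^m)$ with $q_1 = n/(n-1)$ (when $n \geq 2$; when $n = 1$, any $q_1 < \infty$ works). If $p \leq q_1$, the proof is complete, since $L^{q_1}_\loc \subset L^p_\loc$ locally. Otherwise, the inclusion $L^p_\loc \subset L^{q_1}_\loc$ shows $\abs{\nabla f} \in L^{q_1}_\loc$, and combining this with $f \in L^{q_1}_\loc$ upgrades $f$ to $W^{1,q_1}_\loc$. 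Another application of Theorem \ref{thm:sobolev_eucl_embedding_local} then gives $f \in L^{q_2}_\loc$ with $1/q_2 = 1/q_1 - 1/n$, and I iterate.

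Iteration produces a sequence of exponents satisfying $1/q_k = \max(1 - k/n,\, 0)$, where as soon as $1/q_k \leq 1/n$ the embedding theorem lets me place $f$ into $L^r_\loc$ for every $r < \infty$. Since $1/q_k$ strictly decreases by $1/n$ at each step, after finitely many steps $q_k \geq p$, whence $f \in L^{q_k}_\loc \subset L^p_\loc$ and so $f \in W^{1,p}_\loc(U, \R^m)$ as claimed. The only potential hurdle is a bookkeeping one: at each bootstrap step I must check that the current exponent $q_k$ satisfies $q_k \leq p$, so that the hypothesis $\abs{\nabla f} \in L^p_\loc$ indeed places $\abs{\nabla f}$ in the ambient space $L^{q_k}_\loc$ needed to view $f$ as $W^{1,q_k}_\loc$; this is automatic as long as the iteration has not yet terminated.
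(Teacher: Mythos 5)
Your proposal is correct and follows essentially the same route as the paper: both bootstrap the integrability of $f$ by iterating the local Sobolev embedding of Theorem \ref{thm:sobolev_eucl_embedding_local}, noting at each stage that $\abs{Df} \in L^p_\loc$ embeds into the current exponent's space until the target exponent $p$ is reached (with the iteration terminating at worst once the exponent reaches $n$). The bookkeeping point you flag is handled identically in the paper's argument.
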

\begin{proof}
	We see by Theorem \ref{thm:sobolev_eucl_embedding_local} that $f \in L^{n/(n-1)}_\loc(U, \R^m)$. Since $\abs{Df} \in L^p_\loc(U)$, if $p \leq n/(n-1)$ then $f \in W^{1,p}_\loc(U, \R^m)$ and we're done. Otherwise $\abs{Df} \in L^p_\loc(U)$ implies $f \in W^{1, n/(n-1)}_\loc(U, \R^m)$. Now we repeat the use of Theorem \ref{thm:sobolev_eucl_embedding_local}, obtaining either that $f \in W^{1,p}_\loc(U, \R^m)$ or $f \in W^{1,n/(n-2)}_\loc(U, \R^m)$. Continuing this eventually gives us $f \in W^{1,p}_\loc(U, \R^m)$, at the very least after the step $f \in W^{1, n/1}_\loc(U, \R^m)$.
\end{proof}

Finally, we recall a version of the \emph{Rellich--Kondrachov theorem}, which gives for sufficiently nice domains a compact embedding of Sobolev spaces. For the proof, see e.g.\ \cite[Theorems 4.12 and 6.3]{Adams-Fournier_Sobolev}.

\begin{thm}\label{thm:sobolev_eucl_embedding_compact}
	Let $U \subset \R^n$ be a bounded domain with smooth boundary, and let $p, q \in [1, \infty)$ be such that $1/q \geq 1/p - 1/n$. Then $W^{1,p}(U, \R^m) \subset L^{q}(U, \R^m)$, where the embedding is continuous. 
	
	Moreover, if $1/q > 1/p - 1/n$, the embedding is compact: if $S$ is a bounded subset of $W^{1,p}(U, \R^m)$, then $S$ is precompact in $L^q(U, \R^m)$.
\end{thm}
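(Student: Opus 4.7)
\emph{Plan.} The theorem has two parts: the continuous embedding when $1/q \geq 1/p - 1/n$, and its compactness when the inequality is strict. In both parts the crucial preliminary, and essentially the only input not already supplied by the excerpt, is an \emph{extension operator} $E \colon W^{1,p}(U,\R^m) \to W^{1,p}(\R^n,\R^m)$ which is bounded, satisfies $(Ef)\vert_U = f$ a.e., and produces functions supported in a fixed bounded set $V \supset \overline{U}$. This is where the smooth boundary hypothesis enters: one straightens $\partial U$ in a finite cover of boundary charts, reflects across the flat boundary, combines the local extensions by a partition of unity, and finally multiplies by a cutoff supported in $V$.

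Granted $E$, I prove continuous embedding first in the subcritical regime $p < n$ with $p^* = np/(n-p)$. Multiplying $Ef$ by a smooth cutoff that is $1$ on $V$ and supported in some bounded $W$, Theorem \ref{thm:sobolev_eucl_inequality} applied with exponent $p^*$ gives $\norm{f}_{L^{p^*}(U)} \leq C \norm{f}_{W^{1,p}(U)}$. Since $U$ has finite measure, H\"older's inequality then yields $L^{p^*}(U) \hookrightarrow L^q(U)$ for every $q \leq p^*$. For $p \geq n$, the same H\"older argument applied to $\nabla f$ gives $W^{1,p}(U) \hookrightarrow W^{1,s}(U)$ for every $s < p$; choosing $s < n$ with $s^* > q$ reduces matters to the already-handled subcritical case.

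For compactness, I apply the Fr\'echet--Kolmogorov criterion in $L^q(\R^n)$ to the family $\{Ef : f \in S\}$, which is bounded in $W^{1,p}(\R^n,\R^m)$ and has supports uniformly inside the bounded set $V$. The translation estimate $\norm{\tau_h g - g}_p \leq \abs{h}\,\norm{\nabla g}_p$, proved for $g \in C^\infty_0$ by integrating $\nabla g$ along the segment from $x$ to $x+h$ and passed to $W^{1,p}$ by density, yields uniform $L^p$-equicontinuity of translations on the extended family. The continuous embedding just proved supplies a uniform $L^{p^*}$-bound (or, in the case $p \geq n$, a uniform $L^r$-bound for any preassigned finite $r$) on the same family. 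H\"older interpolation $\norm{u}_q \leq \norm{u}_p^\theta \norm{u}_{p^*}^{1-\theta}$ with $\theta \in (0,1]$ then transfers equicontinuity from $L^p$ to $L^q$ exactly when $q < p^*$, i.e.\ $1/q > 1/p - 1/n$. The tail condition of Fr\'echet--Kolmogorov is trivial since all supports lie in $V$, so the family is precompact in $L^q(\R^n)$, and restriction to $U$ preserves this.

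The main obstacle is the construction of $E$: it is the only step where the smooth boundary hypothesis genuinely matters, and it is essentially disjoint from everything else in the excerpt. A secondary subtlety is the endpoint $1/q = 1/p - 1/n$, where the embedding is continuous but \emph{not} compact, as a translated bump of vanishing scale demonstrates; it is precisely the strict inequality that yields a positive exponent $\theta > 0$ in the H\"older interpolation, without which the translation estimate alone provides no compactness.
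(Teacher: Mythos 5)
Your proposal is correct in outline, but note that the paper does not prove this theorem at all: it is quoted as a classical result with a citation to Adams--Fournier (Theorems 4.12 and 6.3), so there is no internal argument to compare against. What you have written is the standard textbook proof, and it is sound: the extension operator built from boundary straightening, reflection, a partition of unity and a cutoff is indeed the one place where the smooth-boundary hypothesis is used; the continuous embedding then follows from Theorem \ref{thm:sobolev_eucl_inequality} plus H\"older on the finite-measure domain (your reduction of the case $p \geq n$ to the subcritical case via $W^{1,p} \hookrightarrow W^{1,s}$, $s < n$, $s^* > q$, is fine); and the compactness follows from Fr\'echet--Kolmogorov together with the translation estimate and H\"older interpolation, with the strict inequality $1/q > 1/p - 1/n$ entering exactly as you say, through the positivity of the interpolation exponent $\theta$. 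Two caveats. First, your argument rests on two substantial external black boxes, the extension theorem and the Fr\'echet--Kolmogorov criterion, neither of which is available in the excerpt; you flag the first but silently assume the second, which is itself an Arzel\`a--Ascoli-type theorem requiring proof. Second, the interpolation $\norm{u}_q \leq \norm{u}_p^\theta \norm{u}_{p^*}^{1-\theta}$ only applies for $q \in [p, p^*)$; for $q < p$ you should instead pass equicontinuity from $L^p$ to $L^q$ directly by H\"older on the fixed bounded support $V$ (or simply observe that $L^p$-precompactness implies $L^q$-precompactness there). Your closing remark that the critical endpoint $q = p^*$ is continuous but not compact, witnessed by concentrating rescaled bumps, is correct.
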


\subsection{Pre- and postcomposition of Sobolev maps}

In order to define Sobolev spaces on manifolds, we require several chain rules for Sobolev maps. Due to their importance for the discussion, we present relatively detailed proofs for them.

Before the main result, we recall a lemma used in the proof of the chain rule for postcomposition. The lemma is a variant on the dominated convergence theorem.

\begin{lemma}\label{lem:dom_convergence_variant}
	Let $U \subset \R^n$ be open and $1 \leq p < \infty$. Suppose that the functions $u_j, v_j, u, v \in L^p(U, \R^m)$ are such that $u_j \to u$ almost everywhere, $v_j \to v$ in $L^p(U, \R^m)$, and $\abs{u_j} \leq \abs{v_j}$ almost everywhere. Then $u_j \to u$ in $L^p(U, \R^m)$.
\end{lemma}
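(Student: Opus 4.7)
The plan is to reduce the claim to a Fatou-type computation along a subsequence where $v_j \to v$ almost everywhere, exploiting the fact that $\norm{v_j}_p \to \norm{v}_p$ follows from $v_j \to v$ in $L^p$. Since $L^p$-convergence of $u_j$ to $u$ is equivalent to every subsequence of $(u_j)$ admitting a further subsequence that converges to $u$ in $L^p$, I may start with an arbitrary subsequence. Along this subsequence, the $L^p$-convergence $v_j \to v$ yields an a.e.\ convergent further subsequence (which I relabel so that $v_j \to v$ a.e.), while the hypotheses $u_j \to u$ a.e.\ and $\abs{u_j} \leq \abs{v_j}$ a.e.\ pass along unchanged. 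Taking a.e.\ limits in the latter inequality then also gives $\abs{u} \leq \abs{v}$ a.e.

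Next I would apply Fatou's lemma to the non-negative sequence
\[
	h_j = 2^{p-1}\bigl(\abs{v_j}^p + \abs{v}^p\bigr) - \abs{u_j - u}^p,
\]
where non-negativity comes from the convexity estimate $\abs{u_j - u}^p \leq 2^{p-1}(\abs{u_j}^p + \abs{u}^p) \leq 2^{p-1}(\abs{v_j}^p + \abs{v}^p)$ valid for $p \geq 1$. Pointwise a.e., $h_j \to 2^p \abs{v}^p$; meanwhile $v_j \to v$ in $L^p$ gives $\int \abs{v_j}^p \dd m_n \to \int \abs{v}^p \dd m_n$. Fatou's lemma then yields
\[
	2^p \int_U \abs{v}^p \dd m_n \leq \liminf_{j \to \infty} \int_U h_j \dd m_n = 2^p \int_U \abs{v}^p \dd m_n - \limsup_{j \to \infty} \int_U \abs{u_j - u}^p \dd m_n,
\]
so $\limsup_j \int_U \abs{u_j - u}^p \dd m_n \leq 0$ and the relabeled subsequence converges to $u$ in $L^p$. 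Since every subsequence of $(u_j)$ admits a further subsequence with $L^p$-limit $u$, the full sequence converges to $u$ in $L^p$.

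The main subtlety I expect is the reduction step: the standard dominated convergence theorem does not apply directly, because no single integrable function dominates the whole sequence $\abs{u_j}$, and one has to exploit the weaker fact that the $L^p$-norms of the dominating functions $v_j$ converge. Once the subsequence extraction promotes $v_j \to v$ from $L^p$ to a.e.\ convergence, the Fatou calculation above is essentially the standard trick used to prove the $L^p$-continuity of absolute values, so no further obstacle arises.
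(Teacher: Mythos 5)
Your proof is correct. It is Fatou-based, like the paper's, but organized differently: the paper first applies Fatou twice to the nonnegative sequence $\abs{v_j}^p - \abs{u_j}^p$ to deduce $\norm{u_j}_p \to \norm{u}_p$, and then invokes the Riesz lemma (a.e.\ convergence plus convergence of $L^p$-norms implies $L^p$-convergence), which is itself a third Fatou application to $2^p(\abs{u}^p + \abs{u_j}^p) - \abs{u - u_j}^p$. You instead collapse everything into a single Fatou application to $2^{p-1}(\abs{v_j}^p + \abs{v}^p) - \abs{u_j - u}^p$, at the cost of a preliminary subsequence extraction and the subsequence-of-subsequences principle. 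That extraction is not mere bookkeeping: the paper's computation evaluates $\int_U \liminf_j \abs{v_j}^p$ as $\int_U \abs{v}^p$, which requires $v_j \to v$ almost everywhere and does not follow from $L^p$-convergence alone (a typewriter-type sequence gives $\liminf_j \abs{v_j}^p = 0$ a.e.\ while $\norm{v_j - v}_p \to 0$ with $v \neq 0$), so strictly speaking the paper's argument also needs to pass to an a.e.-convergent subsequence of $(v_j)$ at the outset. Your version handles this point explicitly, and the convexity bound $\abs{u_j - u}^p \leq 2^{p-1}(\abs{u_j}^p + \abs{u}^p) \leq 2^{p-1}(\abs{v_j}^p + \abs{v}^p)$, together with $\abs{u} \leq \abs{v}$ a.e.\ obtained by passing to the limit, correctly justifies the nonnegativity needed for Fatou. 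No gaps.
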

\begin{proof}
	By Fatou's lemma,
	\[
		\int_U \liminf_{j \to \infty} (\abs{v_j}^p - \abs{u_j}^p) \dd m_n 
			\leq \liminf_{j \to \infty} \int_U (\abs{v_j}^p - \abs{u_j}^p) \dd m_n.
	\]
	Rearranging this estimate yields
	\begin{align*}
		&\limsup_{j \to \infty} \int_U \abs{u_j}^p \dd m_n\\
		&\qquad\leq \int_U \limsup_{j \to \infty} \abs{u_j}^p \dd m_n  
			+ \limsup_{j \to \infty} \int_U \abs{v_j}^p \dd m_n 
			- \int_U \liminf_{j \to \infty} \abs{v_j}^p \dd m_n\\
		&\qquad= \int_U \abs{u}^p \dd m_n
			+ \int_U \abs{v}^p \dd m_n
			- \int_U \abs{v}^p \dd m_n\\
		&\qquad= \int_U \abs{u}^p \dd m_n.
	\end{align*}
	Another use of Fatou's lemma also yields
	\[
		\int_U \abs{u}^p \dd m_n = \int_U \liminf_{j \to \infty} \abs{u_j}^p \dd m_n
		\leq \liminf_{j \to \infty} \int_U \abs{u_j}^p \dd m_n.
	\]
	Hence, we now have that $u_j \to u$ almost everywhere and $\norm{u_j}_p \to \norm{u}_p$. It is a classical theorem of Riesz that this implies $u_j \to u$ in $L^p$ (the proof of this is in fact yet another application of Fatou's lemma, this time to the nonnegative function $2^p(\abs{u}^p + \abs{u_j}^p) - \abs{u-u_j}^p$).
\end{proof}

We begin by giving the Sobolev chain rule for precomposition.

\begin{lemma}\label{lem:sobolev_precomposition}
	Let $U, U' \subset \R^n$ be open domains, and let $1 \leq p < \infty$. Suppose that $f \in W^{1,p}(U, \R^m)$ and that $h \colon U' \to U$ is a $C^1$-smooth $L$-bilipschitz map.
	
	Then $f \circ h \in W^{1,p}(U', \R^m)$ with
	\[
		\norm{f \circ h}_{1,p} \leq L^{1 + \frac{n}{p}} \norm{f}_{1,p},
	\]
	and 
	\[
	 	D(f \circ h) = Df \circ Dh
	\]
	almost everywhere on $U'$.
\end{lemma}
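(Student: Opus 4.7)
The plan is to first establish the result for smooth $f$, where everything reduces to the classical pointwise chain rule combined with the bilipschitz change of variables formula, and then to extend to general $f \in W^{1,p}(U, \R^m)$ by the Meyers--Serrin density theorem (Theorem \ref{thm:sobolev_eucl_smooth_density}).

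For $f \in C^\infty(U, \R^m) \cap W^{1,p}(U, \R^m)$, the classical pointwise chain rule gives $D(f \circ h)(y) = Df(h(y)) \circ Dh(y)$ on $U'$, and these classical partial derivatives also serve as the weak partial derivatives of $f \circ h$ via integration by parts. To estimate the resulting norms, note that since $h$ is $C^1$ and $L$-bilipschitz, Theorem \ref{thm:C1_lip_characterization} combined with the inverse function theorem shows that $h$ is a $C^1$ diffeomorphism onto its image $h(U') \subset U$, with $\abs{Dh} \leq L$ and $\abs{J_h} \geq L^{-n}$ by \eqref{eq:bilipschitz_J}. The Euclidean change of variables formula applied to $h \colon U' \to h(U')$ then gives
\[
	\int_{U'} \abs{f \circ h}^p \dd m_n
	\leq L^n \int_{U'} \abs{f \circ h}^p \abs{J_h} \dd m_n
	= L^n \int_{h(U')} \abs{f}^p \dd m_n
	\leq L^n \norm{f}_p^p,
\]
and similarly $\norm{D(f \circ h)}_p \leq L \cdot L^{n/p} \norm{Df}_p$ using the pointwise bound $\abs{D(f \circ h)} \leq \abs{Dh} \cdot (\abs{Df} \circ h) \leq L (\abs{Df} \circ h)$. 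Combined, these yield the norm estimate $\norm{f \circ h}_{1,p} \leq L^{1 + n/p} \norm{f}_{1,p}$.

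For general $f \in W^{1,p}(U, \R^m)$, take a sequence $f_k \in C^\infty(U, \R^m) \cap W^{1,p}(U, \R^m)$ converging to $f$ in $W^{1,p}$. Applying the estimate just proved to $f_j - f_k$ shows that $\{f_k \circ h\}$ is Cauchy in $W^{1,p}(U', \R^m)$, and hence converges to some $g$ by completeness (Theorem \ref{thm:sobolev_eucl_banach}). The same estimate applied to $f_k - f$ gives $f_k \circ h \to f \circ h$ in $L^p(U', \R^m)$; here we use that $f \circ h$ is a well-defined element of $L^p(U', \R^m)$, which requires $h$ to pull back null sets to null sets, a consequence of $h^{-1}$ being Lipschitz and thus satisfying the Lusin $(N)$ condition. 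Therefore $g = f \circ h$ in $L^p$, so $f \circ h \in W^{1,p}(U', \R^m)$ with the claimed norm bound preserved in the limit.

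The derivative identity follows by passing to the limit in $D(f_k \circ h) = Df_k \circ Dh$: the left side converges to $D(f \circ h)$ in $L^p$ by the above, while the change-of-variables bound applied to $Df_k - Df$ yields $Df_k \circ h \to Df \circ h$ in $L^p(U')$, and multiplication by the bounded continuous $Dh$ preserves $L^p$ convergence, giving $Df_k \circ Dh \to Df \circ Dh$. The main obstacle is threading the bilipschitz hypothesis through both directions: the Lipschitz bound on $h$ controls $\abs{Df \cdot Dh}$ pointwise, while the Lipschitz bound on $h^{-1}$ is needed for the lower Jacobian estimate that controls $\norm{f \circ h}_p$ by $\norm{f}_p$ and that guarantees $f \circ h$ is well-defined modulo null sets.
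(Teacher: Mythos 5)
Your proof is correct and follows essentially the same route as the paper's: smooth approximation via Meyers--Serrin, the bilipschitz Jacobian bound \eqref{eq:bilipschitz_J} with the diffeomorphic change of variables to get the $L^n$ and $L^{p+n}$ comparison constants, and completeness of $W^{1,p}(U',\R^m)$ to identify the limit. You are in fact slightly more careful than the paper in spelling out why $f\circ h$ and $Df\circ h$ are well defined modulo null sets (the Lusin condition for $h^{-1}$) and why $Df_k\circ Dh\to Df\circ Dh$ in $L^p$; no gaps.
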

\begin{proof}
	Let $f_j \in C^\infty(U,\R^m)$ be a sequence of smooth functions for which $\norm{f_j - f}_{1,p} \to 0$. Note that $\nabla f \circ h$ is measurable since $h$ satisfies the Lusin conditions. We first obtain by \eqref{eq:bilipschitz_J} that
	\begin{align*}
		\int_{U'} \abs{f_j \circ h}^p \dd m_n
		&\leq L^n \int_{U'} \left(\abs{f_j}^p \circ h\right) J_h \dd m_n
		\\&= L^n \int_{hU'} \abs{f_j}^p \dd m_n,
	\end{align*}
	from which we conclude that $f_j \circ h \in L^{1,p}(U', \R^m)$. Similarly, we have by \eqref{eq:lipschitz_D} and \eqref{eq:bilipschitz_J} that
	\begin{align*}
		\int_{U'} \abs{(\nabla f_j \circ h) \nabla h}^p \dd m_n
		&\leq \int_{U'} \left(\abs{\nabla f_j}^p \circ h\right) \abs{\nabla h}^p \dd m_n
		\\&\leq L^{p+n} \int_{U'} \left(\abs{\nabla f_j}^p \circ h\right) J_h \dd m_n
		\\&= L^{p+n} \int_{hU'} \abs{\nabla f_j}^p \dd m_n.
	\end{align*}
	Hence, $f_j \circ h \in W^{1,p}(U', \R^m)$.
	
	By the same computation as previously, we also obtain
	\begin{align*}
		\int_{U'} \abs{f \circ h - f_j \circ h}^p \dd m_n
		&\leq L^n \int_{hU'} \abs{f-f_j}^p \dd m_n
	\end{align*}
	and
	\begin{align*}
		\int_{U'} \abs{(\nabla f \circ h) \nabla h - (\nabla f_j \circ h) \nabla h}^p \dd m_n
		&\leq L^{p+n} \int_{hU'} \abs{\nabla f- \nabla f_j}^p \dd m_n.
	\end{align*}
	Hence, $(f_j \circ h)$ is Cauchy in $W^{1,p}(U', \R^m)$, $f_j \circ h \to f \circ h$ in $L^p$, and moreover $D(f_j \circ h) \to Df \circ Dh$ in $L^p$. Since $W^{1,p}(V, \R^m)$ is complete, we get that $f \circ h \in W^{1,p}(V, \R^m)$ and $D(f \circ h) = Df \circ Dh$. Also, we see from the above that the comparison constant between norms is $(\max\{L^n, L^{p+n} \})^{1/p} = L^{1+n/p}$.
\end{proof}

The postcomposition version is as follows.

\begin{lemma}\label{lem:sobolev_postcomposition}
	Let $U \subset \R^n$ and $V \subset \R^m$ be open domains, and let $1 \leq p < \infty$. Suppose that $f \in W^{1,p}(U, V)$ and that $h \colon V \to \R^k$ is a $C^1$-smooth $L$-Lipschitz map. Suppose furthermore that one of the following holds:
	\begin{itemize}
		\item $U$ has finite volume;
		\item $0 \in V$ and $h(0) = 0$.
	\end{itemize}
	
	Then $h \circ f \in W^{1,p}(U, \R^k)$, and 
	\[
		D(h \circ f) = Dh \circ Df
	\]
	almost everywhere on $U$.
\end{lemma}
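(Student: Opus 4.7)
The plan is to mimic the template of Lemma \ref{lem:sobolev_precomposition}: approximate $f$ by smooth maps via Meyers--Serrin, apply the classical chain rule at the smooth level, and then pass to the $W^{1,p}$-limit using Lemma \ref{lem:dom_convergence_variant}. Before doing this, integrability of $h \circ f$ and of the candidate weak derivative $Dh(f) \circ Df$ is a direct consequence of the Lipschitz estimate: for any fixed $y_0 \in V$, $\smallabs{h(f(x))} \leq \smallabs{h(y_0)} + L\smallabs{f(x) - y_0}$ for a.e.\ $x$, which lies in $L^p(U)$ under either hypothesis (taking $y_0 = 0$ in the second case, and exploiting $m_n(U) < \infty$ in the first). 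Likewise $\smallabs{Dh(f) \circ Df} \leq L\smallabs{Df} \in L^p(U)$.

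The main obstacle I anticipate is that the Meyers--Serrin approximants $f_j \in C^\infty(U, \R^m) \cap W^{1,p}(U, \R^m)$ of $f$ need not take values in $V$, so $h \circ f_j$ may fail to be defined. I would circumvent this by first extending $h$ to an $L$-Lipschitz map $\tilde h \colon \R^m \to \R^k$ via Kirszbraun's theorem (arranging $\tilde h(0) = 0$ in the second case) and then mollifying: $h_\eps := \tilde h * \varphi_\eps$ is a $C^\infty$ $L$-Lipschitz map defined on all of $\R^m$. Crucially, for any $y \in V$ with $\dist(y, \partial V) > \eps$, the convolution reduces to $h_\eps(y) = (h * \varphi_\eps)(y)$, and since $h$ is $C^1$ on $V$ we obtain $h_\eps(y) \to h(y)$ and $Dh_\eps(y) \to Dh(y)$ pointwise on $V$ as $\eps \to 0$.

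With this setup, the limit argument splits into two stages. First, fix $\eps$ and let $j \to \infty$: since $h_\eps$ and $f_j$ are smooth, the classical chain rule gives $\nabla(h_\eps \circ f_j) = Dh_\eps(f_j) \nabla f_j$, and the dominations $\smallabs{h_\eps \circ f_j} \leq \smallabs{h_\eps(0)} + L\smallabs{f_j}$ together with $\smallabs{Dh_\eps(f_j)\nabla f_j} \leq L\smallabs{\nabla f_j}$, combined with Lemma \ref{lem:dom_convergence_variant}, yield convergence in $L^p$; completeness (Theorem \ref{thm:sobolev_eucl_banach}) then gives $h_\eps \circ f \in W^{1,p}(U, \R^k)$ with $\nabla(h_\eps \circ f) = Dh_\eps(f) \nabla f$. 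Second, let $\eps \to 0$: since $f(x) \in V$ almost everywhere, the pointwise convergence from the previous paragraph applies at $f(x)$ for a.e.\ $x$, and another invocation of the same domination and Lemma \ref{lem:dom_convergence_variant} delivers $h \circ f \in W^{1,p}(U, \R^k)$ together with the chain rule $D(h \circ f) = Dh \circ Df$ almost everywhere.
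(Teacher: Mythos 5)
Your overall strategy coincides with the paper's — Meyers--Serrin approximation of $f$, the classical chain rule at the smooth level, the dominations $\abs{h\circ f_j}\leq \abs{h(0)}+L\abs{f_j}$ and $\abs{Dh(f_j)\nabla f_j}\leq L\abs{\nabla f_j}$, and passage to the limit via Lemma \ref{lem:dom_convergence_variant} and completeness — but you insert a regularization layer that the paper does not. The paper composes $h$ directly with the approximants $f_j$ and handles the finite-volume case by translating $f$ and $h$ to reduce to the case $h(0)=0$. You instead extend $h$ to an $L$-Lipschitz map on all of $\R^m$ (Kirszbraun) and mollify, obtaining globally defined smooth $L$-Lipschitz maps $h_\eps$ with $h_\eps \to h$ and $Dh_\eps \to Dh$ pointwise on $V$, and then take a two-stage limit, first in $j$ and then in $\eps$. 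This costs an extra limit, but it repairs a point the paper passes over silently: the approximants $f_j$ need not take values in $V$, so $h\circ f_j$ and $Dh(f_j)$ are not literally defined as the paper writes them. Your observation that $h_\eps$ agrees with $h\ast\varphi_\eps$ on the $\eps$-interior of $V$ (so that, by $C^1$-smoothness of $h$, also $Dh_\eps \to Dh$ there) is exactly what is needed to recover $h$ and $Dh$ in the second limit, and the dominations are uniform in $\eps$, so both stages go through.

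One small point to patch: in the case where $U$ has infinite volume and $h(0)=0$, you can arrange $\tilde h(0)=0$, but mollification destroys this — in general $h_\eps(0)\neq 0$ (only $\abs{h_\eps(0)}\leq L\eps$). Your domination $\abs{h_\eps\circ f_j}\leq \abs{h_\eps(0)}+L\abs{f_j}$ then involves a nonzero constant, which is not $p$-integrable over an infinite-volume $U$, so neither the $L^p$-membership of $h_\eps\circ f_j$ nor the hypotheses of Lemma \ref{lem:dom_convergence_variant} are met as stated. The fix is immediate: replace $h_\eps$ by $h_\eps - h_\eps(0)$ throughout; this has the same derivative, vanishes at $0$, and since $h_\eps(0)\to h(0)=0$ the final limit is unchanged. (In the finite-volume case the constant is integrable and nothing needs changing.) With that adjustment the argument is complete.
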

\begin{proof}
	Let again $f_j \in C^\infty(U,\R^m)$ be a sequence of smooth functions for which $\norm{f_j - f}_{1,p} \to 0$. Consider first the case $h(0) = 0$. We note that $\abs{h \circ f(x)} = \abs{h \circ f(x) - h(0)} \leq L \abs{f(x)}$, from which it follows that $h \circ f \in L^p(U, \R^m)$. We also get that
	\begin{align*}
		\int_U \abs{h \circ f - h \circ f_j}^p \dd m_n
		&\leq L^p \int_U \abs{f-f_j}^p \dd m_n.
	\end{align*}
	For the weak differentials, we first note using \eqref{eq:lipschitz_D} that
	\begin{align*}
		\abs{(\nabla h \circ f_j) \nabla f_j}
		& \leq (\abs{\nabla h} \circ f_j) \abs{\nabla f_j}\\
		& \leq L \abs{\nabla f_j}
	\end{align*}
	Hence, $\nabla(h \circ f_j) = (\nabla h \circ f_j) \nabla f_j$ is $L^p$, and we therefore have that $h \circ f_j \in W^{1,p}(U, \R^m)$. It remains to show that $(\nabla h \circ f_j) \nabla f_j \to (\nabla h \circ f) \nabla f$ in $L^p$, after which we obtain the result as before.
	
	For this, note first that $(\nabla h \circ f_j) \nabla f_j \to (\nabla h \circ f) \nabla f$ almost everywhere. Indeed, for almost every $x$, the $W^{1,p}$-convergence $f_j \to f$ yields that $f_j(x) \to f(x)$ and $\nabla f_j(x) \to \nabla f(x)$. The desired convergence then holds for such $x$ by continuity of $\nabla h$. Moreover, since $L \nabla f_j \to L \nabla f$ in $L^p$, we may use Lemma \ref{lem:dom_convergence_variant} with $u_j = (\nabla h \circ f_j) \nabla f_j$, $u = (\nabla h \circ f) \nabla f$, $v_j = L \nabla f_j$ and $v = L \nabla f$. With that, the desired $L^p$-convergence of differentials follows.
	
	The remaining case is that $U$ has finite volume. For this, we fix a $x \in U$, and define $\tilde{f} \colon U \to \tilde{V}$ by $\tilde{f} = f - f(x)$. Then we define $\tilde{h} \colon \tilde{V} \to \tilde{V}'$ by $\tilde{h}(y) = h(y+f(x)) - h(f(x))$. We may now apply the previous case to $\tilde{f}$ and $\tilde{h}$. Clearly $(\nabla h \circ f) \nabla f = (\nabla \tilde{h} \circ \tilde{f}) \nabla \tilde{f}$. Therefore, it remains to check $h \circ f \in L^p(U, \R^m)$, after which the claim follows. This follows from the fact that $h \circ f = \tilde{h} \circ \tilde{f} - h(f(x))$, where on the right side $\tilde{h} \circ \tilde{f}$ is in $L^p$, and the constant function $h(f(x))$ is in $L^p$ due to the finite volume of the domain of integration.
\end{proof}

A local version of the chain rules follows immediately as a corollary. Note that in the local case we can automatically assume the boundedness of $U$ when applying Lemma \ref{lem:sobolev_postcomposition}, and therefore dividing to cases is unnecessary.

\begin{cor}\label{cor:sobolev_composition_local}
	Let $U, U' \subset \R^n$ and $V \subset \R^m$, $V' \subset \R^k$ be open domains, and let $1 \leq p < \infty$. Suppose that $f \in W^{1,p}_\loc(U, V)$, $h_1 \colon U' \to U$ is a $C^1$-smooth locally bilipschitz map, and $h_2 \colon V \to V'$ is a $C^1$-smooth Lipschitz map.
	
	Then $h_2 \circ f \circ h_1 \in W^{1,p}_\loc(U', V')$ and 
	\[
		D(h_2 \circ f \circ h_1) = Dh_2 \circ Df \circ Dh_1
	\]
	almost everywhere on $U'$.
\end{cor}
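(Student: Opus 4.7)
The plan is to deduce this from Lemma \ref{lem:sobolev_precomposition} and Lemma \ref{lem:sobolev_postcomposition} by a routine localization argument. Since $W^{1,p}_\loc$ is a local property, it suffices to show that every $x' \in U'$ has a bounded open neighborhood $W' \subset U'$ with $\overline{W'}$ compact in $U'$ on which $h_2 \circ f \circ h_1 \in W^{1,p}(W', V')$ and the chain rule holds almost everywhere.

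Fix such a point $x'$. Using the local bilipschitz assumption together with Theorem \ref{thm:C1_lip_characterization}, I would first pick a small neighborhood on which $h_1$ is $L$-bilipschitz for some $L > 0$, then shrink it to a bounded $W' \ni x'$ whose closure lies in $U'$. Because $h_1$ is bilipschitz (hence a homeomorphism onto its image) and, having $l(Dh_1) > 0$ with equal dimensions, open by invariance of domain, the set $W := h_1(W')$ is an open subset of $U$, and by further shrinking $W'$ I may assume $\overline{W}$ is compact in $U$. The assumption $f \in W^{1,p}_\loc(U, V)$ then gives $f|_W \in W^{1,p}(W, V)$.

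Now I apply Lemma \ref{lem:sobolev_precomposition} to $f|_W$ and $h_1|_{W'} \colon W' \to W$, obtaining $(f \circ h_1)|_{W'} \in W^{1,p}(W', V)$ with
\[
D(f \circ h_1) = Df \circ Dh_1 \quad \text{a.e.\ on } W'.
\]
Since $W'$ has finite volume (being bounded), the first alternative of Lemma \ref{lem:sobolev_postcomposition} applies to the composition with $h_2$, yielding $h_2 \circ f \circ h_1 \in W^{1,p}(W', V')$ together with
\[
D(h_2 \circ f \circ h_1) = Dh_2 \circ D(f \circ h_1) = Dh_2 \circ Df \circ Dh_1 \quad \text{a.e.\ on } W'.
\]
Since $x'$ was arbitrary, the conclusion $h_2 \circ f \circ h_1 \in W^{1,p}_\loc(U', V')$ and the almost-everywhere chain rule on all of $U'$ follow.

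The only step that requires any real thought is verifying that one can indeed localize $h_1$ cleanly so that Lemma \ref{lem:sobolev_precomposition} applies — i.e., that $h_1(W')$ is open with compact closure inside $U$ — but this is a direct consequence of the bilipschitz property plus invariance of domain. The postcomposition step is entirely painless because the finite-volume hypothesis is automatic once one works on the bounded neighborhood $W'$, which is exactly why the corollary does not need a case split like Lemma \ref{lem:sobolev_postcomposition}.
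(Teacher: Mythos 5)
Your proposal is correct and is essentially the argument the paper intends: the paper dispenses with the proof by noting that the corollary "follows immediately" from Lemmas \ref{lem:sobolev_precomposition} and \ref{lem:sobolev_postcomposition} once one localizes to a bounded neighborhood, which makes the finite-volume branch of the postcomposition lemma automatic. Your write-up simply fills in the routine details (choosing $W'$, checking $h_1(W')$ is open and precompact in $U$) that the paper leaves implicit.
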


\section{Sobolev spaces with manifold domain}\label{sect:sobolev_manifold_domain}

In this section, we generalize the discussion from the last section to the case where the domains of Sobolev maps are Riemannian manifolds, instead of being contained in $\R^n$.

\subsection{Sobolev maps from manifolds into $\R^k$}

Suppose then that $M$ is a Riemannian $n$-manifold without boundary. We wish to define weak derivatives on $M$, that is, to define the space $W^{1,1}_\loc(M, \R^k)$. The definition is as follows:
\begin{defn}
	Let $f \in L^1_\loc(M, \R^k)$. Then $f \in W^{1,1}_\loc(M, \R^k)$ if, for every $x \in M$, there exists a neighborhood $U \subset M$ of $x$ and a $C^1$-chart $\phi \colon U \to \R^n$ for which $f \circ \phi^{-1} \in W^{1,1}(\phi(U), \R^k)$.
\end{defn}

If $M$ is a domain in $\R^n$, we need to verify that our definition is equivalent with the Euclidean one. For that, let $x \in M$. If $f$ satisfies the Euclidean definition, we may pick a neighborhood $U$ of $x$ where $f\vert_U \in W^{1,1}(U, \R^k)$, and now the manifold definition holds by the identity chart on $U$. On the other hand, if the manifold definition holds, we find a neighborhood $U$ of $x$ and a chart $\phi \colon U \to \R^n$ for which $f \circ \phi^{-1} \in W^{1,1}(\phi(U), \R^k)$. Since $\phi$ is a diffeomorphism to its image, we may assume it is bilipschitz by further restricting $U$. Then $f \in W^{1,1}(U, \R^k)$ by Lemma \ref{lem:sobolev_precomposition}, and the Euclidean definition is easily seen to follow.

We now wish to define a weak derivative $Df \colon TM \to T\R^k$ for the function $f$. Suppose $x \in M$, and select a chart $\phi \colon U \to \R^n$ at $x$ such that $f \circ \phi^{-1} \in W^{1,1}(\phi(U))$. By further restricting $U$, we may also assume that $\phi$ is bilipschitz. Since $D\phi \colon T\phi(U) \to TU$ is a diffeomorphism, it satisfies the Lusin conditions, and we hence have a measurable map $Df\vert_U \colon TU \to T\R^n$ defined by $Df\vert_U = D(f \circ \phi^{-1}) \circ D\phi$.

We may then define $Df$ by taking a countable locally finite cover of $M$ by bilipschitz charts $(U_i, \phi_i)_{i\in \N}$, and setting $Df(x) = Df\vert_{U_0}(x)$ for $x \in U_0$, $Df(x) = Df\vert_{U_1}(x)$ for $x \in U_1 \setminus U_0$, $Df(x) = Df\vert_{U_2}(x)$ for $x \in U_2 \setminus (U_1 \cup U_0)$, etc. The resulting $Df$ is clearly measurable. To see the uniqueness of $Df$, suppose that $\phi \colon U \to \R^n$ and $\psi \colon V \to \R^n$ are both bilipschitz charts at $x$. Then by Lemma \ref{lem:sobolev_precomposition}, we conclude that
\begin{multline*}
Df\vert_U
= D(f \circ \phi^{-1}) \circ D\phi
= D(f \circ \phi^{-1}) \circ D\phi \circ D\psi^{-1} \circ D\psi\\
= D(f \circ \phi^{-1}) \circ D(\phi \circ \psi^{-1}) \circ D\psi
= D(f \circ \psi^{-1}) \circ D\psi
= Df\vert_V
\end{multline*}
almost everywhere on $U \cap V$. It follows that $Df$ is unique up to a set of measure zero.

Recall the continuous map $\iota_k \colon T\R^k \to \R^k$ given by the canonical identifications $T_x \R^n \cong \R^n$. We may then for any $f \in W^{1,1}_\loc(M, \R^k)$ define a weak differential $df \colon TM \to \R^n$ by $df = \iota_k \circ df$. Since $\iota_k$ is continuous, $df$ is a measurable differential 1-form by Lemma \ref{lem:two_measurability_defs_for_forms}. Hence, the following part of the diagram of different weak derivatives survives to the manifold domain setting.
\[\begin{tikzcd}[sep=large]
	T_x M \ar[r, "Df(x)"] \ar[dr, "df_x"]
	& T_{f(x)}\R^k \ar[d, "\cong"]\\
	& \R^k 
\end{tikzcd}\]

Note that $\abs{Df}$ is in $L^1_\loc(M)$. Indeed, this follows from the fact that, given an $L$-bilipschitz chart $\phi \colon U \to \R^n$, we have $\abs{Df\vert_U} = \abs{D(f \circ \phi^{-1}) \circ D\phi} \leq L^{n+1}(\abs{D(f \circ \phi^{-1})} \circ \phi) J_\phi$. If $k = 1$, this also holds for $\abs{df}$, where $\abs{\cdot}$ is the pointwise norm induced by the Grassmann inner product; we discuss this detail later on when we consider Sobolev differential forms (see Corollary \ref{cor:bilip_pullback_Lp_estimate}).

The Sobolev space is then defined as follows.

\begin{defn}\label{def:sobolev_mf-dom}
	Let $M$ be a Riemannian $n$-manifold, and let $1 \leq p \leq \infty$. The space $W^{1,p}(M, \R^m)$ consists of all $f \in W^{1,1}_\loc(M, \R^m)$ for which $f \in L^p(M, \R^m)$ and $\abs{Df} \in L^p(M, \R^m)$. Similarly, the space $W^{1,p}_\loc(M, \R^m)$ consists of all $f \in W^{1,1}_\loc(M, \R^m)$ for which $f \in L^p_\loc(M, \R^m)$ and $\abs{Df} \in L^p_\loc(M, \R^m)$.
\end{defn}

It is clear that if $M$ is a domain of $\R^n$, then the above definition agrees with the Euclidean one. As in the real case, we have the norm
\[
	\norm{f}_{1,p} = \left( \int_U \left(\abs{f(x)}^p + \abs{Df(x)}^p\right) \dd m_n(x) \right)^\frac{1}{p}
\]
for $f \in W^{1,p}(M, \R^m)$.

\subsection{Manifold domain versions of some classical results}

We then point out some manifold versions of the classical Euclidean results for Sobolev spaces, obtained through simple use of charts and the corresponding Euclidean results. They are not necessarily in full generality, but are sufficient for our purposes. We begin by considering the pre- and postcomposition lemmas in the manifold context.

\begin{lemma}\label{lem:sobolev_precomposition_mfld_domain}
	Let $M, M'$ be $n$-dimensional Riemannian manifolds, and let $1 \leq p < \infty$. Suppose that $f \in W^{1,p}(M, \R^m)$ and that $h \colon M' \to M$ is a $C^1$-smooth $L$-bilipschitz map.
	
	Then $f \circ h \in W^{1,p}(M', \R^m)$ with
	\[
		\norm{f \circ h}_{1,p} \leq L^{1 + \frac{n}{p}} \norm{f}_{1,p},
	\]
	and 
	\[
		D(f \circ h) = Df \circ Dh
	\]
	almost everywhere on $M'$.
\end{lemma}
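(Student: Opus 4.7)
The plan is to reduce the statement entirely to the Euclidean case of Lemma \ref{lem:sobolev_precomposition} via bilipschitz charts on both $M'$ and $M$, and then to produce the global $L^p$ bound from the pointwise estimates $\abs{Dh} \leq L$ and $L^{-n} \leq \abs{J_h} \leq L^n$ supplied by the $L$-bilipschitz hypothesis through \eqref{eq:lipschitz_D}, \eqref{eq:lipschitz_J} and \eqref{eq:bilipschitz_J}, combined with the Riemannian change of variables formula applied to the $C^1$ diffeomorphism $h \colon M' \to hM'$ (Theorem \ref{thm:C1_lip_characterization}).

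For the local step, fix $x' \in M'$ and use Theorem \ref{thm:bilipschitz_charts} together with continuity of $h$ to select bilipschitz $C^1$ charts $\phi' \colon U' \to \R^n$ at $x'$ and $\phi \colon U \to \R^n$ at $h(x')$ with $h(U') \subset U$. The conjugated map $\tilde{h} = \phi \circ h \circ (\phi')^{-1}$ is $C^1$ and bilipschitz between Euclidean domains as a composition of bilipschitz maps, while $\tilde{f} = f \circ \phi^{-1}$ lies in $W^{1,p}(\phi(U), \R^m)$ by the definition of $W^{1,p}(M, \R^m)$. Lemma \ref{lem:sobolev_precomposition} then delivers $f \circ h \circ (\phi')^{-1} = \tilde{f} \circ \tilde{h} \in W^{1,p}(\phi'(U'), \R^m)$ with $D(\tilde{f} \circ \tilde{h}) = D\tilde{f} \circ D\tilde{h}$ a.e. Pulling this back through the manifold identities $Df\vert_U = D\tilde{f} \circ D\phi$ and $Dh\vert_{U'} = D\phi^{-1} \circ D\tilde{h} \circ D\phi'$, the factors $D\phi(h(x'))$ and $D\phi^{-1}(\phi(h(x')))$ cancel to give $D(f \circ h) = Df \circ Dh$ a.e.\ on $U'$. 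A countable cover of $M'$ by such chart neighborhoods then upgrades these local statements to $f \circ h \in W^{1,1}_\loc(M', \R^m)$ with the chain rule holding globally a.e.

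For the global $L^p$ bound, the Riemannian change of variables formula gives $\int_{M'} (g \circ h)\abs{J_h} \dd m_n = \int_{hM'} g \dd m_n$ for every nonnegative measurable $g$ on $M$. Combining this with $\abs{J_h} \geq L^{-n}$ and $\abs{D(f \circ h)} \leq L(\abs{Df} \circ h)$ yields
\[
    \int_{M'} \abs{f \circ h}^p \dd m_n \leq L^n \norm{f}_p^p, \qquad \int_{M'} \abs{D(f \circ h)}^p \dd m_n \leq L^{p+n} \norm{Df}_p^p.
\]
Adding these and taking $p$-th roots produces $\norm{f \circ h}_{1,p} \leq L^{1+n/p} \norm{f}_{1,p}$.

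The only delicate point, and the one I expect to require real attention, is ensuring that the chart-level chain rule from the Euclidean Lemma \ref{lem:sobolev_precomposition} assembles into the chart-independent manifold identity $D(f \circ h) = Df \circ Dh$. This is handled by the same across-charts consistency calculation already used in Section \ref{sect:sobolev_manifold_domain} to show that the manifold weak derivative is well defined up to a null set; once that bookkeeping is in place, the rest of the argument is routine.
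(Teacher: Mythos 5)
Your proposal is correct, but it takes a more laborious route than the paper's own proof, and the comparison is instructive. You fix independent bilipschitz charts $\phi'$ on $M'$ and $\phi$ on $M$, conjugate $h$ to a Euclidean bilipschitz map $\tilde{h}=\phi\circ h\circ(\phi')^{-1}$, and invoke the Euclidean precomposition Lemma \ref{lem:sobolev_precomposition} on $\tilde{f}\circ\tilde{h}$; you then need the across-charts consistency bookkeeping to assemble the local chain rules into the global identity $D(f\circ h)=Df\circ Dh$, which you rightly flag as the delicate step. The paper instead chooses the chart on $M'$ to be $\phi\circ h\colon h^{-1}U\to\R^n$ itself (legitimate, since $h$ is a $C^1$ diffeomorphism onto its image). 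With that choice the transition map is trivial: $(f\circ h)\circ(\phi\circ h)^{-1}=f\circ\phi^{-1}$, which is weakly differentiable by hypothesis, so $f\circ h\in W^{1,1}_\loc(M',\R^m)$ follows directly from the definition, and the chain rule drops out of the defining relation $D(f\circ h)=D\bigl((f\circ h)\circ(\phi\circ h)^{-1}\bigr)\circ D(\phi\circ h)$ in one line. As the paper notes, no fresh application of Lemma \ref{lem:sobolev_precomposition} is needed, because that lemma was already consumed in establishing the well-posedness of the manifold definition. Your norm estimate via $\abs{Dh}\leq L$, $\abs{J_h}\geq L^{-n}$, and the $C^1$ change of variables is exactly what the paper does. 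What your approach buys is that it works verbatim even if one insists on only ever using charts from a fixed atlas on $M'$; what the paper's buys is brevity and the avoidance of the assembly argument altogether.
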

\begin{proof}
	Let $\phi \colon U \to \R^n$ be a $C^1$-chart on $U \subset M$ for which $f \circ \phi^{-1}$ is weakly differentiable. Then $\phi \circ h \colon h^{-1} U \to \R^n$ is also a $C^1$-chart, and we have that $(f \circ h) \circ (\phi \circ h)^{-1} = f \circ \phi^{-1}$ is weakly differentiable. 
	
	By definition, $f \circ h \in W^{1,1}_\loc(M)$. Moreover, $D(f \circ h) \circ Dh^{-1} \circ D\phi^{-1} = D((f \circ h) \circ (\phi \circ h)^{-1}) = D(f \circ \phi^{-1}) = Df \circ D\phi^{-1}$, from which we conclude $D(f \circ h) = Df \circ Dh$.
	
	Finally, the norm estimates are done similarly as in the proof of Lemma \ref{lem:sobolev_precomposition} using \eqref{eq:lipschitz_D} and \eqref{eq:bilipschitz_J}.
\end{proof}

We note that there was in fact no need to directly apply Lemma \ref{lem:sobolev_precomposition}, since it had already been used to make sure the definition of weak differentiability on manifolds was well-posed.

\begin{lemma}\label{lem:sobolev_postcomposition_mfld_domain}
	Let $M$ be a Riemannian $n$-manifold, let $V \subset \R^m$, and let $1 \leq p < \infty$. Suppose that $f \in W^{1,p}(M, V)$ and that $h \colon V \to \R^k$ is a $C^1$-smooth $L$-Lipschitz map. Suppose furthermore that one of the following holds:
	\begin{itemize}
		\item $M$ has finite volume;
		\item $0 \in V$ and $h_2(0) = 0$.
	\end{itemize}
	
	Then $h \circ f \in W^{1,p}(U, \R^k)$, and 
	\[
		D(h \circ f) = Dh \circ Df
	\]
	almost everywhere on $U$.
\end{lemma}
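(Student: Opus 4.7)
The approach is to reduce to the Euclidean chain rule (Lemma \ref{lem:sobolev_postcomposition}) by working in bilipschitz charts, mirroring the structure of the preceding Lemma \ref{lem:sobolev_precomposition_mfld_domain}. Using Theorem \ref{thm:bilipschitz_charts}, for each $x \in M$ fix a smooth bilipschitz chart $\phi \colon U \to B \subset \R^n$ onto a bounded Euclidean ball. Then Lemma \ref{lem:sobolev_precomposition_mfld_domain} gives $f \circ \phi^{-1} \in W^{1,p}(B, V)$. Since $B$ has finite volume, the first case of Lemma \ref{lem:sobolev_postcomposition} applies and yields $h \circ f \circ \phi^{-1} \in W^{1,p}(B, \R^k)$ together with the Euclidean chain rule $D(h \circ f \circ \phi^{-1}) = Dh \circ D(f \circ \phi^{-1})$ almost everywhere on $B$. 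In particular, $h \circ f \in W^{1,1}_\loc(M, \R^k)$.

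Next, I would assemble the chain rule on $M$. By the construction of the weak derivative on manifolds, $D(h \circ f)\vert_U = D(h \circ f \circ \phi^{-1}) \circ D\phi$ and $Df\vert_U = D(f \circ \phi^{-1}) \circ D\phi$ almost everywhere on $U$. Substituting these identities into the Euclidean chain rule above yields
\[
D(h \circ f)\vert_U = Dh \circ D(f \circ \phi^{-1}) \circ D\phi = Dh \circ Df\vert_U
\]
almost everywhere on $U$. Since charts of the above form cover $M$, the identity $D(h \circ f) = Dh \circ Df$ holds almost everywhere on $M$.

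It then remains to upgrade local integrability to the global $L^p$-integrability of $h \circ f$ and $\abs{D(h \circ f)}$. The derivative bound is immediate: combining the chain rule with the $L$-Lipschitzness of $h$ (via \eqref{eq:lipschitz_D}) gives $\abs{D(h \circ f)(x)} \leq L \abs{Df(x)}$ almost everywhere, so $\abs{D(h \circ f)} \in L^p(M)$ since $f \in W^{1,p}(M, V)$. For $h \circ f \in L^p(M, \R^k)$, the argument splits according to the two hypotheses. If $h(0) = 0$, then $\abs{h \circ f} \leq L\abs{f}$ pointwise and the conclusion follows from $f \in L^p(M, \R^m)$. If instead $M$ has finite volume, fix any $y_0 \in V$ and estimate $\abs{h(f(x))} \leq L\abs{f(x) - y_0} + \abs{h(y_0)}$; here $f - y_0 \in L^p(M, \R^m)$ and the constant function $\abs{h(y_0)}$ lies in $L^p(M)$ thanks to the finite volume hypothesis.

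I do not expect a genuine obstacle; the proof is essentially bookkeeping around charts. The one conceptual point worth highlighting is that working with bounded chart images collapses the two cases of the Euclidean Lemma \ref{lem:sobolev_postcomposition} into a single local application, so the dichotomy in the manifold statement only resurfaces at the very end, when one has to pass from the already-established local $L^p$-integrability to the global one for $h \circ f$ itself.
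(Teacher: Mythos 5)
Your proposal is correct and follows essentially the same route as the paper: localize with (bilipschitz) charts, apply the Euclidean pre/post-composition chain rules to get weak differentiability and the identity $D(h\circ f)=Dh\circ Df$ locally, then recover the global $L^p$-integrability of $h\circ f$ by the same two-case argument as in the Euclidean Lemma \ref{lem:sobolev_postcomposition}. The only cosmetic difference is that you invoke Lemmas \ref{lem:sobolev_precomposition_mfld_domain} and \ref{lem:sobolev_postcomposition} separately where the paper cites Corollary \ref{cor:sobolev_composition_local}, which amounts to the same thing.
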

\begin{proof}
	Let $\phi \colon U \to M$ be a chart on a neighborhood of a point $x \in M$ for which $f \circ \phi^{-1}$ is weakly differentiable. Then by Corollary \ref{cor:sobolev_composition_local}, $h \circ f \circ \phi^{-1}$ is weakly differentiable and $D(h \circ f \circ \phi^{-1}) = Dh \circ D(f \circ \phi^{-1}) = Dh \circ Df \circ D\phi^{-1}$. Now by definition of weak differentiability on manifolds, we have $h \circ f \in W^{1,1}_\loc(M, \R^k)$ and $D(h \circ f) = Dh \circ Df$.
	
	The integrabilities are then handled by the exact same way as in the proof of Lemma \ref{lem:sobolev_postcomposition}
\end{proof}

\begin{cor}\label{cor:sobolev_composition_mfld_dom_local}
	Let $M, M'$ be Riemannian $n$-manifolds, let $V \subset \R^m$ and $V' \subset \R^k$ be open subsets, and let $1 \leq p < \infty$. Suppose that $f \in W^{1,p}_\loc(M, V)$, $h_1 \colon M' \to M$ is a $C^1$-smooth locally bilipschitz map, and $h_2 \colon V \to V'$ is a $C^1$-smooth Lipschitz map.
	
	Then $h_2 \circ f \circ h_1 \in W^{1,p}_\loc(M', V')$ and 
	\[
		D(h_2 \circ f \circ h_1) = Dh_2 \circ Df \circ Dh_1
	\]
	almost everywhere on $M'$.
\end{cor}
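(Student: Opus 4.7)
My plan is to localize around each point of $M'$ and reduce the statement to the two previously established global chain rules on manifolds, namely Lemmas \ref{lem:sobolev_precomposition_mfld_domain} and \ref{lem:sobolev_postcomposition_mfld_domain}. Since $W^{1,p}_\loc$ is defined by a neighborhood-wise condition and weak derivatives on manifolds were already shown to be uniquely determined almost everywhere by any chart-based construction, it suffices to produce, for every $x' \in M'$, a neighborhood on which $h_2 \circ f \circ h_1$ lies in $W^{1,p}$ and satisfies the stated derivative identity.

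Fix such an $x' \in M'$. Since $h_1$ is locally bilipschitz, I would first choose a precompact open neighborhood $U' \subset M'$ of $x'$ so small that $h_1|_{U'}$ is $L$-bilipschitz onto its image $U = h_1(U')$ for some $L \geq 1$. Because $h_1$ is continuous and $\overline{U'}$ is compact, $h_1(\overline{U'})$ is a compact subset of $M$ containing $\overline{U}$, so $U$ is precompact in $M$. The hypothesis $f \in W^{1,p}_\loc(M, V)$ then gives $f|_U \in W^{1,p}(U, V)$.

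Now I apply Lemma \ref{lem:sobolev_precomposition_mfld_domain} with manifolds $U$ and $U'$, the bilipschitz $C^1$-diffeomorphism $h_1|_{U'} \colon U' \to U$, and the map $f|_U$. This produces $f \circ h_1 \in W^{1,p}(U', V)$ together with $D(f \circ h_1) = Df \circ Dh_1$ almost everywhere on $U'$. Since $U'$ is precompact it has finite Riemannian volume, so I may invoke the finite-volume case of Lemma \ref{lem:sobolev_postcomposition_mfld_domain} with the $C^1$-smooth Lipschitz map $h_2$ applied to $f \circ h_1 \in W^{1,p}(U', V)$, yielding $h_2 \circ f \circ h_1 \in W^{1,p}(U', V')$ and
\[
    D(h_2 \circ f \circ h_1) = Dh_2 \circ D(f \circ h_1) = Dh_2 \circ Df \circ Dh_1
\]
almost everywhere on $U'$. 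Since $x' \in M'$ was arbitrary, this establishes both $h_2 \circ f \circ h_1 \in W^{1,p}_\loc(M', V')$ and the chain rule almost everywhere on $M'$.

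The only genuine point to check is that a locally bilipschitz $C^1$-map does restrict to a bilipschitz map onto its image on a sufficiently small precompact neighborhood, and that the image of such a neighborhood is itself precompact in $M$; both assertions are immediate from the definition of local bilipschitzness and continuity, so no essentially new ingredient is required beyond the two preceding lemmas.
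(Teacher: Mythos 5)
Your proposal is correct and follows exactly the route the paper intends for this corollary: localize to a precompact neighborhood on which $h_1$ is bilipschitz, apply the precomposition lemma (Lemma \ref{lem:sobolev_precomposition_mfld_domain}), and then apply the postcomposition lemma (Lemma \ref{lem:sobolev_postcomposition_mfld_domain}) in its finite-volume case, which is automatic by precompactness. The supporting observations (openness and precompactness of $h_1(U')$, and $f|_U \in W^{1,p}(U,V)$ from local integrability over a precompact set) are all handled correctly.
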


Next, we give a local Sobolev embedding theorem on manifold domains.

\begin{thm}\label{thm:sobolev_mfld_dom_embedding_local}
	Let $M$ be a Riemannian $n$-manifold, and let $p, q \in [1, \infty)$ be such that $1/q \geq 1/p - 1/n$. Then $W^{1,p}_\loc(M, \R^m) \subset L^{q}_\loc(M, \R^m)$.
\end{thm}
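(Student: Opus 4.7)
The plan is to reduce the statement to the Euclidean embedding theorem (Theorem \ref{thm:sobolev_eucl_embedding_local}) via bilipschitz charts, using the norm estimates \eqref{eq:lipschitz_D}--\eqref{eq:bilipschitz_J} to transfer $L^p$-integrabilities between $M$ and Euclidean balls in both directions.

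First I would fix $x \in M$ and apply Theorem \ref{thm:bilipschitz_charts} to obtain an $L$-bilipschitz $C^\infty$-chart $\phi \colon U \to \R^n$ with $x \in U$ and $\phi U$ a Euclidean ball. From the discussion following Definition \ref{def:sobolev_mf-dom} (and Lemma \ref{lem:sobolev_precomposition}), $f \in W^{1,1}_\loc(M, \R^m)$ implies $f \circ \phi^{-1} \in W^{1,1}_\loc(\phi U, \R^m)$ with weak derivative $D(f \circ \phi^{-1}) = Df \circ D\phi^{-1}$ almost everywhere. Using that $\phi$ is $L$-bilipschitz, the estimates \eqref{eq:lipschitz_D} and \eqref{eq:bilipschitz_J} applied to $\phi^{-1}$ give, for any precompact $V \subset U$,
\[
\int_{\phi V} \abs{f \circ \phi^{-1}}^p \dd m_n \leq L^n \int_V \abs{f}^p \dd m_n, \qquad
\int_{\phi V} \abs{D(f \circ \phi^{-1})}^p \dd m_n \leq L^{n+p} \int_V \abs{Df}^p \dd m_n.
\]
Since $f \in W^{1,p}_\loc(M, \R^m)$ implies the right hand sides are finite, we conclude $f \circ \phi^{-1} \in W^{1,p}_\loc(\phi U, \R^m)$ in the Euclidean sense.

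Next, I would apply the Euclidean local embedding Theorem \ref{thm:sobolev_eucl_embedding_local} on the domain $\phi U$ to obtain $f \circ \phi^{-1} \in L^q_\loc(\phi U, \R^m)$. Transferring back through $\phi$ with the same style of Jacobian estimate, we have for any precompact $V \subset U$ that
\[
\int_V \abs{f}^q \dd m_n \leq L^n \int_{\phi V} \abs{f \circ \phi^{-1}}^q \dd m_n < \infty,
\]
so that $f \in L^q$ on a neighborhood of $x$.

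Finally, since $x \in M$ was arbitrary, every point of $M$ admits a neighborhood on which $\abs{f}^q$ is integrable. Any compact subset of $M$ is covered by finitely many such neighborhoods, so $f \in L^q_\loc(M, \R^m)$. The argument presents no real obstacle; the only point that requires care is the first step of verifying that $f \circ \phi^{-1}$ lies in the Euclidean $W^{1,p}_\loc$ space, but this is immediate once one invokes the bilipschitz estimates already recorded in \eqref{eq:lipschitz_D}--\eqref{eq:bilipschitz_J}.
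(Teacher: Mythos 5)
Your proposal is correct and follows essentially the same route as the paper: reduce to the Euclidean embedding Theorem \ref{thm:sobolev_eucl_embedding_local} via a bilipschitz chart, with the paper citing Corollary \ref{cor:sobolev_composition_mfld_dom_local} for the transfer into the chart and ``a similar estimate as in Lemma \ref{lem:sobolev_precomposition}'' for the transfer back, exactly the two change-of-variables estimates you write out explicitly. No gaps.
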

\begin{proof}
	Let $f \in W^{1,p}_\loc(M, \R^m)$, and let $h \colon U \to \R^n$ be a bilipschitz chart on $U \subset M$. By Corollary \ref{cor:sobolev_composition_mfld_dom_local}, $f \circ h^{-1} \in W^{1,p}_\loc(hU, \R^m)$. Hence by the Euclidean version of the result, Theorem \ref{thm:sobolev_eucl_embedding_local}, we have that $f \circ h^{-1} \in L^q_\loc(hU, \R^m)$. The claim therefore follows by a similar estimate as in the proof of Lemma \ref{lem:sobolev_precomposition}.
\end{proof}

Finally, we again have by the same argument as in the Euclidean case the following corollary of Theorem \ref{thm:sobolev_mfld_dom_embedding_local}.

\begin{cor}\label{cor:sobolev_mfld_dom_base_exponent}
	Let $M$ be a Riemannian $n$-manifold, and let $p \in [1, \infty)$. Suppose that $f \in W^{1,1}_\loc(M, \R^m)$ and $\abs{Df} \in L^p_\loc(M)$. Then $f \in W^{1,p}_\loc(M, \R^m)$.
\end{cor}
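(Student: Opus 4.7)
The plan is to repeat, verbatim in the manifold setting, the bootstrap argument given for the Euclidean Corollary \ref{cor:sobolev_base_exponent}, now using the manifold embedding Theorem \ref{thm:sobolev_mfld_dom_embedding_local} in place of its Euclidean counterpart. The starting observation is that $f \in W^{1,1}_\loc(M, \R^m)$ gives $f \in L^{n/(n-1)}_\loc(M, \R^m)$ by Theorem \ref{thm:sobolev_mfld_dom_embedding_local}, and combining this with the hypothesis $\abs{Df} \in L^p_\loc(M)$ immediately places $f$ in $W^{1, \min\{p,\,n/(n-1)\}}_\loc(M, \R^m)$. If $p \leq n/(n-1)$, the proof is complete at this step.

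Otherwise, iterate: having established $f \in W^{1, n/(n-k)}_\loc(M, \R^m)$ for some $k \in \{1, \dots, n-1\}$ with $n/(n-k) < p$, apply Theorem \ref{thm:sobolev_mfld_dom_embedding_local} with the exponent relation $1/q \geq (n-k)/n - 1/n$ to obtain $f \in L^{n/(n-k-1)}_\loc(M, \R^m)$ (with the convention that the target exponent may be taken arbitrarily large once $n - k - 1 \leq 0$). The hypothesis $\abs{Df} \in L^p_\loc(M)$ is preserved throughout, so at the next stage we have $f \in W^{1, \min\{p,\,n/(n-k-1)\}}_\loc(M, \R^m)$.

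Since $n/(n-k)$ strictly increases in $k$ and eventually (namely for $k \geq n-1$) the exponent produced by the embedding can be taken to be any finite number, the iteration terminates after at most $n - 1$ steps, at which point $f \in L^p_\loc(M, \R^m)$. Combined with $\abs{Df} \in L^p_\loc(M)$, this gives $f \in W^{1,p}_\loc(M, \R^m)$ by Definition \ref{def:sobolev_mf-dom}. There is no genuine obstacle here: the only thing to verify is that each bootstrap step is valid for manifold domains, and this is exactly the content of Theorem \ref{thm:sobolev_mfld_dom_embedding_local}. An equivalent and equally short route would be to fix a bilipschitz chart $\phi \colon U \to \R^n$ from Theorem \ref{thm:bilipschitz_charts} around an arbitrary point of $M$, use Corollary \ref{cor:sobolev_composition_mfld_dom_local} together with the Jacobian bound \eqref{eq:bilipschitz_J} to transport the hypotheses to $f \circ \phi^{-1}$ on $\phi U$, apply the Euclidean Corollary \ref{cor:sobolev_base_exponent} there, and then transport back along $\phi$ by Lemma \ref{lem:sobolev_precomposition_mfld_domain}.
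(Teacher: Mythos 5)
Your proposal is correct and is exactly the argument the paper intends: the paper proves this corollary by the one-line remark that it follows ``by the same argument as in the Euclidean case'' from Theorem \ref{thm:sobolev_mfld_dom_embedding_local}, and your bootstrap is precisely that argument spelled out (your alternative chart-based route would also work, but the iteration via the manifold embedding theorem is the intended one).
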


\section{Local Sobolev spaces for continuous functions between manifolds}\label{sect:sobolev_cont_functions}

In this section, we discuss an approach to Sobolev maps between manifolds which avoids defining a full Sobolev space. Namely, suppose that $f \colon M \to N$ is continuous. Then for every $x \in M$, there is a neighborhood $U$ of $x$ which is mapped by $f$ inside a chart neighborhood $V$ of $f(x)$; note that the continuity of $f$ is crucial for this. Hence, we're able to define the space $C(M, N) \cap W^{1,p}_\loc(M, N)$ using charts, similarly to our definition of the space $W^{1,1}_\loc(M, \R^n)$.

\begin{defn}\label{def:local_Sobolev_for_continuous_maps}
	Let $M$ and $N$ be a Riemannian $n$- and $m$-manifold, respectively. Suppose $f \colon M \to N$ is a continuous map and $1 \leq p < \infty$. Then $f \in W^{1, p}_\loc(M, N)$ if, for every $x \in M$, there exists a neighborhood $U$ and a bilipschitz $C^1$-chart $\psi \colon V \to \R^m$ for which $fU \subset V$ and $\psi \circ f\vert_U \in W^{1,p}_\loc(U, \R^m)$.
\end{defn}

In order to work with this definition, one key result is required

\begin{prop}\label{prop:cont_sobolev_weak_D_existence}
	Suppose that $f \colon M \to N$ is continuous and that $f \in W^{1,p}_\loc(M, N)$ as defined above. Then there exists a measurable bundle map $Df \colon TM \to TN$ for which 
	\[
		D\psi \circ Df = D(\psi \circ f\vert_U)
	\]
	whenever $U \subset M$ is an open domain and $\psi \colon V \to \R^m$ is a $C^1$-smooth bilipschitz chart for which $fU \subset V$.
\end{prop}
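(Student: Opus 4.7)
The plan is to mimic the construction of the weak derivative for $W^{1,1}_\loc(M,\R^k)$ from the previous section: define $Df$ locally via the charts furnished by the definition, show that the local definitions agree on overlaps via a Sobolev chain rule argument, and paste them together using a countable locally finite cover. The compatibility statement for an arbitrary bilipschitz chart $\psi$ then falls out of the same overlap argument.

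First, for each $x \in M$, the definition gives a neighborhood $U_x \ni x$ and a bilipschitz $C^1$-chart $\psi_x \colon V_x \to \R^m$ with $fU_x \subset V_x$ and $\psi_x \circ f\vert_{U_x} \in W^{1,p}_\loc(U_x, \R^m)$. Since $\psi_x$ is a $C^1$-diffeomorphism onto its image, the tangent map $D\psi_x \colon TV_x \to T\R^m$ is a $C^1$-diffeomorphism satisfying the Lusin conditions, and I set
\[
	Df\vert_{U_x}(y) := D\psi_x(f(y))^{-1} \circ D(\psi_x \circ f\vert_{U_x})(y)
\]
for almost every $y \in U_x$. This is a measurable bundle map $TU_x \to TN$ by the same reasoning used in Section \ref{sect:sobolev_manifold_domain} for $W^{1,1}_\loc(M, \R^k)$.

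Second, I verify compatibility: if $U \subset M$ is open and $\psi \colon V \to \R^m$ is any bilipschitz $C^1$-chart with $fU \subset V$, I claim that $\psi \circ f\vert_U \in W^{1,p}_\loc(U, \R^m)$ and $D\psi \circ Df\vert_{U_x} = D(\psi \circ f\vert_{U \cap U_x})$ almost everywhere on $U \cap U_x$. Indeed, the transition map $\psi \circ \psi_x^{-1}$ restricted to $\psi_x(V \cap V_x)$ is a $C^1$-smooth locally bilipschitz map between Euclidean domains, and $\psi \circ f\vert_{U \cap U_x} = (\psi \circ \psi_x^{-1}) \circ (\psi_x \circ f\vert_{U \cap U_x})$. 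Applying Corollary \ref{cor:sobolev_composition_mfld_dom_local} to this postcomposition and using the smooth chain rule identity $D(\psi \circ \psi_x^{-1})(\psi_x(f(y))) = D\psi(f(y)) \circ D\psi_x^{-1}(\psi_x(f(y)))$ gives
\[
	D(\psi \circ f\vert_{U \cap U_x})(y) = D\psi(f(y)) \circ D\psi_x(f(y))^{-1} \circ D(\psi_x \circ f\vert_{U_x})(y) = D\psi(f(y)) \circ Df\vert_{U_x}(y)
\]
almost everywhere. In particular, taking $\psi = \psi_{x'}$ for some other base chart shows that $Df\vert_{U_x}$ and $Df\vert_{U_{x'}}$ agree almost everywhere on $U_x \cap U_{x'}$, so the local definitions are consistent.

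Third, I glue: fix a countable locally finite cover $(U_i)_{i \in \N}$ of $M$ by sets from the first step, with associated charts $\psi_i$. Define $Df$ on $M$ by setting $Df(y) = Df\vert_{U_i}(y)$ whenever $y \in U_i \setminus \bigcup_{j < i} U_j$. The resulting bundle map $Df \colon TM \to TN$ is measurable as a countable disjoint pasting of measurable maps, and by the overlap identity from step two, $D\psi \circ Df = D(\psi \circ f\vert_U)$ almost everywhere for any bilipschitz $C^1$-chart $\psi$ with $fU \subset V$. The main technical point—and essentially the only place where one has to work—is the overlap argument of step two, which is really a careful combination of the Sobolev postcomposition chain rule (Corollary \ref{cor:sobolev_composition_mfld_dom_local}) with the smooth chain rule for transition functions; once this is in place, uniqueness up to sets of measure zero and the gluing step are routine.
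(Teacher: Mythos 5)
Your proof is correct and follows essentially the same route as the paper: define $Df$ chartwise by $D(\psi^{-1}) \circ D(\psi \circ f\vert_U)$, check independence of the chart via the Sobolev postcomposition chain rule (Corollary \ref{cor:sobolev_composition_mfld_dom_local}) applied to the transition map, and glue over a locally finite cover. Your step two is in fact slightly more explicit than the paper's (which only compares two charts over a common $U$ and $V$), but the underlying argument is identical.
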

\begin{proof}
	Given such $\psi$, we define $Df$ on $U$ by
	\[
		Df = D(\psi^{-1}) \circ D(\psi \circ f\vert_U).
	\]
	The resulting map is measurable. It then remains to verify that this definition is independent on the $\psi$ used.
	
	Suppose then that $\psi_1, \psi_2 \colon V \to \R^m$ are bilipschitz $C^1$-charts with $fU \subset V$. By Corollary \ref{cor:sobolev_composition_mfld_dom_local}, we get
	\begin{align*}
		&D(\psi_1^{-1}) \circ D(\psi_1 \circ f\vert_U)\\
		&\qquad= D(\psi_2^{-1} \circ \psi_2 \circ \psi_1^{-1})
		\circ D(\psi_1 \circ f\vert_U) \\
		&\qquad= D(\psi_2^{-1}) \circ D(\psi_2 \circ \psi_1^{-1} \circ \psi_1 
		\circ f\vert_U)\\
		&\qquad= D(\psi_2^{-1}) \circ D(\psi_2 \circ f\vert_U)
	\end{align*}
	almost everywhere on $U$.
\end{proof}

We also point out a composition result in the similar vein as Corollary \ref{cor:sobolev_composition_mfld_dom_local}

\begin{lemma}\label{lem:sobolev_composition_mfld_targ_cont}
	Let $M$, $M'$, $N, N'$ be connected Riemannian manifolds of dimensions $m$, $m$, $n$ and $n'$, respectively, and let $1 \leq p < \infty$. Suppose that $f \colon M \to N$ is a continuous map with $f \in W^{1,p}_\loc(M, N)$, that $h_1 \colon M' \to M$ is a $C^1$-smooth $L$-bilipschitz map, and that $h_2 \colon N \to N'$ is a $C^1$-smooth $L$-Lipschitz map. Then $h_2 \circ f \circ h_1 \in W^{1,p}_\loc(M', N')$ and 
	\[
		D(h_2 \circ f \circ h_1) = Dh_2 \circ Df \circ Dh_1
	\]
	almost everywhere on $M'$.
\end{lemma}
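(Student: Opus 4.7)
The plan is to verify the two conclusions — membership in $W^{1,p}_\loc(M', N')$ and the chain rule identity — by reducing locally to Corollary \ref{cor:sobolev_composition_mfld_dom_local}, using continuity of $h_2 \circ f \circ h_1$ to line up compatible bilipschitz charts. Since the composition of continuous maps is continuous, Definition \ref{def:local_Sobolev_for_continuous_maps} applies, and the statement is local in $M'$.

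Fix $x' \in M'$ and set $x = h_1(x')$ and $z = h_2(f(x))$. Since $f \in W^{1,p}_\loc(M, N)$, choose a neighborhood $U$ of $x$ and a $C^1$-smooth bilipschitz chart $\psi \colon V \to \R^n$ with $fU \subset V$ and $\psi \circ f\vert_U \in W^{1,p}_\loc(U, \R^n)$. By Theorem \ref{thm:bilipschitz_charts}, pick a $C^1$-smooth bilipschitz chart $\psi' \colon V' \to \R^{n'}$ with $z \in V'$; by continuity of $h_2$, shrink $U$ so that $h_2(fU) \subset V'$, and by continuity of $h_1$, choose a neighborhood $U' \subset h_1^{-1} U$ of $x'$. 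On $U'$ we then have the factorization
\[
    \psi' \circ (h_2 \circ f \circ h_1)\vert_{U'}
    = \bigl(\psi' \circ h_2 \circ \psi^{-1}\bigr) \circ \bigl(\psi \circ f\vert_U\bigr) \circ h_1\vert_{U'},
\]
where $h_1\vert_{U'}$ is a $C^1$-smooth bilipschitz map between Riemannian manifolds of the same dimension, $\psi \circ f\vert_U \in W^{1,p}_\loc(U, \R^n)$ takes values in the open set $\psi(V \cap h_2^{-1} V') \subset \R^n$, and $\psi' \circ h_2 \circ \psi^{-1}$ is a $C^1$-smooth Lipschitz map between Euclidean open sets (with Lipschitz constant controlled by $L$ and the bilipschitz constants of $\psi, \psi'$). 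Corollary \ref{cor:sobolev_composition_mfld_dom_local} then yields $\psi' \circ (h_2 \circ f \circ h_1)\vert_{U'} \in W^{1,p}_\loc(U', \R^{n'})$, which gives $h_2 \circ f \circ h_1 \in W^{1,p}_\loc(M', N')$ once $x'$ ranges over $M'$.

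For the chain rule, Proposition \ref{prop:cont_sobolev_weak_D_existence} applied to $h_2 \circ f \circ h_1$ with the chart $\psi'$ gives
\[
    D\psi' \circ D(h_2 \circ f \circ h_1) = D\bigl(\psi' \circ (h_2 \circ f \circ h_1)\vert_{U'}\bigr)
\]
almost everywhere on $U'$. The differential formula in Corollary \ref{cor:sobolev_composition_mfld_dom_local}, combined with the classical $C^1$ chain rule for $\psi' \circ h_2 \circ \psi^{-1}$ and Proposition \ref{prop:cont_sobolev_weak_D_existence} applied to $\psi \circ f\vert_U$, identifies the right-hand side as
\[
    D\psi' \circ Dh_2 \circ D\psi^{-1} \circ D\psi \circ Df \circ Dh_1
    = D\psi' \circ \bigl(Dh_2 \circ Df \circ Dh_1\bigr).
\]
Cancelling the fiberwise isomorphism $D\psi'$ yields $D(h_2 \circ f \circ h_1) = Dh_2 \circ Df \circ Dh_1$ almost everywhere on $U'$, and hence on all of $M'$ by covering. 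The only real hurdle is bookkeeping — carefully shrinking the chart domains $U$ and $U'$ so that every composition is well-defined on the intended open set — after which the result reduces cleanly to the manifold-domain chain rule of Corollary \ref{cor:sobolev_composition_mfld_dom_local}.
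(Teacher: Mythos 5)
Your argument is correct and follows essentially the same route as the paper's proof: the same factorization $\psi' \circ (h_2 \circ f \circ h_1)\vert_{U'} = (\psi' \circ h_2 \circ \psi^{-1}) \circ (\psi \circ f\vert_U) \circ h_1\vert_{U'}$ through a chain of bilipschitz charts, followed by Corollary \ref{cor:sobolev_composition_mfld_dom_local} for membership and Proposition \ref{prop:cont_sobolev_weak_D_existence} together with the classical chain rule to identify the derivative. The extra care you take in shrinking the chart domains is exactly the bookkeeping the paper also performs, just stated more explicitly.
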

\begin{proof}
	Let $x \in M'$. By using continuity and Theorem \ref{thm:bilipschitz_charts}, we find a chain of smooth bilipschitz charts $\psi' \colon V' \to \R^{n'}$, $\psi \colon V \to \R^n$, $\phi \colon U \to \R^m$ and $\phi' \colon U' \to \R^m$ such that $x \in U'$, $h_1(U') \subset U$, $f(U) \subset V$, $h_2(V) \subset V'$, and $\psi \circ f\vert_U \in W^{1,p}_\loc(U, \R^n)$.
	
	By Corollary \ref{cor:sobolev_composition_mfld_dom_local}, we obtain that $\psi' \circ h_2 \circ f \circ h_1\vert_{U'} = (\psi' \circ h_2 \circ \psi^{-1}) \circ (\psi \circ f) \circ h_1\vert_{U'} \in W^{1,p}_\loc(U', \R^n)$, and also $D(\psi' \circ h_2 \circ f \circ h_1\vert_{U'}) = D(\psi' \circ h_2 \circ \psi^{-1}) \circ D(\psi \circ f) \circ D(h_1\vert_{U'})$. By definition, we conclude that $h_2 \circ f \circ h_1 \in W^{1,p}_\loc(M', N')$. Moreover, since $D(\psi \circ f\vert_U) = D\psi \circ Df\vert_U$ by Proposition \ref{prop:cont_sobolev_weak_D_existence}, and since $D(\psi' \circ h_2 \circ \psi^{-1}) = D\psi' \circ Dh_2 \circ D\psi^{-1}$ by the classical chain rule, we conclude that the stated chain rule holds.
\end{proof}

With just this little effort, our tools are in fact already essentially sufficient to give a definition for quasiregular maps between Riemannian manifolds and to work with said definition. Indeed, only a slight detour to define the Jacobian determinant would be required before stating the definition. However, since our secondary goal is to show the equivalence of various definitions of quasiregular maps between Riemannian manifolds, we instead proceed to discuss several other means of defining Sobolev spaces of maps with a manifold target, this time without the assumption of continuity.

\section{The Sobolev space via the Nash embedding}\label{sect:sobolev_nash_embedding}

We then state the first definition of a true Sobolev space for measurable maps between manifolds, which is by using the Nash embedding theorem. The Nash embedding theorem states that every manifold is isometrically embeddable into some higher dimensional Euclidean space $\R^l$. More precisely, the statement is as follows; see \cite{Nash_embedding} for details.

\begin{thm}[$C^\infty$ Nash embedding]\label{thm:nash_embedding}
	Let $M$ be a smooth Riemannian $n$-manifold. Then there exists $m \geq n$ and a smooth injective map $\iota \colon M \to \R^m$ satisfying
	\[
		\ip{v}{w}_x = \ip{D\iota(x)v}{D\iota(x)w}_{\iota(x)}
	\]
	for all $x \in M$ and $v,w \in T_x M$; here, the right hand metric is the standard Euclidean one. The map $\iota$ is called a \emph{Nash embedding}.
	
	Moreover, $m$ can be chosen to be $m \leq n(n+1)(3n+11)/2$. If $M$ is closed, this further improves to $m \leq n(3n+11)/2$.
\end{thm}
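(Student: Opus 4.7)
The plan is to follow Nash's original strategy via a quadratically convergent iteration, since this is a deep nonlinear PDE result rather than something accessible by a direct construction. The starting point is the Whitney embedding theorem, which yields a smooth proper embedding of $M$ into some $\R^N$; after rescaling one arranges that the resulting map $f_0 \colon M \to \R^N$ is \emph{short}, meaning the difference $h_0$ between the Riemannian metric on $M$ and the pullback of the Euclidean metric along $f_0$ is a smooth positive-definite symmetric $(0,2)$-tensor. The task is then reduced to absorbing $h_0$ by a smooth perturbation of $f_0$ so that the final metric defect vanishes.

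At each step, given an approximate embedding $f_k$ with small metric defect $h_k$, one seeks a correction $v_k \colon M \to \R^N$ such that $f_{k+1} = f_k + v_k$ has a strictly smaller defect $h_{k+1}$. The linearization of $f \mapsto f^* g_{\mathrm{euc}}$ around $f_k$ is an underdetermined first-order linear operator in $v_k$, and a formal right inverse can be constructed pointwise, purely algebraically, provided $f_k$ is \emph{free} in the sense that its first and second partial derivatives span at each point a linear subspace of the maximal possible dimension. Freeness is arranged by enlarging the codimension with auxiliary spiral directions of the form $t \mapsto \lambda^{-1}(\cos \lambda t, \sin \lambda t)$. The main obstacle, and the genuinely novel part of Nash's contribution, is that the naive Picard-type iteration loses derivatives: the quadratic error in the linearization involves higher derivatives of $v_k$ than are controlled by $h_k$. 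Nash resolves this by interleaving each correction with a smoothing operator at scale $\lambda_k \to \infty$, balancing the regularity loss in the inversion against the smoothing gain, and then showing that for a carefully chosen schedule the iterates $f_k$ converge in every $C^j$ norm to an isometric immersion.

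The dimension bound is then a careful accounting of the algebraic cost of freeness together with the number of spiral corrections required per component of the metric. In the closed case a single bounded smoothing scheme suffices, producing $m \leq n(3n+11)/2$; for a general non-compact $M$, an exhaustion by countably many relatively compact pieces together with a partition-of-unity gluing argument introduces an additional factor of $n+1$, yielding the stated bound $m \leq n(n+1)(3n+11)/2$. Injectivity of the limit map $\iota$ follows because the initial Whitney embedding is injective and the successive perturbations can be kept arbitrarily $C^0$-small. Since the full argument is long and technical, in practice I would treat Nash's theorem as a black box and refer to \cite{Nash_embedding}.
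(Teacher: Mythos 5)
The paper gives no proof of this theorem at all: it is stated as a known deep result and the reader is referred to \cite{Nash_embedding}, which is exactly what you conclude by treating it as a black box. Your preceding sketch of Nash's iteration (short Whitney embedding, algebraic right inverse of the linearization at free maps, smoothing operators to defeat the loss of derivatives, and the exhaustion argument behind the extra factor of $n+1$ in the non-compact dimension bound) is a fair summary of the cited argument, so nothing further is needed here.
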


We then call any embedding provided by Theorem \ref{thm:nash_embedding} a \emph{Nash embedding}. Using a Nash embedding, we may define a Sobolev space between manifolds as follows.

\begin{defn}\label{def:sobolev_via_nash_embedding}
	Let $M$ and $N$ be smooth Riemannian manifolds of dimensions $m$ and $n$, respectively, and let $\iota \colon N \to \R^{n'}$ be a Nash embedding. Then we define
	\[
		W^{1,p}_\iota(M, N) = \left\{
			f \colon M \to N \text{ measurable} : 
			\iota \circ f \in W^{1,p}\bigl(M, \R^{n'}\bigr)
		\right\}.
	\]
	Similarly,
	\[
		W^{1,p}_{\iota, \loc}(M, N) = \left\{
		f \colon M \to N \text{ measurable} : 
		\iota \circ f \in W^{1,p}_\loc\bigl(M, \R^{n'}\bigr)
		\right\}.
	\]
\end{defn}

There are two primary questions that require verification in order to properly use Definition \ref{def:sobolev_via_nash_embedding}:
\begin{itemize}
	\item Are the spaces $W^{1,p}_\iota(M, N)$ and $W^{1,p}_{\iota, \loc}(M, N)$ independent on the choice of Nash embedding $\iota$?
	\item If $f \in W^{1,p}_\iota(M, N)$, is $Df := (D\iota)^{-1} \circ D(\iota \circ f)$ well defined; that is, do we have $\Im D(\iota \circ f) \subset \Im D\iota$?
\end{itemize}

For the independence on embedding, an obstacle immediately arises for global Sobolev spaces. Namely, consider the embedding $\iota \colon \R \to \R^2$ which maps $\R$ to $\R \times \{1\}$ isometrically. Then $W^{1,p}_{\iota}(\R, \R) = \emptyset$, since no function $f \colon \R \to \R \times \{1\}$ has a finite integral. This issue occurs whenever $M$ has infinite measure, and only for the global space $W^{1,p}_{\iota}$.

Our goal for the Nash embedding Sobolev spaces is to show that the local $W^{1,p}_{\iota, \loc}(M, N)$-spaces are well-behaved. That is, that their elements $f$ are independent on the choice of embedding $\iota$ and admit a well defined measurable map $Df \colon TM \to TN$. We leave this for the next section, as there we prove an equivalence of definitions -result which implies this immediately. We note that this result also trivially implies a global version for compact $M$, as the compactness of $M$ implies that $W^{1,p}_\loc(M, N) = W^{1,p}(M, N)$. Indeed, assuming compactness of $M$ and/or $N$ is relatively common in the study of Nash embedding Sobolev spaces; e.g.\ the paper of Haj\l asz, Iwaniec, Mal\'y and Onninen \cite{Hajlasz-Iwaniec-Maly-Onninen} provides an introduction to Nash embedding Sobolev spaces between compact manifolds.

\section{The Sobolev space via post-composition}\label{sect:sobolev_intrinsic_derivative}

One more approach to first order Sobolev spaces between manifolds was proposed by Convent and Van Schaftingen in \cite{Convent-VanSchaftingen_Sobolev}. Their definition is based on post-composition with $C^1_0$-functionals. Indeed, Lipschitz post-composition is a known approach to generalizing Sobolev spaces, with another example being e.g.\ a study of Sobolev functions into metric spaces by Reshetnyak \cite{Reshetnyak_metric_Sobolev}. Convent and Van Schaftingen show that the post-composition definition results in a unique map $Df \colon TM \to TN$, which they call the \emph{colocal weak derivative}. This weak $Df$ then allows for an intrinsic definition of a Sobolev space.

A notable point is that the Sobolev space in \cite{Convent-VanSchaftingen_Sobolev} does not assume any integrability on the map $f$ itself, only on the weak derivative $Df$. The lack of this assumption however turns out to be the only ultimate difference between this definition and the global first order Nash embedding definition. In particular, the resulting local Sobolev spaces end up fully equivalent with the Nash embedding definition.

We present here the relevant parts of the theory; for further details, along with some of the proofs of the results we use, the reader is referred to \cite{Convent-VanSchaftingen_Sobolev}.

\subsection{Colocal weak differentiability}

Under the approach of this section, the weak differential of a map $f \colon M \to N$ is defined as follows.

\begin{defn}[{\cite[Definition 1.1]{Convent-VanSchaftingen_Sobolev}}]
	Let $M$, $N$ be smooth Riemannian manifolds of dimension $m$ and $n$, respectively. Suppose that $f \colon M \to N$ is measurable. Then $f$ is \emph{colocally weakly differentiable} if for every $u \in C^1_0(N)$, the map $u \circ f$ is weakly differentiable (ie.\ $u \circ f \in W^{1,1}_\loc(M)$). We denote the space of all colocally weakly differentiable $f \colon M \to N$ by $\cW(M, N)$.
\end{defn}

For every $f \in \cW(M, N)$, there exists an almost everywhere unique measurable \emph{colocal weak derivative} $Df \colon TM \to TN$ which satisfies $Du \circ Df = D(u \circ f)$ for all $u \in C^1_0(N, \R)$. We refer to the proof of \cite[Proposition 1.2]{Convent-VanSchaftingen_Sobolev} for the details, but due to how important this is for us, we briefly comment on the main idea of how this result is obtained. The key trick is to consider a bilipschitz chart on $N$ which has been extended to all of $N$ using a cutoff function: that is, $\phi \in C^\infty_0(N, \R^n)$ and $\phi\vert_U$ is a bilipschitz chart for some $U \subset N$. By definition, $\phi \circ f$ is weakly differentiable, and we may define $Df$ on $f^{-1} U$ by $Df = D(\phi\vert_U^{-1}) \circ D(\phi \circ f)$. Any compactly supported $u \in C^1_0(N)$ can then be decomposed into a sum of finitely many $u_i$ with support contained in the chart-part of such an extended chart $\phi$, and the rule $Du_i \circ Df = D(u_i \circ f)$ can be verified by a Sobolev chain rule (see Lemma \ref{lem:sobolev_postcomposition_mfld_domain}).

We then recall some other main results of \cite[Section 1]{Convent-VanSchaftingen_Sobolev}, referring there for the details. First, if $f \in \cW(M, N)$ and $g \in C^1(N, \R^n)$ are such that $g \circ f$ is weakly differentiable, then $D(g \circ f) = Dg \circ Df$; see \cite[Proposition 1.2]{Convent-VanSchaftingen_Sobolev}. Note that this does not imply that $g \circ f$ is weakly differentiable for $f \in \cW(M, N)$ and $g \in C^1(N, \R^n)$; only that the chain rule holds if this composition is a-priori weakly differentiable. Similarly, if $f \in \cW(M, N)$ and $g \in C^1(N, N')$ are such that $g \circ f \in \cW(M, N')$, then again $Dg \circ Df = D(g \circ f)$; see \cite[Proposition 1.5]{Convent-VanSchaftingen_Sobolev}. Moreover, if $g$ is a $C^1$-embedding, then $g \circ f \in \cW(M, N')$ implies $f \in \cW(M, N)$; see \cite[Proposition 1.6]{Convent-VanSchaftingen_Sobolev}.

It is \emph{not} true that $\cW(M, \R) = W^{1,1}_\loc(M)$; according to \cite[p.4]{Convent-VanSchaftingen_Sobolev} it instead corresponds to a so called \emph{Sobolev space by truncations}, which is a larger space. However, the gap can be bridged in the following way.

\begin{lemma}\label{lem:colocal_euclidean_equivalence}
	Let $M$ be a smooth Riemannian $n$-manifold. Then we have $f \in W^{1,1}_\loc(M, \R^m)$ if and only if $f \in \cW(M, \R^m)$ and $\abs{Df} \in L^1_\loc(M)$. Moreover, in this situation, the two definitions of $Df$ coincide.
\end{lemma}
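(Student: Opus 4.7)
The plan is to establish both directions separately; the coincidence of the two $Df$'s will then drop out from the almost-everywhere uniqueness of the colocal weak derivative.

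For the forward direction, $\abs{Df} \in L^1_\loc$ is built into Definition \ref{def:sobolev_mf-dom}. To see $f \in \cW(M, \R^m)$, fix $u \in C^1_0(\R^m)$; since $u$ is compactly supported and $C^1$, it is globally Lipschitz with $C^1$ regularity, so by reducing to a precompact chart where the finite-volume hypothesis of Lemma \ref{lem:sobolev_postcomposition_mfld_domain} is met, I conclude $u \circ f \in W^{1,1}_\loc(M)$ with $D(u \circ f) = Du \circ Df$. This places $f$ in $\cW(M, \R^m)$, and exhibits the classical $Df$ as a map satisfying the defining identity of the colocal weak derivative; uniqueness forces the two candidates to agree almost everywhere.

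The backward direction is where the real work lies. Given $f \in \cW(M, \R^m)$ with $\abs{Df} \in L^1_\loc$, I need to upgrade to $f \in L^1_\loc(M, \R^m)$; once this is secured, standard approximation will recover $f \in W^{1,1}_\loc$. Passing through bilipschitz charts (using Lemma \ref{lem:sobolev_precomposition_mfld_domain} to transport Sobolev information) reduces the problem to the Euclidean statement that a measurable $g \colon B(0,r) \to \R^m$ satisfying $u \circ g \in W^{1,1}_\loc$ for every $u \in C^1_0(\R^m)$, with colocal derivative in $L^1_\loc$, must lie in $L^1_\loc$. I would introduce smooth truncations $\tau_k(y) = y\,\eta(\abs{y}/k)$, where $\eta \in C^\infty_0([0,\infty))$ equals $1$ on $[0,1/2]$ and vanishes on $[1,\infty)$; each coordinate of $\tau_k$ sits in $C^\infty_0(\R^m)$, so by the colocal hypothesis $\tau_k \circ g \in W^{1,1}_\loc(B(0,r), \R^m)$ with $\abs{D(\tau_k \circ g)} \leq C\abs{Dg}$ uniformly in $k$, while $\tau_k \circ g \to g$ pointwise almost everywhere.

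Fix $B' \Subset B(0,r)$. Since $g$ is almost everywhere finite, elementary measure theory yields $M>0$ such that $E = \{x \in B' : \abs{g(x)} \leq M\}$ has $\abs{E} \geq \abs{B'}/2$. For $k > 2M$, $\tau_k \circ g = g$ on $E$, so the mean $(\tau_k \circ g)_E$ has norm at most $M$. The Poincar\'e inequality on $B'$ combined with the uniform gradient bound then controls $(\tau_k \circ g)_{B'}$ via the identity $(\tau_k \circ g)_{B'} - (\tau_k \circ g)_E = \abs{E}^{-1}\int_E((\tau_k \circ g)_{B'} - \tau_k \circ g)$, giving a uniform $L^1(B')$-bound on $\tau_k \circ g$. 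Fatou's lemma delivers $g \in L^1(B', \R^m)$. With this integrability in hand, dominated convergence (using $\abs{\tau_k \circ g} \leq \abs{g}$ and $\abs{D(\tau_k \circ g)} \leq C\abs{Dg}$) shows $\tau_k \circ g \to g$ in $W^{1,1}_\loc$, so completeness places $g$ in $W^{1,1}_\loc$, and the classical weak derivative there agrees with the colocal one by the same uniqueness argument as in the forward direction. The main obstacle is precisely this $L^1_\loc$ bound on $f$: colocal weak differentiability gives no direct integrability for $f$ itself, and the Poincar\'e-plus-good-set trick above is what bridges the gap.
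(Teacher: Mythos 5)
Your proof is correct, and the forward direction plus the final completeness step match the paper's. The interesting divergence is in the one genuinely hard step, namely upgrading colocal weak differentiability plus $\abs{Df} \in L^1_\loc$ to $f \in L^1_\loc$. The paper truncates coordinate-wise (taking $u_{i,k} \in C^1_0(\R^m)$ agreeing with $\proj_i$ on $B_m(0,k)$) and argues by contradiction: if $f_i \notin L^1_\loc$, the normalized truncations $g_k = f_{i,k}/\norm{f_{i,k}}_1$ satisfy $\norm{g_k}_1 = 1$, $g_k \to 0$ a.e., and $\norm{Dg_k}_1 \to 0$, so the Rellich--Kondrachov compact embedding forces a subsequence to converge in $L^1$ to $0$, a contradiction. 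You instead truncate radially and give a direct, quantitative bound: a set $E$ of at least half measure on which $\abs{g} \leq M$ pins down the mean of $\tau_k \circ g$, the $L^1$ Poincar\'e inequality controls the oscillation uniformly in $k$ by $C\norm{Dg}_{L^1(B')}$, and Fatou closes the argument. Your route avoids compactness entirely and yields an explicit a priori bound $\norm{g}_{L^1(B')} \leq C(n, B')(M + \norm{Dg}_{L^1(B')})$, at the cost of invoking the mean-value Poincar\'e inequality on balls, which is classical but not among the Euclidean tools the paper explicitly catalogues (it states only the Gagliardo--Nirenberg--Sobolev inequality for compactly supported functions and Rellich--Kondrachov); the paper's version stays within its own stated toolbox. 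Both truncations and both integrability arguments are sound, and in each case the identification of the classical weak derivative with the colocal one follows from the chain rule and almost-everywhere uniqueness exactly as you say.
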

\begin{proof}
	The "only if" -claim is clear by Corollary \ref{cor:sobolev_composition_mfld_dom_local}, since maps $u \in C^1_0(\R^m)$ are Lipschitz. 
	
	We then prove the other direction, as \cite{Convent-VanSchaftingen_Sobolev} only states such an equivalence result for the Euclidean definition of Sobolev spaces, referring to B\'enilan et al.\ \cite[Lemma 2.1]{Benilan-et-al} for the proof. Indeed, our argument will be similar as in \cite{Benilan-et-al}, with just the slight complications of using charts to work with the manifold domain. For the duration of this proof, we use $\tilde{D}$ and to denote colocal weak derivatives, in order to clearly distinguish them from ordinary weak derivatives. We similarly use $\tilde{d}$ for the weak differentials obtained by post-composing a colocal weak derivative with the standard identification $\iota_1 \colon T \R \to \R$.
	
	Suppose that $f \in \cW(M, \R^m)$ and $\smallabs{\tilde{D}f} \in L^1_\loc(M)$. Let $u_{i,k} \in C_0^1(\R^m)$ be such that $u_{i,k} \vert_{B_m(0, k)} = \proj_i \vert_{B_m(0, k)}$, the estimate $\abs{u_{i,k}} \leq \abs{\proj_i}$ holds everywhere, and $u_{i,k}$ is 1-Lipschitz. Let $f_{i,k} = u_{i,k} \circ f$, in which case $f_{i,k} \in W^{1,1}_\loc(M)$. We then have $f_{i,k} = f_i$ and $Df_{i,k} = \tilde{D}f_i$ almost everywhere on the set $f^{-1} B_m(0, k)$, where $f_i$ is the $i$:th coordinate mapping of $f$. 
	
	Moreover, since $Df_{i,k} = Du_{i,k} \circ \tilde{D}f$ and $u_{i,k}$ is 1-Lipschitz, we have $\abs{Df_{i,k}} \leq \smallabs{\tilde{D}f}$ almost everywhere. Hence, $\smallabs{df_{i,k} - \tilde{d}f_i} \leq 2\smallabs{\tilde{D}f}$ almost everywhere outside $f^{-1} B_m(0, k)$. Since $f^{-1} B_m(0, k)$ tends locally to full measure as $k \to \infty$, we have $df_{i,k} \to \tilde{d}f_i$ locally in $L^1$.
	
	We then show that $f_{i,k} \to f_i$ locally in $L^1$. For this, we wish to show that $f_i \in L^1_\loc(M)$; given that, we may similarly as before estimate $\abs{f_{i,k} - f_i} \leq 2\abs{f_i}$ outside $f^{-1} B_m(0, k)$, and then conclude that $f_{i,k} \to f_i$ locally in $L^1$ by the fact that $f^{-1} B_m(0, k)$ tends locally to full measure.
	
	Suppose then towards contradiction that $f_i \notin L^1_\loc(M)$. Since $\abs{f_{i,k}} \to \abs{f_i}$ monotonely, we therefore obtain by monotone convergence a precompact neighborhood $U \subset M$ on which $\norm{f_{i,k}\vert_U}_1 \to \infty$. We may additionally assume that the boundary of $U$ is smooth by slightly enlarging $U$. If we write $g_k = \norm{f_{i,k}\vert_U}_1^{-1} f_{i,k}\vert_U$, we then have $g_k \to 0$ almost everywhere and $\norm{g_k}_1 = 1$.
	
	But we also have that $g_k \in W^{1,1}(U)$ and that 
	\[
		\norm{Dg_k}_1 = \norm{f_{i,k}\vert_U}_1^{-1} \norm{(Df_{i,k})\vert_U}_1 \leq \norm{f_{i,k}\vert_U}_1^{-1} \smallnorm{(\tilde{D}f)\vert_U}_1 \to 0.
	\]
	Hence, $(g_k)$ is bounded in $W^{1,1}(U)$, and the Rellich-Kondrachov Theorem \ref{thm:sobolev_eucl_embedding_compact} implies that it has a convergent subsequence in $L^1$. Since $g_k \to 0$ almost everywhere, the limit of this subsequence is necessarily zero. But this is a contradiction since $\norm{g_k}_1 = 1$. Hence, we finally have $f_i \in L^1_\loc(M)$, and therefore $f_{i,k} \to f_i$ locally in $L^1$.
	
	Consider now a bounded $U \subset M$ with closure contained in the domain of a bilipschitz chart $\phi \colon U \to \R^n$. In this case, $f_{i,k}\vert_U \to f_i\vert_U$ and $Df_{i,k}\vert_U \to (\tilde{D}f_i)\vert_U$ in $L^1$. By Lemma \ref{lem:sobolev_precomposition_mfld_domain}, \eqref{eq:lipschitz_D}, and \eqref{eq:bilipschitz_J}, we have that $f_{i,k} \circ \phi^{-1} \in W^{1,1}(\phi U, \R^m)$, that $f_{i,k} \circ \phi^{-1} \to f_i \circ \phi^{-1}$ in $L^1$, and that $d(f_{i,k} \circ \phi^{-1}) \to \tilde{d}f_i \circ D\phi^{-1}$ in $L^1$.
	
	By the completeness of $W^{1,1}(\phi U, \R^m)$ given in Theorem \ref{thm:sobolev_eucl_banach}, we therefore have that $f_i \circ \phi^{-1} \in W^{1,1}(\phi U, \R^m)$ and $d(f_i \circ \phi^{-1}) = \tilde{d}f_i \circ D\phi^{-1}$. Now the definition \ref{def:sobolev_mf-dom} of $W^{1,1}_\loc(M, \R^m)$ yields that $f_i \in W^{1,1}_\loc(M, \R^m)$, and its weak differential is $\tilde{d}f_i$. Since this holds for all coordinate maps $f_i$, the proof is therefore complete.
\end{proof}

\subsection{Adding the integrability condition}
Based on this, we may define a Sobolev space as in \cite[Definition 2.1]{Convent-VanSchaftingen_Sobolev}. We also define a local counterpart.

\begin{defn}
	\label{def:sobolev_via_colocal_derivatives}
	Let $M$, $N$ be smooth Riemannian manifolds of dimension $m$ and $n$, respectively. Then $f \in \cW^{1,p}(M, N)$ if $f \in \cW(M, N)$ and $\abs{Df} \in L^p(M)$. Similarly, $f \in \cW^{1,p}_\loc(M, N)$ if $f \in \cW(M, N)$ and $\abs{Df} \in L^p_\loc(M)$.
\end{defn}

\begin{rem}
	Note that the operator norm $\abs{Df}$ used above is different from the norm $\abs{Df}_{T^*M \otimes TN}$ originally used by Convent and Van Schaftingen. The norm $\abs{A}_{T^*M \otimes TN}$ for $A \colon T^*_x M \to T^*_y N$ is obtained by defining an inner product on $T^*_xM \otimes T_yN$, by setting
	\[
		\ip{\omega \otimes v}{\tau \otimes w} = \ip{\omega}{\tau} \ip{v}{w}
	\]
	for $\omega, \tau \in T^*_x M$ and $v, w \in T_y N$ and then extending bilinearly. If $e_i$ is a basis of $T_x M$ and $\eps_i$ is the dual basis $\eps_i(e_j) = \delta_{ij}$, then we may identify $A$ uniquely with $\eps_1 \otimes A(e_1) + \dots + \eps_m \otimes A(e_m)$. 
	
	If moreover the basis $e_i$ is orthonormal, then so is $\eps_i$, and a straightforward computation gives us the formula
	\[
		\abs{A}^2_{T^*M \otimes TN} = \abs{A(e_1)}^2 + \ldots + \abs{A(e_m)}^2.
	\]
	Hence, we obtain the estimate
	\[
		\abs{A}^2_{T^*M \otimes TN} \leq m\abs{A}^2.
	\]
	Conversely, if $v \in T_x M$ with $\abs{v} \leq 1$, we may write $v = v_1 e_1 + \dots + v_m e_m$, with $v_1^2 + \dots + v_m^2 \leq 1$. We can then estimate using triangle inequality and Cauchy--Schwarz as follows:
	\begin{multline*}
		\abs{A(v)} 
		\leq \abs{v_1}\abs{A(e_1)} + \dots + \abs{v_m}\abs{A(e_m)}\\
		\leq \left(v_1^2 + \dots + v_m^2\right)^{\frac{1}{2}} 
				\left(\abs{A(e_1)}^2 + \dots + \abs{A(e_m)}^2 \right)^{\frac{1}{2}}
		\leq \abs{A}_{T^*M \otimes TN}.
	\end{multline*}
	
	Hence, the maps $\abs{Df}_{T^*M \otimes TN}$ and $\abs{Df}$ are uniformly comparable with constant $\sqrt{m}$, and therefore they yield equivalent definitions of $W^{1,p}(M, N)$, leaving the choice of norm up to preference. We use the operator norm due to its closer connections with quasiregular analysis.
\end{rem}

It is worth noting that the above definition does not include any global integrability assumptions on $f$: indeed, the actual relation to the Euclidean situation is $W^{1,p}(M, \R^m) = \cW^{1,p}(M, \R^m) \cap L^p(M, \R^m)$ by Lemma \ref{lem:colocal_euclidean_equivalence}. This way, $\cW^{1,p}(M, N)$ sidesteps the issue where the isometric embedding definition would depend on eg.\ where/whether the embedded space meets zero. The main result of \cite[Section 2]{Convent-VanSchaftingen_Sobolev} in fact links this definition to the one by embeddings.

\begin{prop}[{\cite[Proposition 2.6]{Convent-VanSchaftingen_Sobolev}}]
	\label{prop:colocal_sobolev_embedding_equivalence}
	Let $M$, $N$ and $N'$ be Riemannian manifolds of dimension $m$, $n$ and $n'$, respectively. Let $f \colon M \to N$, and let $\iota \colon N \to N'$ be a $C^1$-embedding satisfying
	\[
		\ip{v}{w}_y = \ip{D\iota(y)v}{D\iota(y)w}_{\iota(y)}
	\]
	for all $y \in N$ and $v, w \in T_y N$. Then $f \in \cW^{1,p}(M, N)$ if and only if $\iota \circ f \in \cW^{1,p}(M, N')$.
\end{prop}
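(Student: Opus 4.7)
The proof hinges on the isometric property of $\iota$: the given condition asserts that $D\iota(y) \colon T_y N \to T_{\iota(y)} N'$ is an isometric linear injection for every $y \in N$. In operator norm this yields $\abs{D\iota(y) \circ A} = \abs{A}$ for any linear $A \colon T_x M \to T_y N$, so whenever the chain rule $D(\iota \circ f) = D\iota \circ Df$ is available one automatically has $\abs{D(\iota \circ f)} = \abs{Df}$ almost everywhere. Thus the equivalence of the integrability conditions $\abs{Df} \in L^p(M)$ and $\abs{D(\iota \circ f)} \in L^p(M)$ is automatic, and the problem reduces to showing $\iota \circ f \in \cW(M, N')$ if and only if $f \in \cW(M, N)$.

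For the backward direction, assume $\iota \circ f \in \cW^{1,p}(M, N')$. Since $\iota$ is a $C^1$-embedding, \cite[Proposition 1.6]{Convent-VanSchaftingen_Sobolev} (cited in the text preceding the statement) immediately gives $f \in \cW(M, N)$. The chain rule \cite[Proposition 1.5]{Convent-VanSchaftingen_Sobolev} then supplies $D(\iota \circ f) = D\iota \circ Df$ almost everywhere, and by the isometric identity $\abs{Df} = \abs{D(\iota \circ f)} \in L^p(M)$, so $f \in \cW^{1,p}(M, N)$.

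For the forward direction, assume $f \in \cW^{1,p}(M, N)$. Once $\iota \circ f \in \cW(M, N')$ is established, \cite[Proposition 1.5]{Convent-VanSchaftingen_Sobolev} together with the isometric identity again give $\abs{D(\iota \circ f)} = \abs{Df} \in L^p(M)$, yielding $\iota \circ f \in \cW^{1,p}(M, N')$. To show $\iota \circ f \in \cW(M, N')$, I fix $v \in C^1_0(N')$ and set $u := v \circ \iota \in C^1(N)$. Since $\iota$ is infinitesimally $1$-Lipschitz (hence $1$-Lipschitz on each connected component by Theorem \ref{thm:C1_lip_characterization}), $u$ is bounded and has uniformly bounded differential, but $u$ need not be compactly supported on $N$. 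The plan is to approximate $u$ by $u_k := u \chi_k$, where $\chi_k \in C^\infty_0(N)$ satisfies $\chi_k \equiv 1$ on a compact exhaustion $\{K_k\}$ of $N$ with $\abs{D\chi_k}$ uniformly bounded in $k$. Each $u_k$ lies in $C^1_0(N)$, so the defining property of the colocal weak derivative gives $u_k \circ f \in W^{1,1}_\loc(M)$ with weak derivative $Du_k \circ Df$. A dominated convergence argument on bounded subsets of $M$, using the pointwise limits $u_k \circ f \to u \circ f$ and $Du_k \circ Df \to Du \circ Df$ together with the $k$-independent dominants $\norm{u}_\infty$ and $(\norm{Du}_\infty + \norm{u}_\infty \sup_k \norm{D\chi_k}_\infty)\abs{Df}$, the latter in $L^1_\loc(M)$ since $\abs{Df} \in L^p_\loc(M) \subset L^1_\loc(M)$, then yields $u \circ f \in W^{1,1}_\loc(M)$.

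The principal technical obstacle is the construction of the cutoff sequence $(\chi_k)$ with uniformly bounded $\abs{D\chi_k}$ on an arbitrary, possibly incomplete Riemannian manifold $N$. The standard approach is to build a compact exhaustion $\{K_k\}$ satisfying $d_N(K_k, N \setminus K_{k+1}) \geq 1$ by iterative enlargement and then take $\chi_k$ to be a $C^\infty$-mollification of $y \mapsto \max(0, \min(1, 1 - d_N(y, K_k)))$, which gives $\abs{D\chi_k} \leq 2$ uniformly; the existence of such an exhaustion on a general Riemannian manifold itself requires a brief technical argument, but is independent of the main line of reasoning.
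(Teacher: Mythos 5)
The paper never proves this proposition; it is imported verbatim from \cite[Proposition 2.6]{Convent-VanSchaftingen_Sobolev}, so there is no internal proof to compare against and your attempt must stand on its own. Your overall reduction is sound: the isometry hypothesis does give $\abs{D\iota(y) \circ A} = \abs{A}$, so everything correctly reduces to the equivalence of colocal weak differentiability, and the backward direction via \cite[Propositions 1.5 and 1.6]{Convent-VanSchaftingen_Sobolev} is exactly right.

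The gap is in the forward direction, and it is not the deferrable ``brief technical argument'' you claim: the cutoff sequence you need does not exist in general. You require $\chi_k \in C^\infty_0(N)$ with $\chi_k \equiv 1$ on a compact exhaustion $\{K_k\}$ and $\sup_k \norm{D\chi_k}_\infty < \infty$, via an exhaustion satisfying $d_N(K_k, N \setminus K_{k+1}) \geq 1$. But $N$ is an arbitrary, possibly incomplete Riemannian manifold. Take $N = (0,1) \subset \R$ or a punctured disc: once $K_k$ contains a point within distance $\delta$ of the missing boundary, the $\delta$-neighborhood of $K_k$ in $N$ has non-compact closure, so no compact $K_{k+1}$ can contain it — for any fixed $\delta > 0$, let alone $\delta = 1$. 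Consequently any exhausting sequence of cutoffs must have $\norm{D\chi_k}_\infty \to \infty$ (a $C^1_0((0,1))$ function equal to $1$ on $[1/k, 1-1/k]$ has derivative of size at least roughly $k$ somewhere). Without the uniform bound, the term $(u \circ f)(D\chi_k \circ Df)$ tends to zero pointwise but has no integrable majorant, and nothing in the hypotheses forces $\norm{D\chi_k}_\infty \int_{B \cap f^{-1}(N \setminus K_k)} \abs{Df} \to 0$; so the dominated convergence step collapses. Note that this is exactly the difficulty that the truncation argument in Lemma \ref{lem:colocal_euclidean_equivalence} sidesteps by having target $\R^m$, where compactly supported $1$-Lipschitz extensions of $\proj_i\vert_{B(0,k)}$ exist for every $k$; that structure is simply unavailable on an incomplete $N$, which is the generic case here (e.g.\ $N$ a chart image or a bounded domain). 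The proposition is true, but proving that $v \circ \iota \circ f \in W^{1,1}_\loc(M)$ for $v \in C^1_0(N')$ — where $v \circ \iota$ fails to be compactly supported on $N$ precisely because $\iota(N)$ need not be closed in $N'$ — requires a genuinely different treatment of this non-compactly-supported composition, which is the actual content of the cited proof; a routine exhaustion-by-cutoffs does not deliver it.
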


Hence, combined with Lemma \ref{lem:colocal_euclidean_equivalence}, we see that the isometric embedding definition of Sobolev spaces is dependent of the choice of embedding only through the global $L^p$-integrability of $\iota \circ f$.

However, the above result is for global spaces, while our ultimate goal is the equivalence of the corresponding local spaces. Due to this, we give a localization result which will be used to apply the above result in a local context.

\begin{lemma}\label{lem:colocal_sobolev_restricion_characterization}
	Let $M$ and $N$ be Riemannian manifolds of dimension $m$ and $n$, respectively. Let $f \colon M \to N$. Then $f \in \cW^{1,p}_\loc(M, N)$ if and only if for every $x \in M$ there exists a neighborhood $U$ of $x$ for which $f\vert_U \in \cW^{1,p}(U, N)$. Moreover, if $U$ is such a neighborhood, then $D(f\vert_U)(y) = Df(y)$ for almost every $y \in U$.
\end{lemma}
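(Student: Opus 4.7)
The plan is to use the local nature of the conditions defining $\cW$ and $\cW^{1,p}_{\loc}$, together with the almost-everywhere uniqueness of the colocal weak derivative, so that both implications reduce to essentially formal verifications. The main point to watch is that one must carefully match the colocal weak derivative defined on $M$ with the one defined on each neighborhood $U$, and this is exactly where the uniqueness property is needed.

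For the forward implication, assume $f \in \cW^{1,p}_\loc(M, N)$, and let $x \in M$. I would choose $U$ to be any precompact neighborhood of $x$. The inclusion $f\vert_U \in \cW(U, N)$ follows directly from the definition of $\cW(M,N)$: for any $u \in C^1_0(N)$, the restriction $(u \circ f)\vert_U$ lies in $W^{1,1}_\loc(U)$ since $u \circ f \in W^{1,1}_\loc(M)$. To identify $D(f\vert_U)$ with $Df\vert_U$ almost everywhere, I would observe that $Du \circ (Df\vert_U) = (Du \circ Df)\vert_U = D(u \circ f)\vert_U = D(u \circ f\vert_U)$ for every $u \in C^1_0(N)$, so by the a.e.\ uniqueness of the colocal weak derivative of $f\vert_U$, it must agree with $Df\vert_U$ almost everywhere. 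The integrability $\abs{D(f\vert_U)} \in L^p(U)$ then follows from the local integrability $\abs{Df} \in L^p_\loc(M)$ combined with the precompactness of $U$.

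For the backward implication, suppose every point $x \in M$ has a neighborhood $U_x$ with $f\vert_{U_x} \in \cW^{1,p}(U_x, N)$. First I would check $f \in \cW(M, N)$: fix $u \in C^1_0(N)$; then for each $x$, $(u \circ f)\vert_{U_x} = u \circ f\vert_{U_x} \in W^{1,1}_\loc(U_x)$, and since $W^{1,1}_\loc$ is itself a local property (one can verify weak differentiability chart-by-chart), this patches together to give $u \circ f \in W^{1,1}_\loc(M)$. Consequently, $f$ admits a colocal weak derivative $Df \colon TM \to TN$. Applying the same uniqueness argument as above on each $U_x$, we obtain $Df\vert_{U_x} = D(f\vert_{U_x})$ almost everywhere, so $\abs{Df} \in L^p(U_x)$ for every $x$, and a cover argument yields $\abs{Df} \in L^p_\loc(M)$. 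The moreover statement about $D(f\vert_U)(y) = Df(y)$ a.e.\ comes out of both verifications.

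The only mildly delicate step is confirming that $W^{1,1}_\loc(M)$ really is a genuinely local notion built out of the chart-based Definition~\ref{def:sobolev_mf-dom}; but this is immediate from that definition itself, which only demands that, around each point, $f$ restricted to some chart lies in a Euclidean Sobolev space. Beyond that, the whole proof is bookkeeping around the uniqueness clause of the colocal weak derivative, which is why I expect no serious obstacle.
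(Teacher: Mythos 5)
Your proposal is correct and follows essentially the same route as the paper's proof: restriction of $W^{1,1}_\loc$ functions for membership in $\cW(U,N)$, the a.e.\ uniqueness of the colocal weak derivative to identify $D(f\vert_U)$ with $Df\vert_U$ via the chain $Du \circ Df\vert_U = D(u\circ f)\vert_U = D(u\circ f\vert_U) = Du \circ D(f\vert_U)$, precompactness of $U$ for the $L^p$ integrability, and a cover argument for the converse. No gaps.
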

\begin{proof}
	The main idea of the (simple) proof is to reduce these to the corresponding properties for $W^{1,p}_\loc(M, \R)$.
	
	For the "only if" -direction, if $f \in \cW^{1,p}_\loc(M, N)$, then for $u \in C_0^1(N)$ we have $u \circ f\vert_U = (u \circ f)\vert_U \in W^{1,1}_\loc(U)$ since restrictions of $W^{1,1}_\loc$-maps remain $W^{1,1}_\loc$. Hence, if $U$ is a neighborhood of $x$, we have $f \in \cW(U, N)$.
	
	We then wish to show the part that $D(f\vert_U) = Df$ almost everywhere in $U$. This readily follows by computing using Lemma \ref{lem:sobolev_postcomposition_mfld_domain} that $Du \circ Df = D(u \circ f) = D((u \circ f)\vert_U) = D(u \circ (f\vert_U)) = Du \circ D(f\vert_U)$ almost everywhere on $U$ for every $u \in C^1_0(M)$. Hence, we have $D(f\vert_U) = Df$ almost everywhere in $U$. The "only if" -part of the main claim then also follows, since $\abs{Df} \in L^p(U)$ if we select a bounded $U$ compactly contained in $M$.
	
	For the "if" -direction, suppose then that we have a locally finite cover of $M$ by $U_i$ for which $f\vert_{U_i} \in \cW^{1,p}(U_i, N)$. Let $u \in C^1_0(N)$, in which case $(u \circ f)\vert_{U_i} = u \circ f\vert_{U_i} \in W^{1,1}_\loc(U)$. It follows that $u \circ f \in W^{1,1}_\loc(M)$. We conclude that $f \in \cW(M, N)$. For the rest, the previously used restriction property of colocal weak derivatives implies that $(\abs{Df})\vert_{U_i} = \abs{D(f\vert_{U_i})} \in L^p(U_i)$ for every $i$, in which case $\abs{Df} \in L^p_\loc(M)$.
\end{proof}

\subsection{Equivalence of local Sobolev spaces}

We now finally show that the multiple definitions for local Sobolev spaces we've discussed are equivalent.

\begin{thm}\label{thm:local_sobolev_eq_of_defs}
	Let $M$ and $N$ be Riemannian $n$- and $m$-manifolds, respectively. Then $W^{1,p}_{\iota, \loc}(M, N) = \cW^{1,p}_\loc(M, N)$. 
	
	Furthermore, if $f \colon M \to N$ is continuous, then $f \in \cW^{1,p}_\loc(M, N)$ if and only if $f \in W^{1,p}_\loc(M, N)$ in the sense of Definition \ref{def:local_Sobolev_for_continuous_maps}.
	
	Moreover, for all above definitions, the corresponding maps $Df(x)$ coincide for almost every $x \in M$.
\end{thm}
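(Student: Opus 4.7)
The plan is to prove the three-way equivalence by first establishing $W^{1,p}_{\iota,\loc}(M,N) = \cW^{1,p}_\loc(M,N)$, and then treating the continuous case by a chart-localization argument that sits naturally on top of either of the two global descriptions. Throughout, the identifications of weak derivatives will fall out of the chain rules already used in each direction, so that the last assertion becomes a routine bookkeeping step.

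For the equivalence $W^{1,p}_{\iota,\loc}(M,N) = \cW^{1,p}_\loc(M,N)$, I would work one point at a time and patch via Lemma \ref{lem:colocal_sobolev_restricion_characterization}. Given $x \in M$, pick a precompact neighborhood $U$ of $x$. On the Nash-embedding side, membership in $W^{1,p}_{\iota,\loc}$ gives $\iota \circ f \in W^{1,p}(U, \R^{n'}) \subset \cW^{1,p}(U, \R^{n'})$ (the inclusion being an instance of Lemma \ref{lem:colocal_euclidean_equivalence}), whence Proposition \ref{prop:colocal_sobolev_embedding_equivalence} yields $f|_U \in \cW^{1,p}(U, N)$; gluing over a countable cover of $M$ via Lemma \ref{lem:colocal_sobolev_restricion_characterization} gives $f \in \cW^{1,p}_\loc(M, N)$. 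For the converse, Proposition \ref{prop:colocal_sobolev_embedding_equivalence} and Lemma \ref{lem:colocal_sobolev_restricion_characterization} supply $\iota \circ f \in \cW^{1,p}_\loc(M, \R^{n'})$, i.e.\ $\iota \circ f \in \cW(M, \R^{n'})$ with $\abs{D(\iota \circ f)} \in L^p_\loc(M)$. The main obstacle here is that Nash-embedding membership demands $\iota \circ f \in L^p_\loc(M, \R^{n'})$, not merely derivative integrability. This is exactly what Lemma \ref{lem:colocal_euclidean_equivalence} is designed to handle: it upgrades the colocal condition to $\iota \circ f \in W^{1,1}_\loc$, and then Corollary \ref{cor:sobolev_mfld_dom_base_exponent} bootstraps from $W^{1,1}_\loc$ plus $\abs{D(\iota \circ f)} \in L^p_\loc$ to the full $W^{1,p}_\loc$ statement.

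For the continuous case, fix a continuous $f \colon M \to N$. If $f \in W^{1,p}_\loc(M,N)$ in the sense of Definition \ref{def:local_Sobolev_for_continuous_maps}, I would show $f \in \cW^{1,p}_\loc(M,N)$ by testing against any $u \in C^1_0(N)$. Given $x \in M$, the chart $\psi \colon V \to \R^m$ and neighborhood $U \ni x$ with $fU \subset V$ and $\psi \circ f|_U \in W^{1,p}_\loc(U, \R^m)$ let me factor $u \circ f|_U = (u \circ \psi^{-1}) \circ (\psi \circ f|_U)$, where $u \circ \psi^{-1}$ is $C^1$-smooth and Lipschitz on $\psi V$ (using compact support of $u$ and the bilipschitz constant of $\psi$). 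Corollary \ref{cor:sobolev_composition_mfld_dom_local} then gives $u \circ f|_U \in W^{1,p}_\loc(U) \subset W^{1,1}_\loc(U)$; covering $M$ gives $u \circ f \in W^{1,1}_\loc(M)$, i.e.\ $f \in \cW(M,N)$, and the norm estimate $\abs{Df|_U} \le L \abs{D(\psi \circ f|_U)}$ from Proposition \ref{prop:cont_sobolev_weak_D_existence} yields $\abs{Df} \in L^p_\loc(M)$. Conversely, if the continuous $f$ is in $\cW^{1,p}_\loc(M,N)$, for each $x \in M$ I pick a bilipschitz chart $\psi \colon V \to \R^m$ at $f(x)$ (Theorem \ref{thm:bilipschitz_charts}) and use a smooth cutoff to produce $\tilde\psi \in C^1_0(N, \R^m)$ agreeing with $\psi$ on a smaller neighborhood $V' \Subset V$; continuity of $f$ furnishes $U'$ with $fU' \subset V'$. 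Since each coordinate of $\tilde\psi$ lies in $C^1_0(N)$, colocal differentiability gives $\tilde\psi \circ f \in \cW(M, \R^m)$ with $\abs{D(\tilde\psi \circ f)} \le \smallnorm{D\tilde\psi}_\infty \abs{Df} \in L^p_\loc(M)$; combining Lemma \ref{lem:colocal_euclidean_equivalence} and Corollary \ref{cor:sobolev_mfld_dom_base_exponent} yields $\tilde\psi \circ f \in W^{1,p}_\loc(M, \R^m)$, hence $\psi \circ f|_{U'} \in W^{1,p}_\loc(U', \R^m)$, verifying Definition \ref{def:local_Sobolev_for_continuous_maps}.

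Finally, for the coincidence of the various $Df$, I would track the chain rules used in each implication: Proposition \ref{prop:colocal_sobolev_embedding_equivalence} and the definition of colocal weak derivatives force $D(\iota \circ f) = D\iota \circ Df$ almost everywhere, so the Nash-embedding $Df = (D\iota)^{-1} \circ D(\iota \circ f)$ matches the colocal one; likewise in the continuous case, composing $D\psi \circ Df|_U = D(\psi \circ f|_U)$ (from Proposition \ref{prop:cont_sobolev_weak_D_existence}) with any test $u \in C^1_0(N)$ and applying Corollary \ref{cor:sobolev_composition_mfld_dom_local} recovers $Du \circ Df = D(u \circ f)$ almost everywhere, identifying the continuous-definition $Df$ with the colocal one. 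The only genuinely subtle step is the bootstrap from colocal derivative integrability to $L^p_\loc$ integrability of the map itself, which is exactly where Lemma \ref{lem:colocal_euclidean_equivalence} does the essential work; everything else is careful composition with charts and cutoffs.
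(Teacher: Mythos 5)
Your proposal is correct and follows essentially the same route as the paper: Lemma \ref{lem:colocal_euclidean_equivalence}, Proposition \ref{prop:colocal_sobolev_embedding_equivalence}, and Lemma \ref{lem:colocal_sobolev_restricion_characterization} for the Nash/colocal equivalence, with Corollary \ref{cor:sobolev_mfld_dom_base_exponent} supplying exactly the bootstrap you identify as the subtle step, and cutoff-extended bilipschitz charts plus the composition lemmas for the continuous case. The identification of the various $Df$ via the chain rules and injectivity of $D\iota$ also matches the paper's argument.
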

\begin{proof}
	For the first claim, suppose first that $f \in W^{1,p}_{\iota, \loc}(M, N)$. Then $\iota \circ f \in W^{1,p}_\loc(M, \R^k)$. It follows that for every $x \in M$ we have a neighborhood $U \subset M$ in which $\iota \circ f\vert_U \in W^{1,p}(U, \R^k)$, and $D(\iota \circ f\vert_U) = D(\iota \circ f)$ almost everywhere on $U$. 
	
	By Lemma \ref{lem:colocal_euclidean_equivalence}, $\iota \circ f\vert_U \in \cW^{1,p}(U, \R^k)$ with the same weak derivative for both spaces. By Proposition \ref{prop:colocal_sobolev_embedding_equivalence}, $f \vert_U \in \cW^{1,p}(U, \R^k)$. Moreover, since $\iota$ is $C^1$, we obtain that $D(\iota \circ f\vert_U) = D\iota \circ D(f\vert_U)$; recall that this was due to \cite[Proposition 1.2]{Convent-VanSchaftingen_Sobolev}. Since these neighborhoods $U$ cover $M$, we conclude by Lemma \ref{lem:colocal_sobolev_restricion_characterization} that $f \in \cW^{1,p}_\loc(M, N)$, and moreover $D\iota \circ Df = D(\iota \circ f)$.
	
	The opposite direction is nearly the same in reverse, but with one extra detail needed. Indeed, if $f \in \cW^{1,p}_\loc(M, N)$, by doing the argument in reverse we have no issues reaching the point where for every point $x \in M$ we have a neighborhood $U$ on which $\iota \circ f\vert_U \in \cW^{1,p}(U, \R^k)$. However, now Lemma \ref{lem:colocal_euclidean_equivalence} only yields us that $\iota \circ f\vert_U \in W^{1,1}_\loc(U, \R^k)$ and $\abs{D(\iota \circ f\vert_U)} \in L^p(U)$. Hence, we need to use Corollary \ref{cor:sobolev_mfld_dom_base_exponent} to conclude that $\iota \circ f\vert_U \in W^{1,p}_\loc(U, \R^k)$. Now, since we have this in the neighborhood of every point of $M$, we conclude that $\iota \circ f \in W^{1,p}_\loc(M, \R^k)$, and consequently $f \in W^{1,p}_{\iota, \loc}(M, N)$.
	
	For the equivalence of maps $Df$, we already computed in the beginning that the $Df$ given by the space $\cW^{1, p}_\loc(M, N)$ satisfies $D(\iota \circ f) = D\iota \circ Df$, and is therefore valid for $W^{1, p}_{\iota, \loc}(M, N)$. Since the $Df$ of $\cW^{1, p}_\loc(M, N)$ is almost everywhere unique, we only need a similar uniqueness result for any possible maps $Df$ for $W^{1, p}_{\iota, \loc}(M, N)$. However, this immediately follows since $D \iota$ is injective. Hence, we conclude that $\cW^{1, p}_\loc(M, N)$ and $W^{1, p}_{\iota, \loc}(M, N)$ have almost everywhere equivalent maps $Df$.
	
	Next, we prove the second claim regarding the continuous case. Suppose $f$ is continuous and is in $W^{1,p}_\loc(M, N)$ in the sense of Definition \ref{def:local_Sobolev_for_continuous_maps}. By Lemma \ref{lem:sobolev_composition_mfld_targ_cont}, $u \circ f$ is weakly differentiable for every $u \in C^1_0(N)$, and therefore $f \in \cW(M, N)$. Moreover, $Df$ is the colocal weak derivative of $f$, due to Lemma \ref{lem:sobolev_composition_mfld_targ_cont} and the uniqueness of the colocal weak derivative (see again \cite[Proposition 1.2]{Convent-VanSchaftingen_Sobolev}). Moreover, given a smooth $L$-bilipschitz chart $\psi \colon V \to \R^n$ on $N$, we have $\abs{Df\vert_{f^{-1}V}}^p \leq L^p \abs{D(\psi \circ f\vert_{f^{-1}V})}^p$. Hence, $\abs{Df} \in L^p_\loc(M)$, and therefore $f \in \cW^{1, p}_\loc(M, N)$.
	
	It therefore only remains to prove the opposite direction, where we suppose $f$ is continuous and $f \in \cW^{1,p}_\loc(M, N)$. We may select a $\psi \in C^\infty_0(N, \R^n)$ such that $\psi \vert_V$ is a bilipschitz chart, and a $U \subset M$ such that $f(U) \subset V$. It follows from Lemma \ref{lem:colocal_sobolev_restricion_characterization} that $f \in \cW^{1,p}_\loc(U, N)$. We thus have by the definition of colocal weak differentiability that $\psi \circ f \in W^{1,1}_\loc(U, \R^n)$. By \cite[Proposition 1.2]{Convent-VanSchaftingen_Sobolev}, $D(\psi \circ f\vert_U) = D\psi \circ Df\vert_U$, and since $\psi$ is $L$-Lipschitz on the image of $f\vert_U$, we conclude that $\abs{D(\psi \circ f\vert_U)} \in L^p_\loc(U)$. By Corollary \ref{cor:sobolev_mfld_dom_base_exponent}, we hence have $\psi \circ f\vert_U \in W^{1,p}_\loc(U, \R^n)$. Hence, $f \in W^{1,p}_\loc(M, N)$, and the previous case then implies that the corresponding weak derivatives are the same.
\end{proof}

The previous theorem also confirms the independence of $W^{1,p}_{\iota, \loc}(M, N)$ on the embedding $\iota$, and therefore does the same for $W^{1,p}_\iota(M, N)$ when $M$ is compact. Moreover, in the above situations it also shows the existence of a unique $Df$ satisfying $D\iota \circ Df = D(\iota \circ f)$, implying the well-definedness of the $Df$ map for $W^{1, p}_{\iota, \loc}(M, N)$.


\section{Sobolev differential forms}\label{sect:Sobolev_forms}

As the last topic on Sobolev spaces before moving on to the subject of quasiregular maps, we discuss Sobolev spaces of differential forms. We start by considering Sobolev differential forms on $\R^n$, and then on Riemannian manifolds. We also give several results on how pull-backs and Sobolev differential forms interact.
 
\subsection{Sobolev differential forms and the weak exterior derivative}

Suppose that $U$ is an open domain in $\R^n$, and that $\omega$ is a differential $k$-form on $U$. That is, $\omega$ is a section $U \to \wedge^k T^*U \subset \wedge^k T^*\R^n$. As has already been discussed, the isometries $\iota_{n, x} \colon T_x \R^n \to \R^n$ allow us to canonically identify $\wedge^k T^*\R^n$ with $\R^n \times (\wedge^k \R^n)$. In particular, we have a standard orthonormal frame for $\wedge^k T^*\R^n$: the differential forms
\[
	\eps_I = \dd x_{I_1} \wedge \dots \wedge \dd x_{I_k},
\]
where the index $I$ runs through all $k$-element subsets of $\{1, \dots, n\}$, and $I_j$ is the $j$:th element of $I$ in increasing order. In particular, there are $\binom{n}{k}$ different indices $I$.

Hence, the form $\omega$ can be written in the form $\omega = \sum_I \omega_I \eps_I$, where $\omega_I \colon U \to \R$ is a real function. As discussed earlier, $\omega$ is smooth, continuous, or measurable, exactly if all of its coordinate functions $\omega_I$ are. The pointwise norm induced by the Grassmann inner product becomes
\[
	\abs{\omega} = \left( \sum_I \abs{\omega_I}^2 \right)^{1/2}.
\]
Since all norms on a finite dimensional vector space are equivalent, we have for every $p \geq 1$ a dimensional constant $C(p, n, k)$ for which
\[
	C(p,n,k)^{-1} \left( \sum_I \abs{\omega_I}^p \right)^{1/p}
	\leq \abs{\omega} 
	\leq C(p,n,k)\left( \sum_I \abs{\omega_I}^p \right)^{1/p}.
\]
In conclusion, we also obtain that $\omega \in L^p(\wedge^k U)$ if and only if $\omega_I \in L^p(U)$ for every $I$.

Hence, a space $W^{1, p}(\wedge^k U)$ of Sobolev $k$-forms may be defined as follows: $\omega \in W^{1, p}(\wedge^k U)$ if $\omega_I \in W^{1,p}(U)$ for every $I$. The space is therefore essentially the space $W^{1, p}(U, \R^N)$ for $N = \binom{n}{k}$ with a special notation, and we inherit a norm and all standard properties of Sobolev spaces from this identification. A local version $W^{1,p}_\loc(\wedge^k U)$ can be defined accordingly.

However, in these notes, we do not focus on the space $W^{1,p}(\wedge^k U)$. Instead, we focus on another type of Sobolev space for differential forms, based on a weak exterior deivative. Suppose that $\omega \in C^{\infty}(\wedge^k U)$, and let $\eta \in C^\infty_0(\wedge^{n-k-1} U)$. We have then that
\[
	0 = \int_U d(\omega \wedge \eta) 
		= \int_U \left( d\omega \wedge \eta + (-1)^k \omega \wedge d\eta \right).
\]
We may hence extend the weak exterior derivative $d$ from functions to $k$-forms, following the characterization in Lemma \ref{lem:weak_derivative_characterizations}. That is, suppose that $\omega \in L^1_\loc(\wedge^k U)$. We say that a form $\tau \in L^1_\loc(\wedge^k U)$ is a \emph{weak differential} of $\omega$, or in other words a \emph{weak exterior derivative} of $\omega$, if it satisfies the condition
\begin{equation}\label{eq:weak_differential_for_forms_eucl}
	\int_U \omega \wedge d\eta = (-1)^{k+1} \int_U \tau \wedge \eta
\end{equation}
for all $\eta \in C^\infty_0(\wedge^{n-k-1} U)$. 

If a form $\tau$ as in \eqref{eq:weak_differential_for_forms_eucl} exists, it is unique up to a set of measure zero, and hence we denote $\tau = d\omega$. We then say that $\omega \in W^{d, p}(\wedge^k U)$ if $\omega \in L^p(\wedge^k U)$ and $\omega$ has a weak differential $d\omega \in L^p(\wedge^{k+1} U)$. The local version $W^{d,p}_\loc(\wedge^k U)$ is defined similarly by requiring that $\omega \in L^p_\loc(\wedge^k U)$ and $d\omega \in L^p_\loc(\wedge^{k+1} U)$. Moreover, we define a mixed integrability version $W^{d, p, q}(\wedge^k U)$ by requiring that $\omega \in L^p(\wedge^k U)$ and $d\omega \in L^q(\wedge^{k+1} U)$. A local mixed integrability version $W^{d, p, q}_\loc(\wedge^k U)$ is also defined in the same manner as usual.

The space $W^{d, p}(\wedge^k U)$ is larger than $W^{1,p}(\wedge^k U)$. This is since even if the coordinate functions $\omega_I$ don't have weak partial derivatives, suitable combinations of them may still exist to yield a valid weak $d\omega$. A good example of this is to consider the 1-form $\omega = f(x_1) dx_1$ in $U = (0, 1)^2 \subset \R^2$, where $f \in L^1([0, 1])$. Clearly these assumptions are not enough to guarantee that $x \mapsto f(x_1)$ has a weak $\partial_1$-derivative; yet it has a weak $\partial_2$-derivative of 0, due to which it can be shown that $\omega$ has the weak differential $d\omega = 0$.

However, for $k = 0$, the spaces are the same. Indeed, $0$-forms on $U$ can be interpreted as functions $U \to \R$, and $W^{1, p}(\wedge^0 U) = W^{d,p}(\wedge^0 U) = W^{1, p}(U)$, with the weak differential being essentially the weak gradient as discussed earlier.

\medskip

The Sobolev spaces defined using the weak exterior derivative extend naturally to the manifold setting. Indeed, suppose that $M$ is an oriented Riemannian manifold. Then for a measurable differential form $\omega \in L^1_\loc(M)$, we again say that $d\omega \in L^1_\loc(\wedge^{k+1} M)$ is a weak differential of $\omega$ if it satisfies
\begin{equation}\label{eq:weak_differential_for_forms}
	\int_M \omega \wedge d\eta = (-1)^{k+1} \int_M d\omega \wedge \eta
\end{equation}
for every $\eta \in C^\infty_0(\wedge^{n-k-1} M)$. We may then define the spaces $W^{d, p}(\wedge^k M)$, $W^{d, p}_\loc(\wedge^k M)$, $W^{d, p, q}(\wedge^k M)$ and $W^{d, p, q}_\loc(\wedge^k M)$ in the same manner as in the Euclidean case. It is clear that if $M$ is a domain in $\R^n$, these definitions are equivalent with the previously stated ones. 

We note that in these notes, we do not even define a manifold counterpart for the spaces $W^{1, p}(\wedge^k U)$. However, defining such a manifold counterpart is regardless possible; an interested reader will find an introduction to the spaces $W^{1, p}(\wedge^k M)$, as well as a far more in-depth discussion on the spaces $W^{d, p}(\wedge^k M)$, in the papers \cite{Scott_HodgeTheory} by Scott and \cite{Iwaniec-Scott-Stroffolini} by Iwaniec, Scott and Stroffolini.

We then briefly point out some properties of Sobolev differential forms. First, if $\omega \in W^{d, 1}_\loc(\wedge^k M)$ and $\xi \in C^\infty(\wedge^l M)$, then $\omega \wedge \xi \in W^{d, 1}_\loc(\wedge^{k+l} M)$ with weak differential $d(\omega \wedge \xi) = d\omega \wedge \xi + (-1)^k \omega \wedge d\xi$. The argument is simple: take a test form $\eta \in C^\infty_0(\wedge^{n-k-l-1} M)$, and verify using the definition of weak differential and the Leibniz rule for smooth forms that
\begin{multline*}
\int_M (\omega \wedge \xi) \wedge d\eta
= \int_M \omega \wedge (-1)^l \left( d(\xi \wedge \eta) - d\eta \wedge \xi\right)\\
= (-1)^{k+l+1} \int_M d\omega \wedge \xi \wedge \eta + (-1)^{l+1} \int_U \omega \wedge d\xi \wedge \eta\\
= (-1)^{k+l+1} \int_M (d\omega \wedge \xi + (-1)^k \omega \wedge d\xi) \eta.
\end{multline*}

Second, if $\omega \in L^1_\loc(\wedge^k M)$ has a weak differential $d\omega \in L^1_\loc(\wedge^{k+1} M)$, then $d\omega$ has a weak differential $d d\omega = 0$. We see this using the corresponding result for smooth functions, as
\[
	\int_M d\omega \wedge d\eta = (-1)^{k+1} \int_M \omega \wedge d d \eta = (-1)^{k+1} \int_M \omega \wedge 0 = 0
\]
for every $\eta \in C^\infty_0(\wedge^{n-k-2} M)$.

\subsection{Pull-backs of Sobolev forms with smooth maps}

We then proceed to discuss the interaction of the weak exterior derivative with pull-backs. An important property of smooth differential forms $\omega \in C^\infty(\wedge^k M)$ is that $f^* d\omega = d f^*\omega$ for smooth $f$. It would therefore be desirable to have a counterpart for this in the case of a Sobolev form $\omega$.

We immediately remark that if $\omega \in W^{d,p,q}(\wedge^k N)$, then $f \in C^\infty(M, N)$ isn't enough for $f^* d\omega = d f^* \omega$. Indeed, the problems start way before we can even discuss the weak exterior derivative; a smooth $f$ can have an image of measure zero, in which case changing $\omega$ in a set of measure zero would change $f^* \omega$ everywhere. However, a sufficiently strong condition is that $f$ is smooth and bilipschitz. We formulate this in the following proposition, the proof of which is the main focus of this subsection.

\begin{prop}\label{prop:wdpqloc_bilip_pullback_formula}
	Let $M$ and $N$ be oriented Riemannian $n$-manifolds. Suppose that $\omega \in W^{d, p, q}_\loc(\wedge^k N)$, and that $f \in C^\infty(M, N)$ is $L$-bilipschitz. Then $f^*\omega \in W^{d, p, q}_\loc(\wedge^k M)$, and $f^* d\omega = d f^* \omega$. More precisely, if $\omega \in W^{d, p, q}(N)$, then $f^*\omega \in W^{d, p, q}(M)$, with
	\begin{align*}
		\norm{f^* \omega}_p &\leq \binom{n}{k}^{\frac{1}{2}} L^{k + \frac{n}{p}} \norm{\omega}_p,&
		\norm{f^* d\omega}_q &\leq \binom{n}{k+1}^{\frac{1}{2}} L^{k+1 + \frac{n}{q}} \norm{d\omega}_q.
	\end{align*}
\end{prop}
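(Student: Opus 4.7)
Since $f \in C^\infty(M, N)$ is $L$-bilipschitz between $n$-manifolds, Theorem \ref{thm:C1_lip_characterization} gives $l(Df(x)) \geq L^{-1} > 0$ everywhere, so the inverse function theorem combined with injectivity makes $f$ a smooth diffeomorphism onto the open set $fM \subset N$, with smooth $L$-bilipschitz inverse $g = f^{-1} \colon fM \to M$. The local conclusion follows from the global one by restricting to precompact neighborhoods inside charts, so I would focus on the global case $\omega \in W^{d,p,q}(\wedge^k N)$.

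For the norm estimates, the key pointwise bound is that $\smallabs{\wedge^k A} \leq \smallabs{A}^k$ for a linear $A \colon T_x M \to T_y N$, which follows from the singular value decomposition exactly as in the discussion preceding \eqref{eq:l_jacob_opnorm_ineq}. Applied to $A = Df(x)$ this yields $\smallabs{f^*\omega}(x) \leq L^k \smallabs{\omega}(f(x))$. Combining this with the top-form change of variables $\int_M (h \circ f) \smallabs{J_f} \dd m_n = \int_{fM} h \dd m_n$ and the lower bilipschitz Jacobian bound \eqref{eq:bilipschitz_J} gives $\smallnorm{f^*\omega}_p \leq L^{k + n/p} \smallnorm{\omega}_p$; the extra $\binom{n}{k}^{1/2}$ constant enters only if one prefers instead to estimate $\smallabs{f^*\omega}$ coordinate-by-coordinate in a smooth frame and apply Cauchy--Schwarz across the $\binom{n}{k}$ multi-indices. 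The analogous bound for $d\omega$ follows once $f^* d\omega = d f^*\omega$ is available.

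For the weak-derivative identity, fix a test form $\eta \in C^\infty_0(\wedge^{n-k-1} M)$. Because $g$ is smooth, the classical identity $g^* d\eta = d g^* \eta$ holds, and $g^*\eta$ is smooth with compact support inside $fM$; extending it by zero produces $\tilde\eta \in C^\infty_0(\wedge^{n-k-1} N)$. Using $f^* g^* = \id$ I would rewrite $f^*\omega \wedge d\eta = f^*\bigl(\omega \wedge d g^*\eta\bigr)$. Applying the top-form change of variables for the diffeomorphism $f \colon M \to fM$ (with a fixed sign $\epsilon \in \{\pm 1\}$ coming from orientation) transfers the integral to $\epsilon \int_{fM} \omega \wedge d g^*\eta = \epsilon \int_N \omega \wedge d \tilde\eta$. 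Invoking the defining property of the weak exterior derivative of $\omega$ on $N$ sends this to $(-1)^{k+1} \epsilon \int_N d\omega \wedge \tilde\eta = (-1)^{k+1} \epsilon \int_{fM} d\omega \wedge g^*\eta$, and a second application of the change of variables back to $M$ produces another factor of $\epsilon$ that cancels the first, yielding exactly $(-1)^{k+1} \int_M f^* d\omega \wedge \eta$. This identifies $f^* d\omega$ as a weak exterior derivative of $f^* \omega$.

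The principal delicate point is that $fM$ may be a proper open subset of $N$: the weak derivative of $\omega$ is only accessible against test forms on $N$, whereas the natural push of $\eta$ via $g$ is only compactly supported inside $fM$, so the extension-by-zero step cannot be skipped. The orientation signs from the two applications of the change of variables cancel automatically, and the remaining bookkeeping is routine given \eqref{eq:lipschitz_J}, \eqref{eq:bilipschitz_J}, and Theorem \ref{thm:C1_lip_characterization}.
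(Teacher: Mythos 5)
Your proof is correct and follows essentially the same route as the paper's: the identical change-of-variables and extension-by-zero argument for the weak exterior derivative identity (with the two orientation signs cancelling), and the same Jacobian bookkeeping for the $L^p$ bounds. The only difference is in the pointwise estimate, where you bound $\smallabs{(f^*\omega)_x}$ directly via $\smallabs{\wedge^k Df(x)} \leq \smallabs{Df(x)}^k$ — which is valid and in fact sharper, rendering the $\binom{n}{k}^{1/2}$ in the statement unnecessary — whereas the paper routes the estimate through the comass norm and Lemma \ref{lem:comass_norm_conversion}, which is where its binomial factor originates.
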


For the following proof, it is convenient to recall the pointwise comass norm of differential forms. In the author's opinion, this provides the most illuminating approach to pointwise norm estimates of differential forms, which will be used here as well as later on when we develop similar theory for quasiregular maps. Hence, let $M$ be a Riemannian manifold, let $x \in M$, and let $\alpha \in \wedge^k T^*_x M$. Then $\alpha$ is a linear map $\wedge^k T_x M \to \R$. We define the \emph{comass norm} $\abs{\alpha}_{\mass}$ of $\alpha$ by
\[
	\abs{\alpha}_{\mass} = \sup \{ \alpha(v_1 \wedge \dots \wedge v_k) :
	v_i \in T_x M, \abs{v_1 \wedge \dots \wedge v_k} = 1 \}.
\]
The comass norm is therefore similar to the operator norm of $\alpha$, but it is restricted to the values of $\alpha$ on simple $k$-vectors of $\wedge^k T_x M$. Recall that a $k$-vector is \emph{simple} if it can be expressed as a wedge product of elements of $T_x M$.

Simple $k$-vectors have among others the following properties
\begin{lemma}\label{lem:simple_k_vec_props}
	Let $M$ be a Riemannian manifold, let $x \in M$, and let $v \in \wedge^k T_x M$. Suppose that $v$ is simple, and can therefore be written as $v = v_1 \wedge \dots \wedge v_k$, with $v_i \in T_x M$.
	
	\begin{enumerate}
		\item \label{item:simple_k_vec_prop_ortho} The $k$-vector $v$ can also be written as $v = v_1' \wedge \dots \wedge v_k'$, where the $v_i' \in T_x M$ are pairwise orthogonal.
		\item \label{item:simple_k_vec_prop_estimate} We have
		\[
			\abs{v} \leq \abs{v_1} \cdots \abs{v_k},
		\]
		where equality holds if $v_i$ are pairwise orthogonal.
	\end{enumerate}
\end{lemma}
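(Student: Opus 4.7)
My plan is to prove both parts of the lemma using a wedge-preserving Gram--Schmidt procedure, which simultaneously yields the orthogonal representation in part (\ref{item:simple_k_vec_prop_ortho}) and the norm inequality in part (\ref{item:simple_k_vec_prop_estimate}).

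The core algebraic fact I would use is that for any $i \neq j$ and any scalar $c \in \R$, replacing $v_i$ by $v_i + c v_j$ does not change the wedge product $v_1 \wedge \dots \wedge v_k$, because the extra term produced by multilinearity contains a repeated factor $v_j \wedge v_j = 0$. Iterating this, I would set $v_1' = v_1$ and inductively
\[
	v_i' = v_i - \sum_{j < i} \frac{\ip{v_i}{v_j'}_x}{|v_j'|_x^2}\, v_j',
\]
with terms where $v_j' = 0$ omitted. An induction on $i$ then shows that $v_1' \wedge \dots \wedge v_i' \wedge v_{i+1} \wedge \dots \wedge v_k = v_1 \wedge \dots \wedge v_k$: in the inductive step one expands $v_i' = v_i - \sum_{j < i} c_j v_j'$, and each term introduced by this substitution contains a repeated $v_j'$ and therefore vanishes. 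Taking $i = k$ gives $v = v_1' \wedge \dots \wedge v_k'$ with the $v_i'$ pairwise orthogonal, settling (\ref{item:simple_k_vec_prop_ortho}).

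For (\ref{item:simple_k_vec_prop_estimate}), I would apply the Grassmann inner product formula recalled earlier in the excerpt to this orthogonal representation:
\[
	|v|_x^2 = \det\bigl[\, \ip{v_i'}{v_j'}_x \,\bigr]_{ij} = |v_1'|_x^2 \cdots |v_k'|_x^2,
\]
the second equality because the Gram matrix is diagonal. When the original $v_i$ are already pairwise orthogonal, Gram--Schmidt leaves them unchanged and this gives the stated equality case. In the general case, the construction writes $v_i = v_i' + w_i$ with $w_i$ in the span of $v_1', \dots, v_{i-1}'$ and therefore orthogonal to $v_i'$, so $|v_i|_x^2 = |v_i'|_x^2 + |w_i|_x^2 \geq |v_i'|_x^2$; multiplying over $i$ yields $|v|_x \leq |v_1|_x \cdots |v_k|_x$.

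The only point requiring care is the degenerate case where $v_1, \dots, v_k$ are linearly dependent: then some $v_j'$ vanishes mid-process and the Gram--Schmidt denominator becomes ill-defined, but in that case the wedge product also vanishes and the lemma is trivial. I would handle this simply by omitting such zero terms from the recursion as indicated above. I do not expect a more serious obstacle than this minor bookkeeping; the substantive content is entirely contained in the algebraic fact $v_j \wedge v_j = 0$ and the Gram determinant formula.
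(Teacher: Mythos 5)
Your proposal is correct and follows essentially the same route as the paper: a wedge-preserving Gram--Schmidt orthogonalization for part (\ref{item:simple_k_vec_prop_ortho}), followed by the Gram determinant formula and the Pythagorean estimate $\abs{v_i'} \leq \abs{v_i}$ for part (\ref{item:simple_k_vec_prop_estimate}). Your treatment is slightly more explicit than the paper's (the closed-form recursion, the induction, and the degenerate linearly dependent case), but the substance is identical.
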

\begin{proof}
	Part \eqref{item:simple_k_vec_prop_ortho} is due to a Gram--Schmidt argument. We first write $v_1' = v_1$. Then we decompose $v_2 = v_2' + a v_1'$, where $v_2'$ is orthogonal to $v_1'$. It follows that $v_1' \wedge v_2 = v_1' \wedge v_2'$, as $v_1' \wedge v_1' = 0$. Then we decompose $v_3 = v_3' + bv_1' + cv_2'$, with $v_3'$ orthogonal to both $v_1'$ and $v_2'$, and similarly obtain $v_1' \wedge v_2' \wedge v_3 = v_1' \wedge v_2' \wedge v_3'$. We continue up to $v_k$, and obtain the desired decomposition.
	
	Part \eqref{item:simple_k_vec_prop_estimate} is then consequence of part \eqref{item:simple_k_vec_prop_ortho}. Namely, 
	\[
		\abs{v} = \sqrt{\ip{v}{v}} = \sqrt{\det{\left(\ip{v_i'}{v_j'}\right)_{ij}}},
	\] 
	where the matrix inside the determinant is diagonal with entries $\abs{v_i'}^2$. We thus obtain 
	\[
		\abs{v} = \abs{v_1'} \abs{v_2'} \cdots \abs{v_k'}.
	\]
	Finally, we note that $\abs{v_i'} \leq \abs{v_i}$ for every $i$, since $v_i - v_i'$ is orthogonal to $v_i'$. Hence, the claim follows.
\end{proof}

We now discuss how the comass norm compares to the usual pointwise norm for differential forms. The following estimate is stated in \cite[Section 1.8]{Federer_book}. However, as the proof is left for the reader there, we provide it for completeness.

\begin{lemma}\label{lem:comass_norm_conversion}
	Let $M$ be a Riemannian manifold, let $x \in M$, and let $\alpha \in \wedge^k T^*_x M$. Then
	\[
		\abs{\alpha}_{\mass} \leq \abs{\alpha} \leq \binom{n}{k}^{\frac{1}{2}} \abs{\alpha}_{\mass}.
	\]
\end{lemma}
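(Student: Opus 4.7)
The plan is to handle the two inequalities separately, with the first being immediate from definitions and the second reducing to a computation in an orthonormal basis.

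For the left inequality $\abs{\alpha}_{\mass} \leq \abs{\alpha}$, I would first observe that by the standard duality between a finite dimensional inner product space and its dual, the Grassmann norm $\abs{\alpha}$ on $\wedge^k T^*_x M$ coincides with the operator norm of $\alpha$ viewed as a linear functional on $\wedge^k T_x M$, i.e.\ $\abs{\alpha} = \sup\{\alpha(v) : v \in \wedge^k T_x M,\ \abs{v} = 1\}$. The comass norm is the same supremum taken over the smaller set of simple unit $k$-vectors, so the inequality is automatic.

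For the right inequality, I would fix an orthonormal basis $e_1, \ldots, e_n$ of $T_x M$ and form the wedge products $e_I = e_{I_1} \wedge \dots \wedge e_{I_k}$ indexed by $k$-subsets $I \subset \{1, \dots, n\}$. By the defining property of the Grassmann inner product on $\wedge^k T_x M$, the collection $(e_I)$ is orthonormal, and hence so is its dual basis $(\eps_I)$ in $\wedge^k T^*_x M$. Writing $\alpha = \sum_I \alpha_I \eps_I$ with $\alpha_I = \alpha(e_I)$, we have $\abs{\alpha}^2 = \sum_I \alpha_I^2$. Each $e_I$ is a simple $k$-vector of norm one, the latter fact following from Lemma \ref{lem:simple_k_vec_props}\eqref{item:simple_k_vec_prop_estimate}. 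Therefore $\abs{\alpha_I} = \abs{\alpha(e_I)} \leq \abs{\alpha}_{\mass}$ for every $I$, and summing over the $\binom{n}{k}$ indices gives $\abs{\alpha}^2 \leq \binom{n}{k} \abs{\alpha}_{\mass}^2$.

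Neither step presents a real obstacle: the only subtle point is making sure to appeal to Lemma \ref{lem:simple_k_vec_props} to conclude $\abs{e_I} = 1$, rather than treating this as part of the definition of the Grassmann inner product (the two viewpoints agree, but the Grassmann formula from Section \ref{sect:diff_geo_prelims} is what makes it rigorous).
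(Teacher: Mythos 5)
Your proof is correct, and the second inequality is argued exactly as in the paper: expand $\alpha$ in the orthonormal dual basis $(\eps_I)$, note that each $e_I$ is a simple unit $k$-vector, and bound $\sum_I \alpha(e_I)^2$ by $\binom{n}{k}\abs{\alpha}_{\mass}^2$. The only place you diverge is the easy half $\abs{\alpha}_{\mass} \leq \abs{\alpha}$: you invoke the identification of the Grassmann norm of $\alpha$ with its operator norm on all of $\wedge^k T_x M$ (so the comass, being a supremum over the smaller set of simple unit vectors, is automatically dominated), whereas the paper avoids stating that identification and instead completes a given simple unit vector $v$ to an orthonormal basis $\{e_I'\}$ of $\wedge^k T_x M$ via Gram--Schmidt and reads off $\alpha(v) \leq \sqrt{\sum_I \alpha(e_I')^2} = \abs{\alpha}$. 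Both arguments rest on the same underlying fact (Cauchy--Schwarz for the duality pairing in orthonormal dual bases), so yours is a legitimate, slightly more streamlined packaging; if you use it, you should make explicit that $\eps_I(e_J) = \delta_{IJ}$ with both families orthonormal, which is what justifies the equality of the Grassmann norm and the operator norm.
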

\begin{proof}
	Let $\{e_I\}$ be an orthonormal basis of $\wedge^k T_x M$ consisting of simple vectors; such a basis exists by the standard construction $e_I = e_{I_1} \wedge \dots \wedge e_{I_k}$, where $\{e_1, \dots, e_n\}$ is an orthonormal basis of $T_x M$. Let $\{\eps_I\}$ be the corresponding dual basis, in which case $\{\eps_I\}$ is also orthonormal. We write $\alpha = \sum_{I} \alpha_I \eps_I$. Then $\abs{\alpha}^2 = \sum_{I} \alpha_I^2$. Since there are $\binom{n}{k}$ basis elements $\eps_I$, we may estimate
	\[
		\abs{\alpha} \leq \sqrt{\binom{n}{k} \sup_I \alpha_I^2}
		= \sqrt{\binom{n}{k} \sup_I \alpha(e_I)^2}.
	\]
	In particular, since the elements $e_I$ are simple and $\abs{e_I} = 1$, we obtain
	\[
		\abs{\alpha} \leq \binom{n}{k}^{\frac{1}{2}} \abs{\alpha}_{\mass}.
	\]
	
	For the converse direction, let $v \in \wedge^k T_x M$ be simple with $\abs{v} = 1$. By the Gram--Schmidt process, we may create an orthonormal basis $\{e_I'\}$ of $\wedge^k T_x M$ which contains $v$. Let $\{\eps_I'\}$ be the corresponding dual basis of $\wedge^k T_x^* M$, which is also orthonormal. Then by writing $\alpha = \sum_{I} \alpha_I' \eps_I'$, we obtain
	\[
		\abs{\alpha} = \sqrt{\sum_I (\alpha_I')^2} = \sqrt{\sum_I \alpha(e_I')^2}
		\geq \sup_I \alpha(e_I') \geq \alpha(v).
	\]
	By taking the supremum over all such $v$, we obtain that
	\[
		\abs{\alpha}_{\mass} \leq \abs{\alpha}.
	\]
\end{proof}

Using the comass norm, we obtain the following pointwise estimate for bilipschitz pull-backs of measurable differential forms.

\begin{lemma}\label{lem:bilip_pullback_pointwise_estimate}
	Let $M$ and $N$ be oriented Riemannian $n$-manifolds. Suppose that $\omega \in \Gamma(\wedge^k N)$, and $f \colon M \to N$ is $L$-bilipschitz. Then
	\[
		\binom{n}{k}^{-\frac{1}{2}} L^{-k} \abs{\omega_{f(x)}}
		\leq \abs{(f^*\omega)_x}
		\leq \binom{n}{k}^{\frac{1}{2}} L^k \abs{\omega_{f(x)}}
	\]
	for almost every $x \in M$.
\end{lemma}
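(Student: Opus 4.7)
The plan is to use the comass norm as a bridge via Lemma \ref{lem:comass_norm_conversion}, and then to obtain the lower bound for free by applying the upper bound to the bilipschitz inverse $f^{-1}$. More precisely, since $\smallabs{\alpha} \leq \binom{n}{k}^{1/2}\smallabs{\alpha}_{\mass}$ and $\smallabs{\alpha}_{\mass} \leq \smallabs{\alpha}$, it suffices to prove the pointwise comass estimate
\[
\abs{(f^*\omega)_x}_{\mass} \leq L^k \abs{\omega_{f(x)}}_{\mass}
\]
for almost every $x \in M$, after which the upper bound in the claim follows by combining the two inequalities, and the lower bound follows by symmetry.

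For the key pointwise estimate, I would fix a point $x \in M$ at which $\omega_{f(x)}$ is defined (which holds for a.e.\ $x$, since $f$ satisfies the Lusin $(N^{-1})$ condition as a bilipschitz map) and a simple unit $k$-vector $v = v_1 \wedge \dots \wedge v_k \in \wedge^k T_x M$. By Lemma \ref{lem:simple_k_vec_props} \eqref{item:simple_k_vec_prop_ortho}, I may assume the $v_i$ are pairwise orthogonal, whence $\abs{v_1}\cdots\abs{v_k} = \abs{v} = 1$ by Lemma \ref{lem:simple_k_vec_props} \eqref{item:simple_k_vec_prop_estimate}. Since $\abs{Df(x)} \leq L$ by Theorem \ref{thm:C1_lip_characterization}, the same lemma yields
\[
	\abs{Df(x) v_1 \wedge \dots \wedge Df(x) v_k} \leq \abs{Df(x) v_1}\cdots\abs{Df(x) v_k} \leq L^k.
\]
The pullback definition then gives
\[
\abs{(f^*\omega)_x(v)} = \abs{\omega_{f(x)}\bigl(Df(x)v_1 \wedge \dots \wedge Df(x)v_k\bigr)} \leq L^k \abs{\omega_{f(x)}}_{\mass},
\]
where I used that the comass norm bounds $\omega_{f(x)}$ on simple vectors (and the image is simple). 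Taking the supremum over unit simple $k$-vectors $v$ yields the desired comass estimate.

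For the lower bound, I would observe that $f^{-1} \colon fM \to M$ is itself a smooth $L$-bilipschitz map (in particular, $fM$ is open since $f$ is a local diffeomorphism by the inverse function theorem, which is implicit in the discussion after Theorem \ref{thm:C1_lip_characterization}). Applying the upper bound already proved to $f^{-1}$ and the form $f^*\omega \in \Gamma(\wedge^k M)$, and using that $(f^{-1})^* f^* \omega = (f \circ f^{-1})^* \omega = \omega$ almost everywhere on $fM$, I obtain
\[
	\abs{\omega_{f(x)}} = \abs{\bigl((f^{-1})^* f^* \omega\bigr)_{f(x)}} \leq \binom{n}{k}^{\frac{1}{2}} L^k \abs{(f^*\omega)_x}
\]
for almost every $x \in M$, which rearranges to the desired lower bound. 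I do not anticipate any serious obstacle; the main point to watch is the measure-theoretic care needed to ensure $\omega_{f(x)}$ and $(f^*\omega)_x$ are simultaneously defined for a.e.\ $x$, which is handled by the Lusin conditions from the bilipschitz hypothesis.
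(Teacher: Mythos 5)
Your proposal is correct and follows essentially the same route as the paper: the comass upper bound via orthogonalized simple $k$-vectors and Lemma \ref{lem:simple_k_vec_props}, the lower bound by applying the upper bound to the $L$-bilipschitz inverse $f^{-1}$ together with $(f^{-1})^* f^* \omega = \omega$, and the conversion to the Grassmann norm through Lemma \ref{lem:comass_norm_conversion}. The only cosmetic differences are that the paper symmetrizes at the comass level before converting norms (yielding the same constants) and that it disposes of the degenerate case $k = 0$ separately, which your simple-vector argument implicitly excludes.
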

\begin{proof}
	The case $k = 0$ is trivial, as in that case every term in the chain of inequalities equals $\abs{\omega_{f(x)}}$. For $k \geq 1$, we show that in fact
	\begin{equation}\label{eq:bilip_pointwise_estimate_mass}
		L^{-k} \abs{\omega_{f(x)}}_{\mass}
		\leq \abs{(f^* \omega)_x}_{\mass} \leq L^k \abs{\omega_{f(x)}}_{\mass}
	\end{equation}
	for every $x \in M$. Then the claim follows by Lemma \ref{lem:comass_norm_conversion}.
	
	Let $v \in \wedge^k T_x M$ be simple, in which case we may write $v = v_1 \wedge \dots \wedge v_k$ with $v_i \in T_x M$, and suppose that $\abs{v} = 1$. By Lemma \ref{lem:simple_k_vec_props} part \eqref{item:simple_k_vec_prop_ortho}, we may assume that the vectors $v_i$ are pairwise orthogonal. Moreover, we consequently have $\abs{v} = \abs{v_1} \cdots \abs{v_k}$ by Lemma \ref{lem:simple_k_vec_props} part \eqref{item:simple_k_vec_prop_estimate}.
	
	Now, $(Df(x)v_1) \wedge \dots \wedge (Df(x)v_k)$ is a simple $k$-vector of $\wedge^k T_{f(x)} N$. Moreover, by Lemma \ref{lem:simple_k_vec_props} part \eqref{item:simple_k_vec_prop_estimate}, we have that
	\begin{multline*}
		\abs{(Df(x)v_1) \wedge \dots \wedge (Df(x)v_k)}
		\leq \abs{Df(x)v_1} \cdots \abs{Df(x)v_k}\\
		\leq \abs{Df(x)}^k \abs{v_1} \cdots \abs{v_k}
		= \abs{Df(x)}^k \abs{v}
		= \abs{Df(x)}^k
		\leq L^k.
	\end{multline*}
	Hence, we have that
	\begin{multline*}
		(f^*\omega)_x(v)
		= \omega_{f(x)} \left((Df(x)v_1) \wedge \dots \wedge (Df(x)v_k)\right)\\
		\leq \abs{\omega_{f(x)}}_{\mass} \abs{(Df(x)v_1) \wedge \dots \wedge (Df(x)v_k)}
		\leq \abs{\omega_{f(x)}}_{\mass} \abs{L}^k.
	\end{multline*}
	By taking the supremum over all $v$, we obtain the upper bound of \eqref{eq:bilip_pointwise_estimate_mass}.
	
	For the lower bound, we note that the inverse $f^{-1} \colon fM \to M$ is also an $L$-bilipschitz smooth map. Hence, we have
	\[
		\abs{\omega_{f(x)}}_{\mass} = \abs{((f^{-1})^* f^* \omega)_{f(x)}}_{\mass}
		\leq L^k \abs{(f^* \omega)_{f^{-1}(f(x))}}_{\mass}
		= L^k \abs{(f^* \omega)_{x}}_{\mass}
	\]
	Hence, we obtain the lower bound of \eqref{eq:bilip_pointwise_estimate_mass}.
\end{proof}

The immediate consequence of Lemma \ref{lem:bilip_pullback_pointwise_estimate} is the following estimate.
\begin{cor}\label{cor:bilip_pullback_Lp_estimate}
	Let $M$ and $N$ be oriented Riemannian $n$-manifolds. Suppose that $\omega \in L^p(\wedge^k N)$ for $p \in [1, \infty]$, and that $f \colon M \to N$ is $L$-bilipschitz. Then
	\[
		\norm{f^*\omega}_p
		\leq \binom{n}{k}^{\frac{1}{2}} L^{k + \frac{n}{p}} \norm{\omega}_p,
	\]
	where we interpret $n/\infty = 0$.
\end{cor}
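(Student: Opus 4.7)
The plan is to combine the pointwise comass-based estimate from Lemma \ref{lem:bilip_pullback_pointwise_estimate} with a change-of-variables computation driven by the Jacobian bounds for bilipschitz maps in \eqref{eq:lipschitz_J} and \eqref{eq:bilipschitz_J}. Since the pointwise statement already carries the factor $\binom{n}{k}^{1/2} L^{k}$, the only remaining content is absorbing the passage from an integral over $M$ to an integral over $N$, which contributes the extra $L^{n/p}$.

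First I would handle the case $p \in [1, \infty)$. Applying Lemma \ref{lem:bilip_pullback_pointwise_estimate} and raising to the $p$-th power gives
\[
\abs{(f^*\omega)_x}^p \leq \binom{n}{k}^{p/2} L^{kp} \abs{\omega_{f(x)}}^p
\]
for a.e.\ $x \in M$. Since $f$ is $L$-bilipschitz, \eqref{eq:bilipschitz_J} yields $\abs{J_f(x)} \geq L^{-n}$ a.e., and $f$ is injective with $C^1$ inverse (as in the proof of Theorem \ref{thm:C1_lip_characterization}), so the Euclidean change-of-variables formula transferred through bilipschitz charts gives
\[
\int_M \abs{\omega_{f(x)}}^p \dd m_n(x)
\leq L^n \int_M \abs{\omega_{f(x)}}^p \abs{J_f(x)} \dd m_n(x)
= L^n \int_{fM} \abs{\omega}^p \dd m_n
\leq L^n \norm{\omega}_p^p.
\]
Combining these two inequalities and taking $p$-th roots produces the stated bound $\binom{n}{k}^{1/2} L^{k+n/p} \norm{\omega}_p$.

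For the case $p = \infty$, no change of variables is needed: the pointwise estimate immediately gives
\[
\esssup_{x \in M} \abs{(f^*\omega)_x} \leq \binom{n}{k}^{1/2} L^k \esssup_{x \in M} \abs{\omega_{f(x)}} \leq \binom{n}{k}^{1/2} L^k \norm{\omega}_\infty,
\]
since $f$ maps null sets to null sets (being Lipschitz, it satisfies the Lusin $(N)$ condition, and conversely $f^{-1}$ does as well, so essential suprema transport correctly). This matches the claimed bound with the convention $n/\infty = 0$.

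The only step requiring minor care is justifying the change of variables on the Riemannian manifold $M$: this is standard once one invokes bilipschitz charts on $M$ and $N$ to reduce to the Euclidean statement, using the fact that $\abs{J_f}$ is well defined intrinsically via $\abs{J_f} \vol_M = f^*\vol_N$ (up to sign) even without assuming $f$ preserves orientation. There is no serious obstacle here; the whole proof is a two-line consequence of the preceding lemma together with \eqref{eq:bilipschitz_J}.
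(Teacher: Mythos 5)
Your proof is correct and follows essentially the same route as the paper's: the pointwise comass estimate of Lemma \ref{lem:bilip_pullback_pointwise_estimate} combined with the Jacobian lower bound \eqref{eq:bilipschitz_J} and a diffeomorphic change of variables for $p < \infty$, with the $p = \infty$ case handled separately via the Lusin conditions. (Your bookkeeping of the binomial factor as $\binom{n}{k}^{p/2}$ after raising to the $p$-th power is in fact the correct exponent; the paper's displayed chain contains a small typo there.)
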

\begin{proof}
	Consider first the case $p = \infty$. Then the estimate reads
	\[
		\norm{f^*\omega}_\infty
		\leq \binom{n}{k}^{\frac{1}{2}} L^{k} \norm{\omega}_\infty.
	\]
	This follows directly from the estimate of \ref{lem:bilip_pullback_pointwise_estimate}, combined with the fact that bilipschitz maps satisfy the Lusin $(N^{-1})$ condition due to being $C^1$ diffeomorphisms onto their image.
	
	Suppose then that $p < \infty$. Now we use the Jacobian estimate \eqref{eq:bilipschitz_J}, diffeomorphic change of variables, as well as Lemma \ref{lem:bilip_pullback_pointwise_estimate} to obtain that
	\begin{align*}
		\int_M \abs{f^* \omega}^p \vol_M 
		&\leq \binom{n}{k}^{\frac{1}{2}} L^{pk} \int_M (\abs{\omega}^p \circ f)\vol_M\\
		&\leq \binom{n}{k}^{\frac{1}{2}} L^{pk + n} 
			\int_M (\abs{\omega}^p \circ f)\abs{J_f}\vol_M\\
		&= \binom{n}{k}^{\frac{1}{2}} L^{pk + n} \int_N \abs{\omega}^p \vol_N.
	\end{align*}
	Raising to a power $1/p$ yields the result.
\end{proof}

We may now begin the proof of Proposition \ref{prop:wdpqloc_bilip_pullback_formula}

\begin{proof}[Proof of Proposition \ref{prop:wdpqloc_bilip_pullback_formula}]
	The correct integrabilities follow from Corollary \ref{cor:bilip_pullback_Lp_estimate}. The only remaining question is whether $f^* d\omega$ is a weak differential of $d f^* \omega$. Hence, suppose that $\eta \in C^\infty_0(\wedge^{n-k-1} M)$, with the goal of showing that
	\[
	\int_M f^*\omega \wedge d\eta = (-1)^{k+1} \int_M f^* d\omega \wedge \eta.
	\]
	 
	Note that $f$ has a smooth inverse $f^{-1} \colon fM \to M$. Let $\mu = \pm 1$ be a coefficient, where $\mu = 1$ if $f$ is orientation preserving, and $\mu = -1$ if $f$ is orientation reversing. Using a diffeomorphic change of variables for $n$-forms, we may compute that
	\begin{multline*}
	\int_M f^* \omega \wedge d\eta 
	= \int_M f^* \omega \wedge f^* (f^{-1})^* d\eta
	= \int_M f^*(\omega \wedge (f^{-1})^* d\eta)\\
	= \mu \int_{fM} \omega \wedge (f^{-1})^* d\eta
	= \mu \int_{fM} \omega \wedge d (f^{-1})^*\eta.
	\end{multline*}
	
	The support of $(f^{-1})^*\eta$ is compact, since it is contained in the compact set $f (\spt \eta)$. Since $fM$ is open in $N$, we may hence extend $(f^{-1})^* \eta$ as zero to the whole manifold $N$, obtaining a compactly supported smooth form. Hence, by definition of the weak differential and another diffeomorphic change of variables,
	\begin{multline*}
	\int_M f^* \omega \wedge d\eta
	= \mu \int_{fM} \omega \wedge d (f^{-1})^*\eta
	= \mu \int_{N} \omega \wedge d (f^{-1})^*\eta\\
	= (-1)^{k+1} \mu \int_{N} d\omega \wedge (f^{-1})^*\eta
	= (-1)^{k+1} \mu \int_{fM} (f^{-1})^* f^* d\omega \wedge (f^{-1})^*\eta\\
	= (-1)^{k+1} \mu \int_{fM} (f^{-1})^* (f^* d\omega \wedge \eta)
	= (-1)^{k+1} \mu^2 \int_{M}  f^* d\omega \wedge \eta.
	\end{multline*}
	Since $\mu^2 = 1$, the claim therefore follows.
\end{proof}

\medskip

With the proof of Proposition \ref{prop:wdpqloc_bilip_pullback_formula} complete, we then tie a loose end from before. Namely, if $f \in W^{1, p}(M, \R)$, then in Section \ref{sect:sobolev_manifold_domain} we defined a map $df \in \Gamma(\wedge^1 M)$ by $df = \iota_1 \circ Df$, where $\iota_1 \colon T\R \to \R$ is again given by the canonical identification of $T_x \R$ with $\R$. However, since $f \in L^1_\loc(M)$, we may also consider a weak differential $df$ by the definitions of this section. In the Euclidean setting, we know these two versions of $df$ to be the same object. We now show that this equivalence holds even for manifold domains.

\begin{lemma}\label{lem:sobolev_0_forms}
	Suppose $M$ is an oriented Riemannian $n$-manifold. If $f \in W^{1,1}_\loc(M)$, then the map $df = \iota_1 \circ Df$ is in $L^{1}_\loc(\wedge^1 M)$ and satisfies
	\begin{equation}\label{eq:weak_differential}
		\int_M df \wedge \omega = -\int_M f d\omega 
	\end{equation}
	for all $\omega \in C^\infty_0(\wedge^{n-1} M)$. Conversely, if $f \in L^1_\loc(M)$ and there exists a $df \in L^1_\loc(M)$ satisfying \eqref{eq:weak_differential}, then $f \in W^{1,1}_\loc(M)$ and $df = \iota_1 \circ Df$.
	
	In particular, we have $W^{1, p}(M, \R) = W^{d, p}(\wedge^0 M)$ and $W^{1, p}_\loc(M, \R) = W^{d, p}_\loc(\wedge^0 M)$.
\end{lemma}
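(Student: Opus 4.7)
The strategy is to reduce both directions to the corresponding Euclidean statement, Lemma \ref{lem:weak_derivative_characterizations}, by localizing to positively oriented bilipschitz charts supplied by Theorem \ref{thm:bilipschitz_charts}. The local $L^1_\loc$-integrability of $df = \iota_1 \circ Df$ is essentially already recorded in Section \ref{sect:sobolev_manifold_domain}: in any bilipschitz chart $\phi \colon U \to \R^n$, the chart-defined weak derivative gives $df\vert_U = \phi^* d(f \circ \phi^{-1})$ pointwise a.e., and Corollary \ref{cor:bilip_pullback_Lp_estimate} applied to the smooth bilipschitz map $\phi$ yields $\abs{df}\vert_U \in L^1_\loc(U)$.

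For the forward direction, I take $\omega \in C^\infty_0(\wedge^{n-1} M)$ and cover $\spt \omega$ by finitely many positively oriented bilipschitz charts $\phi_j \colon U_j \to \R^n$, with a subordinate smooth partition of unity $\{\eta_j\}$, so that $\omega = \sum_j \eta_j \omega$ with each summand compactly supported in $U_j$. In each chart, write $\tilde{f}_j = f \circ \phi_j^{-1} \in W^{1,1}(\phi_j U_j)$ and $\tilde{\omega}_j = (\phi_j^{-1})^*(\eta_j \omega) \in C^\infty_0(\wedge^{n-1} \phi_j U_j)$. Using $df\vert_{U_j} = \phi_j^* d\tilde{f}_j$, the identity $\phi_j^*(d\tilde{\omega}_j) = d(\eta_j \omega)$ for smooth pullbacks, and the orientation-preserving diffeomorphic change of variables for $n$-forms, each Euclidean identity
\[
    \int_{\phi_j U_j} d\tilde{f}_j \wedge \tilde{\omega}_j = -\int_{\phi_j U_j} \tilde{f}_j \, d\tilde{\omega}_j
\]
from Lemma \ref{lem:weak_derivative_characterizations} transfers to $\int_{U_j} df \wedge \eta_j \omega = -\int_{U_j} f \, d(\eta_j \omega)$. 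Summing over $j$ produces \eqref{eq:weak_differential}.

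For the converse, assume $f \in L^1_\loc(M)$ and $\alpha \in L^1_\loc(\wedge^1 M)$ satisfy the integral identity with $\alpha$ in place of $df$. Fix $x \in M$ and a positively oriented bilipschitz chart $\phi \colon U \to \R^n$ around $x$, and set $\tilde{f} = f \circ \phi^{-1}$ and $\tilde{\alpha} = (\phi^{-1})^* \alpha$; Corollary \ref{cor:bilip_pullback_Lp_estimate} gives $\tilde{\alpha} \in L^1_\loc(\wedge^1 \phi U)$. For any $\tilde{\omega} \in C^\infty_0(\wedge^{n-1} \phi U)$, the pullback $\phi^* \tilde{\omega}$ extends by zero to an element of $C^\infty_0(\wedge^{n-1} M)$, and reversing the change-of-variables computation above transports the hypothesis to $\int_{\phi U} \tilde{\alpha} \wedge \tilde{\omega} = -\int_{\phi U} \tilde{f} \, d\tilde{\omega}$. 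The Euclidean Lemma \ref{lem:weak_derivative_characterizations} then gives $\tilde{f} \in W^{1,1}(\phi U)$ with Euclidean weak differential $\tilde{\alpha}$; by Definition of $W^{1,1}_\loc(M)$ in Section \ref{sect:sobolev_manifold_domain}, this means $f \in W^{1,1}_\loc(M)$, and the identification $\iota_1 \circ Df = \alpha$ follows from uniqueness, since $df = \iota_1 \circ Df$ also satisfies \eqref{eq:weak_differential} by the forward direction, and the Euclidean uniqueness of weak differentials can be lifted chartwise.

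The main bookkeeping obstacle is simply checking that the three Euclidean/manifold translations commute cleanly in both directions: namely, that $df\vert_U = \phi^*(d(f \circ \phi^{-1}))$ in the chart (from the Section \ref{sect:sobolev_manifold_domain} definition), that $d$ commutes with smooth pullbacks on $C^\infty$-forms, and that orientation-preserving diffeomorphic change of variables equates the manifold and Euclidean integrals. Nothing in the argument requires more than $p = 1$; the final ``in particular'' statement then follows because $W^{1,p}$ and $W^{d,p}$ (or their local variants) both add the matching $L^p$-integrability of the function and of its differential on top of the already-established $W^{1,1}_\loc$/$W^{d,1}_\loc$ equivalence.
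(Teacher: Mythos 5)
Your proof is correct and follows essentially the same route as the paper: localize to positively oriented bilipschitz charts, use the identity $df\vert_U = \phi^* d(f \circ \phi^{-1})$ coming from the chart definition of $Df$, transfer the Euclidean characterization of Lemma \ref{lem:weak_derivative_characterizations} via change of variables, and globalize with a partition of unity (the paper packages the change-of-variables step as an appeal to Proposition \ref{prop:wdpqloc_bilip_pullback_formula}, which you instead carry out by hand). The only point you pass over quickly is the final ``in particular'': one must check that the operator norm $\abs{Df(x)}$ and the Grassmann norm $\abs{df_x}$ give the same $L^p$-condition, which the paper does by noting that $\iota_1$ is an isometry and that for $1$-forms the comass norm coincides with the Grassmann norm (Lemma \ref{lem:comass_norm_conversion}); this is easy but worth stating.
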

\begin{proof}
	Let $df = \iota_1 \circ Df$, and suppose that $\phi \colon U \to \R^n$ is a bilipschitz chart. Since $\phi$ satisfies the Lusin conditions, we may pull back $df$ by $\phi$ and its inverse. By the definition of $Df$, we see that
	\[
		df\vert_U = \iota_1 \circ Df\vert_U 
		= \iota_1 \circ D(f \circ \phi^{-1}) \circ D\phi
		=\phi^*d(f \circ \phi^{-1}).
	\]
	Since $\phi$ is bilipschitz, Proposition \ref{prop:wdpqloc_bilip_pullback_formula} implies that $df\vert_U$ is the weak differential of $\phi^*(f \circ \phi^{-1}) = f\vert_U$ in the sense of this section. We conclude that $df \in L^{1}_\loc(\wedge^1 U)$, and that $df$ satisfies \eqref{eq:weak_differential} if $\spt \omega \subset U$. By using a smooth partition of unity, it follows that $df \in L^{1}_\loc(\wedge^1 M)$ and $df$ satisfies \eqref{eq:weak_differential} for all $\omega \in C^\infty_0(\wedge^{n-1} M)$.
	
	The converse is by essentially the same calculation in the opposite direction. Hence, we have $W^{1,1}_\loc(M) = W^{d,1}_\loc(\wedge^0 M)$, where the two definitions of $df$ coincide. Finally, since $\iota_{1, y} \colon T_y\R \to \R$ is an isometry for every $y \in \R$, we have that $\abs{Df} \in L^p(M)$ if and only if the map $x \mapsto \sup_{\abs{v} \leq 1} df_x(v)$ is in $L^p(M)$. But since every element of $T_x M$ is simple, we have $\sup_{\abs{v} \leq 1} df_x(v) = \abs{df_x}_{\mass}$, and by Lemma \ref{lem:comass_norm_conversion}, $\abs{df}_{\mass}$ is in $L^p(M)$ if and only if $\abs{df}$ is in $L^p(M)$. The rest of the claim therefore follows.
\end{proof}

Most notably, we have obtained an alternate definition for $W^{1,p}(M, \R^k)$ using \eqref{eq:weak_differential}, since a function is in $W^{1,p}(M, \R^k)$ if and only if all its coordinate functions are in $W^{1,p}(M) = W^{d,p}(\wedge^0 M)$.

\subsection{Approximation and wedge products}

We now wish to prove some more further properties of $W^{d, p, q}$-spaces. For these, a significant tool we require is a counterpart of the smooth approximation result given in Theorem \ref{thm:sobolev_eucl_smooth_density}. We do not aim to show a result of full generality, instead choosing to avoid technicalities for a simpler version which is sufficient for our uses.

We start with a result for the full space $\R^n$. The argument is essentially the same as that of Theorem \ref{thm:sobolev_eucl_smooth_density}, based on convolution with standard mollifiers. However, the fact that we're taking convolutions of differential forms introduces some minor quirks. Hence, we provide a full detailed proof. For another exposition on this fact, see e.g.\ \cite[II.4.2.]{Reshetnyak-book}

\begin{prop}\label{prop:W_dp_smooth_approx}
	Let $\omega \in W^{d, p, q}(\wedge^k \R^n)$, where $0 \leq k \leq n-1$ and $p, q \in [1,\infty)$. Then there exists a sequence $\omega_j \in C^\infty(\wedge^k \R^n)$ for which $\norm{\omega - \omega_j}_p \to 0$ and $\norm{d\omega - d\omega_j}_q \to 0$. Moreover, if $\spt \omega$ is compact, then we may assume that the supports of $\omega_j$ are contained in any fixed neighborhood $U \subset \R^n$ of $\spt \omega$.
\end{prop}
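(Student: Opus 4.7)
The plan is to prove the statement by smooth mollification, applied coefficient-wise in the standard frame $\eps_I$. Fix a standard mollifier $\rho_\eps \in C^\infty_0(\R^n)$, i.e.\ a symmetric non-negative bump supported in $B(0,\eps)$ with integral $1$. Write $\omega = \sum_I \omega_I \eps_I$ with $\omega_I \in L^p(\R^n)$, and define
\[
    \omega_\eps = \sum_I (\omega_I * \rho_\eps) \eps_I,
\]
which is an element of $C^\infty(\wedge^k \R^n)$. The standard mollifier approximation in $L^p$ gives $\omega_I * \rho_\eps \to \omega_I$ in $L^p(\R^n)$ for each $I$, whence $\norm{\omega - \omega_\eps}_p \to 0$ as $\eps \to 0$ by the equivalence of $\abs{\cdot}$ with the coefficient-wise $\ell^p$-norm discussed earlier in the section.

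The crux is to verify the identity $d \omega_\eps = (d\omega) * \rho_\eps$, where the right side is the coefficient-wise convolution of $d\omega \in L^q(\wedge^{k+1}\R^n)$. Since $\omega_\eps$ is smooth, its classical and weak differentials coincide, so it suffices to show that $(d\omega) * \rho_\eps$ is a weak differential of $\omega_\eps$. For any $\eta \in C^\infty_0(\wedge^{n-k-1} \R^n)$, I would apply Fubini coefficient-wise (using the symmetry of $\rho_\eps$) to rewrite
\[
    \int_{\R^n} \omega_\eps \wedge d\eta
    = \int_{\R^n} \omega \wedge \bigl( d\eta * \rho_\eps \bigr).
\]
Because $\eta$ is smooth, convolution with $\rho_\eps$ commutes with the classical $d$, i.e.\ $d\eta * \rho_\eps = d(\eta * \rho_\eps)$, and $\eta * \rho_\eps \in C^\infty_0(\wedge^{n-k-1}\R^n)$. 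Using the definition of $d\omega$ as a weak exterior derivative and one more application of Fubini yields
\[
    \int_{\R^n} \omega_\eps \wedge d\eta
    = (-1)^{k+1} \int_{\R^n} d\omega \wedge (\eta * \rho_\eps)
    = (-1)^{k+1} \int_{\R^n} \bigl((d\omega) * \rho_\eps\bigr) \wedge \eta,
\]
as desired. The main technical care needed is to justify the Fubini steps with the wedge products: this reduces to the scalar identity $\int (f * \rho_\eps) g = \int f (g * \rho_\eps)$ applied bilinearly to the coefficient pairings produced by $\eps_I \wedge \eps_J$, which presents no real difficulty once written out.

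With the identity $d\omega_\eps = (d\omega) * \rho_\eps$ in hand, standard mollification in $L^q$ applied to the coefficients of $d\omega$ gives $\norm{d\omega - d\omega_\eps}_q \to 0$. Taking $\omega_j = \omega_{\eps_j}$ for any sequence $\eps_j \to 0^+$ yields the required approximation. For the compact support addendum, one uses that convolution enlarges the support by at most $\bar B(0,\eps)$: if $\spt \omega$ is compact and $U$ is an open neighborhood of it, then $\spt \omega_\eps \subset \spt \omega + \bar B(0,\eps) \subset U$ for all sufficiently small $\eps$, so restricting to such $\eps$ gives approximants with the required support property. The only substantive obstacle in the whole proof is the commutation lemma $d\omega_\eps = (d\omega) * \rho_\eps$; the remaining steps are routine convolution analysis.
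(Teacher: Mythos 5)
Your proposal is correct and follows essentially the same route as the paper: coefficient-wise mollification, with the key commutation identity $d\omega_\eps = (d\omega)\ast\rho_\eps$ established by pairing against test forms, using the adjoint property of convolution under the wedge-product pairing together with the fact that $d$ commutes with convolution for smooth forms (the paper isolates these two facts as Lemma \ref{lem:form_convolution_rules} and Corollary \ref{cor:form_convolution_weak_diff}). The only cosmetic difference is that you assume the mollifier is symmetric so that its convolution adjoint is itself, whereas the paper works with the reflected kernel $\tilde f(x)=f(-x)$.
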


We start by defining a convolution for $k$-forms and $C^\infty_0$-functions. Let $\omega \in L^1_\loc(\wedge^k \R^n)$, and $\eta \in C^\infty_0(\R^n)$. We define a smooth $k$-form $\omega \ast \eta \in C^\infty_0(\wedge^k \R^n)$ by
\[
	\omega \ast \eta = \sum_I (\omega_I \ast \eta) \eps_I.
\]
Recall that the convolution operator $f \ast g$ of functions $f, g \colon \R^n \to \R$ is defined by $(f \ast g)(x) = \int_{\R^n} f(y) g(x-y) \dd y$, whenever the integral is well defined. The convolution is commutative, associative, and bilinear. Moreover, if $f$ is differentiable, then $f \ast g$ is differentiable with $\partial_i (f \ast g) = (\partial_i f) \ast g$, provided the right hand side is well defined for all partial derivatives.

We now record two specific formulae for convolutions of differential forms.
\begin{lemma}\label{lem:form_convolution_rules}
	Let $\eta \in C^\infty(\wedge^{k} \R^n)$ and $f \in C^\infty_0(\R^n)$. Then
	\[
		d(\eta \ast f) = (d\eta) \ast f.
	\]
	Moreover, suppose that also $\alpha \in L^1_\loc(\wedge^{n-k} \R^n)$ and that $\eta$ is compactly supported. Then
	\[
		\int_{\R^n} (\eta \ast f) \wedge \alpha
		= \int_{\R^n} \eta \wedge (\alpha \ast \tilde{f}),
	\]
	where $\tilde{f}(x) = f(-x)$.
\end{lemma}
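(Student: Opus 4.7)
The plan is to reduce both identities to scalar statements by expanding everything in the standard basis $\{\eps_I\}$ of $\wedge^\bullet \R^n$, after which each identity follows from a routine property of the scalar convolution.

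For the first identity, I write $\eta = \sum_I \eta_I \eps_I$, so by definition $\eta \ast f = \sum_I (\eta_I \ast f) \eps_I$. Since each $\eta_I \ast f$ is smooth, a direct computation gives
\[
	d(\eta \ast f) = \sum_I \sum_{j=1}^n \partial_j (\eta_I \ast f) \dd x_j \wedge \eps_I.
\]
The classical identity $\partial_j (\eta_I \ast f) = (\partial_j \eta_I) \ast f$, which follows by differentiating under the convolution integral with $f \in C^\infty_0$ as a dominating control, then yields $\sum_{j,I} ((\partial_j \eta_I) \ast f) \dd x_j \wedge \eps_I = (d\eta) \ast f$.

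For the second identity, I additionally expand $\alpha = \sum_J \alpha_J \eps_J$. The wedge $\eps_I \wedge \eps_J$ vanishes unless $J$ equals the increasing complement $I^c$ of $I$ in $\{1,\ldots,n\}$, in which case $\eps_I \wedge \eps_{I^c} = \sigma_I \dd x_1 \wedge \cdots \wedge \dd x_n$ for a sign $\sigma_I \in \{-1, +1\}$ depending only on $I$. Both sides therefore reduce to finite sums of the form
\[
	\sum_I \sigma_I \int_{\R^n} (\eta_I \ast f) \, \alpha_{I^c} \dd m_n
	\quad\text{and}\quad
	\sum_I \sigma_I \int_{\R^n} \eta_I \, (\alpha_{I^c} \ast \tilde f) \dd m_n,
\]
with the same sign factors $\sigma_I$ on each side. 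It thus suffices to verify the scalar identity
\[
	\int_{\R^n} (\eta_I \ast f) \alpha_{I^c} \dd m_n = \int_{\R^n} \eta_I (\alpha_{I^c} \ast \tilde f) \dd m_n
\]
for each $I$. This follows from Fubini's theorem applied to $(x,y) \mapsto \eta_I(y) f(x-y) \alpha_{I^c}(x)$, after recognizing that $\int f(x-y) \alpha_{I^c}(x) \dd x = (\alpha_{I^c} \ast \tilde f)(y)$ by the substitution built into the definition of $\tilde f$.

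The only point requiring care is the applicability of Fubini, but compact support of $\eta$ makes every $\eta_I$ compactly supported, compact support of $f$ means the convolution kernel has compact support, and $\alpha_{I^c} \in L^1_\loc$ is then integrated only against a compactly supported function; hence all iterated integrals are absolutely convergent. No genuine obstacle arises here; this is bookkeeping of the wedge signs combined with a Fubini computation.
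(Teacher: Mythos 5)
Your proposal is correct and follows essentially the same route as the paper's proof: both identities are reduced to coordinate statements in the standard basis $\{\eps_I\}$, the first via the commutation of partial derivatives with convolution and the second via Fubini applied to the absolutely integrable integrand $(x,y) \mapsto \eta_I(y) f(x-y) \alpha_{I^*}(x)$. Your explicit justification of absolute integrability (compact support of $\eta_I$ and $f$ confining the integration to a compact set on which $\alpha_{I^*}$ is integrable) is the same point the paper makes more tersely.
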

\begin{proof}
	The first claim is by essentially writing out the formula for $d\eta$ out with respect to the standard basis. In particular,
	\[
		(d\eta)_I = \sum_{j = 1}^{k+1} (-1)^{c(I, j)} \partial_{I_j} \eta_{I \setminus I_j},
	\]
	where $c(I, j)$ is either 0 or 1 depending on the $(k+1)$-element subset $I \subset \{1, \dots, n\}$ and the element $I_j$ we're removing from it. Now, with a simple (but notationally nasty) computation, we obtain
	\begin{align*}
		&((d\eta) \ast f)_I 
		= (d\eta)_I \ast f\
		= \Bigg(\sum_{j = 1}^{k+1} (-1)^{c(I, j)} \partial_{I_j} 
			\eta_{I \setminus I_j}\Bigg) \ast f\\
		&\quad= \sum_{j = 1}^{k+1} (-1)^{c(I, j)} \partial_{I_j} 
			\left(\eta_{I \setminus I_j} \ast f\right)
		= \sum_{j = 1}^{k+1} (-1)^{c(I, j)} \partial_{I_j} 
			(\eta \ast f)_{I \setminus I_j}
		= \left(d(\eta \ast f)\right)_{I}.
	\end{align*}
	
	For the second formula, we again write $\eta \ast f$ and $\alpha$ with respect to the standard basis, and note that in the wedge product only pairs of complementary basis elements don't get annihilated. For an index $I$, we denote the complementary index $\{1, \dots, n\} \setminus I$ by $I^*$. Hence, we have
	\[
		\int_{\R^n} (\eta \ast f) \wedge \alpha
		= \sum_I (-1)^{c(I)} \int_{\R^n} (\eta_I \ast f)(x) \alpha_{I^*}(x) \dd x.
	\]
	By writing the convolutions out, we have
	\[
		\int_{\R^n} (\eta \ast f) \wedge \alpha
		= \sum_I (-1)^{c(I)} \int_{\R^n} \int_{\R^n} 
			\eta_I(y) \alpha_{I^*}(x) f(x-y) \dd y \dd x.
	\]
	Similarly,
	\begin{align*}
		\int_{\R^n} \eta \wedge (\alpha \ast \tilde{f})
		&= \sum_I (-1)^{c(I)} \int_{\R^n} \int_{\R^n} 
			\eta_I(y) \alpha_{I^*}(x) \tilde{f}(y-x) \dd x \dd y\\
		&= \sum_I (-1)^{c(I)} \int_{\R^n} \int_{\R^n} 
		\eta_I(y) \alpha_{I^*}(x) f(x-y) \dd x \dd y.
	\end{align*}
	Hence, the claim follows from using Fubini to change the integration order, which is valid since the integrand is $L^1$.
\end{proof}

The purpose of the specific formulae in Lemma \ref{lem:form_convolution_rules} is to set up the following corollary.

\begin{cor}\label{cor:form_convolution_weak_diff}
	Let $\omega \in W^{d, 1}_\loc(\wedge^k \R^n)$ and $f \in C^\infty_0(\R^n)$. Then
	\[
		d(\omega \ast f) = (d\omega) \ast f.
	\]
\end{cor}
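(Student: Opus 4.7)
The plan is to verify that $(d\omega) \ast f$ is the weak exterior derivative of $\omega \ast f$. Since $\omega \ast f$ is smooth (it is a componentwise convolution of an $L^1_\loc$ function with a $C^\infty_0$ function), and since smooth forms always have classical differential equal to their weak differential, the uniqueness of weak differentials will then force the pointwise identity $d(\omega \ast f) = (d\omega) \ast f$.

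Concretely, I would fix a test form $\eta \in C^\infty_0(\wedge^{n-k-1} \R^n)$ and chase the integral $\int_{\R^n} (\omega \ast f) \wedge d\eta$ through the two formulae of Lemma \ref{lem:form_convolution_rules}. First, the swap formula with $\omega$ in the role of ``$\eta$'' and $d\eta$ in the role of ``$\alpha$'' rewrites this as $\int_{\R^n} \omega \wedge (d\eta \ast \tilde{f})$. Second, the identity $d\eta \ast \tilde{f} = d(\eta \ast \tilde{f})$, which is the first part of Lemma \ref{lem:form_convolution_rules} applied to the smooth form $\eta$, turns this into $\int_{\R^n} \omega \wedge d(\eta \ast \tilde{f})$. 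Since $\eta \ast \tilde{f}$ is compactly supported and smooth (convolution of compactly supported objects is compactly supported), the definition of the weak exterior derivative of $\omega$ produces $(-1)^{k+1} \int_{\R^n} d\omega \wedge (\eta \ast \tilde{f})$. A final application of the swap formula, now with $\eta$ in the role of ``$\eta$'', $\tilde{f}$ in the role of ``$f$'', and $d\omega$ in the role of ``$\alpha$'', together with two wedge commutations whose signs $(-1)^{(k+1)(n-k-1)}$ cancel, converts this to $(-1)^{k+1} \int_{\R^n} ((d\omega) \ast f) \wedge \eta$, which is exactly the weak-differential identity needed.

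The main technical obstacle is that Lemma \ref{lem:form_convolution_rules} as stated requires the factor being convolved on the left to be smooth and compactly supported, whereas in the first step I want to use it with $\omega \in L^1_\loc$ in that slot. The resolution is that the compact-support hypothesis in the proof of Lemma \ref{lem:form_convolution_rules} is only used to ensure absolute convergence of the double integral on which Fubini is applied. In the present setting the compact supports of $f$ and $d\eta$ already confine that double integral to a compact subset of $\R^n \times \R^n$, on which each $\omega_I$ is integrable and the other factors are bounded, so Fubini applies and the swap formula remains valid. This mild extension is the only additional ingredient beyond Lemma \ref{lem:form_convolution_rules} and the definition of the weak exterior derivative.
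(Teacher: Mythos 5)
Your proposal is correct and follows essentially the same route as the paper's proof: the same chain of equalities through the two formulae of Lemma \ref{lem:form_convolution_rules} and the definition of the weak differential (the paper just writes the chain in the opposite direction, starting from $\int ((d\omega)\ast f)\wedge\eta$), concluding via smoothness of $\omega\ast f$ and uniqueness of the weak differential. The ``mild extension'' you describe is not actually needed: in your first step you can apply the swap formula as stated with $d\eta$ in the smooth compactly supported slot and $\omega$ in the $L^1_\loc$ slot, commuting the wedge products on both sides so that the signs $(-1)^{k(n-k)}$ cancel --- exactly the trick you already use in the last step.
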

\begin{proof}
	Let $\eta \in C^\infty_0(\wedge^{n-k-1} \R^n)$, and let $\tilde{f} \in C^\infty_0(\R^n)$ again denote the function $x \mapsto f(-x)$. We compute that
	\begin{multline*}
		\int_{\R^n} ((d\omega) \ast f) \wedge \eta
		= \int_{\R^n} d\omega \wedge (\eta \ast \tilde{f})
		= \int_{\R^n} \omega \wedge d(\eta \ast \tilde{f})\\
		= \int_{\R^n} \omega \wedge ((d\eta) \ast \tilde{f})
		= \int_{\R^n} (\omega \ast f) \wedge (d\eta).
	\end{multline*}
	Here, the first, third and last equality are all due to Lemma \ref{lem:form_convolution_rules}, while the second one is by the definition of a weak differential. Hence, $(d\omega) \ast f$ is a weak differential of $\omega \ast f$. By almost everywhere uniqueness of the weak differential and smoothness of both forms, the claim follows.	
\end{proof}

Now we finally have the tool to prove Proposition \ref{prop:W_dp_smooth_approx}.

\begin{proof}[Proof of Proposition \ref{prop:W_dp_smooth_approx}]
	We take a sequence of standard mollifiers $\sigma_i \in C^\infty_0(\R^n)$, which we've selected such that $\spt \sigma_i + \spt \omega \subset U$ if $\spt \omega$ is compact, and we define $\omega_i = \omega \ast \sigma_i$. Since every coordinate function $\omega_I$ is in $L^p(\R^n)$, the standard properties of mollifiers imply that $\norm{\omega_I - (\omega_I \ast \sigma_i)}_p \to 0$ for every $I$; see for example \cite[Theorem 2.29]{Adams-Fournier_Sobolev}. Hence, $\norm{\omega - \omega_i}_p \to 0$.
	
	If we similarly define $\tau_i = (d\omega) \ast \sigma_i$, then we have by the same argument that $\norm{d\omega - \tau_i}_q \to 0$. Now the claim follows, since by Corollary \ref{cor:form_convolution_weak_diff} we have $\tau_i = d\omega_i$.
\end{proof}

We then give a version that works on manifolds. We settle on an easily attainable corollary for subdomains which are compactly contained in a manifold; this tends to be enough for applications, as it can be used to cover the support of a compactly supported test form $\eta$. More general results do however hold; see e.g.\ \cite[Section 3]{Iwaniec-Scott-Stroffolini}.

\begin{cor}\label{cor:W_dp_smooth_approx_mfld}
	Let $M$ be an oriented Riemannian manifold, and let $\omega \in W^{d, p, q}(\wedge^k M)$, where $0 \leq k \leq n-1$ and $p, q \in [1,\infty)$. Suppose that $U$ is a domain compactly contained in $M$. Then there exists a sequence $\omega_j \in C^\infty_0(\wedge^k M)$ for which $\norm{\omega\vert_U - \omega_j\vert_U}_p \to 0$ and $\norm{d\omega\vert_U - d\omega_j\vert_U}_q \to 0$.
\end{cor}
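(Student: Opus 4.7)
The plan is to localize with a partition of unity and bilipschitz charts, so that Proposition \ref{prop:W_dp_smooth_approx} can be applied chart by chart in the Euclidean setting. Since $\overline{U}$ is compact in $M$, Theorem \ref{thm:bilipschitz_charts} supplies finitely many smooth bilipschitz charts $\phi_i \colon V_i \to \R^n$, $i = 1, \dots, N$, whose domains cover $\overline{U}$, and a standard partition of unity construction furnishes $\chi_i \in C^\infty_0(V_i)$ with $\sum_{i=1}^N \chi_i \equiv 1$ on some open neighborhood $U'$ of $\overline{U}$ that is precompact in $M$. By the Leibniz rule for $W^{d,1}_\loc$-forms recorded just after \eqref{eq:weak_differential_for_forms}, each $\chi_i \omega$ lies in $W^{d,p,q}(\wedge^k M)$ with compact support inside $V_i$; its weak differential equals $d\chi_i \wedge \omega + \chi_i \, d\omega$, which is $L^q$ because $d\chi_i$ is bounded and compactly supported.

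Next, transport $\chi_i \omega$ to Euclidean space by setting $\alpha_i := (\phi_i^{-1})^*(\chi_i \omega)$. Since $\phi_i^{-1}$ is a smooth bilipschitz map between oriented Riemannian $n$-manifolds, Proposition \ref{prop:wdpqloc_bilip_pullback_formula} yields $\alpha_i \in W^{d,p,q}(\wedge^k \phi_i(V_i))$ with $d\alpha_i = (\phi_i^{-1})^* d(\chi_i \omega)$. The support of $\alpha_i$ is compactly contained in $\phi_i(V_i)$, so extending by zero gives an element of $W^{d,p,q}(\wedge^k \R^n)$. Apply Proposition \ref{prop:W_dp_smooth_approx}, using its support clause, to obtain $\alpha_{i,j} \in C^\infty(\wedge^k \R^n)$ whose supports lie in a prescribed compact subset of $\phi_i(V_i)$ and which satisfy $\norm{\alpha_{i,j} - \alpha_i}_p \to 0$ and $\norm{d\alpha_{i,j} - d\alpha_i}_q \to 0$. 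Let $\omega_{i,j} := \phi_i^*(\alpha_{i,j})$, extended by zero outside $V_i$, so that $\omega_{i,j} \in C^\infty_0(\wedge^k M)$, and define $\omega_j := \sum_{i=1}^N \omega_{i,j} \in C^\infty_0(\wedge^k M)$.

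It then remains to verify the two convergences on $U$. On $U'$, the identities $\phi_i^* \alpha_i = \chi_i \omega$ and $\sum_i \chi_i \equiv 1$ give $\omega = \sum_i \phi_i^* \alpha_i$, and since $\sum_i d\chi_i \equiv 0$ on $U'$, the Leibniz rule yields $d\omega = \sum_i d(\chi_i \omega) = \sum_i \phi_i^* d\alpha_i$. For the smooth forms $\alpha_{i,j}$ the classical relation $d\phi_i^* \alpha_{i,j} = \phi_i^* d\alpha_{i,j}$ holds, so on $U$ we have
\[
\omega - \omega_j = \sum_{i=1}^N \phi_i^*(\alpha_i - \alpha_{i,j}), \qquad
d\omega - d\omega_j = \sum_{i=1}^N \phi_i^*(d\alpha_i - d\alpha_{i,j}).
\]
Each summand is supported in $V_i$, so Corollary \ref{cor:bilip_pullback_Lp_estimate} applied to $\phi_i \colon V_i \to \phi_i(V_i)$ bounds its $L^p$- (respectively $L^q$-) norm on $U$ by a constant multiple of the corresponding Euclidean norm of $\alpha_i - \alpha_{i,j}$ (respectively $d\alpha_i - d\alpha_{i,j}$). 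Since only finitely many terms are involved and each Euclidean norm tends to zero, the required convergences on $U$ follow.

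The only real difficulty is the careful bookkeeping with the partition of unity and ensuring that the mollified forms do not leak out of their chart domains; the latter is precisely what the support-control clause in Proposition \ref{prop:W_dp_smooth_approx} is for. All analytic content is imported directly from the Euclidean approximation result.
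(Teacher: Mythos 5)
Your proposal is correct and follows essentially the same route as the paper's proof: cover $\overline{U}$ by finitely many bilipschitz chart neighborhoods, cut $\omega$ off with a smooth partition of unity, push each piece to $\R^n$ via Proposition \ref{prop:wdpqloc_bilip_pullback_formula}, mollify with Proposition \ref{prop:W_dp_smooth_approx} using its support clause, and pull back and sum. The only point worth noting is that your justification that $d\chi_i \wedge \omega \in L^q$ (from $\omega \in L^p$ on a compact support) implicitly uses $p \geq q$, a restriction the paper's own terser cutoff step shares and which holds in all of the paper's applications.
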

\begin{proof}
	We select an intermediary domain $V$ which contains $\overline{U}$ and is compactly contained in $M$. We can cover $\overline{U}$ with finitely many bilipschitz chart neighborhoods $U_i \subset V$ of charts $\phi_i \colon U_i \to \R^n$. By multiplying $\omega$ with smooth cutoff functions, we find forms $\omega_i \in W^{d, p, q}(\wedge^k M)$ such that $\spt \omega_i \subset U_i$ and $\omega\vert_U = (\sum_i \omega_i)\vert_U$.
	
	In this case each $(\phi_i^{-1})^* \omega_i$ extends to a compactly supported form in $W^{d, p, q}(\wedge^k \R^n)$ by Proposition \ref{prop:wdpqloc_bilip_pullback_formula}. By Proposition \ref{prop:W_dp_smooth_approx}, we may approximate $(\phi_i^{-1})^* \omega_i$ in $W^{d, p, q}(\wedge^k \R^n)$ with $\alpha_{i, j} \in C^\infty_0(\wedge^k \R^n)$ satisfying $\spt \alpha_{i, j} \subset \phi_i U_i$. We can extend $\phi_i^* \alpha_{i, j}$ to the entire manifold $M$ due to their compact supports, and again by Proposition \ref{prop:wdpqloc_bilip_pullback_formula}, we have that $\phi_i^* \alpha_{i, j} \to \omega_i$ in $W^{d, p, q}(\wedge^k M)$. We conclude that $(\sum_i \alpha_{i, j})\vert_U \to \omega\vert_U$ in $W^{d, p, q}(\wedge^k U)$.
\end{proof}

We then use Corollary \ref{cor:W_dp_smooth_approx_mfld} to show that wedge products of weakly differential forms are weakly differentiable, assuming sufficient local integrability.

\begin{prop}\label{prop:wedges_of_sobolev_forms}
	Let $M$ be an oriented Riemannian manifold, let $\omega \in W^{d, p, q}_\loc(\wedge^k M)$, and let $\omega' \in W^{d, p', q'}_\loc(\wedge^{k'} M)$, where $k + k' \leq n - 1$ and $p, p', q, q' \in [1, \infty)$. Let $a, b$ be given by
	\begin{align*}
		\frac{1}{a} &= \frac{1}{p} + \frac{1}{p'},&
		\frac{1}{b} &= \max \left( \frac{1}{p} + \frac{1}{q'}, \frac{1}{q} + \frac{1}{p'} \right). 
	\end{align*}
	If $a, b \geq 1$, then $\omega \wedge \omega' \in W^{d, a, b}_\loc(\wedge^{k+k'} M)$ and
	\[
		d(\omega \wedge \omega') = d\omega \wedge \omega' + (-1)^k \omega \wedge d\omega'.
	\]
\end{prop}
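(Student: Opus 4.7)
The plan is to verify the integrability assertions directly from Hölder's inequality, and then obtain the Leibniz identity for the weak differential by smoothing both factors and passing to the limit. For integrability, the wedge product is a continuous bilinear operation between finite-dimensional inner-product spaces, so there is a pointwise bound $\abs{\alpha \wedge \beta}_x \leq C(n,k,k') \abs{\alpha}_x \abs{\beta}_x$. Combined with Hölder, this puts $\omega \wedge \omega'$ in $L^a_\loc(\wedge^{k+k'} M)$, with $1/a = 1/p + 1/p'$. The two summands of the candidate differential lie in $L^{s_1}_\loc$ and $L^{s_2}_\loc$ with $1/s_1 = 1/q + 1/p'$ and $1/s_2 = 1/p + 1/q'$, respectively; since $1/b = \max(1/s_1, 1/s_2)$ we have $s_1, s_2 \geq b$, and on a compact set $L^{s}_\loc \subset L^b_\loc$ when $s \geq b \geq 1$, so the candidate differential belongs to $L^b_\loc(\wedge^{k+k'+1} M)$.

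For the weak Leibniz identity, fix a test form $\eta \in C^\infty_0(\wedge^{n-k-k'-1} M)$ and choose a relatively compact open $U \subset M$ containing $\spt \eta$. By Corollary \ref{cor:W_dp_smooth_approx_mfld}, I can find smooth compactly supported approximations $\omega_j, \omega'_j \in C^\infty_0(\wedge^{\bullet} M)$ with $\omega_j \to \omega$ in $W^{d,p,q}(\wedge^k U)$ and $\omega'_j \to \omega'$ in $W^{d,p',q'}(\wedge^{k'} U)$. The classical Leibniz rule and Corollary \ref{cor:stokes_corollary} give
\[
	\int_M \omega_j \wedge \omega'_j \wedge d\eta
	= (-1)^{k+k'+1}\int_M \bigl( d\omega_j \wedge \omega'_j + (-1)^k \omega_j \wedge d\omega'_j \bigr) \wedge \eta,
\]
and both integrals restrict to $U$ since $\eta, d\eta$ vanish outside $U$. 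To pass to the limit, I would use the telescoping identity $\omega_j \wedge \omega'_j - \omega \wedge \omega' = (\omega_j - \omega) \wedge \omega'_j + \omega \wedge (\omega'_j - \omega')$ with the pointwise bound above and Hölder on $U$ with exponents $p$ and $p'$, invoking the uniform boundedness of $\{\omega'_j\}$ in $L^{p'}(U)$; an analogous decomposition for $d\omega_j \wedge \omega'_j$ and for $\omega_j \wedge d\omega'_j$, using exponents $(q,p')$ and $(p,q')$, handles the right-hand side. Since $\eta$ and $d\eta$ are bounded on $U$, this produces convergence of both integrals and identifies $d\omega \wedge \omega' + (-1)^k \omega \wedge d\omega'$ as the weak differential of $\omega \wedge \omega'$.

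No step presents a serious obstruction; the work is in keeping the Hölder bookkeeping straight and confirming that the exponents land in the expected $L^{a}$ and $L^{b}$ spaces. The only nontrivial input is the smooth approximation given by Corollary \ref{cor:W_dp_smooth_approx_mfld}, which is exactly what allows the classical product rule to be transported across the weak limit.
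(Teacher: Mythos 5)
Your proposal is correct, and its skeleton matches the paper's proof: the pointwise bound $\abs{\alpha \wedge \beta} \leq C(n,k,k')\abs{\alpha}\abs{\beta}$ plus H\"older for the integrability claims, then smooth approximation via Corollary \ref{cor:W_dp_smooth_approx_mfld} on a relatively compact neighborhood of $\spt\eta$ and a passage to the limit in the integrated Leibniz identity. The one structural difference is that you mollify \emph{both} factors and start from the classical Leibniz rule for smooth compactly supported forms together with Corollary \ref{cor:stokes_corollary}, whereas the paper approximates only $\omega$ and invokes the already-established weak Leibniz rule for a $W^{d,1}_\loc$ form wedged with a smooth form; consequently your limit argument needs the two-term telescoping decomposition and the (easy) observation that $\{\omega'_j\}$ and $\{d\omega'_j\}$ are bounded in $L^{p'}(U)$ and $L^{q'}(U)$, while the paper's one-sided version needs only the convergence of $\omega_i \to \omega$ and $d\omega_i \to d\omega$ against the fixed $\omega'$. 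Your exponent bookkeeping is sound: both summands of the candidate differential land in $L^{s_i}(U)$ with $s_i \geq b \geq 1$, and since $U$ has finite volume this suffices to pair against the bounded forms $\eta$ and $d\eta$ and to conclude membership in $L^b_\loc$.
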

\begin{proof}
	We note that 
	\begin{equation}\label{eq:wedge_estimate}
		\abs{\omega \wedge \omega'} \leq C \abs{\omega} \abs{\omega'},
	\end{equation} 
	where $C$ depends on $n$, $k$ and $k'$. Indeed, if $v = v_1 \wedge \dots \wedge v_{k+k'}$ is simple, then we recall that 
	\[
		(\omega \wedge \omega')(v) = \sum_{\sigma} (-1)^{\operatorname{sgn}(\sigma)} \omega(v_{\sigma(1)} \wedge \dots \wedge v_{\sigma(k)}) \omega'(v_{\sigma(k+1)} \wedge \dots \wedge v_{\sigma(k+k')}),
	\]
	where the sum is over all permutations $\sigma$ of $\{1, \dots, k+k'\}$ which are increasing on $\{1, \dots, k\}$ and $\{k+1, \dots, k+k'\}$. It therefore follows, keeping in mind Lemma \ref{lem:simple_k_vec_props}, that $\abs{\omega \wedge \omega'}_{\mass} \leq C' \abs{\omega}_{\mass} \abs{\omega'}_{\mass}$, where $C'$ is the number of permutations in the sum. The estimate \eqref{eq:wedge_estimate} then follows by Lemma \ref{lem:comass_norm_conversion}.
	
	We show the case $a^{-1} =  p^{-1} + (q')^{-1} \geq  q^{-1} + (p')^{-1}$, as the other case is analogous. Let $U$ be a domain compactly contained in $M$. We observe that $\omega \wedge \omega'$ and the candidate differential have the correct integrabilities on $U$, as H\"older's inequality and \eqref{eq:wedge_estimate} yield
	\[
		\norm{\omega \wedge \omega'}_a \leq C \norm{\omega}_p \norm{\omega'}_{p'}
	\]
	and
	\begin{multline*}
		\norm{d\omega \wedge \omega' + (-1)^k \omega \wedge d\omega'}_b\\
		\leq C_1 \vol_M(U)^{\frac{1}{b} - \frac{1}{q} - \frac{1}{p'}} \norm{d\omega}_q \norm{\omega'}_{p'} + C_2 \norm{\omega}_p \norm{d\omega'}_{q'},
	\end{multline*}
	where the norms are over $U$. Hence, we have the correct local integrabilities over $M$.
	
	Let then $\eta \in C^\infty_0(\wedge^{n-k-k'-1} U)$. By Corollary \ref{cor:W_dp_smooth_approx_mfld}, we may select $\omega_i \in C^\infty_0(\wedge^k M)$ which approximate $\omega$ in $W^{d,p,q}(\wedge^k U)$. It follows that $d(\omega_i \wedge \omega') = d\omega_i \wedge \omega' + (-1)^k \omega_i \wedge d\omega'$ weakly, and therefore
	\[
		\int_U (d\omega_i \wedge \omega' + (-1)^k \omega_i \wedge d\omega') \wedge \eta = (-1)^{k+k'+1} \int_U \omega_i \wedge \omega' \wedge d\eta.
	\]
	However, we then use \eqref{eq:wedge_estimate} and H\"older's inequality to estimate
	\[
		\abs{\int_U \omega_i \wedge \omega' \wedge d\eta - \int_U \omega \wedge \omega' \wedge d\eta}
		\leq C_3 \norm{\omega_i - \omega}_p \norm{\omega'}_{p'} \norm{d\eta}_{\frac{a}{a-1}}
	\]
	and similarly
	\begin{multline*}
		\abs{\int_U (d\omega_i \wedge \omega' + (-1)^k \omega_i \wedge d\omega') \wedge \eta 
			- \int_U (d\omega \wedge \omega' + (-1)^k \omega \wedge d\omega') \wedge \eta}\\
		\leq C_4 \vol_M(U)^{\frac{1}{b} - \frac{1}{q} - \frac{1}{p'}} 	
			\norm{d\omega_i - d\omega}_q \norm{\omega'}_{p'} \norm{\eta}_{\frac{b}{b-1}}\\
		+ C_5 \norm{\omega_i - \omega}_p \norm{d\omega'}_{q'} 	
			\norm{\eta}_{\frac{b}{b-1}},
	\end{multline*}
	where the norms are again over $U$. By letting $i \to \infty$, we therefore have
	\[
		\int_U (d\omega \wedge \omega' + (-1)^k \omega \wedge d\omega') \wedge \eta = (-1)^{k+k'+1} \int_U \omega \wedge \omega' \wedge d\eta.
	\] 
	Since the support of every test form $\eta \in C^\infty_0(\wedge^{n-k-k'-1} M)$ is contained in such a neighborhood $U$, we conclude that $d(\omega \wedge \omega') = d\omega \wedge \omega' + (-1)^k \omega \wedge d\omega'$ weakly. 
\end{proof}

\subsection{Pull-backs of smooth forms with Sobolev maps}

To end this section, we then discuss the other interaction of Sobolev forms and pull-backs. Namely, suppose that $M$ and $N$ are oriented Riemannian manifolds. When do we have $f^* d \omega = d f^* \omega$ weakly if $f \in \cW^{1, p}(M, N)$ and $\omega \in C^\infty(\wedge^k N)$? We focus on the case where $\omega$ is compactly supported.

Given a $f \in \cW^{1,1}_\loc(M, N)$ with weak derivative $Df \colon TM \to TN$ and a continuous $k$-form $\omega$, we may define $f^* \omega$ by $f^* \omega = \omega \circ (\wedge^k Df)$. It is immediately clear that changing $Df$ in a set of measure zero changes $f^* \omega$ only in a set of measure zero. The other foundational question is whether $f^* \omega$ is measurable.

\begin{lemma}\label{lem:measurability_and_wedges}
	Let $M$ and $N$ be smooth orientad Riemannian manifolds, and suppose that $f \in \cW^{1,1}_\loc(M, N)$ and $\omega \in C(\wedge^k N)$. Then $f^*\omega \in \Gamma(\wedge^k M)$.
\end{lemma}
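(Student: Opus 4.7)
The strategy is to apply Lemma \ref{lem:two_measurability_defs_for_forms} and reduce the problem to showing that $f^*\omega$, viewed as the real-valued function $(x,v) \mapsto \omega_{f(x)}(\wedge^k Df(x)\, v)$ on the manifold $\wedge^k TM$, is measurable. Since $\omega \colon \wedge^k TN \to \R$ is continuous, it then suffices to show that the bundle map $\wedge^k Df \colon \wedge^k TM \to \wedge^k TN$ is measurable between the total spaces regarded as smooth manifolds.

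To verify this, I would build a countable measurable decomposition of $M$ on which $\wedge^k Df$ has an explicit measurable description. Using Theorem \ref{thm:bilipschitz_charts}, pick a countable cover $\{(U_i, \phi_i)\}$ of $M$ by smooth bilipschitz charts, and a countable cover of $N$ by open sets $V_j$ each carrying a smooth bilipschitz chart $\psi_j \colon V_j \to \R^m$. For each $j$, take a cutoff to produce $\tilde\psi_j \in C^\infty_0(N, \R^m)$ which coincides with $\psi_j$ on $V_j$. Since $f \in \cW^{1,1}_\loc(M,N)$ is in particular measurable, the sets $A_{ij} := U_i \cap f^{-1}(V_j)$ are measurable and cover $M$. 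Each coordinate of $\tilde\psi_j$ lies in $C^1_0(N)$, so by the definition of colocal weak differentiability $\tilde\psi_j \circ f \in W^{1,1}_\loc(M, \R^m)$, and its Sobolev weak derivative $D(\tilde\psi_j \circ f) \colon TM \to T\R^m$ is a measurable bundle map (as recorded in the discussion around Definition \ref{def:sobolev_mf-dom}). On $A_{ij} \subset f^{-1}(V_j)$, the construction of the colocal weak derivative outlined in Section \ref{sect:sobolev_intrinsic_derivative} gives $Df = D(\psi_j^{-1}) \circ D(\tilde\psi_j \circ f)$; since $D(\psi_j^{-1})$ is smooth on its domain, composing it with the measurable map $D(\tilde\psi_j \circ f)$ yields that $Df$ is measurable on $A_{ij}$. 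Taking $k$th exterior powers is a fiberwise polynomial operation in the matrix entries, so $\wedge^k Df$ is measurable on each $A_{ij}$, and hence measurable on all of $M$.

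The conclusion now follows by composing the measurable $\wedge^k Df$ with the continuous $\omega$ to obtain a measurable real-valued function on $\wedge^k TM$, which by Lemma \ref{lem:two_measurability_defs_for_forms} is exactly the statement that $f^*\omega \in \Gamma(\wedge^k M)$. The principal obstacle is the lack of continuity of $f$: it prevents us from choosing chart domains $U_i$ on $M$ that are mapped into single chart domains $V_j$ of $N$, which would allow a direct chart-based argument as in the continuous case. This is circumvented precisely by the measurable decomposition via the $A_{ij}$ together with the $C^1_0$-extensions $\tilde\psi_j$, which is exactly the setup that the colocal framework is designed to produce.
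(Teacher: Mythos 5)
Your proof is correct, but it takes a genuinely different route from the paper's. The paper handles $k=0$ and $k=1$ directly (writing $f^*\omega = \omega \circ f$, resp.\ $\omega \circ Df$, as a continuous function composed with a measurable one), and for $k>1$ it locally decomposes $\omega$ into finite sums of wedge products of continuous $1$-forms, applies the $k=1$ case to each factor, uses measurability of wedge products of measurable forms, and finally patches over $N$ with a countable Borel partition. You instead prove once and for all that the bundle map $\wedge^k Df \colon \wedge^k TM \to \wedge^k TN$ is measurable --- via the observation that its fiberwise matrix entries are the $k\times k$ minors of $Df$, hence polynomial in measurable entries --- and then compose with the continuous map $\omega \colon \wedge^k TN \to \R$, invoking Lemma \ref{lem:two_measurability_defs_for_forms} at both ends. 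Your argument is more uniform in $k$ and avoids the local decomposition of $\omega$ into simple forms entirely; the price is that you must justify joint measurability of $(x,v) \mapsto \wedge^k Df(x)\,v$ on the total space (which works since it is measurable in $x$ and linear, hence continuous, in $v$). One remark: your reconstruction of the measurability of $Df$ via the sets $A_{ij}$ and the cutoff-extended charts $\tilde\psi_j$ duplicates the construction of the colocal weak derivative that the paper already imports from \cite[Proposition 1.2]{Convent-VanSchaftingen_Sobolev}; you could simply cite the measurability of $Df \colon TM \to TN$ as given there and shorten the argument accordingly.
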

\begin{proof}
	The case $k = 0$ is simple, as then $f^* \omega = \omega \circ f$, which is measurable as a composition of a continuous and a measurable function. The case $k = 1$ similarly follows using Lemma \ref{lem:two_measurability_defs_for_forms}: $f^* \omega = \omega \circ Df$, which again is a composition of a continuous and a measurable function.
	
	Consider then the remaining case $k > 1$. Then every $x \in N$ has a neighborhood $U \subset N$ with the following property: we have $\omega = \sum_I \omega_I$ on $U$, where $\omega_I$ are simple continuous forms $\omega_I = \omega_{I, 1} \wedge \dots \wedge \omega_{I, k}$ with $\omega_{I, j} \in C(\wedge^1 N)$. By the previous case, the forms $f^* \omega_{I, j}$ are measurable, and hence, we see that the form
	\[
		f^*\left( \sum_I \omega_I \right)
		= \sum_I (f^* \omega_{I, 1}) \wedge \dots \wedge (f^* \omega_{I, k}) 
	\]
	is measurable.
	
	Suppose then that $V \subset \R$ is open, with the goal of showing that the pre-image $(f^* \omega)^{-1} V \subset \wedge^k TM$ is measurable. Then if $E$ is a borel set contained in a neighborhood $U$ as above, then the set
	\[
		(f^{-1} E) \cap (f^* \omega)^{-1} V = (f^{-1} E) \cap \biggl(f^*\Bigl( \sum_I \omega_I \Bigr)\biggr)^{-1} V
	\]
	is measurable by Lemma \ref{lem:two_measurability_defs_for_forms}. The measurability of $(f^* \omega)^{-1} V$ then follows by taking a countable partition of $N$ by Borel sets $E$ as above. Hence, $f^* \omega$ is measurable by Lemma \ref{lem:two_measurability_defs_for_forms}.
\end{proof}

We then consider the formula $f^* d\omega = d f^* \omega$ when $f$ is Sobolev and $\omega$ is smooth and compactly supported. We prove first the Euclidean case.

\begin{lemma}\label{lem:smooth_form_sobolev_pullback_eucl}
	Suppose that $\omega \in C^\infty_0(\wedge^k V)$, where $V$ is a domain in $\R^n$. Suppose that $f \in W^{1, p}(U, V)$ where $U$ is a domain in $\R^m$ and $p \in [1, \infty]$. If $p \geq k+1$, then $f^*\omega \in W^{d, p/k, p/(k+1)}(\wedge^k U)$ and $d f^* \omega = f^* d \omega$.
\end{lemma}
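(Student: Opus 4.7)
The plan is to prove the result by smooth approximation of $f$, combined with careful $L^p$-estimates for the pullbacks exploiting the pointwise bounds that come from $\omega$ being smooth and compactly supported.

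First I verify the integrability claims: since $\omega$ and $d\omega$ are bounded, the pointwise estimate $\abs{f^*\omega} \leq C(n,k)\abs{\omega \circ f}\abs{Df}^k$ (coming from Lemma \ref{lem:simple_k_vec_props} and \ref{lem:comass_norm_conversion} applied to simple vectors with $|Df|^k$ bounding the norm of $\wedge^k Df$) gives $f^*\omega \in L^{p/k}(\wedge^k U)$, and similarly $f^*d\omega \in L^{p/(k+1)}(\wedge^{k+1} U)$; here the assumption $p \geq k+1$ ensures $p/(k+1) \geq 1$, so these sit in $L^1_\loc$. Measurability is Lemma \ref{lem:measurability_and_wedges}. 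I extend $\omega$ by zero to a compactly supported smooth form on $\R^n$, so that $f_j^*\omega$ makes sense for arbitrary $f_j \colon U \to \R^n$.

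Next, by Meyers--Serrin (Theorem \ref{thm:sobolev_eucl_smooth_density}), choose $f_j \in C^\infty(U, \R^n) \cap W^{1,p}(U, \R^n)$ with $f_j \to f$ in $W^{1,p}$, and passing to a subsequence assume $f_j \to f$ and $Df_j \to Df$ almost everywhere. For the smooth maps, the classical identity $df_j^*\omega = f_j^* d\omega$ holds, so for any $\eta \in C^\infty_0(\wedge^{n-k-1} U)$ we have
\[
\int_U f_j^*\omega \wedge d\eta = (-1)^{k+1}\int_U f_j^* d\omega \wedge \eta.
\]
The goal is to pass $j \to \infty$, which reduces to showing $f_j^*\omega \to f^*\omega$ in $L^1_\loc$ (and similarly for $d\omega$), since $\eta$, $d\eta$ are bounded and compactly supported. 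I will in fact prove the stronger convergences $f_j^*\omega \to f^*\omega$ in $L^{p/k}_\loc$ and $f_j^* d\omega \to f^* d\omega$ in $L^{p/(k+1)}_\loc$.

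The main technical step is this convergence, which I handle using the telescoping decomposition
\[
f_j^*\omega - f^*\omega = (\omega \circ f_j - \omega \circ f)\circ \wedge^k Df_j + (\omega \circ f)\circ(\wedge^k Df_j - \wedge^k Df),
\]
together with the pointwise bound $\abs{\wedge^k L_1 - \wedge^k L_2} \leq k \max(\abs{L_1}, \abs{L_2})^{k-1} \abs{L_1 - L_2}$. The second summand is handled by Hölder with exponents $(k/(k-1), k)$, giving the estimate
\[
\norm{\wedge^k Df_j - \wedge^k Df}_{p/k}^{p/k} \leq k^{p/k}\norm{\max(\abs{Df_j},\abs{Df})}_p^{p(k-1)/k}\norm{Df_j - Df}_p^{p/k} \to 0,
\]
together with boundedness of $\omega \circ f$. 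The first summand is the main obstacle: using the global Lipschitz bound $\abs{\omega \circ f_j - \omega \circ f} \leq L\abs{f_j - f}$ together with the global bound $\abs{\omega \circ f_j - \omega \circ f} \leq 2\norm{\omega}_\infty$, split the domain on $E_j(\delta) = \{\abs{f_j - f} > \delta\}$ and its complement. On $K \setminus E_j(\delta)$, bound by $L^{p/k}\delta^{p/k}\norm{Df_j}_p^p$; on $K \cap E_j(\delta)$, bound by $(2\norm{\omega}_\infty)^{p/k}\int_{E_j(\delta)}\abs{Df_j}^p$, which tends to zero as $j \to \infty$ for fixed $\delta$ by uniform integrability of $\{\abs{Df_j}^p\}$ (a consequence of $L^p$-convergence $Df_j \to Df$) together with $\abs{E_j(\delta)} \to 0$ (convergence in measure). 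Sending $j \to \infty$ and then $\delta \to 0$ gives the desired convergence. The same argument with $k$ replaced by $k+1$ handles $f_j^* d\omega \to f^* d\omega$ in $L^{p/(k+1)}_\loc$, using precisely the hypothesis $p \geq k+1$ to keep the target exponent $\geq 1$. This completes the identification of $f^* d\omega$ as the weak exterior derivative of $f^*\omega$.
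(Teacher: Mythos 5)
Your argument is essentially correct, but it takes a genuinely different route from the paper. You fix the form $\omega$ and smooth the \emph{map}: approximate $f$ by smooth $f_j$ via Meyers--Serrin, use the classical identity $d f_j^*\omega = f_j^* d\omega$ together with Stokes, and pass to the limit by proving $f_j^*\omega \to f^*\omega$ in $L^{p/k}_\loc$ and $f_j^* d\omega \to f^* d\omega$ in $L^{p/(k+1)}_\loc$ through the telescoping decomposition, the H\"older estimate on $\wedge^k Df_j - \wedge^k Df$, and the uniform-integrability/convergence-in-measure splitting for the $(\omega\circ f_j - \omega\circ f)$ term. The paper instead fixes $f$ and works on the \emph{form}: it reduces to a monomial $\omega_0\, dx_{i_1}\wedge\dots\wedge dx_{i_k}$, applies the Sobolev chain rule (Corollary \ref{cor:sobolev_composition_local} and Lemma \ref{lem:sobolev_0_forms}) to get $d(f^*\omega_0) = f^* d\omega_0$ and $d(f^* dx_i) = 0$ weakly, and then invokes the Leibniz rule for Sobolev forms (Proposition \ref{prop:wedges_of_sobolev_forms}, which itself rests on mollification of forms). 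Your approach is more self-contained at this point of the text --- it does not need Proposition \ref{prop:wedges_of_sobolev_forms} --- at the cost of the somewhat delicate convergence analysis for pullbacks under $W^{1,p}$-approximation of the map; the paper's approach outsources exactly that kind of limiting argument to the form-approximation machinery already set up.

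One genuine (though repairable) gap: the statement allows $p=\infty$, but Theorem \ref{thm:sobolev_eucl_smooth_density} is only stated (and only true) for $p<\infty$, so your opening appeal to Meyers--Serrin fails in that case. You should either treat $p=\infty$ separately or note that $W^{1,\infty}(U,V)\subset W^{1,q}_\loc(U,V)$ for every finite $q\geq k+1$, so the weak-differential identity (which is local) follows from the finite-exponent case, while the $L^\infty$-memberships of $f^*\omega$ and $f^* d\omega$ are immediate from your pointwise bounds. A cosmetic remark: for $k=0$ the claimed ``convergence in $L^{p/k}_\loc$'' should be read as the $L^1_\loc$ convergence of $\omega\circ f_j$, which your splitting argument does give; and for a domain $U\subset\R^m$ the test forms are of degree $m-k-1$, not $n-k-1$.
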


\begin{proof}
	We first show that $f^* \omega$ and $f^* d\omega$ have the correct degree of integrability. Indeed, suppose that $v = v_1 \wedge \dots \wedge v_k \in \wedge^k T_xM$ is simple with $\abs{v_i} = 1$ for every $i$. Note that since $\omega$ is compactly supported, $\omega$ and $d\omega$ both have a finite $L^\infty$-norm. By Lemmas \ref{lem:simple_k_vec_props} and \ref{lem:comass_norm_conversion}, we have
	\begin{multline*}
		(f^* \omega)_x (v) 
		= \omega_{f(x)} (Df(x) v_1 \wedge \dots \wedge Df(x) v_k) \\
		\leq \abs{\omega_{f(x)}}_{\mass} \abs{Df(x)v_1} \dots \abs{Df(x)v_k}
		\leq \binom{n}{k}^\frac{1}{2} \norm{\omega}_\infty \abs{Df}^k.
	\end{multline*}
	Since $\abs{Df}^k \in L^{p/k}(U)$, we have $f^* \omega \in L^{p/k}(\wedge^k U)$. Similarly, if also $v_{k+1} \in T_x M$ with $\abs{v_{k+1}} \leq 1$, we may compute
	\[
		(f^* d\omega)_x (v\wedge v_{k+1}) 
		\leq \binom{n}{k+1}^\frac{1}{2} \norm{d\omega}_\infty \abs{Df}^{k+1},
	\]
	and therefore $f^* d\omega \in L^{p/(k+1)}(\wedge^{k+1} U)$. 
	
	The remaining part is therefore confirming that $f^* d\omega$ is a weak differential of $f^* \omega$. It suffices to show this for a standard basis component of $\omega$. Hence, we may assume $\omega = \omega_0 \, dx_{i_1} \wedge \dots \wedge dx_{i_k}$ where $\omega_0$ is smooth and compactly supported.
	
	Since $\omega_0$ is compactly supported, it is Lipschitz. Hence, we have by Corollary \ref{cor:sobolev_composition_local} that $\omega_0 \circ f \in W^{1,p}_\loc(U)$ and $D(\omega_0 \circ f) = D\omega_0 \circ Df$. Lemma \ref{lem:sobolev_0_forms} then implies that $\omega_0 \circ f \in W^{d, p}_\loc(U)$, and $d f^* \omega_0 = \iota_1 \circ D(\omega_0 \circ f) = \iota_1 \circ D\omega_0 \circ Df = f^* d\omega_0$.
	
	Similarly, the functions $x \mapsto x_{i_j}$ are smooth and 1-Lipschitz. Hence, as before, we have by Corollary \ref{cor:sobolev_composition_local} and Lemma \ref{lem:sobolev_0_forms} that $f^* dx_i = d f_i$ weakly. Since the weak differential of a weak differential is zero, we have $d f^* dx_i = 0$, and in particular, $f^* dx_i \in W^{d, p}_\loc(\wedge^1 U)$. Now, by using the Leibniz formula of Proposition \ref{prop:wedges_of_sobolev_forms}, it follows that
	\[
		d f^*(\omega_0 \, dx_{i_1} \wedge \dots \wedge dx_{i_k})
		= f^* d (\omega_0 \, dx_{i_1} \wedge \dots \wedge dx_{i_k}).
	\]
	The proof is therefore complete.
\end{proof}

We then give the manifold version of the previous result. We include the local version in the statement.

\begin{prop}\label{prop:smooth_form_sobolev_pullback}
	Suppose that $M$ and $N$ are oriented Riemannian manifolds. Let $\omega \in C^\infty_0(\wedge^k N)$, and suppose that $f \in \cW^{1, p}(M, N)$, or respectively $f \in \cW_\loc^{1, p}(M, N)$, where $p \in [1, \infty]$. If $p \geq k+1$, then $f^*\omega \in W^{d, p/k, p/(k+1)}(\wedge^k M)$, or respectively $f^* \omega \in W^{d, p/k, p/(k+1)}_\loc(\wedge^k M)$, where the weak differential is given by $d f^* \omega = f^* d \omega$. Moreover, we have the estimates
	\begin{align*}
		\norm{f^* \omega}_\frac{p}{k} 
		&\leq \binom{n}{k}^{\frac{1}{2}} \norm{Df}_{p}^k 
			\norm{\omega}_\infty,&
		\norm{f^* d\omega}_\frac{p}{k+1} 
		&\leq \binom{n}{k+1}^{\frac{1}{2}} \norm{Df}_{p}^{k+1} 	
			\norm{d\omega}_\infty.
	\end{align*}
\end{prop}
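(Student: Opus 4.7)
The plan is to follow the strategy of Lemma \ref{lem:smooth_form_sobolev_pullback_eucl}, splitting the argument into pointwise integrability estimates and a separate verification of the weak differential identity. The norm estimates transfer almost verbatim from the Euclidean case: for a simple unit $k$-vector $v = v_1 \wedge \cdots \wedge v_k \in \wedge^k T_x M$, which we may take to consist of orthonormal vectors by Lemma \ref{lem:simple_k_vec_props}, we estimate
\[
	(f^*\omega)_x(v) = \omega_{f(x)}\bigl(Df(x)v_1 \wedge \cdots \wedge Df(x)v_k\bigr) \leq \norm{\omega}_\infty \abs{Df(x)}^k,
\]
and Lemma \ref{lem:comass_norm_conversion} converts this to $\abs{(f^*\omega)_x} \leq \binom{n}{k}^{1/2} \norm{\omega}_\infty \abs{Df(x)}^k$. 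Integrating against $\vol_M$ gives the bound on $\norm{f^*\omega}_{p/k}$; an identical computation for $d\omega$ yields the bound on $\norm{f^*d\omega}_{p/(k+1)}$. The hypothesis $p \geq k+1$ ensures that both resulting exponents are at least $1$.

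The main remaining task is to establish $d f^*\omega = f^* d\omega$ as weak differentials. The key reduction is to decompose $\omega$ as a finite sum $\omega = \sum_j u_j \, dv_j^1 \wedge \cdots \wedge dv_j^k$ with $u_j, v_j^i \in C^\infty_0(N)$. This decomposition is constructed by covering the compact set $\spt \omega$ with finitely many bilipschitz chart neighborhoods $(V_\alpha, \psi_\alpha)$ supplied by Theorem \ref{thm:bilipschitz_charts}, choosing a smooth partition of unity $\chi_\alpha$ subordinate to this cover with $\sum_\alpha \chi_\alpha \equiv 1$ on a neighborhood of $\spt \omega$, expanding $\chi_\alpha \omega = \sum_I \chi_\alpha \omega_{\alpha, I} \, d\psi_\alpha^{i_1} \wedge \cdots \wedge d\psi_\alpha^{i_k}$ within $V_\alpha$, and replacing each coordinate $\psi_\alpha^i$ with $\rho_\alpha \psi_\alpha^i$ for a cutoff $\rho_\alpha \in C^\infty_0(V_\alpha)$ equal to $1$ on $\spt \chi_\alpha$. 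The modified coordinates $\rho_\alpha \psi_\alpha^i$ extend to elements of $C^\infty_0(N)$, and the expansion is preserved: on $\spt \chi_\alpha$ we have $d(\rho_\alpha \psi_\alpha^i) = d\psi_\alpha^i$, while outside $\spt \chi_\alpha$ the coefficient $\chi_\alpha \omega_{\alpha, I}$ is already zero.

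For a single simple summand $u \, dv^1 \wedge \cdots \wedge dv^k$, the compactly supported smooth functions $u, v^i \in C^\infty_0(N)$ are Lipschitz, so the defining chain rule $D(u \circ f) = Du \circ Df$ of the colocal weak derivative, together with the estimate $\abs{D(u \circ f)} \leq \norm{Du}_\infty \abs{Df}$ and Corollary \ref{cor:sobolev_mfld_dom_base_exponent}, places $u \circ f$ and each $v^i \circ f$ in $W^{1,p}_\loc(M)$. Lemma \ref{lem:sobolev_0_forms} then identifies $d(u \circ f) = f^* du$ and $d(v^i \circ f) = f^* dv^i$ as weak $1$-form differentials, and the latter further satisfy $d d(v^i \circ f) = 0$. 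Rewriting
\[
	f^*(u \, dv^1 \wedge \cdots \wedge dv^k) = (u \circ f) \cdot d(v^1 \circ f) \wedge \cdots \wedge d(v^k \circ f)
\]
and iteratively applying the wedge Leibniz rule of Proposition \ref{prop:wedges_of_sobolev_forms} (with auxiliary integrability exponents chosen large enough to satisfy its hypotheses, which is permissible because each successive secondary weak differential vanishes) produces
\[
	d f^* \omega = f^* du \wedge f^* dv^1 \wedge \cdots \wedge f^* dv^k = f^* d\omega.
\]
Summing over $j$ yields the identity for $\omega$. The main technical obstacle lies in the decomposition step, specifically in verifying that the cutoff-extended coordinate functions recover $\omega$ pointwise everywhere on $N$ rather than merely on $\spt \omega$; the local-versus-global distinction in the statement is handled by Lemma \ref{lem:colocal_sobolev_restricion_characterization} together with the local nature of the pointwise estimates.
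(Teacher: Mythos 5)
Your argument is correct, but it takes a genuinely different route from the paper's. The paper proves the identity $d f^*\omega = f^* d\omega$ by sandwiching $f$ between bilipschitz charts on both $M$ and $N$: after cutting $\omega$ into pieces supported in extended chart neighborhoods of $N$, it shows $\psi \circ f \circ \phi^{-1} \in W^{1,p}(\phi U, \R^n)$, writes $(f^* d\omega_i)\vert_U = \phi^* (\psi \circ f \circ \phi^{-1})^* (\psi')^* d\omega_i$, and commutes $d$ across the three pull-backs using the already-established Euclidean case (Lemma \ref{lem:smooth_form_sobolev_pullback_eucl}) together with the bilipschitz pull-back formula of Proposition \ref{prop:wdpqloc_bilip_pullback_formula}. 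You instead globalize the Euclidean argument itself: your decomposition $\omega = \sum_j u_j\, dv_j^1 \wedge \cdots \wedge dv_j^k$ with $u_j, v_j^i \in C^1_0(N)$ replaces the global coordinates $x_i$ of the Euclidean proof by cutoff-extended chart coordinates, after which the colocal chain rule, Lemma \ref{lem:sobolev_0_forms}, the vanishing of $dd(v^i\circ f)$, and an iterated application of Proposition \ref{prop:wedges_of_sobolev_forms} run entirely on $M$ with no chart on the domain side. What your version buys is the elimination of Lemma \ref{lem:smooth_form_sobolev_pullback_eucl} and Proposition \ref{prop:wdpqloc_bilip_pullback_formula} from this proof and a cleaner, intrinsic use of the colocal weak derivative; what the paper's version buys is that the manifold case becomes pure bookkeeping once the Euclidean case is known. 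Your exponent bookkeeping in the Leibniz induction checks out: at step $j$ one has $\alpha_{j-1} \in W^{d, p/(j-1), p/j}_\loc$ (with a large finite auxiliary exponent in place of $\infty$ for $j=1$) and $f^* dv^j \in W^{d, p, r'}_\loc$ with $r' \geq p/2$ finite, and the hypotheses $a, b \geq 1$ of Proposition \ref{prop:wedges_of_sobolev_forms} reduce to $p \geq j+1$, which holds since $j \leq k \leq p-1$. Two cosmetic refinements: take $\rho_\alpha \equiv 1$ on a \emph{neighborhood} of $\spt \chi_\alpha$ (or observe that the set where the coefficient is nonzero is open and lies in the interior of $\{\rho_\alpha = 1\}$, so the differentials agree where it matters), and note that for $p = \infty$ one should first localize to precompact subdomains so that all exponents entering Proposition \ref{prop:wedges_of_sobolev_forms} are finite — the paper's proof faces the same technicality.
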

\begin{proof}
	The integrability estimates follow exactly as in the beginning of the proof of Lemma \ref{lem:smooth_form_sobolev_pullback_eucl}. The local result also follows from the global one by restricting to subdomains, keeping in mind Lemma \ref{lem:colocal_sobolev_restricion_characterization}. The main question is therefore showing that $d f^* \omega = f^* d \omega$ weakly in the global case.
	
	For every $y \in N$, we may select neighborhoods $V \subset N$ and $V' \subset \R^n$ of $y$ and $0$, respectively, and smooth maps $\psi_V \colon N \to \R^n$ and $\psi'_V \colon \R^m \to N$, for which $\spt \psi_V$ is compact and $\psi_V \vert_V \colon V \to V'$ is a bilipschitz chart with inverse $\psi_V'\vert_{V'} \colon V' \to V$. Indeed, this is again done by taking a standard smooth bilipschitz chart, and then modifying it and its inverse with suitable cutoffs. The argument is gone through in detail in \cite[Lemma 1.3]{Convent-VanSchaftingen_Sobolev}.
	
	Since $\spt \omega$ is compact, we may use smooth cutoff functions to decompose $\omega = \sum_{i=1}^l \omega_i$, where $\spt \omega_i$ is contained in the neighborhood $V_i \subset N$ of an extended bilipschitz chart $(V_i, V_i', \psi_i, \psi_i')$ as described above. Since there are only finitely many $\omega_i$, it suffices to show that $f^* d\omega_i = d f^* \omega_i$.
	
	Let $\phi \colon U \to \R^m$ be a bilipschitz chart for which $U$ is compactly contained in $M$. By the locality of the weak differential, it suffices to show that $f^* d\omega_i = d f^* \omega_i$ on $U$. By the definitions of the colocal weak derivative and the space $\cW^{1,p}(M, N)$, it follows that $\psi \circ f \in W^{1,1}_\loc(M, \R^n)$ and $\abs{D(\psi \circ f)} \leq \norm{D\psi}_\infty \abs{Df} \in L^p(M)$. Corollary \ref{cor:sobolev_mfld_dom_base_exponent} thus yields $\psi \circ f \in W^{1,p}_\loc(M, \R^n)$. Consequently, since $U$ is compactly contained in $M$, we have $(\psi \circ f)\vert_U \in W^{1,p}_\loc(U, \R^n)$. It then follows by Lemma \ref{lem:sobolev_precomposition_mfld_domain} that $\psi \circ f \circ \phi^{-1} \in W^{1,p}(\phi U, \R^n)$. 
	
	We then write
	\[
		(f^* d \omega_i)\vert_U = \phi^* (\psi \circ f \circ \phi^{-1})^* (\psi')^* d\omega_i,
	\]
	and move the differential on the right side of the above equation across all the pull-backs; first by the smoothness of $\psi'$ and $\omega_i$, then by Lemma \ref{lem:smooth_form_sobolev_pullback_eucl}, and then by Proposition \ref{prop:wdpqloc_bilip_pullback_formula}. It follows that $f^* d\omega_i = d f^* \omega_i$ on $U$, which completes the proof.
\end{proof}

We note that if $f$ is continuous, it is easy to rid ourselves of the need to assume a compact support for $\omega$.

\begin{cor}\label{cor:smooth_form_sobolev_pullback_cont}
	Suppose that $M$ and $N$ are oriented Riemannian manifolds. Let $\omega \in C^\infty(\wedge^k N)$, and suppose that $f \in C(M, N) \cap W^{1, p}_\loc(M, N)$. If $p \geq k + 1$, then $f^* \omega \in W^{d, p/k, p/(k+1)}_\loc(\wedge^k M)$, where the weak differential is given by $d f^* \omega = f^* d \omega$.
\end{cor}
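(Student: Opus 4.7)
The plan is to reduce Corollary \ref{cor:smooth_form_sobolev_pullback_cont} to Proposition \ref{prop:smooth_form_sobolev_pullback} by a localization argument on the target, exploiting the continuity of $f$. By Theorem \ref{thm:local_sobolev_eq_of_defs} the hypothesis $f \in C(M, N) \cap W^{1,p}_\loc(M, N)$ is equivalent to $f \in \cW^{1, p}_\loc(M, N)$, so Proposition \ref{prop:smooth_form_sobolev_pullback} will be applicable to any compactly supported approximant of $\omega$.

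First I would set up the localization recipe. Given a precompact open $U \subset M$, continuity of $f$ makes $f(\overline{U}) \subset N$ compact, so we may choose $\chi \in C^\infty_0(N)$ with $\chi \equiv 1$ on an open neighborhood $W \subset N$ of $f(\overline{U})$ and set $\tilde\omega = \chi\omega \in C^\infty_0(\wedge^k N)$. Since $\tilde\omega$ coincides with $\omega$ on $W$, so does $d\tilde\omega$ coincide with $d\omega$ there, and therefore $f^*\omega = f^*\tilde\omega$ and $f^* d\omega = f^* d\tilde\omega$ pointwise on $U$.

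For the local integrability, cover $M$ by such precompact $U$'s. Proposition \ref{prop:smooth_form_sobolev_pullback} applied to $\tilde\omega$ yields $f^*\tilde\omega \in W^{d, p/k, p/(k+1)}_\loc(\wedge^k M)$; restricting to $U$ shows that $f^*\omega \in L^{p/k}(\wedge^k U)$ and $f^* d\omega \in L^{p/(k+1)}(\wedge^{k+1} U)$. Since $U$ was arbitrary, this gives the claimed $L^{p/k}_\loc$ and $L^{p/(k+1)}_\loc$ bounds on all of $M$.

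For the weak differential identity on $M$, pick $\eta \in C^\infty_0(\wedge^{n-k-1} M)$. Since $\spt \eta$ is compact, cover it by finitely many precompact sets $U_1, \dots, U_l$ as above, each equipped with an associated $\tilde\omega_i \in C^\infty_0(\wedge^k N)$ that agrees with $\omega$ on an open neighborhood of $f(\overline{U_i})$. Take a smooth partition of unity $\chi_1, \dots, \chi_l$ subordinate to $\{U_i\}$ on $\spt \eta$, so that $\eta = \sum_i \chi_i \eta$ with $\spt(\chi_i \eta) \subset U_i$. Applying Proposition \ref{prop:smooth_form_sobolev_pullback} to $\tilde\omega_i$ with test form $\chi_i \eta$, and using $f^*\tilde\omega_i = f^*\omega$ and $f^* d\tilde\omega_i = f^* d\omega$ on $U_i$, we obtain
\[
\int_M f^*\omega \wedge d(\chi_i \eta) = (-1)^{k+1} \int_M f^* d\omega \wedge \chi_i \eta,
\]
and summing over $i$ gives the global identity. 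The only real care needed is the bookkeeping in choosing $\tilde\omega_i$ so that its region of agreement with $\omega$ contains all of $f(\overline{U_i})$, not just $f(U_i)$, which guarantees that the pointwise equalities hold on the full support of $\chi_i \eta$; beyond this, no genuine analytic obstacle arises.
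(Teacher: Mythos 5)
Your proof is correct and follows essentially the same route as the paper: both reduce to Proposition \ref{prop:smooth_form_sobolev_pullback} by cutting $\omega$ off to a compactly supported form that agrees with it where it matters, using the continuity of $f$. The only (cosmetic) difference is that the paper localizes on the target, applying the proposition to $f\vert_{f^{-1}V}$ for $V$ precompact in $N$ and invoking locality of the weak differential over the cover $\{f^{-1}V\}$, whereas you localize on the domain via $f(\overline{U})$ and make the partition-of-unity step for the test form explicit.
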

\begin{proof}
	Let $V$ be an open set which is compactly contained in $N$. Then there exists a form $\omega' \in C^\infty_0(\wedge^k N)$ such that $\omega' = \omega$ on $V$. Now, since $f$ is continuous, $f^{-1} V$ is open, and thus a submanifold of $M$. We may therefore use Proposition \ref{prop:smooth_form_sobolev_pullback} on $V$ to conclude that $(f^* \omega)\vert_{f^{-1} V} = (f^* \omega')\vert_{f^{-1} V} \in W^{d, p/k, p/(k+1)}(\wedge^k f^{-1} V)$ and $d(f^* \omega)\vert_{f^{-1} V} = (f^* d\omega)\vert_{f^{-1} V}$. Since neighborhoods $f^{-1} V$ as above form a cover of $M$, the claim follows.
\end{proof}

\section{The definitions of quasiregularity}\label{sect:definitions_of_QR}

In this section, we define quasiregular maps between manifolds. We state four variants of the analytic definition, and show that they're equivalent. The first three definitions are by the three different local Sobolev spaces discussed in sections \ref{sect:sobolev_cont_functions}, \ref{sect:sobolev_nash_embedding} and \ref{sect:sobolev_intrinsic_derivative}; the work of showing their equivalence is already done in Theorem \ref{thm:local_sobolev_eq_of_defs}. The last one is by directly using charts and Euclidean quasiregular maps.

\subsection{Statement of definitions}

We begin by defining the \emph{Jacobian determinant}, or \emph{Jacobian} in short, of a Sobolev map between spaces of the same dimension. Suppose that $M$ and $N$ are oriented Riemannian $n$-manifolds, and $f \colon M \to N$ is in any of the Sobolev spaces shown equivalent in Theorem \ref{thm:local_sobolev_eq_of_defs}. Then $f^* \vol_N$ is a measurable $n$-form by Lemma \ref{lem:measurability_and_wedges}. We then define $J_f = \ip{f^* \vol N}{\vol M}$. This is measurable, since the Grassmann inner product with $\vol_M$ is continuous. It also follows that $J_f$ satisfies the standard characterization of Jacobians of $C^1$ maps: $f^* \vol_N = J_f \vol_M$.

Following this definition of the Jacobian, we state the first definition of quasiregular maps between manifolds, which is by using the Sobolev space of continuous maps. 

\begin{defn}\label{def:QR_with_continuous_Sobolev_space}
	Let $M$, $N$ be smooth, oriented Riemannian $n$-manifolds, and let $f \colon M \to N$. Then $f$ is \emph{$K$-quasiregular}, $K \geq 1$, if the following conditions hold:
	\begin{itemize}
		\item $f$ is continuous;
		\item $f \in W^{1, n}_\loc(M, N)$ in the sense of Definition \ref{def:local_Sobolev_for_continuous_maps}.
		\item $\abs{Df(x)}^n \leq K J_f(x)$ for $\hausd^n$-almost every $x \in M$.
	\end{itemize}
\end{defn}

The second definition is by using a Nash embedding.

\begin{defn}\label{def:QR_with_nash_embedding}
	Let $M$, $N$ be smooth, oriented Riemannian $n$-manifolds, and let $f \colon M \to N$. Then $f$ is \emph{$K$-quasiregular}, $K \geq 1$, if the following conditions hold:
	\begin{itemize}
		\item $f$ is continuous;
		\item $f \in W^{1, n}_{\iota, \loc}(M, N)$ for a Nash embedding $\iota \colon N \to \R^m$.
		\item $\abs{Df(x)}^n \leq K J_f(x)$ for $\hausd^n$-almost every $x \in M$, where $Df$ is the unique measurable map satisfying $D(\iota \circ f) = D\iota \circ Df$ almost everywhere.
	\end{itemize}
\end{defn}

The third definition is by using colocal weak differentiability.

\begin{defn}\label{def:QR_with_colocal_diff}
	Let $M$, $N$ be smooth, oriented Riemannian $n$-manifolds, and let $f \colon M \to N$. Then $f$ is \emph{$K$-quasiregular}, $K \geq 1$, if the following conditions hold:
	\begin{itemize}
		\item $f$ is continuous;
		\item $f \in \cW^{1, n}_{\loc}(M, N)$.
		\item $\abs{Df(x)}^n \leq K J_f(x)$ for $\hausd^n$-almost every $x \in M$.
	\end{itemize}
\end{defn}

The last definition is based on directly using the definition of quasiregular maps between Euclidean domains.

\begin{defn}\label{def:QR_with_charts}
	Let $M$, $N$ be smooth, oriented Riemannian $n$-manifolds, and let $f \colon M \to N$. Then $f$ is \emph{$K$-quasiregular}, $K \geq 1$, if for every $L > 1$, every $K' > K$ and every $x \in M$, there exist smooth $L$-bilipschitz charts $\phi \colon U \to \R^n$ and $\psi \colon V \to \R^n$, where $x \in U \subset M$, $f(x) \in V \subset N$, $f(U) \subset V$, and $\psi \circ f \circ \phi^{-1} \colon \phi(U) \to \R^n$ is $K'$-quasiregular.
\end{defn}

Note that the quantifier "for every $L > 1$" in the above definition is crucial; consider eg. the map $f \colon \R^2 \to \R^2$ defined by $f(x,y) = (x, 2y)$ and the bilipschitz chart $\phi(x, y) = (2x, y)$ on the entire space $\R^2$.

\begin{rem}
	We briefly comment on the use of various definitions. The Nash embedding approach of Definition \ref{def:QR_with_nash_embedding} is used in e.g.\ \cite{Okuyama-Pankka_measure} and \cite{Goldstein-Hajlasz-Pakzad}. The direct reduction to the Euclidean case given in Definition \ref{def:QR_with_charts} is also relatively common, and is used in e.g.\ \cite{Holopainen-Rickman_Picard} and \cite{Kangaslampi_thesis}.
	
	For the post-composition approach of Definition \ref{def:QR_with_colocal_diff}, we're only aware of it being used in the author's own paper \cite{Kangasniemi-Pankka_PLMS}. We're also only aware of Definition \ref{def:QR_with_continuous_Sobolev_space} appearing as-is in the lecture notes \cite{Pankka_Lectnotes_Fribourg}. However, for instance the definition in \cite[I.5.2]{Reshetnyak-book} is essentially a blend of Definitions \ref{def:QR_with_continuous_Sobolev_space} and \ref{def:QR_with_charts}. Moreover, not all papers on quasiregular maps between manifolds specify how the Sobolev space is defined, and we suspect that in some of these cases, an approach analogous to that of Definition \ref{def:QR_with_continuous_Sobolev_space} is the one intended.
\end{rem}

\subsection{Equivalence of definitions}

We then show that all of the above definitions are equivalent

\begin{thm}\label{thm:equivalence_of_definitions}
	Definitions \ref{def:QR_with_continuous_Sobolev_space}, \ref{def:QR_with_nash_embedding}, \ref{def:QR_with_colocal_diff} and \ref{def:QR_with_charts} are equivalent.
\end{thm}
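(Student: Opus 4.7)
My plan is as follows. The equivalence of Definitions \ref{def:QR_with_continuous_Sobolev_space}, \ref{def:QR_with_nash_embedding}, and \ref{def:QR_with_colocal_diff} follows essentially for free from work already done: all three impose continuity of $f$ and the same inequality $\abs{Df}^n \leq K J_f$, and by Theorem \ref{thm:local_sobolev_eq_of_defs} the three underlying local Sobolev spaces coincide for continuous maps, with the three weak derivatives $Df$ agreeing almost everywhere. Since $J_f = \ip{f^*\vol_N}{\vol_M}$ is a pointwise functional of $Df$, both $\abs{Df}$ and $J_f$ are the same object in all three frameworks, so the distortion inequality transfers verbatim.

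The substance of the theorem is the equivalence of these with Definition \ref{def:QR_with_charts}. For $(\ref{def:QR_with_continuous_Sobolev_space}) \Rightarrow (\ref{def:QR_with_charts})$, I would fix $x \in M$, $L > 1$, and $K' > K$, set $L' = \min\{L, (K'/K)^{1/(4n)}\} > 1$, and invoke Theorem \ref{thm:bilipschitz_charts} to select positively oriented smooth $L'$-bilipschitz charts $\phi \colon U \to \R^n$ and $\psi \colon V \to \R^n$ at $x$ and $f(x)$, shrinking $U$ by continuity of $f$ so that $f(U) \subset V$. Any $L'$-bilipschitz chart is a fortiori $L$-bilipschitz. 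Continuity of $g = \psi \circ f \circ \phi^{-1}$ is immediate, and Corollary \ref{cor:sobolev_composition_mfld_dom_local} applied to the bilipschitz precomposition $\phi^{-1}$ carries the membership $\psi \circ f|_U \in W^{1,n}_\loc(U, \R^n)$ (granted by Definition \ref{def:local_Sobolev_for_continuous_maps}) over to $g \in W^{1,n}_\loc(\phi(U), \R^n)$. The chain rule $Dg = D\psi \circ Df \circ D\phi^{-1}$ combined with $\abs{D\phi^{-1}}, \abs{D\psi} \leq L'$ yields $\abs{Dg}^n \leq L'^{2n}\abs{Df}^n$, and Jacobian multiplicativity together with $J_\psi, J_{\phi^{-1}} \in [L'^{-n}, L'^n]$ (from \eqref{eq:bilipschitz_J} and the positive orientation) gives $J_g \geq L'^{-2n} J_f$. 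Combining these with the hypothesis yields $\abs{Dg}^n \leq L'^{4n} K J_g \leq K' J_g$ a.e., so $g$ is $K'$-quasiregular in the Euclidean sense.

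For the converse $(\ref{def:QR_with_charts}) \Rightarrow (\ref{def:QR_with_continuous_Sobolev_space})$, continuity of $f$ is automatic, since on each chart neighborhood $U$ from Definition \ref{def:QR_with_charts} the map $f$ agrees with $\psi^{-1} \circ g \circ \phi$, a composition of bilipschitz homeomorphisms with the continuous Euclidean QR map $g$. Sobolev regularity follows by running the previous transfer backwards through Corollary \ref{cor:sobolev_composition_mfld_dom_local}: from $g \in W^{1,n}_\loc(\phi U, \R^n)$ one obtains $\psi \circ f|_U = g \circ \phi \in W^{1,n}_\loc(U, \R^n)$, which is precisely the condition of Definition \ref{def:local_Sobolev_for_continuous_maps}. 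The same operator-norm and Jacobian bookkeeping, run in reverse, converts the Euclidean distortion $\abs{Dg}^n \leq K' J_g$ into the pointwise estimate $\abs{Df(x)}^n \leq L^{4n} K' J_f(x)$ almost everywhere on the chart neighborhood $U_{x,L,K'}$. To collapse this family of a.e. inequalities into the single bound $\abs{Df}^n \leq K J_f$, I would fix countable sequences $L_i \downarrow 1$ and $K_i' \downarrow K$, use the Lindelöf property of $M$ to extract for each $i$ a countable cover $\{U_{x_{j,i},L_i, K_i'}\}_j$ of $M$, and discard the countable union of all associated null sets. Outside this union, for every $x$ and every $i$, the inequality $\abs{Df(x)}^n \leq L_i^{4n} K_i' J_f(x)$ holds from the chart of the $i$-th cover containing $x$, and sending $i \to \infty$ delivers the claim.

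The main obstacle I foresee is precisely this final countable-limit argument: the chart neighborhoods from Definition \ref{def:QR_with_charts} depend on the parameters $(L, K')$, so the covers and their exceptional null sets must be assembled simultaneously across a sequence of parameters into one set of measure zero. The pointwise bilipschitz estimates used in both directions are routine applications of the tools in Sections \ref{sect:meas_diff_geo_prelims}--\ref{sect:sobolev_cont_functions}, but the positively oriented chart choice guaranteed by Theorem \ref{thm:bilipschitz_charts} is essential: without it, the Jacobian chain rule $J_g = (J_\psi \circ f) \cdot J_f \cdot J_{\phi^{-1}}$ would carry sign ambiguities that could only be resolved after the fact.
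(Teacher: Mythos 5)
Your proposal is correct and follows essentially the same route as the paper: equivalence of the first three definitions via Theorem \ref{thm:local_sobolev_eq_of_defs}, and the transfer to and from Definition \ref{def:QR_with_charts} by choosing positively oriented bilipschitz charts with constant controlled by $\sqrt[4n]{K'/K}$ and running the chain rule and Jacobian estimates \eqref{eq:lipschitz_D}, \eqref{eq:bilipschitz_J}. Your explicit countable-cover argument (sequences $L_i \downarrow 1$, $K_i' \downarrow K$, Lindel\"of covers, and one discarded null set) for collapsing the chart-dependent a.e.\ inequalities into $\abs{Df}^n \leq K J_f$ is a welcome elaboration of a step the paper dispatches with the single sentence that $K'$ and $L$ may be chosen arbitrarily close to $K$ and $1$.
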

\begin{proof}
	Theorem \ref{thm:local_sobolev_eq_of_defs} already provides equivalence of Definitions \ref{def:QR_with_continuous_Sobolev_space}, \ref{def:QR_with_nash_embedding} and \ref{def:QR_with_colocal_diff}. For the remaining part, we show that Definition \ref{def:QR_with_charts} is equivalent to Definition \ref{def:QR_with_continuous_Sobolev_space}.
	
	\emph{\ref{def:QR_with_continuous_Sobolev_space} $\implies$ \ref{def:QR_with_charts}.} Suppose $f$ satisfies Definition \ref{def:QR_with_continuous_Sobolev_space}, and let $x \in M$, $K' > K$. Let $L \leq \sqrt[4n]{K'/K}$. Then, since $f$ is continuous, there exist smooth orientation preserving $L$-bilipschitz charts $\phi \colon U \to \R^n$ and $\psi \colon V \to \R^n$, where $x \in U \subset M$, $f(x) \in V \subset N$ and $f(U) \subset V$; this is again due to Theorem \ref{thm:bilipschitz_charts}.
	
	It follows by Lemma \ref{lem:sobolev_composition_mfld_targ_cont} that the map $\psi \circ f \circ \phi^{-1}$ is in $W^{1,n}_\loc(\phi(U), \R^n)$. In addition, $D(\psi \circ f \circ \phi^{-1}) = D\psi \circ Df \circ D\phi^{-1}$, from which it follows that $J_{\psi \circ f \circ \phi^{-1}} = (J_\psi \circ f \circ \phi^{-1}) \cdot (J_f \circ \phi^{-1}) \cdot J_{\psi^{-1}}$. Now, using \eqref{eq:lipschitz_D} and \eqref{eq:bilipschitz_J}, we obtain for almost every $z \in \phi(U)$ the estimate
	\begin{align*}
		&\abs{D(\psi \circ f \circ \phi^{-1})(z)}^n\\
		&\qquad\leq \abs{D\psi(f(\phi^{-1}(z)))}^n 
				\abs{Df(\psi^{-1}(z))}^n \abs{D\psi^{-1}(z)}^n\\
		&\qquad\leq L^n \cdot K J_f(\psi^{-1}(z)) \cdot L^n
		\quad= KL^{2n} J_f(\psi^{-1}(z))\\
		&\qquad\leq KL^{2n} \bigl(L^{n} J_\psi(f(\psi^{-1}(z)))\bigr)
				\cdot J_f(\psi^{-1}(z)) \cdot \bigl(L^n J_{\phi^{-1}}(z)\bigr)\\
		&\qquad= K L^{4n} J_{\psi \circ f \circ \phi^{-1}}(z)
		\quad\leq K' J_{\psi \circ f \circ \phi^{-1}}(z).
	\end{align*}
	Hence, $\psi \circ f \circ \phi^{-1}$ is $K'$-quasiregular, which is what we wanted to show.
	
	\emph{\ref{def:QR_with_charts} $\implies$ \ref{def:QR_with_continuous_Sobolev_space}.} Suppose $f$ satisfies Definition \ref{def:QR_with_charts}. Then, given $K' > K$, $L > 1$, and $x \in M$, we find a neighborhood $U$ of $x$ and smooth bilipschitz charts $\phi \colon U \to \R^n$ and $\psi \colon V \to \R^n$ where $\psi \circ f \circ \phi^{-1}$ is $K'$-quasiregular. In particular, $\psi \circ f \circ \phi^{-1}$ is continuous and $\psi \circ f \circ \phi^{-1} \in W^{1,n}_\loc(\phi U, \psi V)$. Now by applying Lemma \ref{lem:sobolev_composition_mfld_targ_cont}, we have $f\vert_U = \psi^{-1} \circ (\psi \circ f \circ \phi^{-1}) \circ \phi \in W^{1,n}_\loc(U, N)$. We conclude that $f \in W^{1,n}_\loc(M, N)$.
	
	It remains to show that $\abs{Df}^n \leq K J_f$. However, for this, we note that by a similar estimate as in the previous case using the fact that the charts $\psi$ and $\phi$ are $L$-bilipschitz, we have $\abs{Df}^n \leq K' L^{4n} J_f$. Since we may select $K'$ and $L$ arbitrarily close to $K$ and $1$, respectively, we have the desired estimate.
\end{proof}

\subsection{Omitting the continuity assumption}

It is well known that in the real case, the assumption of continuity in the definition of quasiregularity is unnecessary. Lately, Goldstein, Haj\l asz and Pakzad studied the corresponding question on manifolds, giving the following result:

\begin{thm}[{\cite[Theorem 4]{Goldstein-Hajlasz-Pakzad}}]
	Let $M, N$ be smooth oriented Riemannian $n$-manifolds and assume that $N$ is compact. If $f \in W^{1,n}_\iota(M, N)$ for a Nash embedding $\iota$ and at almost every $x \in M$ either $Df(x) = 0$ or $J_f(x) > 0$, then $f$ is continuous.
	
	In particular, the continuity assumption in Definition \ref{def:QR_with_nash_embedding} (and by equivalence in Definition \ref{def:QR_with_colocal_diff}) is redundant for compact $N$.
\end{thm}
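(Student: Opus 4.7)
My approach is to reduce the claim to the known Euclidean continuity theorem for Sobolev mappings of finite distortion, using the Nash embedding, local bilipschitz charts on $M$, and a smooth tubular neighborhood retraction of $\iota(N)$ in $\R^{n'}$. The Euclidean input I intend to invoke is the following: a $W^{1,n}_\loc$ map $g \colon \Omega \to \R^n$ on an open $\Omega \subset \R^n$ with $Dg(x) = 0$ or $J_g(x) > 0$ for a.e.\ $x$ admits a continuous representative (Iwaniec--Onninen, Vodop'yanov--Gol'dshtein).

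Set $F = \iota \circ f$, which lies in $W^{1,n}(M, \R^{n'})$ by the definition of $W^{1,n}_\iota(M,N)$, and which is bounded thanks to compactness of $\iota(N)$. Because $\iota$ is an isometric embedding, $\abs{DF(x)} = \abs{Df(x)}$ a.e., and the Jacobian hypothesis transfers: at a.e.\ $x$, either $DF(x) = 0$, or $DF(x) \colon T_xM \to \R^{n'}$ is a linear injection with image $D\iota(f(x))(T_{f(x)}N)$ carrying the correct orientation. By the tubular neighborhood theorem, fix $\delta > 0$ and a smooth retraction $\rho \colon T_\delta \to \iota(N)$ from a $\delta$-neighborhood of $\iota(N)$ in $\R^{n'}$. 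Cover the compact set $\iota(N)$ by finitely many chart neighborhoods $V_j \subset \iota(N)$ with charts $\psi_j \colon V_j \to \R^n$, and set $\tilde{\psi}_j = \psi_j \circ \rho$ on the open set $W_j = \rho^{-1}(V_j) \subset \R^{n'}$, so that $W_j \cap \iota(N) = V_j$ and $\tilde{\psi}_j \vert_{V_j}$ is a diffeomorphism onto its image.

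Fix $x_0 \in M$ and a smooth bilipschitz chart $\phi \colon U \to \R^n$ about $x_0$ via Theorem~\ref{thm:bilipschitz_charts}. On a small enough ball $B \subset \phi(U)$ about $\phi(x_0)$, one argues that the image $F \circ \phi^{-1}(B)$ essentially lies in some single $W_j$, so that $g_j := \tilde{\psi}_j \circ F \circ \phi^{-1}$ is a well-defined $W^{1,n}$ map between $n$-dimensional Euclidean domains. By Corollary~\ref{cor:sobolev_composition_mfld_dom_local} combined with the smoothness of $\tilde{\psi}_j$, the map $g_j$ inherits the hypothesis; on $\{Df = 0\}$ we have $Dg_j = 0$, while on $\{J_f > 0\}$ the chain rule together with $\tilde{\psi}_j\vert_{V_j}$ being a local diffeomorphism gives $J_{g_j} > 0$. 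Applying the Euclidean continuity theorem to $g_j$ yields continuity of $f$ near $x_0$, and patching over a locally finite cover of $M$ gives the conclusion.

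\textbf{Main obstacle.} The headline analytic input is the Euclidean continuity theorem for mappings of finite distortion, which is a deep result that I would treat as a citation. A more subtle technical step within the reduction is the claim that $F$ essentially stays inside a single $W_j$ on small balls: this does not follow from $F \in L^\infty \cap W^{1,n}$ alone, since a priori $F$ is only known to be Sobolev. Resolving it requires a VMO-type oscillation bound for $W^{1,n}$ maps together with the uniform tubular radius $\delta$ from compactness of $\iota(N)$, or alternatively an iterative bootstrap in which partial Euclidean continuity at each stage progressively refines the localization of $F$ into a single chart neighborhood.
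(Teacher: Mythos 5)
The paper does not actually prove this statement: it is quoted directly as \cite[Theorem 4]{Goldstein-Hajlasz-Pakzad} and used as a black box, so there is no in-paper argument to compare yours against. Judged on its own terms, your reduction scheme (Nash embedding, tubular retraction $\rho$, extended charts $\tilde{\psi}_j = \psi_j \circ \rho$, then the Euclidean continuity theorem for $W^{1,n}_\loc$ mappings of finite distortion) is the natural first attempt, and the parts you do carry out are sound: $\abs{DF} = \abs{Df}$ almost everywhere because $D\iota$ is a pointwise isometry onto its image, the hypothesis transfers through the chain rule of Corollary \ref{cor:sobolev_composition_mfld_dom_local}, and orientations can be arranged so that $J_{g_j} > 0$ wherever $J_f > 0$.

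The difficulty you flag as the ``main obstacle'' is, however, not a technicality but the actual content of the theorem, and neither of your proposed fixes closes it. For $g_j = \tilde{\psi}_j \circ F \circ \phi^{-1}$ to be a Sobolev map on an open ball $B$ you need the \emph{essential image} of $F \circ \phi^{-1}\vert_B$ to lie in the single open set $W_j$. The VMO property of $W^{1,n}$ maps only controls the \emph{mean} oscillation over small balls: it says the set where $F$ deviates substantially from its average has small relative measure, but it does not exclude a positive-measure exceptional set landing near a different chart of $\iota(N)$. Consequently $g_j$ is a priori defined only on a measurable subset of $B$, not on an open domain, and the Euclidean continuity theorem cannot be invoked. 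The ``iterative bootstrap'' alternative is circular, since any partial continuity of $f$ is precisely what is being proved and cannot be assumed to refine the localization. This is why the argument in \cite{Goldstein-Hajlasz-Pakzad} is not a two-line chart reduction: one has to establish an oscillation (weak monotonicity) estimate for the $\R^{n'}$-valued map $F$ itself, using the finite-distortion hypothesis in the embedded picture and the uniform tubular radius coming from compactness of $N$ --- and once such an oscillation estimate for $F$ is in hand, continuity already follows and the target charts are no longer needed. As written, your proposal therefore has a genuine unfilled gap at its central step.
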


Whether the same holds for noncompact $N$ is however still unknown to our knowledge.


\section{Topological properties of quasiregular maps}\label{sect:reshetnyaks_theorem}

In this section, we state and prove the manifold versions of the basic topological properties of quasiregular maps. In particular, the central result of this section is Reshetnyak's theorem: a non-constant quasiregular map is discrete, open and sense-preserving.

\subsection{Openness and discreteness}

Let $f \colon M \to N$ be a mapping between topological $n$-manifolds. Recall that $f$ is \emph{open} if $fU \subset N$ is open for every open $U \subset M$, and that $f$ is \emph{discrete} if $f^{-1}\{y\}$ has no accumulation points for any $y \in N$. We call $f$ a \emph{branched cover} if it is continuous, open and discrete. The first part of Reshetnyak's theorem states that a non-constant quasiregular map is open and discrete, and therefore a branched cover. The second part, that the map is sense-preserving, is discussed later, as we need to first introduce the topological orientation theory used to define the sense-preserving property.

The standard proof of the first part of Reshetnyak's theorem is through potential theory of the $\cA$-harmonic equation. Fortunately, however, the manifold version of the result can easily be deduced directly from the Euclidean version, so we don't need to discuss the details of the potential theoretic proof here.

\begin{thm}\label{thm:Reshetnyak_on_mflds}
	Let $M$ and $N$ be connected and oriented Riemannian $n$-manifolds, and let $f \colon M \to N$ be a non-constant quasiregular map. Then $f$ is discrete and open, and therefore a branched cover.
\end{thm}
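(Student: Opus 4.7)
The plan is to reduce the statement directly to the Euclidean version of Reshetnyak's theorem via the chart-based Definition \ref{def:QR_with_charts}, which is already known to be equivalent to the Sobolev-analytic definitions by Theorem \ref{thm:equivalence_of_definitions}. Fix any $x \in M$; by Definition \ref{def:QR_with_charts}, there exist smooth bilipschitz charts $\phi \colon U \to \R^n$ and $\psi \colon V \to \R^n$ around $x$ and $f(x)$ respectively with $f(U) \subset V$, such that the Euclidean representation $g = \psi \circ f \circ \phi^{-1} \colon \phi(U) \to \R^n$ is $K'$-quasiregular in the usual Euclidean sense. The Euclidean Reshetnyak theorem applied to $g$ yields the dichotomy that either $g$ is constant on the connected component of $\phi(U)$ containing $\phi(x)$, or $g$ is open and discrete there. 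Transferring this dichotomy back through the homeomorphisms $\phi$ and $\psi$, we obtain that every $x \in M$ has a connected neighborhood $U_x$ on which $f|_{U_x}$ is either constant or both open and discrete.

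The key topological step is then a standard connectedness argument to rule out the constant case globally. Let
\[
	A = \{ x \in M : f \text{ is constant on some neighborhood of } x \}.
\]
Clearly $A$ is open by definition. To show its complement is also open, take $x \in M \setminus A$; by the dichotomy above we find a connected neighborhood $U_x$ on which $f|_{U_x}$ is open. Since $N$ is an $n$-manifold with $n \geq 1$, singletons in $N$ have empty interior, so no nonempty open subset of $U_x$ can be mapped to a single point by an open map. Thus every point of $U_x$ lies in $M \setminus A$, and so $M \setminus A$ is open. Connectedness of $M$ forces one of $A$ or $M \setminus A$ to be empty; since $A = M$ would make $f$ locally constant and hence (again by connectedness of $M$) globally constant, contradicting our assumption, we conclude $A = \emptyset$.

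From $A = \emptyset$ it follows that at every $x \in M$ the local representation $g$ is non-constant, so by the Euclidean Reshetnyak theorem $g$ is open and discrete. Transporting along the homeomorphisms $\phi, \psi$, the map $f$ is open on each neighborhood $U_x$, and openness being a local property, $f$ is open on $M$. For discreteness, suppose some fiber $f^{-1}\{y\}$ had an accumulation point $x_0$; then $x_0 \in f^{-1}\{y\}$ by continuity, and in a chart neighborhood $U_{x_0}$ as above, $\phi(x_0)$ would be an accumulation point of the fiber of $g$ over $\psi(y)$, contradicting discreteness of $g$. Hence $f$ is discrete, and combined with openness and continuity, $f$ is a branched cover.

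I expect no serious obstacle here: the heavy lifting is entirely contained in the Euclidean Reshetnyak theorem (which we quote), and the only subtlety is the connectedness argument used to propagate the Euclidean dichotomy globally, which hinges on the simple observation that open maps between $n$-manifolds cannot be locally constant.
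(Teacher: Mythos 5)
Your proof is correct and follows the same basic strategy as the paper: reduce to the Euclidean Reshetnyak theorem through smooth bilipschitz charts and transport openness and discreteness back through the chart homeomorphisms. The one genuine difference is your connectedness argument ruling out local constancy. The paper simply asserts that each local representation $\psi \circ f \circ \phi^{-1}$ is non-constant, which does not follow immediately from $f$ being globally non-constant; your open--closed argument on the set $A$ of points of local constancy (using that an open map into an $n$-manifold cannot be constant on a nonempty open set, so the complement of $A$ is open) is exactly the step needed to justify that assertion, and it is a worthwhile addition rather than a detour. The rest of your argument --- openness as a local property, and pulling an accumulation point of a fiber back into a chart to contradict discreteness of the Euclidean representative --- matches the paper's proof essentially line for line.
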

\begin{proof}
	For openness, let $A$ be open, with the goal of showing that $fA$ is open. Let $x \in A$, and select positively oriented bilipschitz charts $\phi \colon U \to \R^n$ and $\psi \colon V \to \R^n$ for which $x \in U$ and $fU \subset V$. Then $\psi \circ f \circ \phi^{-1}$ is non-constant and quasiregular, and therefore open. Then, since $\psi$ and $\phi$ are topological embeddings with open image set, we see that 
	\[
		f(A \cap U) = \psi^{-1} \circ (\psi \circ f \circ \phi^{-1}) \circ \phi(A \cap U)
	\]
	is an open neighborhood of $f(x)$ in $fA$. We conclude that $fA$ is open.
	
	For discreteness, suppose to the contrary $f$ isn't discrete. Then there exists $y \in N$ for which $f^{-1}\{y\}$ has an accumulation point $x \in M$, and therefore  $f^{-1}\{y\}$ also contains a sequence $x_i \to x$, $x_i \neq x$. We again select positively oriented bilipschitz charts $\phi \colon U \to \R^n$ and $\psi \colon V \to \R^n$ for which $x \in U$ and $fU \subset V$. By openness of $U$, we may suppose $(x_n) \subset U$ by moving to a subsequence. Now $\phi(x_i) \to \phi(x)$ by continuity of $\phi$, but all these points are contained in $(\psi \circ f \circ \phi^{-1})^{-1}\{\psi(y)\}$, contradicting discreteness of $\psi \circ f \circ \phi^{-1}$. In conclusion, $f$ is discrete.
\end{proof}

\subsection{Normal domains}

The conclusions of Reshetnyak's theorem imply the existence of normal domains for quasiregular maps. The arguments for this are topological, and do not change significantly from the Euclidean case discussed in \cite[Section I.4]{Rickman_book}. Nevertheless, we provide the arguments for the relevant results.

\begin{defn}
	Let $M$ and $N$ be topological $n$-manifolds, and let $f \colon M \to N$ be a branched cover. A connected open set $U \subset M$ is a \emph{normal domain} of $f$ if $f \partial U = \partial f U$.
	
	Suppose $x \in M$. Then $U \subset M$ is a \emph{normal neighborhood} of $x$ if $U$ is a normal domain and $U \cap f^{-1}\{f(x)\} = \{x\}$.
	
	Furthermore, suppose $r > 0$, and that $M$ and $N$ are each equipped with a metric inducing the topology. Then we denote by $U_f(x, r)$ the $x$-component of $f^{-1} B_N(f(x), r)$.
\end{defn}

\begin{lemma}\label{lem:preimage_normal_neighborhoods}
	Let $M$ and $N$ be connected topological $n$-manifolds, each equipped with a metric inducing the topology, and let $f \colon M \to N$ be a branched cover. Then for every $x \in M$, there exists $r_x > 0$ for which $U_f(x, r)$ is a precompact normal neighborhood of $x$ whenever $r < r_x$.
\end{lemma}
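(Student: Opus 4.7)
The plan is to build $U_f(x,r)$ as the connected component of $f^{-1} B_N(f(x), r)$ containing $x$, and to use discreteness of $f$ together with compactness to control its boundary. First I would use the fact that $M$ is a locally compact manifold together with the discreteness of $f^{-1}\{f(x)\}$ to produce a precompact open neighborhood $V$ of $x$ with $\overline{V} \cap f^{-1}\{f(x)\} = \{x\}$; such a $V$ exists because $f^{-1}\{f(x)\}$ is closed with no accumulation points, so $x$ has an open neighborhood meeting $f^{-1}\{f(x)\}$ only in $\{x\}$, and we may shrink it to a precompact open set whose closure still satisfies this. Next, the compact set $f\partial V$ does not contain $f(x)$, so by continuity of the metric on $N$ there exists $r_x > 0$ with $B_N(f(x), r_x) \cap f\partial V = \emptyset$.

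Now fix $r < r_x$ and set $W = f^{-1} B_N(f(x), r)$ and $U = U_f(x, r)$, the $x$-component of $W$. Since $M$ is locally connected, $U$ is open. The key separation step is to write $U = (U \cap V) \cup (U \cap (M \setminus \overline{V}))$; the intersection $U \cap \partial V$ is empty because $\partial V \cap W = \emptyset$ by the choice of $r_x$. Since $U$ is connected and contains $x \in V$, the second piece is empty, giving $U \subset V$. Thus $U$ is precompact (its closure sits inside the compact set $\overline{V}$), and $U \cap f^{-1}\{f(x)\} \subset \overline{V} \cap f^{-1}\{f(x)\} = \{x\}$, so $U$ is a normal neighborhood candidate.

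It remains to verify the normal-domain condition $f\partial U = \partial fU$. Note that $fU$ is open by Theorem \ref{thm:Reshetnyak_on_mflds} (openness of $f$), and since $U$ is a connected component of the open set $W$, one has $\overline{U} \cap W = U$, whence $\partial U \subset M \setminus W$ and therefore $f\partial U \cap B_N(f(x), r) = \emptyset$. For the inclusion $f\partial U \subset \partial fU$: any $y \in \partial U$ is a limit of points in $U$, so $f(y) \in \overline{fU}$; and $f(y) \notin B_N(f(x),r) \supset fU$, so $f(y) \in \partial fU$. For the reverse inclusion $\partial fU \subset f\partial U$: given $y \in \partial fU$, pick $f(z_n) \to y$ with $z_n \in U$; by precompactness of $\overline{U}$ a subsequence converges to some $z \in \overline{U}$ with $f(z) = y$, and since $y \notin fU$ we have $z \notin U$, so $z \in \partial U$.

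The mild obstacle is the very first step, making sure that the precompact open $V$ can be chosen with $\overline{V} \cap f^{-1}\{f(x)\} = \{x\}$ rather than just $V \cap f^{-1}\{f(x)\} = \{x\}$; this is handled by first choosing any precompact open neighborhood $V_0$ of $x$, noting that the discrete set $f^{-1}\{f(x)\} \cap \overline{V_0}$ is finite, and then shrinking $V_0$ to avoid the finitely many unwanted points while keeping $x$ inside. Every other step is a routine application of connectedness, continuity and compactness, with Reshetnyak's theorem (openness plus discreteness) entering only to guarantee that $f$ is a branched cover and that components behave well.
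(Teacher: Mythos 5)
Your proof is correct and follows essentially the same route as the paper's: choose a precompact neighborhood $V$ with $\overline{V}\cap f^{-1}\{f(x)\}=\{x\}$, set $r_x$ using the distance from $f(x)$ to the compact set $f\partial V$, trap $U_f(x,r)$ inside $V$ by a connectedness argument, and verify $f\partial U=\partial fU$ by the two inclusions (your observation that $\partial U\subset M\setminus W$, so $f\partial U$ misses $B_N(f(x),r)\supset fU$, is a slightly cleaner packaging of the paper's argument for the forward inclusion). One cosmetic remark: there is no need to invoke Theorem \ref{thm:Reshetnyak_on_mflds} for openness of $f$, since openness and discreteness are part of the hypothesis that $f$ is a branched cover.
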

\begin{proof}
	By discreteness of $f^{-1}\{f(x)\}$, we may select a neighborhood $U$ of $x$ for which $\overline{U}$ is compact, $M \setminus \overline{U}$ is nonempty, and $\overline{U} \cap f^{-1}\{f(x)\} = \{x\}$. Then $f \partial U$ is compact, non-empty, and doesn't contain $f(x)$. Therefore, we have $d_N(f(x), f \partial U) > 0$. We select $r_x = d_N(f(x), f \partial U)$.
	
	Suppose now that $r < r_x$. Then $U_f(x, r) \subset U$, since $U$ is open and $U_f(x, r)$ is a connected set which meets $U$ but doesn't meet $\partial U$. We conclude that $U_f(x,r) \cap f^{-1}\{f(x)\} = \{x\}$, and that $U_f(x,r)$ is precompact. It then remains to verify $f \partial U_f(x,r) = \partial f U_f(x,r)$.
	
	Let $y \in f \partial U_f(x,r)$, in which case there exists $z \in \partial U_f(x,r)$ for which $f(z) = y$. If $F$ is a neighborhood of $y$, there is a neighborhood $E$ of $z$ for which $fE \subset F$. Then $E$ meets $U_f(x,r)$, and therefore $F$ meets $f U_f(x,r)$.
	
	Moreover, note that $M \setminus f^{-1} B_N(f(x), r)$ is nonempty, since it contains $\partial U$. In addition, since $U_f(x,r)$ is a connected component of $f^{-1} B_N(f(x), r)$, $\partial U_f(x,r) \subset \partial f^{-1} B_N(f(x), r)$. Since $M$ is connected, we therefore have that $E$ meets $M \setminus f^{-1} B_N(f(x), r)$, and hence $F$ meets $N \setminus B_N(f(x),r) \subset N \setminus f U_f(x,r)$. We conclude that $f \partial U_f(x,r) \subset \partial f U_f(x,r)$.
	
	Suppose then that $y \in \partial f U_f(x,r)$. Select $y_i \in fU_f(x,r)$ for which $y_i \to y$, and $z_i \in U_f(x,r)$ for which $f(z_i) = y_i$. Since $U_f(x,r) \subset \overline{U}$ which is compact, we may by moving to a subsequence assume that $z_i \to z \in \overline{U}_f(x,r)$. Continuity of $f$ then yields $f(z) = y$.
	
	Let then $E$ be a neighborhood of $z$. By openness of $f$, $fE$ is a neighborhood of $y$, and therefore it meets $N \setminus f U_f(x,r)$. Hence, $E$ must meet $M \setminus U_f(x,r)$, from which it follows that $z \in \partial U_f(x,r)$. We conclude that $\partial f U_f(x,r) \subset f \partial U_f(x,r)$, which finishes the proof.
\end{proof}

Some particularly useful standard properties of $U_f(x,r)$ are the following.

\begin{lemma}\label{lem:preimage_normal_neighborhood_properties}
	Let $M$ and $N$ be connected topological $n$-manifolds, each equipped with a metric inducing the topology, and let $f \colon M \to N$ be a branched cover. Let $x \in M$ and $r < r_x$, where $r_x$ is given in Lemma \ref{lem:preimage_normal_neighborhoods}.
	\begin{enumerate}
		\item \label{enum:nn_prop_1} We have $fU_f(x,r) = B_N(f(x), r)$.
		\item \label{enum:nn_prop_2} We have $\diam U_f(x,r) \to 0$ as $r \to 0$.
		\item \label{enum:nn_prop_3} The restriction $f\vert_{U_f(x,r)} \colon U_f(x,r) \to B_N(f(x), r)$ is \emph{proper}; that is $(f\vert_{U_f(x,r)})^{-1} K$ is compact for all compact $K \subset B_N(f(x), r)$.
	\end{enumerate}
\end{lemma}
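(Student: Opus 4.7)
My plan is to prove (1), (2), (3) in that order, since (3) will invoke (1) through the identification of the boundary. All three arguments rely on three ingredients already produced inside the previous lemma: the normality identity $f\partial U_f(x,r) = \partial fU_f(x,r)$, the containment $U_f(x,r) \subset \overline{U}$ with $\overline{U}$ compact, and the property $\overline{U} \cap f^{-1}\{f(x)\} = \{x\}$.

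For part (1), I would first observe that since $U_f(x,r)$ is a component of $f^{-1}B_N(f(x),r)$, its topological boundary in $M$ lies in $\partial f^{-1}B_N(f(x),r)$, which under the continuous $f$ maps into $\partial B_N(f(x),r)$. Combining this with the normality identity gives $\partial fU_f(x,r) \subset \partial B_N(f(x),r)$. Hence $fU_f(x,r)$ is a nonempty open subset of $B_N(f(x),r)$ whose frontier misses the ball itself, so connectedness of $B_N(f(x),r)$ (any path from $f(x)$ to a hypothetical omitted point of $B_N(f(x),r)$ would have to cross $\partial fU_f(x,r)$) closes the case. The one subtlety, which I view as the main obstacle, is that a ball in an arbitrary metric on a topological manifold need not be connected; I would handle it by shrinking $r_x$ in the previous lemma so that every $B_N(f(x),r)$ with $r < r_x$ is path-connected, which in the Riemannian setting is standard once $r_x$ is taken below the convexity radius of $f(x)$.

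For part (2), I would argue by contradiction. If $\diam U_f(x,r_i) > \eps$ along some $r_i \to 0$, then since $x \in U_f(x,r_i)$, the triangle inequality applied to a diameter-realizing pair produces points $z_i \in U_f(x,r_i)$ with $d_M(x,z_i) > \eps/2$. All $z_i$ lie in the compact set $\overline{U}$, so a subsequence converges to some $z \in \overline{U}$; continuity of $f$ together with $r_i \to 0$ forces $f(z) = f(x)$; and then $\overline{U} \cap f^{-1}\{f(x)\} = \{x\}$ forces $z = x$, contradicting $d_M(x,z_i) > \eps/2$.

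For part (3), I would take a compact $K \subset B_N(f(x),r)$ and a sequence $z_i$ in $(f\vert_{U_f(x,r)})^{-1}K$. Since $\overline{U_f(x,r)} \subset \overline{U}$ is compact, I pass to a subsequence $z_i \to z \in \overline{U_f(x,r)}$ and note $f(z) \in K \subset B_N(f(x),r)$. If $z$ were in $\partial U_f(x,r)$, then by normality and part (1) we would have $f(z) \in f\partial U_f(x,r) = \partial fU_f(x,r) = \partial B_N(f(x),r)$, which is disjoint from the open ball and contradicts $f(z) \in B_N(f(x),r)$. Hence $z \in U_f(x,r)$, so $(f\vert_{U_f(x,r)})^{-1}K$ is sequentially compact, and therefore compact.
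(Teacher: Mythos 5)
Your arguments for (2) and (3) are correct and essentially the paper's: (3) is the same compactness-plus-normality argument, and (2) replaces the paper's direct observation (that $U_f(x,r)\subset F$ once $r<d_N(f(x),f\partial F)$ for any small precompact $F\ni x$) with an equivalent sequential-compactness contradiction. For (1) your route is also the paper's in substance --- normality makes $fU_f(x,r)$ a nonempty clopen subset of $B_N(f(x),r)$, and connectedness of the ball finishes --- and you have correctly isolated the one real subtlety, namely that a metric ball need not be connected. The only point of divergence is how that subtlety is handled. Your fix (shrink $r_x$ below the convexity radius of $f(x)$) imports Riemannian structure, whereas the lemma is stated for topological manifolds carrying an \emph{arbitrary} metric inducing the topology, so ``convexity radius'' is not available in the stated generality; if you take this route you are really proving a slightly weaker statement (or silently strengthening the hypotheses of Lemma \ref{lem:preimage_normal_neighborhoods}), which is harmless for every application in these notes since only Riemannian distances are ever used, but should be said explicitly. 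The paper instead tries to rule out the disconnection within the stated generality: it argues that if $fU_f(x,r)$ were a proper clopen piece of the ball, normality would force some $z\in\partial U_f(x,r)$ with $f(z)$ in the open ball, and then a connected neighborhood of $z$ inside $f^{-1}B_N(f(x),r)$ meeting $U_f(x,r)$ would place $z$ in the component $U_f(x,r)$ itself, contradicting openness --- i.e.\ it folds the connectedness issue into a local-connectedness argument rather than assuming the ball connected outright. Either way, you should not present the convexity-radius fix as closing the gap for general compatible metrics.
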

\begin{proof}
	For \eqref{enum:nn_prop_1}, suppose to the contrary that $B_N(f(x), r) \setminus f U_f(x,r) \neq \emptyset$. We claim that $B_N(f(x), r) \cap (f \partial U_f(x,r)) \neq \emptyset$. Namely, if this is not the case, we have by $f \partial U_f(x,r) = \partial f U_f(x,r)$ that $\{f U_f(x,r), B_N(f(x), r) \setminus  U_f(x,r)\}$ is a partition of $B_N(f(x), r)$ into nonempty open sets. This is only possible if $B_N(f(x), r)$ is disconnected. But then there would exist a sequence $y_j \in f U_f(x,r)$ tending to the boundary of $N$, which is impossible, as this sequence would be contained in the compact set $f \overline{U}_f(x,r)$.
	
	Hence, we have the claimed $B_N(f(x), r) \setminus (f \partial U_f(x,r)) \neq \emptyset$, and we may fix $z \in \partial U_f(x,r)$ for which $f(z) \in B_N(f(x), r)$. But now there exists a connected neighborhood $V$ of $z$ for which $V \subset f^{-1} B_N(f(x), r)$ and $V \cap U_f(x,r) \neq \emptyset$. Since $U_f(x,r)$ is a component of $f^{-1} B_N(f(x), r)$, we must have $z \in U$. This is a contradiction, as $U$ is open. Hence, \eqref{enum:nn_prop_1} follows.
	
	For \eqref{enum:nn_prop_2}, we merely note as in the proof of Lemma \ref{lem:preimage_normal_neighborhoods} that if $F$ is a precompact neighborhood of $x$, then selecting $r < d_N(f(x), f \partial F)$ yields $U_f(x,r) \subset F$. Therefore, $\diam U_f(x,r) < \diam F$ for small enough $r$, where we can make the right hand side arbitrarily small.
	
	For \eqref{enum:nn_prop_3}, suppose to the contrary that $f^{-1} K \cap U_f(x,r)$ isn't compact. Since $f^{-1} K$ is closed and $U_f(x,r)$ is precompact, there must therefore exist a sequence $(x_j) \in f^{-1} K \cap U_f(x,r)$ tending to some $x \partial U_f(x,r)$. By normality  of $U_f$ and condition \eqref{enum:nn_prop_1}, we have $f(x_j) \to f(x) \in \partial f U_f(x,r) = \partial B_N(f(x),r)$. But this is impossible since $f(x_j) \in K$ and $K$ is compactly contained $B_N(f(x),r)$.
\end{proof}

\subsection{Degree and index theory}

The degree and index theory of quasiregular maps is part of a connection between two distinct aspects of quasiregular maps: the change of variables formula, based on analysis, and the local winding properties of branched covers, based on topology.

Due to this connection, it is possible to approach the degree and index theory from two angles. One approach starts from the change of variables formula, using it as the basis for defining the concepts of degree theory. A detailed account on how this is done in the Euclidean setting can be found in the book of Hencl and Koskela \cite[Chapter 3]{Hencl-Koskela_book}. However, since we find the topological definitions of the degree and local index appear more common in writings on the topic, we take the opposite approach. Indeed, we will give the topological definitions of the degree and local index, and then connect them to the change of variables formula.

An unfortunate consequence of this approach, however, is the resulting prerequisite information in algebraic topology. Indeed, the topological definitions of the degree and local index are based on either homology or cohomology, with various equivalent variations available. In particular, the most common general definition of the degree uses compactly supported cohomology, while the local index is typically defined using homology.

In our exposition here, we will use singular homology and cohomology, as they are the most well known such theories. In order to prevent the section from growing too large, we must necessarily assume that the reader knows some of the basics of these theories: homology and cohomology of pairs, excision, the Mayer--Vietoris sequence, induced maps, and the cup and cap product. For anything beyond these basics, we provide references in the text. For the interested reader, we note an alternate treatment in the lecture notes of Pankka \cite{Pankka_Lectnotes_degreetheory}, which give a longer but far more self-contained exposition using Alexander--Spanier cohomology. 

\addtocontents{toc}{\SkipTocEntry}
\subsubsection{Local tolopogical orientation} We begin our discussion by giving the local topological definition of an orientation of a manifold. This will eventually lead to the definition of the local index. 

Suppose that $M$ is a topological $n$-manifold and $x \in M$. Then the relative homology group $H_n(M, M \setminus \{x\})$ is isomorphic to $\Z$. A \emph{local orientation at $x$} is then a selection of generator $c_x$ for the group $H_n(M, M \setminus \{x\})$. By excision, this induces a generator for $H_n(U, U \setminus \{x\}) \cong \Z$ for every neighborhood $U$ of $x$, which we also in an abuse of notation denote $c_x$.

We may then define a set $M_\Z$ as the disjoint union of fibers $H_n(M, M \setminus \{x\})$ over $x$. In case $M = \R^n$, we have a natural identification of this set with $\R^n \times H_n(\R^n, \R^n \setminus \{0\})$, with $c_x \in H_n(\R^n, \R^n \setminus \{x\})$ corresponding to $(x, (T_{-x})_* c_x)$ where $T_{-x}$ is the translation map by $-x$. Hence, we may give $\R^n_{\Z}$ the topology of $\R^n \times \Z$. 

Now, if $\phi \colon U \to \R^n$ is a chart on $M$, we obtain a map $\phi_* \colon U_\Z \to \R^n_\Z$ which is fibre-wise the isomorphism $\phi_* \colon H_n(U, U \setminus \{x\}) \to H_n(\R^n, \R^n \setminus \{\phi(x)\})$. Moreover, $U_\Z$ lies naturally within $M_\Z$ as a result of the aforementioned excision isomorphism $H_n(M, M \setminus \{x\}) \cong H_n(U, U \setminus \{x\})$. Hence, $M_\Z$ becomes a bundle over $M$ by giving it the topology induced by all such maps $\phi_*$, where $\phi$ is a chart. Now we may define that a map $x \mapsto c_x$ is an \emph{orientation} of $M$ if it is continuous as a map $M \to M_\Z$.

For smooth manifolds, this approach to orientation is equivalent with the smooth atlas approach. A proof for this can be found in \cite[Theorem VI.7.15]{Bredon_book_algtopo}. In particular, as is discussed in the proof, given an orientation $x \mapsto c_x$ of $M$ and a smooth oriented atlas $\{\phi_i\}$ on $M$, every $(\phi_i)_* c_x$ will correspond to the same generating element of $H_n(\R^n, \R^n \setminus \{0\})$. 

\addtocontents{toc}{\SkipTocEntry}
\subsubsection{Global topological orientation} We then give another, global definition of topological orientation, by considering \emph{compactly supported cohomology} $H^k_0(M) = H^k_0(M; \Z)$. The group $H^k_0(M)$ has multiple different definitions, which agree when $M$ is a topological manifold. For our approach, we use a direct limit definition to define $H^k_0(M)$ using singular cohomology. Suppose that $M$ is a topological $n$-manifold. Then $H^k_0(M)$ is the direct limit of groups
\[
	H^k_0(M) = \lim\limits_{\substack{\rightarrow \\ K \text{ compact}}} 
		H^k(M, M \setminus K),
\]
where on the right hand side the morphisms in the direct limit are pull-backs by inclusion maps $i^* \colon H^k(M, M \setminus K) \to H^k(M, M \setminus K')$ whenever $K \subset K'$. A more detailed discussion of direct limits and the above definition can be found e.g.\ in a book by Hatcher \cite[pp. 242--245]{Hatcher_book}.

By the following theorem, a selection of local orientations on $M$ yields a specific cohomology class $[M]$ in compactly supported cohomology. We call $[M]$ the \emph{orientation class} of $M$.

\begin{thm}\label{thm:local_to_global_orientation}
	Let $M$ be a connected topological $n$-manifold, and let $c \colon x \mapsto c_x$ be an orientation of $M$. Then $H^n_0(M) \cong \Z$, and moreover, the orientation $c$ corresponds to a specific generator $[M]$ of $H^n_0(M)$.
\end{thm}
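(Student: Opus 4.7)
The plan is to derive the result from Poincaré--Lefschetz duality for connected oriented topological $n$-manifolds, after first constructing a coherent system of fundamental classes in relative homology.

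First, I would invoke the following classical fact: for each compact subset $K \subset M$, the orientation $c$ determines a unique class $\mu_K \in H_n(M, M \setminus K; \Z)$ whose image under the natural map $H_n(M, M \setminus K; \Z) \to H_n(M, M \setminus \{x\}; \Z)$ equals $c_x$ for every $x \in K$. The standard proof proceeds by Mayer--Vietoris induction over a finite cover of $K$ by chart domains, reducing to the local computation $H_n(\R^n, \R^n \setminus \{0\}; \Z) \cong \Z$; see e.g.\ \cite[Section VI.7]{Bredon_book_algtopo}. The uniqueness statement immediately yields the compatibility of the $\mu_K$: if $K \subset K'$, then the map $H_n(M, M \setminus K'; \Z) \to H_n(M, M \setminus K; \Z)$ induced by the inclusion of pairs carries $\mu_{K'}$ to $\mu_K$, since the image still restricts to $c_x$ at every $x \in K$.

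Second, I would appeal to Poincaré--Lefschetz duality for oriented $n$-manifolds, which supplies an isomorphism
\[
	D \colon H^n_0(M) \xrightarrow{\sim} H_0(M; \Z),
\]
constructed as the direct limit of the cap products $\alpha \mapsto \alpha \frown \mu_K$ on $H^n(M, M \setminus K; \Z) \to H_0(M; \Z)$; see e.g.\ \cite[Theorem VI.8.3]{Bredon_book_algtopo}. The paper's direct-limit definition of $H^n_0(M)$ coincides with the singular cohomology with compact supports $H^n_c(M; \Z)$ that appears in duality, this being a standard equivalence of two definitions. Since $M$ is connected, $H_0(M; \Z) \cong \Z$ has a canonical generator given by the homology class of any single point. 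I would then define $[M] \in H^n_0(M)$ to be the preimage of this generator under $D$; by construction, $[M]$ is a generator and $H^n_0(M) \cong \Z$.

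The main difficulty concealed in this plan is the existence and uniqueness of the coherent system $\{\mu_K\}$, which relies on a genuine Mayer--Vietoris argument but is thoroughly worked out in the references cited. Given the paper's stated approach of deferring to standard algebraic topology textbooks for this kind of background material, an appeal to \cite{Bredon_book_algtopo} would be in keeping with the style of the exposition; alternatively, \cite{Pankka_Lectnotes_degreetheory} gives a more direct treatment via Alexander--Spanier cohomology which sidesteps some of the duality machinery at the cost of different prerequisites.
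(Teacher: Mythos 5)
Your proposal follows essentially the same route as the paper: build the compatible system of fundamental classes $\mu_K \in H_n(M, M\setminus K)$ from the local orientation (the paper cites \cite[Lemma 3.27]{Hatcher_book} where you cite Bredon), pass to the direct-limit cap product giving Poincar\'e duality $H^n_0(M) \cong H_0(M) \cong \Z$, and define $[M]$ as the preimage of the canonical point-class generator. The argument is correct and matches the paper's proof up to the choice of textbook reference and cap-product convention.
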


We now sketch the proof of Theorem \ref{thm:local_to_global_orientation}, and at the same time explain how $[M]$ is obtained from $c$. We first recall the following result from \cite{Hatcher_book}; we briefly note that while only the first part of the statement is stated there, the second part immediately follows from the uniqueness part of the first statement.

\begin{lemma}[{\cite[Lemma 3.27]{Hatcher_book}}]\label{lem:hatcher_class_system}
	Let $M$ be a topological $n$-manifold, and let $a \colon x \mapsto a_x$ be a continuous section $M \to M_\Z$. Then for every compact $K \subset M$, there exists a unique $a_K \in H_n(M, M \setminus K)$ which satisfies the following: if $x \in K$, and $i_{x, K}$ is the inclusion of pairs $i_{x, K} \colon (M, M \setminus K) \hookrightarrow (M, M \setminus \{x\})$, then
	\begin{equation}\label{eq:class_system_incl_points}
		(i_{x, K})_* a_K = a_x.
	\end{equation}
	
	Moreover, if $K, K' \subset M$ are compact, $K \subset K'$, and $i_{K, K'}$ is the inclusion of pairs $i_{K, K'} \colon (M, M \setminus K') \hookrightarrow (M, M \setminus K)$, then 
	\begin{equation}\label{eq:class_system_incl_compacts}
		(i_{K, K'})_* a_{K'} = a_K. 
	\end{equation}
\end{lemma}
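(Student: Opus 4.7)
The plan is to prove the lemma by the standard Hatcher-type induction, establishing the existence/uniqueness of $a_K$ together with the auxiliary vanishing $H_i(M, M \setminus K) = 0$ for $i > n$. The latter statement is not needed in the lemma, but it is needed to run the inductive step: these two facts are entangled and cannot be proven in isolation.

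First I would dispatch the compatibility relation \eqref{eq:class_system_incl_compacts} as a formal consequence of uniqueness in the first part. Since $i_{x,K'} = i_{x,K} \circ i_{K,K'}$ as inclusions of pairs whenever $x \in K \subset K'$, we get $(i_{x,K})_* (i_{K,K'})_* a_{K'} = (i_{x,K'})_* a_{K'} = a_x$, so $(i_{K,K'})_* a_{K'} \in H_n(M, M \setminus K)$ satisfies the defining property \eqref{eq:class_system_incl_points} of $a_K$ and must equal it by uniqueness. This reduces the lemma to the existence and uniqueness of $a_K$ for a single compact $K$.

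For existence/uniqueness I would proceed by induction on the combinatorial complexity of $K$, in three cases. For the base case, take $K$ a compact convex subset of a coordinate neighborhood $U \subset M$ with a chart $\phi \colon U \to \R^n$. After excising $M \setminus U$, we have $H_i(M, M \setminus K) \cong H_i(\R^n, \R^n \setminus \phi(K))$, which is computed directly via the long exact sequence of the pair together with the deformation retraction $\R^n \setminus \phi(K) \simeq S^{n-1}$, yielding $\Z$ in degree $n$ and vanishing above; the map into $H_n(M, M \setminus \{x\})$ for any $x \in K$ is an isomorphism by the same computation, and pins down $a_K$ uniquely. For the inductive step, suppose the lemma and the higher vanishing are known for compact sets $K_1, K_2, K_1 \cap K_2$. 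Running the Mayer--Vietoris sequence associated to the open cover $\{M \setminus K_1, M \setminus K_2\}$ of $M \setminus (K_1 \cap K_2)$, one reads off both the vanishing $H_i(M, M \setminus (K_1 \cup K_2)) = 0$ for $i > n$ and the existence of a unique class mapping to the pair $(a_{K_1}, a_{K_2})$, whose compatibility over $K_1 \cap K_2$ is forced by uniqueness at that level. Finally, any compact $K \subset M$ is a finite union of compact pieces each sitting inside a coordinate chart, and any compact subset of $\R^n$ is a finite union of compact convex pieces (for instance, by covering with finitely many small closed cubes from a sufficiently fine grid), so finitely many applications of the Mayer--Vietoris step reduce the general case to the base case.

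The main obstacle is this entanglement between the existence/uniqueness statement at degree $n$ and the vanishing statement above degree $n$: neither can be pushed through Mayer--Vietoris without the other already being available for $K_1$, $K_2$ and $K_1 \cap K_2$. Setting up the induction so that both survive simultaneously, and keeping careful track of the two homology degrees in the Mayer--Vietoris step, is the only genuinely delicate part; everything else is a calculation or a formal diagram chase.
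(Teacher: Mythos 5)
Your reduction of \eqref{eq:class_system_incl_compacts} to the uniqueness statement is exactly right, and it is in fact all the paper itself does: the text cites \cite[Lemma 3.27]{Hatcher_book} for the first part and only remarks that the second part follows from uniqueness. Your Mayer--Vietoris bootstrap and the convex base case also faithfully reconstruct Hatcher's argument, including the correct observation that the degree-$n$ statement and the vanishing of $H_i(M, M\setminus K)$ for $i>n$ must be carried through the induction together.

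However, there is a genuine gap in your final reduction. You claim that ``any compact subset of $\R^n$ is a finite union of compact convex pieces (for instance, by covering with finitely many small closed cubes from a sufficiently fine grid).'' This is false: covering $K$ by cubes produces a finite union of cubes \emph{containing} $K$, not equal to it, and the intersections of those cubes with $K$ need not be convex. Concretely, a circle in $\R^2$ has only singletons as convex subsets, so it is not a finite union of compact convex sets; the same failure occurs for a Cantor set in $\R$, or for any compact set with infinitely many components. Consequently your induction never reaches a general compact $K$. The missing ingredient is Hatcher's fourth step, a support/limiting argument: any relative cycle $z$ representing a class in $H_i(\R^n, \R^n\setminus A)$ has $\partial z$ supported at positive distance $\delta$ from $A$, so $z$ already defines a class in $H_i(\R^n, \R^n\setminus B)$ where $B\supset A$ is a finite union of closed balls of radius less than $\delta$ centered at points of $A$ --- a set to which your Mayer--Vietoris step \emph{does} apply. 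Vanishing for $i>n$, existence of $a_A$ (as the image of $a_B$), and uniqueness are then all deduced by pushing forward from such neighborhoods $B$. Without this step the proof only covers compact sets that happen to be finite unions of convex pieces.
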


Now, let $\beta \in H^k_0(M)$, and let $a \colon M \to M_\Z$ be a continuous section. We may then define a cap product $a \cap \beta \in H_{n-k}(M)$. Namely, for every compact $K \subset M$, we have by Lemma \ref{lem:hatcher_class_system} an element $a_K \in H_n(M, M \setminus K)$, and the relative version of the cap product yields a map $(a_K \cap \cdot) \colon H^k(M, M \setminus K) \to H_{n-k}(M)$. Using the naturality properties of the cap product along with \eqref{eq:class_system_incl_compacts}, we obtain a map $(a \cap \cdot) \colon H^k_0(M) \to H_{n-k}(M)$ from the direct limit. A more detailed discussion of this can be found in \cite[pp.\ 239--245]{Hatcher_book}.

This cap product map can then be used to state the celebrated \emph{Poincar\'e duality}, which we recall in the following theorem.

\begin{thm}[{\cite[Theorem 3.35]{Hatcher_book}}]\label{thm:poincare_duality}
	Let $M$ be a topological manifold, and let $c \colon M \to M_\Z$ be an orientation. Then the map $(c \cap \cdot) \colon H^k_0(M) \to H_{n-k}(M)$ is an isomorphism for every $k$.
\end{thm}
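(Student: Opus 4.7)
The plan is to prove this via the classical induction-on-open-subsets argument, which requires three ingredients: a base case for Euclidean space, a Mayer--Vietoris step, and a direct-limit step. For each open $U \subset M$, let $P(U)$ denote the statement that $(c_U \cap \cdot) \colon H^k_0(U) \to H_{n-k}(U)$ is an isomorphism for every $k$, where $c_U$ is the section of $U_\Z$ obtained by restricting $c$. The goal is to establish $P(U)$ for a growing family of opens, ultimately reaching $P(M)$.

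First, I would verify $P(\R^n)$ directly. Here both $H^k_0(\R^n)$ and $H_{n-k}(\R^n)$ vanish unless $k = n$, in which case both are isomorphic to $\Z$. The cap product of the class $c_{\R^n, \overline{B}}$ (from Lemma \ref{lem:hatcher_class_system}) with the canonical generator of $H^n_0(\R^n)$, represented by a cocycle supported in $\overline{B}$, yields a generator of $H_0(\R^n)$ by an explicit simplicial computation. This also gives $P(U)$ for any open $U \subset M$ homeomorphic to $\R^n$, in particular for chart neighborhoods.

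Second, I would prove the Mayer--Vietoris step: if $U, V \subset M$ are open with $P(U)$, $P(V)$, and $P(U \cap V)$, then $P(U \cup V)$ holds. Both compactly supported cohomology and singular homology admit Mayer--Vietoris long exact sequences, and the cap products with the restricted orientation classes fit into a ladder connecting them. One must verify that this ladder commutes up to sign, which reduces to naturality of the relative cap product together with the compatibility relation \eqref{eq:class_system_incl_compacts} for the classes $c_{U \cup V, K}$ under inclusions of compact subsets. The five lemma then delivers $P(U \cup V)$. For the direct-limit step, I would show that if $U_1 \subset U_2 \subset \cdots$ are open subsets of $M$ with $P(U_i)$ for every $i$, then $P(\bigcup_i U_i)$ holds; this follows because $H^k_0$ is literally a direct limit and singular homology commutes with unions of expanding open sets, so the duality maps pass to the colimit.

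Combining these steps, I would complete the proof by an exhaustion argument: by second countability, $M$ admits a countable cover by chart neighborhoods $V_1, V_2, \dots$, and the finite unions $W_N = V_1 \cup \cdots \cup V_N$ satisfy $P$ by iterated Mayer--Vietoris (noting that $W_N \cap V_{N+1}$ is a finite union of open subsets of $V_{N+1} \cong \R^n$, which can itself be handled inductively using convex open subsets of $\R^n$ as the primitive building blocks). The direct-limit step applied to $W_N \nearrow M$ then yields $P(M)$. The main obstacle is the Mayer--Vietoris step: one must carefully verify the sign-commutativity of the duality ladder, handling the interaction of the relative cap product with connecting homomorphisms, and one must also check that in the colimit definition of $H^k_0$ the Mayer--Vietoris sequence remains exact, which requires cofinality arguments on compact subsets of $U \cup V$ compared to those of $U$, $V$, and $U \cap V$.
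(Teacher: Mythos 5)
The paper does not prove this theorem at all; it quotes it directly from the cited reference, so there is no internal proof to compare against. Your outline is correct and is essentially the standard argument given in that reference (Hatcher, Theorem 3.35): induction over open subsets with the $\R^n$ base case computed by hand, the Mayer--Vietoris ladder closed by the five lemma, passage to direct limits over nested opens, and a countable exhaustion by charts (second countability, which the paper builds into its definition of a manifold, spares you the Zorn's lemma step needed for general manifolds); the points you flag as the real technical content — the sign-commutativity of the duality ladder and the cofinality of compact subsets when taking the colimit of the Mayer--Vietoris sequences for $H^k_0$ — are indeed exactly where the work lies.
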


Now, Theorem \ref{thm:local_to_global_orientation} immediately follows from Poincar\'e duality.

\begin{proof}[Proof of Theorem \ref{thm:local_to_global_orientation}]
 	Since $M$ is connected, we have $H_0(M) \cong \Z$. Therefore, by the Poincar\'e duality given in Theorem \ref{thm:poincare_duality}, $H_0^n(M) \cong \Z$. Moreover we have a standard positive generator $1 \in H_0(M)$, which is the homology class containing every singleton $0$-simplex in $M$ with coefficient 1. Therefore, we may select $[M]$ to be the element mapped to $1$ in the Poincar\'e duality isomorphism: $[M]$ is the unique element of $H^n_0(M)$ for which $c \cap [M] = 1$.
\end{proof}

\addtocontents{toc}{\SkipTocEntry}
\subsubsection{The sense-preserving -part of Reshetnyak's theorem} Having finally discussed the prerequisite background in algebraic topology, we may begin the discussion of applying this to quasiregular maps. Suppose that $f \colon M \to N$ is a branched cover between oriented topological $n$-manifolds. Then for every $x \in M$, we may select a neighborhood $U$ for which $f^{-1}\{f(x)\} \cap U = \{x\}$. We say $f$ is \emph{sense preserving} if, for all such $x$ and $U$, the induced map in homology $f_* \colon H_n(U, U \setminus \{x\}) \to  H_n(fU, fU \setminus \{f(x)\})$ takes $c_{x}$ to a strictly positive multiple of $c_{f(x)}$.

Since the isomorphisms $H_n(U, U \setminus \{x\}) \to H_n(U', U' \setminus \{x\})$ given by excision for $x \in U \subset U'$ are natural, we obtain that it is sufficient to check the sense preserving property in only one suitable neighborhood $U$ of every point $x$. In particular, we may define the \emph{local index} $i(f, x)$ of $f$ at $x$ to be the unique integer for which $f_* c_x = i(f, x) c_{f(x)}$; this is independent on the neighborhood $U$ as long as $f^{-1}\{f(x)\} \cap U = \{x\}$. Hence, the sense preserving property can also be characterized in the manner that a branched cover $f \colon M \to N$ is sense preserving if and only if $i(f, x) > 0$ for every $x \in M$.

We now have the requisite terminology to state the last component of Reshetnyak's theorem.

\begin{prop}\label{prop:Reshetnyak_sense_preserving_on_manifolds}
	Let $M$ and $N$ be connected, oriented Riemannian $n$-manifolds, and let $f \colon M \to N$ be a non-constant quasiregular map. Then $f$ is sense-preserving.
\end{prop}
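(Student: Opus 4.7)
The plan is to reduce the statement to the Euclidean version of Reshetnyak's theorem using the chart formulation of quasiregularity provided by Definition \ref{def:QR_with_charts} (equivalent to the other definitions via Theorem \ref{thm:equivalence_of_definitions}), together with the topological definition of the local index. So first I would fix an arbitrary point $x \in M$ and, using Definition \ref{def:QR_with_charts} together with Theorem \ref{thm:bilipschitz_charts}, select positively oriented smooth bilipschitz charts $\phi \colon U \to \R^n$ at $x$ and $\psi \colon V \to \R^n$ at $f(x)$ such that $f(U) \subset V$ and $\tilde{f} := \psi \circ f \circ \phi^{-1} \colon \phi(U) \to \R^n$ is quasiregular in the Euclidean sense. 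By shrinking $U$ if necessary, I would also use the discreteness of $f$ from Theorem \ref{thm:Reshetnyak_on_mflds} to ensure $f^{-1}\{f(x)\} \cap U = \{x\}$, which implies $\tilde{f}^{-1}\{\tilde{f}(\phi(x))\} \cap \phi(U) = \{\phi(x)\}$.

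Next I would invoke the Euclidean Reshetnyak theorem (as stated in e.g.\ \cite{Rickman_book} or \cite{Reshetnyak-book}), which yields that $\tilde{f}$ is sense-preserving at $\phi(x)$, so $i(\tilde{f}, \phi(x)) > 0$. The remaining task is to pass this positivity back to $f$ via the chart isomorphisms. For this, I would use the fact, recalled in the discussion preceding the proposition, that for a smooth oriented manifold the topological and smooth orientation structures agree in the sense that a positively oriented smooth chart $\phi$ induces an isomorphism $\phi_* \colon H_n(U, U \setminus \{x\}) \to H_n(\phi(U), \phi(U) \setminus \{\phi(x)\})$ which carries the local topological orientation class $c_x$ to the standard positive generator; the same applies to $\psi$ at $f(x)$.

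The conclusion then follows by naturality of induced maps on singular homology applied to the factorization $f = \psi^{-1} \circ \tilde{f} \circ \phi$ on $U$. Chasing the diagram
\[
\begin{tikzcd}[sep=large]
H_n(U, U \setminus \{x\}) \ar[r, "f_*"] \ar[d, "\phi_*"', "\cong"]
& H_n(fU, fU \setminus \{f(x)\}) \ar[d, "\psi_*", "\cong"']\\
H_n(\phi U, \phi U \setminus \{\phi(x)\}) \ar[r, "\tilde{f}_*"]
& H_n(\psi fU, \psi fU \setminus \{\psi(f(x))\})
\end{tikzcd}
\]
with $c_x$ in the top-left corner and using that both vertical maps send local orientation classes to positive generators, I would read off $i(f, x) = i(\tilde{f}, \phi(x)) > 0$. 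Since $x$ was arbitrary, $f$ is sense-preserving.

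The only nontrivial point in the argument is the compatibility of the smooth and topological orientations under positively oriented smooth charts, but this is precisely the content of the standard equivalence referenced via \cite[Theorem VI.7.15]{Bredon_book_algtopo}, so no new work is required. All other steps are a mechanical reduction to the Euclidean case.
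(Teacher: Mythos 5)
Your proposal is correct and follows essentially the same route as the paper's proof: reduce to the Euclidean Reshetnyak theorem via positively oriented bilipschitz charts chosen so that $f^{-1}\{f(x)\}\cap U = \{x\}$, and then use the fact that $\phi_*$ and $\psi_*$ are isomorphisms preserving the local orientation classes to transfer positivity of the local index. The explicit commutative diagram and the citation of \cite[Theorem VI.7.15]{Bredon_book_algtopo} simply spell out details the paper leaves implicit.
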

\begin{proof}
	We again deduce the manifold version from the Euclidean version, which we assume to be known; see e.g.\ \cite[Theorem I.4.5]{Rickman_book}.
	
	As in the other parts of Reshetnyak's theorem \ref{thm:Reshetnyak_on_mflds}, we again fix $x \in M$ and take chart neighborhoods $U, V$ and positively oriented bilipschitz charts $\phi, \psi$; this time we make the extra assumption that $f^{-1}\{f(x)\} \cap U = \{x\}$, which we may assume due to discreteness of $f$. Consequently, $\psi \circ f \circ \phi^{-1}$ is quasiregular, and therefore maps $c_{\phi(x)} \in H_n(\phi U, \phi U \setminus \{\phi(x)\})$ to a positive multiple of $c_{\psi(f(x))} \in H_n(\psi (f U), \psi (f U) \setminus \{\psi(f(x))\})$. Since $\phi_*$ and $\psi_*$ are isomorphisms that preserve local orientation classes, the claim follows.
\end{proof}

\addtocontents{toc}{\SkipTocEntry}
\subsubsection{Degree of a map} Now, suppose that $M$ and $N$ are connected, oriented $n$-manifolds, and that $f \colon M \to N$ is a proper continuous map. Then for nonnegative integers $k$ we have pull-back maps $f^* \colon H^k(N, N \setminus K) \to H^k(M, M \setminus f^{-1} K)$ for all compact $K \subset N$ which commute with the inclusions in the direct limit definition of compactly supported cohomology. We thus obtain a pull-back map $f^* \colon H^k_0(N) \to H^k_0(M)$. The \emph{degree} of $f$ is the unique integer $(\deg f)$ for which $f^* [N] = (\deg f) [M]$, where $[M]$ and $[N]$ are the corresponding orientation classes.

The degree and local indices of a proper branched cover are closely tied together by the following result.

\begin{thm}\label{thm:deg_i_sum_result}
	Suppose that $M$ and $N$ are connected, oriented Riemannian $n$-manifolds, and let $f \colon M \to N$ be a proper branched cover. Then for every $y \in N$, we have
	\[
		\deg f = \sum_{x \in f^{-1}\{y\}} i(f, x).
	\]
\end{thm}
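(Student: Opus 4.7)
The plan is to reduce the identity, in two steps, to a calculation at a single preimage.

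First, since $f$ is proper and discrete, $f^{-1}\{y\}$ is finite, say $\{x_1, \dots, x_k\}$. Using Lemma \ref{lem:preimage_normal_neighborhoods}, I would choose $r > 0$ small enough that the sets $U_i = U_f(x_i, r)$ are pairwise disjoint precompact normal neighborhoods of the $x_i$. By properness of $f$, shrinking $r$ further if necessary gives $f^{-1}(B) = \bigsqcup_i U_i$, where $B = B_N(y, r)$; each restriction $f\vert_{U_i} \colon U_i \to B$ is then a proper branched cover with $(f\vert_{U_i})^{-1}\{y\} = \{x_i\}$.

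Second, I would establish the additivity $\deg f = \sum_i \deg(f\vert_{U_i})$. For an open inclusion $V \hookrightarrow X$ of oriented $n$-manifolds, the excision isomorphisms $H^n(X, X \setminus K) \cong H^n(V, V \setminus K)$ for compact $K \subset V$ assemble into the direct-limit definitions of compactly supported cohomology to produce a natural ``extension'' map $e \colon H^n_0(V) \to H^n_0(X)$, and this extension commutes with pull-back by proper maps. Characterising $[V] \in H^n_0(V)$ by its local images (via Theorem \ref{thm:local_to_global_orientation} and the uniqueness part of Lemma \ref{lem:hatcher_class_system}), one sees that $e[B] = [N]$ and $e[U_i] = [M]$ for each $i$. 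Combined with the splitting $H^n_0(\bigsqcup_i U_i) = \bigoplus_i H^n_0(U_i)$ and the definition of degree, a diagram chase applied to $[B] \in H^n_0(B)$ then yields
\[
	(\deg f)[M] = f^*[N] = \sum_i \deg(f\vert_{U_i})\,[M].
\]

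Finally, I would identify $\deg(f\vert_{U_i}) = i(f, x_i)$. Writing $g = f\vert_{U_i}$, since $g^{-1}\{y\} = \{x_i\}$ and both $U_i$ and $B$ are connected and oriented, the natural maps $H^n_0(B) \to H^n(B, B \setminus \{y\})$ and $H^n_0(U_i) \to H^n(U_i, U_i \setminus \{x_i\})$ are isomorphisms sending the orientation classes to the Kronecker duals $[B]_y$ and $[U_i]_{x_i}$ of $c_y$ and $c_{x_i}$. Hence $g^*[B]_y = a\,[U_i]_{x_i}$ with $a = \deg g$, and naturality of the Kronecker pairing gives
\[
	a = \langle g^*[B]_y, c_{x_i} \rangle
	= \langle [B]_y, g_* c_{x_i} \rangle
	= i(g, x_i) = i(f, x_i),
\]
the last equality being locality of the local index.

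The main technical obstacle I anticipate lies in the second step: setting up the extension map $e$ cleanly via the direct limit and verifying that it carries orientation classes to orientation classes. This uses both the local characterisation of $[M]$ from Lemma \ref{lem:hatcher_class_system} and the Poincar\'e duality of Theorem \ref{thm:poincare_duality}, and it is the place where the algebraic-topological setup of the preceding subsections bears the most weight.
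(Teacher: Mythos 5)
Your proposal is correct, but it takes a genuinely different route from the one in the notes. The notes work primarily in homology: Lemma \ref{lem:index_formula_points} pushes the class $c_{f^{-1}\{y\}}$ through an excision/Mayer--Vietoris diagram to obtain $f_* c_{f^{-1}\{y\}} = \bigl(\sum_x i(f,x)\bigr) c'_y$, Lemma \ref{lem:index_formula_constant} shows the index sum is locally constant in $y$, and the proof is then finished by feeding $\alpha = [N]$ into the cap-product push--pull formula $f_*(c \cap f^*\alpha) = (f_*c) \cap \alpha$ of Lemma \ref{lem:cap_product_formula} and applying Poincar\'e duality (Theorem \ref{thm:poincare_duality}). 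You instead stay in compactly supported cohomology: you decompose $f^{-1}B$ into normal neighborhoods, prove additivity of the degree over the components via extension maps $e \colon H^n_0(V) \to H^n_0(X)$, and identify each local degree with the local index through the Kronecker pairing. This gives the identity for each fixed $y$ directly, so constancy of the index sum becomes a corollary rather than an ingredient, and the push--pull formula for cap products is avoided entirely; the price is the bookkeeping for the extension maps and for the decomposition $f^{-1}B = \bigsqcup_i U_i$, which deserves an actual proof: properness gives $f^{-1}\overline{B}_N(y,r) \subset \bigcup_i V_i$ for disjoint precompact neighborhoods $V_i$ of the $x_i$ once $r$ is small, and then every component $W$ of $f^{-1}B$ satisfies $fW = B$ (because $f\vert_{f^{-1}B}$ is open and, being proper, closed), so $W$ meets $f^{-1}\{y\}$ and equals some $U_f(x_i,r)$.

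Two small points to tidy up. First, the canonical maps in the direct limit run $H^n(B, B\setminus\{y\}) \to H^n_0(B)$, not in the direction you wrote; they are isomorphisms here (pair against $c'_y$, respectively $c'$, and use Theorem \ref{thm:poincare_duality} together with the compatibility of Lemma \ref{lem:hatcher_class_system}), so you may invert them, but the direction matters when you chase naturality. Second, you should note that $B_N(y,r)$ is connected for small $r$, since both the definition of $\deg(f\vert_{U_i})$ and the surjectivity argument for the components require it.
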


We sketch an essentially complete proof of this using the definitions given here. However, since the proof is long and more in the realm of general algebraic topology, and moreover since the topic already has one treatise in \cite{Pankka_Lectnotes_degreetheory}, we will leave out some details regarding e.g.\ commutation of some diagrams and induced maps in the direct limit.

We begin by noting that, under the assumptions of Theorem $\ref{thm:deg_i_sum_result}$, the set $f^{-1}\{y\}$ is always non-empty and finite. Indeed, since $f$ is a proper discrete map, $f^{-1}\{y\}$ is a discrete compact set, and therefore finite. Moreover, since $f$ is a proper continuous map between locally compact spaces, $f$ is closed. Since $f$ also is open, $fM$ is a non-empty closed-open subset of $N$. By connectedness of $N$, we get that $fM = N$, that is, $f$ is surjective. Hence, the set $f^{-1}\{y\}$ is non-empty.

Next, we proceed to show that $y \mapsto \sum_{x \in f^{-1}\{y\}} i(f, x)$ is constant. This is done in two lemmas.

\begin{lemma}\label{lem:index_formula_points}
	Suppose that $M$ and $N$ are connected, oriented Riemannian $n$-manifolds, with corresponding orientations $c$ and $c'$. Let $f \colon M \to N$ be a proper branched cover. Then for every $y \in N$, we have
	\[
		f_* c_{f^{-1} \{y\}} = \left( \sum_{x \in f^{-1}\{y\}} i(f, x)\right) c_{y}'.
	\]
\end{lemma}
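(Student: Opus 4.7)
The plan is to exploit the standard excision-based decomposition of $H_n(M, M \setminus f^{-1}\{y\})$ into fiber contributions, and then to apply the definition of local index on each factor. The preliminary observation, already recorded in the text just before the statement, is that properness of the branched cover $f$ together with discreteness of fibers forces $f^{-1}\{y\}$ to be a finite set, say $\{x_1, \ldots, x_m\}$. Since $M$ is Hausdorff, I can pick pairwise disjoint open neighborhoods $U_i$ of $x_i$, and I will set $U = \bigsqcup_i U_i$.

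The first main step is the excision argument. Excising the closed set $M \setminus U \subset M \setminus f^{-1}\{y\}$ gives an isomorphism $H_n(U, U \setminus f^{-1}\{y\}) \cong H_n(M, M \setminus f^{-1}\{y\})$, and the disjoint-union decomposition of $U$ yields
\[
H_n(M, M \setminus f^{-1}\{y\}) \;\cong\; \bigoplus_{i=1}^m H_n(U_i, U_i \setminus \{x_i\}).
\]
Under this identification, $c_{f^{-1}\{y\}}$ corresponds to $(c_{x_1}, \ldots, c_{x_m})$. Indeed, the projection to the $i$-th summand is the composition of the inclusion of pairs $(M, M \setminus f^{-1}\{y\}) \hookrightarrow (M, M \setminus \{x_i\})$ with the excision isomorphism $H_n(M, M \setminus \{x_i\}) \cong H_n(U_i, U_i \setminus \{x_i\})$, and by \eqref{eq:class_system_incl_points} this sends $c_{f^{-1}\{y\}}$ to $c_{x_i}$.

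The second main step is to compute $f_*$ factor-wise. By the choice of the $U_i$, each $U_i$ satisfies $U_i \cap f^{-1}\{y\} = \{x_i\}$, so $f\vert_{U_i}$ restricts to a map of pairs $(U_i, U_i \setminus \{x_i\}) \to (N, N \setminus \{y\})$; the definition of the local index then gives $(f\vert_{U_i})_* c_{x_i} = i(f, x_i) c'_y$, after pushing forward the identity from $H_n(fU_i, fU_i \setminus \{y\})$ to $H_n(N, N \setminus \{y\})$ via the inclusion $fU_i \hookrightarrow N$. Summing the contributions over $i$ and using the factor-wise compatibility of $f_*$ with the decomposition above yields the formula. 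The main obstacle is purely bookkeeping: one must verify that the pushforward by $f$ on $H_n(M, M \setminus f^{-1}\{y\})$ genuinely factors through the excision-based direct sum in the way that produces a sum of local contributions. All the relevant squares commute by naturality of excision and of induced maps in singular homology, but this naturality must be invoked carefully and consistently across both isomorphisms appearing in the decomposition.
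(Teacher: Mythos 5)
Your proof is correct and follows essentially the same route as the paper's: decompose $H_n(M, M\setminus f^{-1}\{y\})$ into local summands via excision, identify $c_{f^{-1}\{y\}}$ with $\sum_i c_{x_i}$ using Lemma \ref{lem:hatcher_class_system}, and apply the definition of $i(f,x_i)$ on each summand. The only (harmless) difference is that you use arbitrary pairwise disjoint neighborhoods and map directly into $H_n(N, N\setminus\{y\})$, whereas the paper routes the same diagram through the normal neighborhoods $U_f(x,r)$ and the ball $B_N(y,r)$; your choice is justified by the stated independence of $i(f,x)$ on the neighborhood $U$ so long as $U \cap f^{-1}\{f(x)\} = \{x\}$.
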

\begin{proof}
	Let $B = B_N(y, r)$, and for every $x \in f^{-1}\{y\}$, let $U_x = U_f(x,r)$, where $r$ is such that $U_x$ are normal neighborhoods. Consider then the following diagram.
	\[\begin{tikzcd}[sep=small]
		H_n(M, M \setminus f^{-1}\{y\})
			\ar[rr, "f_*"]
			\ar[dr, "\cong"] \ar[d, "\cong"]
		&& H_n(N, N \setminus \{y\})
			\ar[d, "\cong"] 
		\\ \bigoplus\limits_{f(x) = y} H_n(M, M \setminus \{x\})
			\ar[d, "\cong"]
		&H_n(f^{-1} B, f^{-1} B\setminus f^{-1}\{y\})
			\ar[dl, "\cong"] \ar[r, "f_*"]
		&H_n(B, B \setminus \{y\})
			\ar[d, "="]
		\\ \bigoplus\limits_{f(x) = y} H_n(U_x, U_x \setminus \{x\})
			\ar[rr, "\oplus f_*"]
		&&H_n(B, B \setminus \{y\})
	\end{tikzcd}\]
	The vertical maps in the diagram are given by either excision or by repeated use of Mayer-Vietoris. By the naturality properties of the vertical maps, the diagram commutes (details left to the reader). 
	
	The isomorphism $H^n(M, M \setminus f^{-1}\{y\}) \to \bigoplus_{f(x) = y} H^n(M, M \setminus \{x\})$ is a sum of pushforwards by inclusion maps. Hence, the image of $c_{f^{-1}\{y\}} \in H^n(M, M \setminus f^{-1}\{y\})$ under the left column of the diagram is $\sum_{x \in f^{-1}\{y\}} c_x$. The bottom map of the diagram then maps $\sum_{x \in f^{-1}\{y\}} c_x$ to the element $\sum_{x \in f^{-1}\{y\}} i(f, x)c_{y}'$. The claim hence follows.
\end{proof}
\begin{lemma}\label{lem:index_formula_constant}
	Suppose that $M$ and $N$ are connected, oriented Riemannian $n$-manifolds, with corresponding orientations $c$ and $c'$. Let $f \colon M \to N$ be a proper branched cover. Then the map $N \to \Z$ given for $y \in N$ by
	\[
		y \mapsto \sum_{x \in f^{-1}\{y\}} i(f, x)
	\]
	is constant.
\end{lemma}
\begin{proof}
	Let $K \subset N$ be such that $K$ is compact, and $K$ is contained in the domain of a chart $\phi \colon U \to \R^n$ such that $B = \phi(K)$ and $B' = \phi(U)$ are a closed and an open Euclidean ball, respectively. It follows that for every $y \in K$, we have $H_n(N, N \setminus K) \cong H_n(N, N \setminus \{y\})$ by the inclusion map. Indeed, this can be deduced from the diagram
	\[\begin{tikzcd}[sep=small]
		H_n(N, N \setminus K)
			\ar[r] \ar[d, "\cong"]
		& H_n(N, N \setminus \{y'\})
			\ar[d, "\cong"] 
		\\ H_n(U, U \setminus K)
			\ar[d, "\phi_*"] \ar[r]
		&H_n(U, U \setminus \{y'\})
			\ar[d, "\phi_*"]
		\\ H_n(B', B' \setminus B)
			\ar[r]
		&H_n(B', B' \setminus \{\phi(y')\}),
	\end{tikzcd}\]
	where the horizontal maps are induced by inclusions of pairs, and the bottom horizontal map is easily seen to be isomorphic.
	
	Now, given a set $K$ as above and $y \in K$, since $H_n(N, N \setminus K)$ is isomorphic to $\Z$, we have $f_* c_{f^{-1}K} = l c'_K$ for some $l \in \Z$. By mapping both sides of this equality into $H_n(N, N \setminus \{y\})$ by an inclusion of pairs, we obtain $f_* c_{f^{-1} \{y\}} = l c'_y$. But by Lemma \ref{lem:index_formula_points}, we have $f_* c_{f^{-1} \{y\}} = \big(\sum_{x \in f^{-1}\{y\}} i(f, x)\big)c_y'$. Hence, $l = \sum_{x \in f^{-1}\{y\}} i(f, x)$.
	
	However, $l$ is independent on the selection of $y \in K$. Hence, $y \mapsto \sum_{x \in f^{-1}\{y\}} i(f, x)$ is constant in $K$, and therefore locally constant. Since $N$ is connected, the claim follows.
\end{proof}

Next, we consider a cap product map $H^k_0(N) \to H_{n-k}(N)$ using the map $f$ and the orientation of $M$.

\begin{lemma}\label{lem:cap_product_formula}
	Suppose that $M$ and $N$ are connected, oriented Riemannian $n$-manifolds, and let $c$ be the orientation of $M$. Let $f \colon M \to N$ be a proper branched cover. Then we have a cap product map
	\[
		((f_* c) \cap \cdot) \colon H^k_0(N) \to H_{n-k}(N)
	\]
	given as a direct limit of the cap product maps
	\[
		((f_* c_{f^{-1} K}) \cap \cdot) \colon H^k(N, N \setminus K) \to H_{n-k}(N).
	\]
	Moreover, this cap product map satisfies
	\begin{equation}\label{eq:cap_push_pull_formula}
		f_*(c \cap f^* \alpha) = (f_* c) \cap \alpha
	\end{equation}
	for every $\alpha \in H^k_0(N)$.
\end{lemma}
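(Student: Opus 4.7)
My plan is to build the cap product map in two stages—first verifying that the pushforwards $f_* c_{f^{-1}K}$ assemble into a compatible family as $K$ varies, then using the naturality of the cap product to extend this to the direct limit defining $H^k_0(N)$—and afterwards to deduce \eqref{eq:cap_push_pull_formula} from the classical projection formula for cap products applied at each compact level, before passing to the limit.

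\textbf{Setting up the map.} Since $f$ is proper, $f^{-1}K$ is compact whenever $K \subset N$ is compact, so Lemma \ref{lem:hatcher_class_system} supplies the class $c_{f^{-1}K} \in H_n(M, M \setminus f^{-1}K)$; as $f$ maps the pair $(M, M \setminus f^{-1}K)$ into $(N, N \setminus K)$, the pushforward $m_K := f_* c_{f^{-1}K} \in H_n(N, N \setminus K)$ is well defined. For compact $K \subseteq K'$, naturality of $f_*$ together with \eqref{eq:class_system_incl_compacts} applied to $c$ gives
\[
	(i_{K, K'})_* m_{K'}
	= f_* (i_{f^{-1}K, f^{-1}K'})_* c_{f^{-1}K'}
	= f_* c_{f^{-1}K} = m_K.
\]
Consequently, for $\beta \in H^k(N, N \setminus K)$, the naturality of cap product under the pair map $i_{K, K'}$, whose underlying map of $N$ is the identity, yields
\[
	m_K \cap \beta
	= ((i_{K, K'})_* m_{K'}) \cap \beta
	= m_{K'} \cap i_{K, K'}^* \beta.
\]
Thus the maps $\beta \mapsto m_K \cap \beta \colon H^k(N, N \setminus K) \to H_{n-k}(N)$ commute with the transition maps $i_{K, K'}^*$ of the direct system, and the universal property of the direct limit produces the desired map $((f_* c) \cap \cdot) \colon H^k_0(N) \to H_{n-k}(N)$.

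\textbf{The identity and main obstacle.} Given $\alpha \in H^k_0(N)$ represented by $\alpha_K \in H^k(N, N \setminus K)$, the pullback $f^*\alpha$ is represented at level $f^{-1}K$ by $f^*\alpha_K \in H^k(M, M \setminus f^{-1}K)$, and the classical projection formula for cap products, applied to the map of pairs $f \colon (M, M \setminus f^{-1}K) \to (N, N \setminus K)$, yields
\[
	f_*(c_{f^{-1}K} \cap f^*\alpha_K) = (f_* c_{f^{-1}K}) \cap \alpha_K = m_K \cap \alpha_K
\]
in $H_{n-k}(N)$. Passing to the direct limit in $K$ gives \eqref{eq:cap_push_pull_formula}. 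The main obstacle is the projection formula itself in this relative setting: it is a cochain-level identity following from the definition of the cap product via the diagonal approximation, but one has to track the pair structures carefully to ensure that $f^*\alpha_K$ lands in $H^k(M, M \setminus f^{-1}K)$ so that it may be capped with $c_{f^{-1}K}$, and that both sides of the identity make sense as elements of $H_{n-k}(N)$. Once this has been verified at a fixed compact level, the passage to the limit is formal, since $f^*$ is compatible with the direct systems defining $H^k_0(N)$ and $H^k_0(M)$ by construction.
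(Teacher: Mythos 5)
Your proposal is correct and follows exactly the route the paper intends: the paper's own proof merely states that both the compatibility of the maps $((f_* c_{f^{-1}K}) \cap \cdot)$ with the transition maps and the level-$K$ identity $f_*(c_{f^{-1}K} \cap f^*\alpha_K) = (f_* c_{f^{-1}K}) \cap \alpha_K$ follow from the standard naturality of the cap product, and leaves the details to the reader. You have supplied precisely those details — the commuting square giving $(i_{K,K'})_* m_{K'} = m_K$ via \eqref{eq:class_system_incl_compacts}, the naturality argument for passing to the direct limit, and the relative projection formula — all correctly.
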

\begin{proof}
	There are essentially two things to verify. First, to obtain the map $(f_* c) \cap \cdot$, it has to be verified that that the maps $(f_* c_{f^{-1} K}) \cap \cdot$ commute with the maps induced by inclusions of pairs $(N, N \setminus K) \hookrightarrow (N, N \setminus K')$. Second, the formula \eqref{eq:cap_push_pull_formula} is obtained by showing that a similar formula of the form $f_*(c_{f^{-1}K} \cap f^* \alpha_K) = (f_* c_{f^{-1} K}) \cap \alpha_K$ holds when $K \subset N$ is compact. Both of these follow from the standard naturality properties of the cap product, see e.g.\ \cite[p.\ 240--241]{Hatcher_book}. We leave the details of the proof to the reader.
\end{proof}

Now, we finally may complete the proof of Theorem \ref{thm:deg_i_sum_result}.

\begin{proof}[Proof of Theorem \ref{thm:deg_i_sum_result}]
	As usual, we denote by $c$ and $c'$ the local orientations of $M$ and $N$, respectively. By Lemma \ref{lem:index_formula_constant}, the right hand side of the desired equality assumes a constant value independent of $y$, which we will denote $\deg' f$. It remains to therefore show that $\deg' f = \deg f$.
	
	Now, we consider the cap product formula \eqref{eq:cap_push_pull_formula} of Lemma \ref{lem:cap_product_formula} for $\alpha = [N]$. The left hand side of the formula becomes
	\[
		f_*(c \cap f^* [N]) = f_*(c \cap (\deg f)[M]) = f_* ((\deg f) 1_M) = (\deg f) 1_N,
	\]
	where $1_M$ and $1_N$ are the positive generators of $H_0(M)$ and $H_0(N)$, respectively.
	
	On the other hand, let $K \subset N$ be compact. We claim that $f_* c_{f^{-1} K} = (\deg' f) c_K$. Indeed, for every $y \in K$, we see that $f_* c_{f^{-1} K}$ maps to $f_* c_{f^{-1}\{y\}} = (\deg f') c'_y$ in the map induced by the corresponding inclusion of pairs. However, there is a unique element in $H_n(M, M \setminus K)$ with this property: namely, $(\deg' f) c_K$. Hence, we conclude that $f_* c_{f^{-1} K} = (\deg' f) c_K$.
	
	Now, it follows in the direct limit that $((f_* c) \cap \cdot) = (\deg' f) (c \cap \cdot)$. Hence, the right hand side of \eqref{eq:cap_push_pull_formula} for $\alpha = [N]$ becomes
	\[
		(f_* c) \cap [N] = (\deg' f) (c \cap [N]) = (\deg' f) 1_N.
	\] 
	Since the left hand side was seen to equal $(\deg f) 1_N$, we conclude that $\deg f = \deg' f$, completing the proof.	
\end{proof}

\section{The quasiregular change of variables formula}\label{sect:change_of_variables}

\subsection{The Lusin property}

The absolute first consideration in obtaining a quasiregular change of variables formula is showing that quasiregular maps satisfy the Lusin properties discussed in Section \ref{sect:meas_diff_geo_prelims}.

\begin{thm}\label{thm:QR_lusin_N_conditions}
	Let $M$ and $N$ be connected and oriented Riemannian $n$-manifolds, and let $f \colon M \to N$ be a quasiregular map. Then $f$ satisfies the Lusin $(N)$ -condition. Moreover, if $f$ is not constant, then it also satisfies the Lusin $(N^{-1})$-condition. 
	
	Consequently, if $f$ is quasiregular, then $f(E)$ is measurable for every measurable $E \subset M$, and $f^{-1} F$ is measurable for every measurable $F \subset N$.
\end{thm}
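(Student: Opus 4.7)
The overall strategy is to reduce both claims to the corresponding Euclidean results by means of bilipschitz charts, using the chart-based Definition \ref{def:QR_with_charts} of quasiregularity together with the fact (proven earlier in Section \ref{sect:meas_diff_geo_prelims}) that bilipschitz maps are $C^1$ diffeomorphisms onto their image and hence preserve sets of measure zero in both directions.

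For the Lusin $(N)$ condition, let $E \subset M$ be a set of measure zero. By Theorem \ref{thm:bilipschitz_charts} and Definition \ref{def:QR_with_charts}, every $x \in M$ admits a neighborhood $U_x$ carrying a smooth bilipschitz chart $\phi_x \colon U_x \to \R^n$ and a bilipschitz chart $\psi_x \colon V_x \to \R^n$ with $f(U_x) \subset V_x$ such that $g_x := \psi_x \circ f \circ \phi_x^{-1}$ is Euclidean quasiregular. Since $M$ is second countable, we may extract a countable subcover $(U_i, \phi_i, \psi_i, g_i)$. For each $i$, the set $\phi_i(E \cap U_i) \subset \R^n$ has Lebesgue measure zero because $\phi_i$ is bilipschitz; the classical Euclidean Lusin $(N)$ property of quasiregular maps (see e.g.\ \cite{Rickman_book}) then ensures that $g_i(\phi_i(E \cap U_i))$ has measure zero, and bilipschitzness of $\psi_i^{-1}$ promotes this to $f(E \cap U_i) = \psi_i^{-1}(g_i(\phi_i(E \cap U_i)))$ having measure zero in $N$. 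Since $f(E) = \bigcup_i f(E \cap U_i)$ is a countable union of null sets, it is null.

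For the Lusin $(N^{-1})$ condition, assume $f$ is non-constant. The point where the non-constancy hypothesis enters is Reshetnyak's theorem \ref{thm:Reshetnyak_on_mflds}, which makes $f$ discrete and open; in particular, $f$ is non-constant on every non-empty open subset of $M$, so each local representative $g_i = \psi_i \circ f \circ \phi_i^{-1}$ is itself a non-constant Euclidean quasiregular map (assuming without loss of generality that $U_i$ is a bilipschitz chart domain as above, now chosen so that the $V_i$ cover $f(M)$ as well, which we arrange by taking a countable cover of $N$ by bilipschitz charts $V_j$, then covering each $f^{-1}(V_j)$ by bilipschitz charts $U_{j,k}$ with $f(U_{j,k}) \subset V_j$ and $\psi_j \circ f \circ \phi_{j,k}^{-1}$ quasiregular). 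The Euclidean Lusin $(N^{-1})$ property of non-constant quasiregular maps, which is the classical Martio--Rickman--Väisälä result (see e.g.\ \cite{Rickman_book}), gives that each $g_i$ satisfies $(N^{-1})$. Given $F \subset N$ of measure zero, $\psi_j(F \cap V_j)$ is a Euclidean null set, $g_{j,k}^{-1}(\psi_j(F \cap V_j))$ is null by $(N^{-1})$ of $g_{j,k}$, and applying bilipschitzness of $\phi_{j,k}^{-1}$ yields that $f^{-1}(F \cap V_j) \cap U_{j,k}$ is null. Countable union gives the result.

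Finally, the measurability of $f(E)$ and $f^{-1}(F)$ for measurable $E,F$ is not a new argument: $f$ is continuous by definition of quasiregularity, so Lemma \ref{lem:lusin_preserves_measurable}, applied locally on the chart neighborhoods $U_i$ and $V_j$ (where measurability is defined chartwise, per Definition \ref{def:manifold_meas_set}), transfers directly. The only real substance is therefore the reduction to the Euclidean Lusin properties, and the main obstacle, such as it is, is simply bookkeeping the chart covers so that the two-sided null set arguments compose correctly; the non-trivial Euclidean inputs (particularly the $(N^{-1})$ property for non-constant Euclidean quasiregular maps) are black-boxed from the classical theory.
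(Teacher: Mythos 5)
Your proposal is correct and follows essentially the same route as the paper: reduce to the Euclidean Lusin $(N)$ and $(N^{-1})$ results via bilipschitz charts, invoke Reshetnyak's theorem to guarantee the local representatives are non-constant for the $(N^{-1})$ part, and deduce preservation of measurable sets from continuity plus Lemma \ref{lem:lusin_preserves_measurable}. The only (trivial) point you glide over is that the measurability of $f^{-1}F$ for \emph{constant} $f$ cannot come from $(N^{-1})$, but is immediate since the preimage of any set is then $M$ or $\emptyset$.
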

\begin{proof}
	We again reduce to the Euclidean case via charts. For the Lusin $(N)$ -condition, see e.g.\ \cite[Proposition I.4.14]{Rickman_book} for the proof in the real case. Similarly, for the $(N^{-1})$-condition, see e.g.\ \cite[Theorem 8.1]{Bojarski-Iwaniec}.
	
	Suppose that then that $m_n(E) = 0$. For every $x \in M$, select again orientation preserving bilipschitz charts $\phi \colon U \to \R^n$ and $\psi \colon V \to \R^n$ for which $x \in U$ and $fU \subset V$. Since $\phi$ and $\psi$ are bilipschitz, they preserve sets of measure zero. Hence, using the quasiregularity of $\psi \circ f \circ \phi^{-1}$ and the Euclidean Lusin property of quasiregular maps, we obtain that $f(E \cap U)$ has measure zero. Since $M$ is second-countable, we have a countable cover of $E$ with such charts, using which we conclude that $m_n(f(E)) = 0$. This completes the proof of the Lusin $(N)$ -condition.
	
	The generalization of the $(N^{-1})$ -condition to manifolds is virtually identical to the previously shown generalization of the $(N)$ -condition, with the addition of using the openness of $f$ provided by Reshetnyak's theorem to conclude that the restriction of $f$ to any chart isn't constant. The consequent preservation of measurable sets is then due to Lemma \ref{lem:lusin_preserves_measurable}, as quasiregular maps are continuous. Note that in the special case where $f$ is constant, the measurability of $f^{-1} F$ for measurable sets $F \subset N$ is trivial, as the pre-image of any subset of $N$ under a constant function is either $M$ or the empty set.
\end{proof}

\subsection{Multiplicity and branching}

Next, we discuss multiplicity, which is a key component of the change of variables formula. We go over the basic definition, and how it relates to the previous discussion on degree and index theory.

Let $X$ and $Y$ be sets, and let $f \colon X \to Y$ be a function. For every $y \in Y$, and every $A \subset X$, we define the \emph{multiplicity $N(f, y, A)$ of $f$ in $A$ at $y$} by
\[
N(f, y, A) = \# (f^{-1}\{y\} \cap A).
\]
That is, $N(f, y, A)$ is the number of points in $A$ which $f$ maps to $y$. The value of $N(f, y, A)$ is therefore either a non-negative integer, or $+\infty$.

We also define the following variations with less parameters: if $y \in Y$ and $A \subset X$, then
\begin{align*}
N(f, y) &= N(f, X, y),\\
N(f, A) &= \sup_{y \in Y} N(f, y, A), \qquad \text{and}\\
N(f) &= N(f, X) = \sup_{y \in Y} N(f, y).
\end{align*}
We note that the letter $N$ may end up being used both for multiplicity and to denote a manifold. Unfortunately the standard notation in the field results in this slight overloading of the letter $N$, but thankfully there should be no risk of confusion, since the multiplicity version is always followed by parameters while the manifold version never is.

For quasiregular maps, multiplicity has a close connection with index theory, and a key concept illuminating this connection is the \emph{branch set} of a quasiregular map. Namely, suppose that $f \colon X \to Y$ is a branched cover. Then the branch set $\cB_f$ of $f$ is the set of points $x \in X$ at which $f$ is not locally a homeomorphism. We note that $\cB_f$ is clearly a closed subset of $X$; if $x \notin \cB_f$, then it has a neighborhood $U$ for which $f\vert_U \colon U \to fU$ is a homeomorphism, and consequently no point in $U$ belongs to $\cB_f$.

A crucial property of the branch set is that it is small. For the integration theory of quasiregular maps, it is usually enough to know that it is of measure zero; we again quickly show the manifold version of this result from the Euclidean version.

\begin{lemma}\label{lem:branch_set_measure_zero}
	Suppose that $M$ and $N$ are connected, oriented Riemannian $n$-manifolds, and let $f \colon M \to N$ be a non-constant quasiregular map. Then $\cB_f$ is a set of measure zero in the Riemannian volume. Consequently, $f \cB_f$ and $f^{-1} f \cB_f$ also have measure zero.
\end{lemma}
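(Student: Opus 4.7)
The plan is to reduce the claim to the Euclidean case via bilipschitz charts, and then use the Lusin conditions of Theorem \ref{thm:QR_lusin_N_conditions} to transfer the measure-zero statement across $f$.

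First I would pick a countable cover of $M$ by bilipschitz chart domains $U_i$ with positively oriented $L$-bilipschitz charts $\phi_i \colon U_i \to \mathbb{R}^n$ provided by Theorem \ref{thm:bilipschitz_charts}. For each $x \in U_i$, continuity of $f$ together with Theorem \ref{thm:bilipschitz_charts} gives an orientation preserving bilipschitz chart $\psi_{x} \colon V_x \to \mathbb{R}^n$ and a neighborhood $W_x \subset U_i$ of $x$ such that $f W_x \subset V_x$. By second countability, we may extract a countable refinement and reduce to showing that each $\mathcal{B}_f \cap W_x$ has measure zero.

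The key observation is that the branch set is preserved under pre- and post-composition with local homeomorphisms: since $\phi_i$ and $\psi_x$ are homeomorphisms onto their images, a point $y \in W_x$ lies in $\mathcal{B}_f$ if and only if $\phi_i(y)$ lies in the branch set of the Euclidean quasiregular map $g = \psi_x \circ f \circ \phi_i^{-1} \colon \phi_i(W_x) \to \mathbb{R}^n$, which is non-constant by Reshetnyak's theorem (Theorem \ref{thm:Reshetnyak_on_mflds}) applied to $f$. Hence $\phi_i(\mathcal{B}_f \cap W_x) \subset \mathcal{B}_g$. The classical Euclidean result — for example \cite[Corollary I.4.14]{Rickman_book} or the combination of Reshetnyak's theorem with Chernavskii's theorem — states that the branch set of a non-constant Euclidean quasiregular map has Lebesgue measure zero. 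Applying this and the Lipschitz Lusin $(N)$ property of $\phi_i^{-1}$ (Theorem \ref{thm:QR_lusin_N_conditions} specialized to bilipschitz maps, or directly via \eqref{eq:bilipschitz_J}) yields $m_n(\mathcal{B}_f \cap W_x) = 0$. Summing over the countable cover gives $m_n(\mathcal{B}_f) = 0$.

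For the consequences, $m_n(f\mathcal{B}_f) = 0$ follows immediately from the Lusin $(N)$-condition in Theorem \ref{thm:QR_lusin_N_conditions}, and $m_n(f^{-1}f\mathcal{B}_f) = 0$ then follows from the Lusin $(N^{-1})$-condition in the same theorem, using that $f$ is non-constant. The main obstacle is simply invoking the Euclidean version of the measure-zero branch set result cleanly; no new analytical machinery is needed beyond what has already been established in the excerpt.
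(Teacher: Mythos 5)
Your proposal is correct and follows essentially the same route as the paper: reduce to the Euclidean case via a countable cover by bilipschitz charts, note that the branch set transforms under the chart compositions, invoke the Euclidean measure-zero result for $\cB_{\psi \circ f \circ \phi^{-1}}$, use that bilipschitz maps preserve null sets, and derive the statements for $f\cB_f$ and $f^{-1}f\cB_f$ from Theorem \ref{thm:QR_lusin_N_conditions}. No gaps.
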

\begin{proof}
	As usual, let $\phi \colon U \to \R^n$ and $\psi \colon V \to \R^n$ be positively oriented bilipschitz charts such that $fU \subset V$. Then $\psi \circ f \circ \phi^{-1}$ is non-constant and quasiregular. Since $\phi$ and $\psi$ are embeddings, it follows that $\cB_{\psi \circ f \circ \phi^{-1}} = \phi^{-1} \cB_f$, and therefore $\phi^{-1} \cB_f$ is a set of Lebesgue measure zero by the Euclidean result. Since $\phi$ is bilipschitz, it preserves sets of measure zero, and it follows that $\cB_f \cap U$ has Riemannian volume zero. Now, since we may cover $M$ with countably many $U$ as above, $\cB_f$ has measure zero.
	
	The claim for $f \cB_f$ and $f^{-1} f \cB_f$ follows from the Lusin properties discussed in Theorem \ref{thm:QR_lusin_N_conditions}.
\end{proof}

We discuss here briefly the background of the Euclidean result. In particular, it is consequence of a stronger result known as the Chernavskii-V\"ais\"al\"a theorem: the branch set of a branched cover has topological dimension at most $n-2$. This result is mentioned briefly in Rickman's book \cite[Theorem 4.6]{Rickman_book}; 
however, a far more in-depth look into it can be found in the previously mentioned notes of Pankka \cite{Pankka_Lectnotes_degreetheory}. For this reason the reader is referred there for details, and we contend ourselves with the above statement which is enough for the integration theory.

Now the concepts of degree theory, multiplicity, and the branch set are linked in the following way for quasiregular maps.

\begin{lemma}\label{lem:multiplicity_degree_branching}
	Suppose that $M$ and $N$ are connected, oriented Riemannian $n$-manifolds, and let $f \colon M \to N$ be a non-constant quasiregular map. 
	
	\begin{itemize}
		\item If $x \in M$ and $U$ is a normal neighborhood of $x$, then
		\[
		i(f, x) = N(f, U).
		\]
		\item For every $x \in M$, we have $x \in \cB_f$ if and only if $i(f, x) > 1$.
		\item Suppose that $f$ is proper. Then $N(f) = \deg f$. In particular, we have $N(f, y) \neq \deg f$ if and only if $y \in f \cB_f$, and therefore $N(f, y) = \deg f$ almost everywhere in $N$.
	\end{itemize}
\end{lemma}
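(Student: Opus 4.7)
The plan is to prove the three bullets in an interleaved order, using Theorem~\ref{thm:deg_i_sum_result} as the central engine. The overarching strategy runs: first the easy direction of the second bullet (a point outside $\cB_f$ has local index $1$), then the first bullet, then the remaining direction of the second bullet from the first, and finally the third bullet. The three key inputs are the degree formula $\deg g = \sum_z i(g,z)$ of Theorem~\ref{thm:deg_i_sum_result} applied to suitable restrictions $g = f\vert_U$; the sense-preservation from Proposition~\ref{prop:Reshetnyak_sense_preserving_on_manifolds}, which forces each local index to be a positive integer; and Lemma~\ref{lem:branch_set_measure_zero}, which supplies a point of any nonempty open subset of $N$ on which every $i(f,\cdot)$ equals $1$.

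For the easy half of the second bullet, if $x \notin \cB_f$ then $f$ restricts to a homeomorphism of some neighborhood $V$ of $x$ onto $fV$, so $(f\vert_V)_*$ sends the local orientation class $c_x$ to $\pm c_{f(x)}$, and sense-preservation pins the sign to $+$. For the first bullet I would shrink the given normal neighborhood to one of the form $U = U_f(x,r)$ via Lemma~\ref{lem:preimage_normal_neighborhoods}; Lemma~\ref{lem:preimage_normal_neighborhood_properties} then makes $f\vert_U \colon U \to B_N(f(x),r)$ a proper branched cover. Evaluating Theorem~\ref{thm:deg_i_sum_result} at $y = f(x)$, with $(f\vert_U)^{-1}\{f(x)\} = \{x\}$, gives $\deg f\vert_U = i(f,x)$; at arbitrary $y \in B_N(f(x),r)$, sense-preservation gives $N(f,y,U) \leq \sum_z i(f\vert_U, z) = i(f,x)$, so $N(f,U) \leq i(f,x)$. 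For the reverse inequality, Lemma~\ref{lem:branch_set_measure_zero} yields a $y \in B_N(f(x),r) \setminus f\cB_f$; every preimage point then lies outside $\cB_f$, hence has index $1$ by the easy half, and the degree formula collapses to $N(f,y,U) = i(f,x)$. The remaining direction of the second bullet now follows: if $i(f,x) = 1$, the first bullet forces $N(f, U_f(x,r)) = 1$ for small $r$, i.e.\ $f\vert_{U_f(x,r)}$ is a continuous open injection, hence a homeomorphism onto its image, so $x \notin \cB_f$.

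For the third bullet, with $f$ proper and non-constant, Theorem~\ref{thm:deg_i_sum_result} gives $\deg f = \sum_{z \in f^{-1}\{y\}} i(f,z)$ for every $y \in N$. Combined with the second bullet, each summand equals $1$ if and only if $y \notin f\cB_f$, whence $N(f,y) = \deg f$ for such $y$ and $N(f,y) < \deg f$ otherwise; taking the supremum gives $N(f) = \deg f$, and Lemma~\ref{lem:branch_set_measure_zero} upgrades the equality to one valid almost everywhere. The main obstacle I anticipate is not deep but bookkeeping: the first bullet is stated for a general normal neighborhood, while the cleanest application of Theorem~\ref{thm:deg_i_sum_result} wants a precompact $U_f(x,r)$, so one must verify that $i(f,x) = N(f,U)$ is insensitive to shrinking $U$ to a $U_f(x,r) \subseteq U$. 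This follows by combining the monotonicity $N(f,U_f(x,r)) \leq N(f,U)$ with a degree-formula bound $N(f,U) \leq i(f,x)$, which still applies as long as $f\vert_U$ is a proper branched cover onto $fU$ — true for any precompact normal domain.
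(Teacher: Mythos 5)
Your proposal is correct and follows essentially the same route as the paper: the easy half of the second bullet via sense-preservation, then the first bullet by sandwiching $N(f,\cdot,U)$ between the degree formula of Theorem~\ref{thm:deg_i_sum_result} (upper bound) and a point of $fU$ avoiding $f\cB_f$ supplied by Lemma~\ref{lem:branch_set_measure_zero} (lower bound), then the hard half of the second bullet and the third bullet as corollaries. The only cosmetic differences are that you shrink to $U_f(x,r)$ and transfer back (the paper applies the degree formula to the given normal neighborhood directly) and that you obtain $i(f,x)=1$ off the branch set straight from the induced isomorphism on local homology rather than via $\deg f\vert_U = 1$.
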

\begin{proof}
	Suppose first that $x \in M$ and $f$ is a local homeomorphism at $x$. Consequently by Lemma \ref{lem:preimage_normal_neighborhoods} and Proposition \ref{prop:Reshetnyak_sense_preserving_on_manifolds}, we find a connected normal neighborhood $U$ of $x$ for which $f\vert_U \colon U \to fU$ is a sense-preserving homeomorphism. Since homeomorphisms induce isomorphisms in compactly supported cohomology, it follows that $\deg f\vert_U = 1$. Hence, $i(f, x) = 1$ by Theorem \ref{thm:deg_i_sum_result}. This shows half of the second claim.
	
	We now show the first claim. Suppose $x \in M$ and $U$ is a normal neighborhood of $x$. Since $f$ is sense-preserving, $i(f, x') \geq 1$ for every $x' \in M$. Hence, for every $y \in N$, we obtain by Theorem \ref{thm:deg_i_sum_result} that
	\[
		N(f, y, U) = \sum_{x' \in f^{-1}\{y\} \cap U} 1
		\leq \sum_{x' \in f^{-1}\{y\} \cap U} i(f, x')
		= \deg f\vert_U
		= i(f, x).
	\]
	Hence, $N(f, U) \leq i(f, x)$.
	
	However, since $f^{-1} f \cB_f$ has measure zero by Lemma \ref{lem:branch_set_measure_zero}, we find a $z \in U$ for which $z \notin f^{-1} f \cB_f$. Hence, for every $x' \in f^{-1} \{f(z)\}$, we have that $f$ is a local homeomorphism at $x'$. By the part of the second claim of the lemma we already proved, it follows that $i(f, x') = 1$ for every $x' \in f^{-1} \{f(z)\}$. Hence, we have
	\[
		N(f, f(z), U) = \sum_{x' \in f^{-1}\{y\} \cap U} 1
		= \sum_{x' \in f^{-1}\{y\} \cap U} i(f, x')
		= \deg f\vert_U
		= i(f, x).
	\]
	We conclude that $N(f, U) = i(x, f)$.
	
	We now finish the proof of the second claim of the lemma. Suppose therefore that $f$ is not a local homeomorphism at $x$. By Lemma \ref{lem:preimage_normal_neighborhoods}, we find a normal neighborhood $U$ of $x$. If $f\vert_U$ is injective, then $f\vert_U \colon U \to fU$ is a bijective, continuous, and open map, and therefore a homeomorphism. This is a contradiction with $f$ not being a local homeomorphism at $x$. Therefore there exist $x_1, x_2 \in U$ with $f(x_1) = f(x_2)$. Using the first claim of the lemma, the proof of which we previously completed, we conclude that $i(f, x) = N(f, U) \geq N(f, f(x_1), U) \geq 2$, which proves that $i(f, x) > 1$.
	
	For the third claim, if $f \colon M \to N$ is proper non-constant quasiregular, we note that for every $y \in N$ we have $\deg f = \sum_{x \in f^{-1}\{y\}} i(x, f)$ by Theorem \ref{thm:deg_i_sum_result}. By the previously shown part of the lemma, this sum equals $N(f, y)$ if and only if $f^{-1}\{y\} \cap \cB_f = \emptyset$, and otherwise it is larger than $N(f, y)$. The third claim immediately follows, with the almost everywhere part due to Lemma \ref{lem:branch_set_measure_zero}.
\end{proof}

Finally, we record a tiny technical detail that will regardless be important for our following discussion on the change of variables formula to make sense.

\begin{lemma}\label{lem:N_measurability}
	Suppose that $M$ and $N$ are connected, oriented Riemannian $n$-manifolds, and let $f \colon M \to N$ be a non-constant quasiregular map. Let $U \subset M$ be open. Then the function $N(f, \cdot, U) \colon N \to [0, \infty]$ mapping $y \in N$ to $N(f, y, U)$ is measurable.
\end{lemma}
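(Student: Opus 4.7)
The plan is to decompose $U$ into a countable Borel partition on which counting preimages reduces to checking indicators of measurable sets, pushing the branch set out of the way using the fact that it has measure zero.

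First I would invoke Reshetnyak's theorem (Theorem \ref{thm:Reshetnyak_on_mflds}) so that $f$ is a branched cover, which in turn guarantees that $\cB_f$ and $f(\cB_f)$ are of $m_n$-measure zero by Lemma \ref{lem:branch_set_measure_zero}. On the complement $U \setminus \cB_f$, $f$ is a local homeomorphism, so every point $x \in U \setminus \cB_f$ admits an open neighborhood $V_x \subset U$ on which $f\vert_{V_x}$ is injective. Since $M$ is second countable, I can extract a countable subcover $\{W_k\}_{k \in \N}$ of $U \setminus \cB_f$ by such sets, with each $W_k \subset U$ and $f\vert_{W_k}$ injective.

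Next I would convert this cover into a disjoint Borel partition by setting $A_k = W_k \setminus \bigcup_{j < k} W_j$. Each $A_k$ is a Borel set contained in $W_k$, so $f\vert_{A_k}$ remains injective and consequently
\[
	\#(f^{-1}\{y\} \cap A_k) = \chi_{f(A_k)}(y)
\]
for every $y \in N$. The image $f(A_k)$ is measurable because $f$ satisfies the Lusin $(N)$ condition and is continuous, hence maps measurable sets to measurable sets by Theorem \ref{thm:QR_lusin_N_conditions}. Since distinct preimages of any $y$ in $U \setminus \cB_f$ must lie in distinct $A_k$'s (two preimages in the same $W_k$ would contradict injectivity), summing gives
\[
	N(f, y, U \setminus \cB_f) = \sum_{k=1}^\infty \chi_{f(A_k)}(y),
\]
a countable sum of measurable $[0,1]$-valued functions, hence a measurable function into $[0, \infty]$.

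Finally, since $N(f, y, U) - N(f, y, U \setminus \cB_f) = N(f, y, U \cap \cB_f)$ vanishes for $y \notin f(\cB_f)$ and $f(\cB_f)$ has measure zero, the function $N(f, \cdot, U)$ agrees with the measurable function $\sum_k \chi_{f(A_k)}$ outside a null set. By completeness of the Riemannian measure, $N(f, \cdot, U)$ is therefore Lebesgue measurable. The one nontrivial step is handling the preimages landing in $\cB_f$, where injectivity fails; this is precisely where the measure-zero information about $\cB_f$ (and therefore $f(\cB_f)$) does the essential work.
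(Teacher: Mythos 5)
Your proof is correct, but it takes a genuinely different route from the paper's. You work directly with $N(f,\cdot,U)$: off the branch set you cover $U \setminus \cB_f$ by countably many open sets on which $f$ is injective (possible since $f$ is a local homeomorphism there and $M$ is Lindel\"of), disjointify into Borel pieces $A_k$, observe that $\#(f^{-1}\{y\} \cap A_k) = \chi_{f(A_k)}(y)$ with $f(A_k)$ measurable by the Lusin $(N)$ property of Theorem \ref{thm:QR_lusin_N_conditions}, and then absorb the preimages in $\cB_f$ into the null set $f\cB_f$ from Lemma \ref{lem:branch_set_measure_zero}, using completeness of the Riemannian measure. The paper instead replaces $N(f,\cdot,U)$ by the index-weighted multiplicity $\mu(f,y,U) = \sum_{x \in f^{-1}\{y\} \cap U} i(f,x)$, which equals $N(f,\cdot,U)$ outside $f\cB_f$ by Lemma \ref{lem:multiplicity_degree_branching}, and shows $\mu(f,\cdot,U)$ is an increasing pointwise limit of \emph{continuous} functions built from cutoffs, normal neighborhoods, and the degree sum formula of Theorem \ref{thm:deg_i_sum_result}. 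Both arguments hinge on $f\cB_f$ being null, but yours is the more elementary measure-theoretic one (no index or degree theory beyond Reshetnyak's theorem is needed; in fact, since each $A_k$ is $\sigma$-compact, even the appeal to the Lusin $(N)$ condition could be replaced by continuity alone), whereas the paper's yields the stronger structural fact that the natural multiplicity $\mu(f,\cdot,U)$ is lower semicontinuous, hence Borel --- a point the paper highlights in the remark following the lemma.
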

\begin{proof}
	We define another function $\mu(f, \cdot U) \colon N \to [0, \infty]$ by
	\[
		\mu(f, y, U) = \sum_{x \in f^{-1}\{y\} \cap U} i(f, x)
	\]
	for $y \in N$. By the second item of Lemma \ref{lem:multiplicity_degree_branching}, we have $\mu(f, y, U) = N(f, y, U)$ for almost every $y \in N$. Hence, it suffices to show that $\mu(f, \cdot U)$ is measurable.
	
	The argument for that presented here is essentially from \cite[14.30]{Heinonen-Kilpelainen-Martio_book}. Let $(U_j)_{j=1}^\infty$ be an increasing sequence of open sets compactly contained in $U$, for which $\bigcup_j U_j = U$. For every $j$, let $\psi_j \colon M \to [0, 1]$ be a continuous function for which $\psi_j \vert_{U_j} \equiv 1$ and $\spt \psi_j \subset U_{j+1}$. And then for every $j$, define
	\[
		\mu_j(f, y, U) = \sum_{x \in f^{-1}\{y\} \cap U} i(f, x) \psi_j(x).
	\]
	for $y \in N$.
	
	Since the supports of $\psi_j$ are compact and $f$ is discrete, the values of every function $\mu_j(f, \cdot, U)$ are finite. Moreover, $\mu_{j+1}(f, \cdot, U) \geq \mu_j(f, \cdot, U)$ everywhere. Moreover, $\mu_j(f, \cdot, U) \to \mu(f, \cdot, U)$ pointwise as $j \to \infty$. We show that the functions $\mu_j(f, \cdot, U)$ are continuous. Then it follows that $\mu(f, \cdot, U)$ is measurable as a monotone limit of measurable functions, and the claim follows.
	
	For the continuity of $\mu_j(f, \cdot, U)$, fix $y \in N$. Then $f^{-1}\{y\} \cap \spt \psi_j$ contains at most finitely many points $\{x_1, \dots, x_k\}$, since $\psi_j$ is compactly supported and the set $f^{-1}\{y\}$ is discrete and closed. Fix $\eps > 0$, and let $A = \sum_{l=1}^k i(f, x_l)$. By Lemma \ref{lem:preimage_normal_neighborhood_properties}, we may select $r > 0$ for which ,for every $l \in \{1, \dots, k\}$, the set $U_f(x_l, r)$ is a normal neighborhood of $x_l$ and $\psi(U_f(x_l, r)) \subset B_\R(\psi(x_l), \eps/A)$. It follows from Theorem \ref{thm:deg_i_sum_result} and a standard application of triangle inequality that $\abs{\mu_j(f, y, U) - \mu_j(f, y', U)} < \eps$ for every $y' \in B_N(y, r)$. The continuity of $\mu_j(f, \cdot, U)$ follows.
\end{proof}

\begin{rem}
	The proof of the above lemma reveals that in some sense, $\mu(f, \cdot, U)$ is in fact the more natural multiplicity function for a quasiregular $f$. However, we find the use of $N(f, \cdot, U)$ more common in the measurable context as it is far easier to define.
\end{rem}

\subsection{Change of variables}

Finally, we arrive at the main goal of this section, the change of variables formula for quasiregular maps. The Euclidean case is discussed eg.\ in Rickman \cite[Proposition 4.1.14]{Rickman_book}. We proceed to first derive a version for non-negative measurable functions, where we take advantage of both the Euclidean result in \cite{Rickman_book}, as well as the methods described in the proof of the Euclidean result in \cite{Rickman_book}.

\begin{thm}\label{thm:change_of_variables_non-neg}
	Let $M$ and $N$ be connected, oriented Riemannian $n$-manifolds, and let $f \colon M \to N$ be a non-constant quasiregular map. Suppose that $g \colon N \to [0, \infty]$ is a non-negative measurable function, and $U \subset M$ is a measurable set. Then $g \circ f \colon M \to [0, \infty]$ is a measurable function, and 
	\[
		\int_U (g \circ f) J_f \vol_M = \int_N N(f, \cdot, U)\, g \vol_N.
	\]
\end{thm}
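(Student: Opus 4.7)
The plan is to reduce the identity to the Euclidean change-of-variables formula for quasiregular maps (as in \cite[Proposition I.4.14]{Rickman_book}), transported along bilipschitz charts. First I dispose of measurability: the composition $g \circ f$ is measurable because $f$ is continuous and satisfies the Lusin $(N^{-1})$ condition by Theorem \ref{thm:QR_lusin_N_conditions}, so writing any Lebesgue set in $N$ as a Borel set plus a null set yields a measurable preimage in $M$. For open $U$ the measurability of $N(f, \cdot, U)$ is Lemma \ref{lem:N_measurability}; the general measurable case will emerge at the end.

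The local core of the proof is on a single chart pair. Using Definition \ref{def:QR_with_charts}, I take a countable cover $\{U_i\}$ of $M$ such that each $U_i$ is the domain of a positively oriented bilipschitz chart $\phi_i \colon U_i \to \R^n$ with $fU_i$ contained in the domain of a similar chart $\psi_i \colon V_i \to \R^n$, and such that $\tilde f_i := \psi_i \circ f \circ \phi_i^{-1}$ is $K'$-quasiregular in the Euclidean sense. Fixing one such pair and writing $\phi, \psi, \tilde f$, the Euclidean formula applied to $\tilde f$ with test function $\tilde g := (g \circ \psi^{-1}) J_{\psi^{-1}}$ and domain $\phi A$, for a Borel $A \subset U$, reads
\[
\int_{\phi A} (\tilde g \circ \tilde f) J_{\tilde f}\, dm_n = \int_{\psi V} N(\tilde f, \cdot, \phi A) \tilde g\, dm_n.
\]
The chain-rule identity $J_{\tilde f} = (J_\psi \circ f \circ \phi^{-1})(J_f \circ \phi^{-1}) J_{\phi^{-1}}$, combined with $(J_{\psi^{-1}} \circ \psi) \cdot J_\psi = 1$ (valid since the charts are positively oriented and $J_f \geq 0$ by the quasiregular distortion inequality), collapses the left-hand side to $\int_{\phi A} (g \circ f \circ \phi^{-1})(J_f \circ \phi^{-1}) J_{\phi^{-1}}\, dm_n$, which equals $\int_A (g \circ f) J_f \vol_M$ by the diffeomorphic change of variables along $\phi^{-1}$. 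On the other side the tautology $N(\tilde f, y, \phi A) = N(f, \psi^{-1}(y), A)$ and a second diffeomorphic change of variables along $\psi^{-1}$ give $\int_V N(f, \cdot, A) g \vol_N$.

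To globalize, disjointify by setting $A_1 = U \cap U_1$ and $A_i = (U \cap U_i) \setminus (U_1 \cup \dots \cup U_{i-1})$ for $i \geq 2$, so that $U = \bigsqcup_i A_i$ and $N(f, \cdot, U) = \sum_i N(f, \cdot, A_i)$ pointwise. Summing the local identities and invoking monotone convergence yields the formula for Borel $U$ and non-negative Borel $g$; the extension to non-negative Lebesgue measurable $g$ is standard via simple-function approximation and monotone convergence. For general measurable $U$, write $U = B \cup Z$ with $B$ Borel and $Z$ a null set; by the Lusin $(N)$ condition (Theorem \ref{thm:QR_lusin_N_conditions}), $fZ$ is null in $N$, so $N(f, \cdot, U) = N(f, \cdot, B)$ outside $fZ$ and the right-hand side is unaffected, while $Z$ being null in $M$ kills the corresponding contribution on the left. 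This argument simultaneously establishes the measurability of $N(f, \cdot, U)$.

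The main obstacle I anticipate is not conceptual but clerical: tracking the chain of chart Jacobians carefully enough to verify that the Euclidean and Riemannian integrands align, and ensuring measurability of the multiplicity function survives the disjointification step. Once the single-chart identity is in place, the remainder is routine measure-theoretic glue via monotone convergence.
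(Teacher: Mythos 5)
Your proposal is correct and follows the same overall strategy as the paper: transfer Rickman's Euclidean formula through a pair of positively oriented bilipschitz charts (your Jacobian chain-rule bookkeeping is exactly the paper's pull-back computation $\int_U f^*(g\vol_N) = \int_N N(f,\cdot,U)\,g\vol_N$ written in coordinates), then disjointify a countable chart cover and sum by monotone convergence. The one structural difference is in the globalization: the paper covers only $M \setminus \cB_f$ by neighborhoods on which $f$ is injective, handles the branch set as a separate measure-zero piece via Lemma \ref{lem:branch_set_measure_zero}, and consequently its multiplicity additivity $\sum_i N(f,y,E_i\cap U) = N(f,y,U)$ only holds for $y \notin f\cB_f$, i.e.\ almost everywhere; you disjointify a chart cover of all of $M$, so your additivity is an exact pointwise counting identity and the branch set never enters. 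Your route is marginally cleaner on this point, at the cost of nothing, since the single-chart case needs only that $U_i$ and $fU_i$ sit in charts, not that $f\vert_{U_i}$ be injective. Your final reduction of general measurable $U$ to Borel $U$ via the Lusin $(N)$ condition is sound but dispensable, as the Euclidean formula already applies to arbitrary measurable subsets of the chart domain.
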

\begin{proof}
	We note that the above change of variables formula can be written as
	\[
		\int_U f^*(g\vol_M) = \int_N N(f, \cdot, U) \, g\vol_N.
	\]
	The measurability of $g \circ f$ follows by Theorem \ref{thm:QR_lusin_N_conditions}. Indeed, if $A \subset [0, \infty]$ is open, then $g^{-1} A$ is measurable, and consequently $(g \circ f)^{-1} A = f^{-1}(g^{-1} A)$ is measurable. The measurability of $N(f, \cdot, U)$ was shown in Lemma \ref{lem:N_measurability}.
	
	We deal with the transformation formula in two cases. First, suppose that $U$ and $fU$ are both contained in positively oriented bilipschitz charts $\phi \colon U \to \R^n$ and $\psi \colon fU \to \R^n$, respectively. Hence, we may use the definition of integration of $n$-forms to obtain
	\[
	\int_{N} N(f, \cdot, U) \, g \vol_N.
	= \int_{\psi f U} (\psi^{-1})^*(N(f, \cdot, U)\, g \vol_N)
	\]
	We may further write this as
	\begin{multline*}
	\int_{\psi f U} (\psi^{-1})^*(N(f, \cdot, U)\, g \vol_N)
	= \int_{\psi f U} N(f, \psi^{-1}(\cdot), U) (\psi^{-1})^* (g \vol_N)\\
	= \int_{\psi f U} N(\psi \circ f \circ \phi^{-1}, \cdot, \phi U) 
	(\psi^{-1})^* (g \vol_N).
	\end{multline*}
	Note that $(\psi^{-1})^* (g \vol_N) = (g \circ \psi^{-1}) J_{\psi^{-1}} \vol_{\R^n}$, and since $\psi$ is positively oriented, the function $(g \circ \psi^{-1}) J_{\psi^{-1}}$ is positive. Hence, we may use the Euclidean change of variables formula \cite[Proposition 4.1.14]{Rickman_book} for $\psi \circ f \circ \phi^{-1}$, which yields us
	\begin{multline*}
	\int_{\psi f U} N(\psi \circ f \circ \phi^{-1}, \cdot, \phi U) (\psi^{-1})^* (g \vol_N)
	= \int_{\phi U}  (\psi \circ f \circ \phi^{-1})^* (\psi^{-1})^* (g \vol_N)\\
	= \int_{\phi U} (\phi^{-1})^* f^* (\psi^* (\psi^{-1})^* (g \vol_N))
	= \int_{\phi U} (\phi^{-1})^* f^* (g \vol_N)
	\end{multline*}
	Finally, we again refer to the definition of integration of $n$-forms, and conclude that
	\[
		\int_U f^*(g \vol_N) = \int_{\phi U} (\phi^{-1})^* f^* (g \vol_N) 
		= \int_{N} N(f, \cdot, U) \, g \vol_N.
	\]
	
	Next, we consider the general case. Recall that $\cB_f$ is a closed subset of $M$. Using the continuity of $f$, we select for every $x \in M \setminus \cB_f$ a connected neighborhood $U_x$ such that $U_x$ and $fU_x$ are contained in a positively oriented bilipschitz chart and $f\vert_{U_x} \colon U_x \to fU_x$ is a homeomorphism. The family of sets $\{U_x : x \notin \cB_f\}$ is therefore an open cover of $M \setminus \cB_f$. Since manifolds are second countable and therefore hereditarily Lindel\"of, we may select a countable subcover $\{U_i\}$ of $\{U_x : x \notin \cB_f\}$. Finally, we define
	\[
		E_i = (M \setminus \cB_f) \cap (U_i \setminus \bigcup_{j < i} U_j),
	\]
	and note that $\{\cB_f, E_1, E_2, \dots\}$ is a disjoint partition of $M$ into measurable sets, with $E_i \subset U_i$ for each $i$.
	
	Since $(g \circ f) J_f$ is a positive measurable function on $M$, we may use monotone convergence to split the integral in the change of variables formula into
	\[
		\int_U (g \circ f) J_f \vol_M
		= \int_{\cB_f \cap U} (g \circ f) J_f \vol_M 
		+ \sum_i \int_{E_i \cap U} (g \circ f) J_f \vol_M.
	\]
	Since $\cB_f$ is of zero measure by Lemma \ref{lem:branch_set_measure_zero}, the first integral in the split vanishes. For the other integrals, by $E_i \subset U_i$ and the conditions we used to select $U_i$, we can use the previous case and monotone convergence to conclude that
	\begin{align*}
		\int_U (g \circ f) J_f \vol_M
		&= \sum_i \int_{N} N(f, \cdot, E_i \cap U) \, g \vol_M\\
		&= \int_{N} \left(\sum_i N(f, \cdot, E_i \cap U)\right) g \vol_M.	
	\end{align*}
	
	It remains therefore to show that $\sum_i N(f, y, E_i \cap U) = N(f, y, U)$ for almost every $y \in N$. Note that every element of $f^{-1}\{y\}$ belongs in exactly one set of the partition $\{\cB_f, E_1, E_2, \dots\}$. Hence, if $y \notin f\cB_f$, then $f^{-1}\{y\} \cap \cB_f = \emptyset$, and we may therefore conclude that $\sum_i N(f, y, E_i \cap U) = N(f, y, U)$. Since $f\cB_f$ has measure zero by Lemma \ref{lem:branch_set_measure_zero}, the claim follows.
\end{proof}

Next, we present a version of the change of variables formula with alternating signs. Since we no longer have to use non-negativity of a function in the formulation, we directly formulate this in terms of differential $n$-forms, as that is the more natural language for Riemannian manifolds.

\begin{thm}\label{thm:change_of_variables}
	Let $M$ and $N$ be connected, oriented Riemannian $n$-manifolds, and let $f \colon M \to N$ be a non-constant quasiregular map. Suppose that $\omega$ is a measurable differential $n$-form, $U \subset M$ is measurable, and $N(f, \cdot, U)\, \omega \in L^1(\wedge^n N)$. Then $f^* \omega \in L^1(\wedge^n U)$, and we have the integral transformation formula
	\[
		\int_{U} f^* \omega = \int_{N} N(f, \cdot, U)\, \omega.
	\]
	In particular, if $f \colon M \to N$ is a proper and $\omega \in L^1(\wedge^n N)$, then $f^* \omega \in L^1(\wedge^n M)$ and
	\[
		\int_{M} f^* \omega = (\deg f) \int_N \omega.
	\]
\end{thm}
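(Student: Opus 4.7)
The plan is to reduce the signed statement to the non-negative version already established in Theorem \ref{thm:change_of_variables_non-neg}. First, since $N$ is oriented, every measurable $n$-form on $N$ can be written uniquely as $\omega = g\vol_N$ for some measurable $g \colon N \to \R$. I would then verify the pointwise identity $f^*\omega = (g \circ f) J_f \vol_M$ almost everywhere on $M$: this is immediate from the definition of the Jacobian $J_f = \ip{f^*\vol_N}{\vol_M}$ together with the fact that $\wedge^n T^*_x M$ is one-dimensional, so $f^*\vol_N = J_f \vol_M$ almost everywhere. Note that by quasiregularity and sense-preservation (Proposition \ref{prop:Reshetnyak_sense_preserving_on_manifolds}), $J_f \geq 0$ almost everywhere, which makes the non-negative change of variables applicable without sign issues.

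Next, I would handle the integrability. Write $g = g_+ - g_-$ with $g_\pm \geq 0$ measurable. Applying Theorem \ref{thm:change_of_variables_non-neg} to the non-negative measurable function $\abs{g}$ gives
\[
\int_U (\abs{g} \circ f) J_f \vol_M = \int_N N(f, \cdot, U)\,\abs{g}\vol_N = \int_N \abs{N(f, \cdot, U)\,\omega},
\]
where the last equality uses that $N(f, \cdot, U) \geq 0$. The right-hand side is finite by hypothesis, and therefore $f^*\omega = (g \circ f) J_f \vol_M \in L^1(\wedge^n U)$.

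Having the integrabilities in hand, the identity follows by splitting. Apply Theorem \ref{thm:change_of_variables_non-neg} separately to $g_+$ and $g_-$ to obtain
\[
\int_U (g_\pm \circ f) J_f \vol_M = \int_N N(f, \cdot, U)\, g_\pm \vol_N,
\]
both sides being finite since they are dominated by the corresponding $\abs{g}$ integrals. Subtracting the two identities and using linearity gives $\int_U f^*\omega = \int_N N(f, \cdot, U)\,\omega$, which is the desired formula.

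Finally, for the proper case, I would use the third item of Lemma \ref{lem:multiplicity_degree_branching}, which gives $N(f, y, M) = N(f, y) = \deg f$ for almost every $y \in N$ (the exceptional set $f\cB_f$ has measure zero by Lemma \ref{lem:branch_set_measure_zero}). Hence $N(f, \cdot, M)\,\omega = (\deg f)\,\omega$ almost everywhere, which lies in $L^1(\wedge^n N)$ by assumption. Taking $U = M$ in the formula just proved gives $\int_M f^*\omega = (\deg f)\int_N \omega$. There is no substantive obstacle in this proof: the heavy lifting was already done in Theorem \ref{thm:change_of_variables_non-neg}, and the main task here is simply the routine positive/negative splitting together with an appeal to the multiplicity-equals-degree result.
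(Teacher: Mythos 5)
Your proposal is correct and follows essentially the same route as the paper: write $\omega = g\vol_N$, establish $f^*\omega \in L^1(\wedge^n U)$ by applying Theorem \ref{thm:change_of_variables_non-neg} to $\abs{g}$ (using $J_f \geq 0$), then split into positive and negative parts and subtract, with the proper case handled via the third item of Lemma \ref{lem:multiplicity_degree_branching}. The only cosmetic difference is that $J_f \geq 0$ a.e.\ is already immediate from the distortion inequality in the definition of quasiregularity, without needing to invoke sense-preservation.
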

\begin{proof}
	As usual, $\omega$ can be written in the form $\omega = g \vol_N$, where $g \colon N \to \R$ is measurable. We briefly note that the measurability of $f^*\omega$ is by a similar argument as in Theorem \ref{thm:change_of_variables_non-neg}, as $f^* \omega = (g \circ f) J_f \vol_M$. 
	
	By our integrability assumption, we have that $N(f, \cdot, U)\, \abs{\omega}$ is a non-negative integrable function on $N$. Hence, by Theorem \ref{thm:change_of_variables_non-neg} and the non-negativity of $J_f$, we obtain that
	\begin{multline*}
		\int_U \abs{f^* \omega} \vol_M
		= \int_U \abs{(g \circ f) J_f \vol_M} \vol_M		
		= \int_U (\abs{g} \circ f) J_f \vol_M\\
		= \int_U (\abs{\omega} \circ f) J_f \vol_M
		= \int_{N} N(f, \cdot, U)\, \abs{\omega} \vol_N < \infty.
	\end{multline*}
	Hence, $f^* \omega \in L^1(\wedge^n M)$. Finally, the change of variables formula is derived by splitting the $n$-form $\omega$ into positive and negative parts $\omega = \omega_{+} - \omega_{-}$, where $\omega_{\pm} = g_{\pm} \vol_N$ with $g_{\pm} \colon N \to [0,\infty)$. Theorem \ref{thm:change_of_variables_non-neg} can then be applied to the parts $\omega_{\pm}$, and the claim follows.
	
	The version for proper quasiregular maps is a direct consequence from the previous part due to the third part of Lemma \ref{lem:multiplicity_degree_branching}.
\end{proof}

\section{Quasiregular maps and the pull-back operation}\label{sect:quasiregular_pullback}

With the completion of the proof of Theorem \ref{thm:change_of_variables}, we now have transferred the basic analytic tools from Euclidean spaces into manifolds. Using this machinery, we now discuss the basics of the interaction between quasiregular maps and differential forms. This is probably the most characteristic part of the study of quasiregular maps on Riemannian manifolds; differential forms are rarely encountered when studying quasiregular maps in $\R^n$, but several significant results on manifolds make heavy use of them. In this section, we discuss the most basic and fundamental tool with respect to the interaction of quasiregular maps and differential forms: the pull-back operation.

\subsection{Some linear algebraic properties of $Df$}

Before we begin our main discussion on the pull-back operation, we briefly record some of the properties of $Df$ with roots in linear algebra. These will be used later in the section, when we start studying how to apply change of variables to differential forms.

We recall the $l$-operator discussed in Section \ref{sect:diff_geo_prelims}. In particular, the estimate \eqref{eq:l_jacob_opnorm_ineq} shown there was not in any way dependent on the fact that the map $Df$ was a standard $C^1$ derivative. Hence, this same estimate also holds for weak derivatives $Df$. Therefore, if $f \colon M \to N$ is quasiregular, we have by non-negativity of the Jacobian that
\[
l(Df(x))^n \leq J_f(x) \leq \abs{Df(x)}^n
\]
for almost every $x \in M$.

However, note that the condition $\abs{Df(x)}^n \leq K J_f(x)$ of a quasiregular map can be seen as a converse to the right hand side of this estimate. It turns out that, for purely linear algebraic reasons, this in fact also implies a converse to the left hand side. We record all these estimates to the following Lemma.

\begin{lemma}\label{lem:qr_l_estimates}
	Suppose that $M$ and $N$ are smooth, oriented Riemannian $n$-manifolds, and suppose that $f \colon M \to N$ is $K$-quasiregular. Then for almost every $x \in M$, we have
	\[
		K^{-1} \abs{Df(x)}^n \leq J_f(x) \leq \abs{Df(x)}^n
	\]
	and
	\[
		l(Df(x))^n \leq J_f(x) \leq K^{n-1} l(Df(x))^n.
	\]
\end{lemma}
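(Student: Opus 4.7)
The plan is to reduce everything to a purely linear-algebraic statement about the singular values of $Df(x)$, and to handle the two ``easy'' inequalities by quoting prior results. Fix $x \in M$ at which the $K$-quasiregularity inequality $\abs{Df(x)}^n \leq K J_f(x)$ and the identification of $J_f(x)$ with the unsigned determinant of $Df(x)$ both hold; this is the case for almost every $x$. Note that quasiregularity also forces $J_f(x) \geq 0$ almost everywhere, so we may drop absolute values around $J_f(x)$.

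The inequalities $K^{-1} \abs{Df(x)}^n \leq J_f(x)$ and $J_f(x) \leq \abs{Df(x)}^n$ are immediate: the first is simply the definition of $K$-quasiregularity, and the second is the right-hand inequality in \eqref{eq:l_jacob_opnorm_ineq}. Likewise, the inequality $l(Df(x))^n \leq J_f(x)$ follows from the left-hand inequality in \eqref{eq:l_jacob_opnorm_ineq}. So the only non-trivial content is the upper bound $J_f(x) \leq K^{n-1} l(Df(x))^n$.

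For that bound, I would use the singular value decomposition of $Df(x) \colon T_x M \to T_{f(x)} N$: there exist orthonormal bases of the two tangent spaces and numbers $\sigma_1 \geq \sigma_2 \geq \dots \geq \sigma_n \geq 0$ such that, relative to these bases, $Df(x)$ is diagonal with entries $\sigma_i$. As recorded in the paragraph preceding \eqref{eq:l_jacob_opnorm_ineq}, this gives $\abs{Df(x)} = \sigma_1$, $l(Df(x)) = \sigma_n$, and $J_f(x) = \sigma_1 \sigma_2 \cdots \sigma_n$.

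The $K$-quasiregular inequality then reads $\sigma_1^n \leq K\, \sigma_1 \sigma_2 \cdots \sigma_n$. The key observation, which I expect to be the only non-routine step, is the estimate
\[
\sigma_1 \sigma_2 \cdots \sigma_n \leq \sigma_1^{n-1} \sigma_n,
\]
obtained by bounding each of $\sigma_2, \dots, \sigma_{n-1}$ from above by $\sigma_1$. Combined with $\sigma_1^n \leq K\, \sigma_1 \sigma_2 \cdots \sigma_n$ this yields $\sigma_1^n \leq K \sigma_1^{n-1} \sigma_n$, which, after dividing by $\sigma_1^{n-1}$ (and treating the degenerate case $\sigma_1 = 0$ separately, where every quantity vanishes), gives $\sigma_1 \leq K \sigma_n$. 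Substituting back,
\[
J_f(x) = \sigma_1 \sigma_2 \cdots \sigma_n \leq \sigma_1^{n-1} \sigma_n \leq K^{n-1} \sigma_n^n = K^{n-1} l(Df(x))^n,
\]
which is the desired inequality. Bundling the four estimates together completes the proof.
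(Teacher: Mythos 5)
Your proposal is correct and follows essentially the same route as the paper: both reduce to the single nontrivial bound $J_f(x) \leq K^{n-1} l(Df(x))^n$, pass to the singular values of $Df(x)$, and exploit the estimate $\sigma_1 \cdots \sigma_n \leq \sigma_1^{n-1}\sigma_n$ together with the quasiregularity inequality. The only difference is cosmetic — you extract $\sigma_1 \leq K\sigma_n$ first, while the paper raises the chained inequality to the $n$-th power and divides out the $J_f$-term — and your explicit treatment of the degenerate case $\sigma_1 = 0$ is a welcome touch.
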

\begin{proof}
	As discussed before, the only estimate that requires proving is $J_f(x) \leq K^{n-1} l(Df(x))^n$, as the others follow either from \eqref{eq:l_jacob_opnorm_ineq} or the definition of a quasiregular map. 
	
	Suppose then that $x$ is such that the other estimates hold at $x$. Let $\sigma_1 \geq \dots \geq \sigma_n$ be the singular values of $Df(x)$, which are well defined since $T_x M$ and $T_{f(x)} N$ are both $n$-dimensional inner product spaces. Hence, we have $\abs{Df(x)} = \sigma_1$, $l(Df(x)) = \sigma_n$, and $\abs{J_f(x)} = \sigma_1 \cdots \sigma_n$. Moreover, since $J_f(x) \geq K^{-1} \abs{Df(x)}^n \geq 0$, we have $\abs{J_f(x)} = J_f(x)$.
	
	We may then estimate that
	\begin{multline*}
		J_f(x) = \sigma_1 \cdots \sigma_n
		\leq \sigma_n \sigma_1^{n-1}
		= l(Df(x)) \abs{Df(x)}^{n-1}\\
		\leq l(Df(x)) K^{\frac{n-1}{n}} (J_f(x))^{\frac{n-1}{n}}.
	\end{multline*}
	By raising to the power $n$ and dividing the $J_f$-term out from the right, we obtain the desired estimate
	\[
		J_f(x) \leq K^{n-1} l(Df(x))^n.
	\]
\end{proof}

Finally, we record that $Df$ is almost everywhere invertible. The requisite work for this fact has essentially been already done, as we can deduce it from the change of variables formula and the preceding discussion on linear algebra.

\begin{lemma}\label{lem:qr_diff_almost_everywhere_bij}
	Let $M$ and $N$ be connected, oriented Riemannian $n$-manifolds, and let $f \colon M \to N$ be a non-constant quasiregular map. Then for almost every $x \in M$, we have $J_f(x) > 0$. Consequently, for almost every $x \in M$, the map $Df(x) \colon T_x M \to T_{f(x)} N$ is a bijection, and therefore also invertible.
\end{lemma}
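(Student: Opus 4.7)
The plan is to first establish that $J_f > 0$ almost everywhere, and then to deduce the linear-algebraic consequence from the estimates in Lemma \ref{lem:qr_l_estimates}.

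For the first part, I would introduce the set $E = \{x \in M : J_f(x) = 0\}$, which is measurable since $J_f$ is a measurable function, and aim to show $m_n(E) = 0$. The key observation is that the non-negative change of variables formula of Theorem \ref{thm:change_of_variables_non-neg}, applied with $g \equiv 1$ and the measurable set $E$, gives
\[
    \int_N N(f, \cdot, E) \vol_N = \int_E J_f \vol_M = 0,
\]
since $J_f$ vanishes identically on $E$. Hence $N(f, y, E) = 0$ for almost every $y \in N$, i.e., almost every $y \in N$ fails to lie in $f(E)$. Because $f$ satisfies the Lusin $(N)$ condition of Theorem \ref{thm:QR_lusin_N_conditions} and $E$ is measurable, the image $f(E)$ is itself measurable, and the previous sentence then forces $m_n(f(E)) = 0$.

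At this point I would bring in the Lusin $(N^{-1})$ condition from Theorem \ref{thm:QR_lusin_N_conditions}, which is available precisely because $f$ is a non-constant quasiregular map. It yields $m_n(f^{-1} f(E)) = 0$, and since trivially $E \subset f^{-1} f(E)$, we conclude $m_n(E) = 0$. Thus $J_f(x) > 0$ for almost every $x \in M$, as desired.

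For the second statement, I would invoke the second chain of inequalities in Lemma \ref{lem:qr_l_estimates}, namely $J_f(x) \leq K^{n-1} l(Df(x))^n$, valid for almost every $x$. Combined with $J_f(x) > 0$ a.e.\ just established, this gives $l(Df(x)) > 0$ for a.e.\ $x \in M$, i.e., $Df(x) \colon T_x M \to T_{f(x)} N$ has trivial kernel. Since $T_x M$ and $T_{f(x)} N$ are finite-dimensional inner product spaces of the same dimension $n$, injectivity of $Df(x)$ is equivalent to bijectivity and hence to invertibility. The main step is really the first one; once the Lusin $(N^{-1})$ condition is in hand the rest is a short calculation from the already-established linear algebra, so I expect no substantial obstacle beyond the correct bookkeeping of measurability of $f(E)$.
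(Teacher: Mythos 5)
Your proof is correct and follows essentially the same route as the paper: apply the non-negative change of variables formula to the zero set $E$ of $J_f$ to see that $fE$ has measure zero, invoke the Lusin $(N^{-1})$ condition to conclude $m_n(E)=0$, and then use the inequality $J_f \leq K^{n-1} l(Df)^n$ from Lemma \ref{lem:qr_l_estimates} to get injectivity, hence bijectivity, of $Df(x)$ almost everywhere. The only cosmetic difference is that you pass through $N(f,\cdot,E)=0$ a.e.\ and the measurability of $fE$ where the paper simply bounds $\int_N N(f,\cdot,E)\vol_N \geq \int_{fE}\vol_N$; both also tacitly use that $J_f \geq 0$ a.e., which is immediate from the quasiregularity inequality.
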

\begin{proof}
	Recall that it is a direct consequence of the definition of a quasiregular map that $J_f(x) \geq 0$ for almost every $x \in M$. Let then $E \subset M$ be the set of points of $M$ where $J_f(x) = 0$. By measurability of $J_f$, we obtain that $E$ is measurable. The change of variables formula yields
	\[
		0 = \int_E 0 \vol_M = \int_E J_f \vol_M 
		= \int_N N(f, \cdot, E) \vol_N \geq \int_{fE} \vol_N.
	\]
	It follows that $fE$ is of measure zero, and consequently by the Lusin $(N^{-1})$ property of Theorem \ref{thm:QR_lusin_N_conditions}, that $E$ is of measure zero.
	
	Now, it follows from Lemma \ref{lem:qr_l_estimates} that $l(Df(x)) > 0$ for almost every $x \in M$. Hence, $Df(x) \colon T_x M \to T_{f(x)} N$ is injective for almost every $x \in M$. Since $T_x M$ and $T_{f(x)} N$ are both $n$-dimensional, we conclude that $Df(x)$ is bijective for almost every $x \in M$.
\end{proof}

\subsection{Pullback of measurable differential forms}

We first discuss the measurability of a pull-back $f^* \omega$ by a quasiregular $f$. Since quasiregular maps are locally Sobolev, we already know by Lemma \ref{lem:measurability_and_wedges} that $f^* \omega$ is measurable when $\omega \in C(\wedge^k M)$. 

However, it turns out that the quasiregularity of $f$ in fact gives us a measurable $f^* \omega$ even if we only have $\omega \in \Gamma(\wedge^k M)$. This is something that does not even hold for all smooth maps; the key property which allows this is that quasiregular maps satisfy the Lusin $(N^{-1})$ condition.

\begin{prop}\label{prop:pullback_forms_meas}
	Let $M$ and $N$ be connected, oriented Riemannian $n$-manifolds, and let $f \colon M \to N$ be a non-constant quasiregular map. Suppose that $\omega \in \Gamma(\wedge^k N)$, where $k \in \{0, \dots, n\}$. Then the pull-back $f^* \omega$ is in $\Gamma(\wedge^k M)$. Moreover, $f^* \omega$ changes only in a set of measure zero if $Df$ and $\omega$ are changed in a set of measure zero.
\end{prop}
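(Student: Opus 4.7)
The plan is to reduce to the Euclidean setting via bilipschitz charts, then exploit the Lusin $(N^{-1})$ condition of non-constant quasiregular maps, which is precisely what allows composition with an arbitrary measurable real function to remain measurable. Since bilipschitz $C^1$-charts are diffeomorphisms, they satisfy both Lusin conditions and thus preserve measurability of forms (using the characterization from Lemma \ref{lem:two_measurability_defs_for_forms} and Definition \ref{def:manifold_meas_form}). Hence, using Theorem \ref{thm:bilipschitz_charts}, for any $x \in M$ I would pick bilipschitz charts $\phi \colon U \to \R^n$ at $x$ and $\psi \colon V \to \R^n$ at $f(x)$ with $fU \subset V$, observe that $(\phi^{-1})^*(f^*\omega) = (\psi \circ f \circ \phi^{-1})^* (\psi^{-1})^* \omega$, and note that $g := \psi \circ f \circ \phi^{-1}$ is non-constant Euclidean quasiregular by Theorem \ref{thm:equivalence_of_definitions}. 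It therefore suffices to verify the Euclidean version of the statement for such $g$ and a measurable $k$-form $\tau := (\psi^{-1})^*\omega$ on $\psi V$.

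In the Euclidean setting, I would decompose $\tau = \sum_I \tau_I \eps_I$ with measurable coefficients $\tau_I \colon \psi V \to \R$, giving the pointwise identity
\[
    g^*\tau = \sum_I (\tau_I \circ g)\, dg_{I_1} \wedge \cdots \wedge dg_{I_k},
\]
where $g_j$ are the coordinate functions of $g$. The 1-forms $dg_j = \sum_i (\partial_i g_j)\, dx^i$ have measurable coordinate functions because the weak partial derivatives of $g$ are measurable by definition, and hence so do the wedge products $dg_{I_1} \wedge \cdots \wedge dg_{I_k}$. The remaining issue is the measurability of the composition $\tau_I \circ g$: for any Borel $B \subset \R$, the set $\tau_I^{-1}(B)$ is Lebesgue measurable, and Theorem \ref{thm:QR_lusin_N_conditions} combined with Lemma \ref{lem:lusin_preserves_measurable} shows that $g^{-1}(\tau_I^{-1}(B))$ is Lebesgue measurable. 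Hence $\tau_I \circ g$ is measurable, so $g^*\tau$ has measurable coordinate functions and is therefore a measurable $k$-form.

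For the second claim, suppose $\omega$ and $\omega'$ agree outside a set $E \subset N$ of measure zero, and $Df$ is replaced by some $\widetilde{Df}$ differing from it on a measure-zero set $E' \subset M$. Then $f^*\omega$ and $\widetilde{f^*\omega'}$ can differ only on $E' \cup f^{-1}(E)$. The second set has measure zero by the Lusin $(N^{-1})$ condition for $f$, completing the proof of the modification invariance. The main obstacle to anticipate is the composition $\tau_I \circ g$: for a general continuous map there is no reason this should be measurable when $\tau_I$ is merely Lebesgue measurable, and indeed this step truly requires the full force of Lusin $(N^{-1})$ rather than just continuity or Sobolev regularity. Once this observation is isolated, the rest of the proof assembles straightforwardly from the already established chart reduction and the Euclidean structural formula for pull-backs.
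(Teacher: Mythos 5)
Your proposal is correct and follows essentially the same route as the paper: reduce to the Euclidean case via bilipschitz charts, expand in the standard basis, handle the wedge factors $f^*\dd x_{I_j}$ via measurability of the weak partials, and use the Lusin $(N^{-1})$ condition (Theorem \ref{thm:QR_lusin_N_conditions} together with Lemma \ref{lem:lusin_preserves_measurable}) for the compositions $\omega_I \circ f$, with the modification-invariance claim handled exactly as in the paper. The only cosmetic difference is that you verify measurability of the wedge products directly from the measurable coordinate functions of $dg_j$, whereas the paper cites Lemma \ref{lem:measurability_and_wedges}; these amount to the same observation.
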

\begin{proof}
	We first show the latter claim. Namely, if we change $Df$ in a set $E \subset M$ with measure zero, then $f^* \omega$ only changes in $E$, which by assumption has measure zero. And if we change $\omega$ in a set $F \subset N$ of measure zero, then $f^* \omega$ changes only in $f^{-1} E$, which by the Lusin $(N^{-1})$ property of $f$ has measure zero; see Theorem \ref{thm:QR_lusin_N_conditions}.
	
	It remains to show that $f^* \omega$ is measurable. For this, let $x \in M$. Select a smooth positively oriented bilipschitz chart $\psi \colon V \to \R^n$ on $N$, where $V$ is a neighborhood of $f(x)$. Using continuity of $f$, select a smooth positively oriented bilipschitz chart $\phi \colon U \to \R^n$, where $U$ is a neighborhood of $x$ and $fU \subset V$.
	
	It suffices to show that $(\phi^{-1})^* f^* \omega$ is a measurable $k$-form on $\phi U$. We note that $(\phi^{-1})^* f^* \omega$ only depends on the values of $\omega$ on $fU$, which is contained in $V$. Hence, $(\phi^{-1})^* f^* \omega = ((\phi^{-1})^* f^* \psi^*) (\psi^{-1})^*\omega$, and by Lemma \ref{lem:sobolev_composition_mfld_targ_cont}, we obtain $((\phi^{-1})^* f^* \psi^*) (\psi^{-1})^*\omega = (\phi^{-1} \circ f \circ \psi)^* (\psi^{-1})^* \omega$. Now, $(\psi^{-1})^* \omega$ is a measurable differential form in an Euclidean domain, and $\phi^{-1} \circ f \circ \psi$ is a non-constant quasiregular map between Euclidean domains. Hence, we have reduced the question to the Euclidean case.
	
	Consider then the Euclidean case, where $M$ and $N$ are domains in $\R^n$. In this case, we may write $\omega = \sum_I \omega_I \eps_I$, where $\omega_I \colon M \to \R$ are measurable functions and $\eps_I = \dd x_{I_1} \wedge \dots \wedge \dd x_{I_k}$ are the standard basis vectors of differential $k$-forms in $\R^n$. Hence, the pull-back $f^* \omega$ can be written in the Euclidean setting as
	\[
		f^*\omega = \sum_I (\omega_I \circ f) f^* \eps_I
		= \sum_{I} (\omega_I \circ f) \cdot (f^*\dd x_{I_1}) \wedge \dots \wedge (f^* \dd x_{I_k}).
	\]
	It follows from Lemma \ref{lem:measurability_and_wedges} that $f^* \eps_I$ is measurable. Moreover, using the measurability of $\omega_I$ and the Lusin $(N^{-1})$ condition of $f$ given in Theorem \ref{thm:QR_lusin_N_conditions}, we obtain that $\omega_I \circ f$ is measurable. We conclude that $f^* \omega$ is measurable.
\end{proof}

\begin{rem}
	We remark briefly what happens in the trivial case when $f \colon M \to N$ is constant. In that case, $Df$ is the zero map almost everywhere on $M$. Hence, if $k \neq 0$, then $f^* \omega = 0$ almost everywhere for every $\omega \in \Gamma(\wedge^k N)$, and changing $Df$ and $\omega$ in a set of measure zero doesn't affect the outcome. 
	
	However, if $k = 0$, then $\omega$ is merely a measurable function $M \to \R$, and $f^* \omega = \omega \circ f$. In this case the result depends only on the value of $\omega$ in a single point, which is a set of measure zero. Hence, in only this specific case, we do not for obtain a well defined map when we pass to equivalence classes with respect to equality outside sets of measure zero.
\end{rem}

\subsection{The $L^p$ estimate in the conformal exponent}

We have estabilished that quasiregular pull-back is a well defined operation for measurable forms, and it also passes down to equivalence classes of measurable forms with respect to equality outside a set of measure zero. The next question is on how does it interact with the integrability condition of $L^p$ spaces. It turns out that for differential $k$-forms, there is a specific $p$ for which pull-backs by quasiregular maps locally preserve the $L^p$-space: the \emph{conformal exponent} $p = n/k$.

In case of $k = n$, this means that if $\omega$ is locally integrable and $f$ is non-constant quasiregular, then $f^* \omega$ is also locally integrable. In fact, this case is already consequence of the change of variables formula for quasiregular maps which we discussed previously. The other cases will also follow from the change of variables formula, once we obtain the following key pointwise norm estimate for quasiregular pull-backs. The result is a refined version of Lemma \ref{lem:bilip_pullback_pointwise_estimate}, and parts of it have already come up e.g.\ in the proof of Lemma \ref{lem:smooth_form_sobolev_pullback_eucl}.

\begin{lemma}\label{lem:qr_lp_pointwise_estimate}
	Let $M$ and $N$ be connected, oriented Riemannian $n$-manifolds, and let $f \colon M \to N$ be a non-constant quasiregular map. Let $\omega \in \Gamma(\wedge^k N)$, where $k \in \{0, \dots, n\}$. Then, for almost every $x \in M$, we have
	\[
		\binom{n}{k}^{-\frac{1}{2}} l(Df(x))^k \abs{\omega_{f(x)}} 
		\leq \abs{(f^* \omega)_x}
		\leq \binom{n}{k}^{\frac{1}{2}} \abs{Df(x)}^k \abs{\omega_{f(x)}}.
	\]
\end{lemma}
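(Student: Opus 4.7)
The plan is to reduce the estimate to the corresponding bounds in comass norm and then exploit the invertibility of $Df(x)$ almost everywhere. Since Lemma \ref{lem:comass_norm_conversion} gives the two-sided estimate $\abs{\alpha}_{\mass} \leq \abs{\alpha} \leq \binom{n}{k}^{1/2} \abs{\alpha}_{\mass}$ for any $k$-covector $\alpha$, the stated estimate will follow once we prove that for almost every $x \in M$ we have
\[
l(Df(x))^k \abs{\omega_{f(x)}}_{\mass} \;\leq\; \abs{(f^*\omega)_x}_{\mass} \;\leq\; \abs{Df(x)}^k \abs{\omega_{f(x)}}_{\mass}.
\]
The trivial cases $k=0$ and $k=n$ are immediate (using the relation $f^* \vol_N = J_f \vol_M$ together with the estimate $l(Df)^n \leq J_f \leq \abs{Df}^n$ from \eqref{eq:l_jacob_opnorm_ineq}), so the interesting range is $1 \leq k \leq n-1$.

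The upper bound follows verbatim from the argument used in Lemma \ref{lem:bilip_pullback_pointwise_estimate}, since that part never used bilipschitzness, only the operator norm of the differential. Namely, at any $x$ where $Df(x)$ exists, for a simple unit $v = v_1 \wedge \cdots \wedge v_k \in \wedge^k T_x M$ we may, by Lemma \ref{lem:simple_k_vec_props}, assume the $v_i$ are pairwise orthogonal and satisfy $\abs{v_1} \cdots \abs{v_k} = 1$. Then
\[
(f^*\omega)_x(v) = \omega_{f(x)}\bigl(Df(x)v_1 \wedge \cdots \wedge Df(x)v_k\bigr) \leq \abs{\omega_{f(x)}}_{\mass}\abs{Df(x)}^k,
\]
where we used Lemma \ref{lem:simple_k_vec_props} part (\ref{item:simple_k_vec_prop_estimate}) on the image $k$-vector and the pointwise bound $\abs{Df(x)v_i} \leq \abs{Df(x)}\abs{v_i}$. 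Taking the supremum over $v$ yields the upper bound.

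For the lower bound, we invoke Lemma \ref{lem:qr_diff_almost_everywhere_bij} to restrict to the full-measure set where $Df(x) \colon T_x M \to T_{f(x)} N$ is a bijective linear map between $n$-dimensional inner product spaces; consequently $l(Df(x)) > 0$ and the inverse satisfies $\abs{Df(x)^{-1}} = l(Df(x))^{-1}$ by the standard singular value characterization (the singular values of $Df(x)^{-1}$ are the reciprocals of those of $Df(x)$). Let $w = w_1 \wedge \cdots \wedge w_k \in \wedge^k T_{f(x)} N$ be a simple unit vector, which we may by Lemma \ref{lem:simple_k_vec_props} take with pairwise orthogonal $w_i$ satisfying $\abs{w_1}\cdots\abs{w_k} = 1$. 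Setting $v_i = Df(x)^{-1}w_i$ and $v = v_1 \wedge \cdots \wedge v_k$, we obtain a simple $k$-vector with
\[
(f^*\omega)_x(v) = \omega_{f(x)}(w), \qquad \abs{v} \leq \abs{v_1}\cdots\abs{v_k} \leq l(Df(x))^{-k},
\]
the latter again by Lemma \ref{lem:simple_k_vec_props} part (\ref{item:simple_k_vec_prop_estimate}). Provided $w \neq 0$ so that $v \neq 0$, we normalize and obtain
\[
(f^*\omega)_x(v/\abs{v}) \geq l(Df(x))^k \omega_{f(x)}(w).
\]
Taking the supremum over simple unit $w$ produces the desired comass-norm lower bound, and combining with Lemma \ref{lem:comass_norm_conversion} finishes the proof. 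No step should present a genuine obstacle; the only conceptual point is recognizing that the lower bound requires the almost-everywhere invertibility of $Df$ provided by Lemma \ref{lem:qr_diff_almost_everywhere_bij}, since this is what converts the one-sided operator-norm estimate into a two-sided $l$-versus-operator-norm sandwich.
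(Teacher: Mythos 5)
Your proof is correct and follows essentially the same route as the paper's: reduce to the comass norm via Lemma \ref{lem:comass_norm_conversion}, get the upper bound from the operator-norm estimate on a pairwise-orthogonal decomposition of a simple unit $k$-vector, and get the lower bound by pulling back an orthogonal decomposition of a unit simple $w$ through $Df(x)^{-1}$ on the full-measure set where Lemma \ref{lem:qr_diff_almost_everywhere_bij} provides invertibility. The only cosmetic difference is that you split off $k=n$ as a trivial case, whereas the paper's main argument already covers all $k \geq 1$.
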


\begin{proof}[Proof of Lemma \ref{lem:qr_lp_pointwise_estimate}:]
	We treat the main case $k \geq 1$ first, and leave the spacial case $k = 0$ to the end. For $k \geq 1$, we show similarly as in Lemma \ref{lem:bilip_pullback_pointwise_estimate} that in fact
	\begin{equation}\label{eq:qr_pointwise_estimate_mass}
		l(Df(x))^k \abs{\omega_{f(x)}}_{\mass}
		\leq \abs{(f^* \omega)_x}_{\mass} \leq \abs{Df(x)}^k \abs{\omega_{f(x)}}_{\mass}
	\end{equation}
	for almost every $x \in M$. Then the claim follows by Lemma \ref{lem:comass_norm_conversion}.
	
	The proof of the upper bound is essentially the same as in Lemma \ref{lem:bilip_pullback_pointwise_estimate}. Namely, suppose then that $x$ is such that $(f^* \omega)_x$ is given by the standard pull-back formula using $Df(x)$. Let $v \in \wedge^k T_x M$ be simple, in which case we may write $v = v_1 \wedge \dots \wedge v_k$ with $v_i \in T_x M$, and suppose that $\abs{v} = 1$. Now, by Lemma \ref{lem:simple_k_vec_props}, we may assume that $v_i$ are pairwise orthogonal, and we consequently obtain that
	\begin{multline*}
		\abs{(Df(x)v_1) \wedge \dots \wedge (Df(x)v_k)}
		\leq \abs{Df(x)v_1} \cdots \abs{Df(x)v_k}\\
		\leq \abs{Df(x)}^k \abs{v_1} \cdots \abs{v_k}
		= \abs{Df(x)}^k \abs{v}
		= \abs{Df(x)}^k.
	\end{multline*}
	Hence, we have that
	\begin{multline*}
	(f^*\omega)_x(v)
	= \omega_{f(x)} \left((Df(x)v_1) \wedge \dots \wedge (Df(x)v_k)\right)\\
	\leq \abs{\omega_{f(x)}}_{\mass} \abs{(Df(x)v_1) \wedge \dots \wedge (Df(x)v_k)}
	\leq \abs{\omega_{f(x)}}_{\mass} \abs{Df(x)}^k.
	\end{multline*}
	By taking the supremum over all $v$, we obtain the upper bound of \eqref{eq:qr_pointwise_estimate_mass}.
	
	For the lower bound, we now select a simple $w \in \wedge^k T_{f(x)} N$ with $\abs{w} = 1$, and as before write $w = w_1 \wedge \dots \wedge w_k$ with $w_i$ pairwise orthogonal. By Lemma \ref{lem:qr_diff_almost_everywhere_bij}, we may assume that $Df(x)$ is bijective at $x$. Hence, for every $i \in \{1, \dots, k\}$ there exists a $u_i \in T_x M$ for which $Df(x) u_i = w_i$. Let $u = u_i \wedge \dots \wedge u_k \in \wedge^k T_x M$, in which case $u$ is simple. Then
	\begin{multline*}
		1 = \abs{w} = \abs{w_1} \cdots \abs{w_k}
		= \abs{Df(x)u_1} \cdots \abs{Df(x) u_k}\\
		\geq l(Df(x))^k \abs{u_1} \cdots \abs{u_k}
		\geq l(Df(x))^k \abs{u}.
	\end{multline*}
	Consequently,
	\begin{multline*}
	l(Df(x))^k \omega_{f(x)}(w) 
	= l(Df(x))^k \omega_{f(x)}((Df(x)u_i) \wedge \dots \wedge (Df(x)u_k))\\
	= l(Df(x))^k (f^*\omega)_x(u)
	\leq l(Df(x))^k \abs{u} \abs{(f^*\omega)_x}_{\mass}
	\leq \abs{(f^*\omega)_x}_{\mass}.
	\end{multline*}
	By a supremum over $w$, we obtain the lower bound of \eqref{eq:qr_pointwise_estimate_mass}.
	
	Finally, we comment on the special case $k = 0$. In this case, $\omega$ is merely a measurable function $N \to \R$, and $f^* \omega = \omega \circ f$. The claim of the lemma requires that
	\[
		l(Df(x))^0 \abs{\omega \circ f(x)}
		\leq \abs{\omega \circ f(x)}
		\leq \abs{Df(x)}^0 \abs{\omega \circ f(x)}.	
	\]
	However, this is trivially true whenever $x \in M$ is such that $l(Df(x)) > 0$, which holds for almost every $x$ by Lemma \ref{lem:qr_diff_almost_everywhere_bij}.
\end{proof}

Now, we state the local $L^p$-estimate for pull-backs of differential forms, and prove it using the tools obtained so far.

\begin{prop}\label{prop:qr_local_lp_estimate}
	Let $M$ and $N$ be connected, oriented Riemannian $n$-manifolds, and let $f \colon M \to N$ be a non-constant quasiregular map. Let $\omega \in L^{n/k}_\loc(\wedge^k N)$, where $k \in \{1, \dots, n\}$. Then $f^* \omega \in L^{n/k}_\loc(\wedge^k M)$. More precisely, if $E \subset M$ is measurable, then
	\begin{multline*}
		\binom{n}{k}^{-\frac{n}{2k}} K^{-(n+1)} \int_N N(f, \cdot, E)
			\abs{\omega}^{\frac{n}{k}} \vol_N
		\leq \int_E \abs{f^* \omega}^{\frac{n}{k}} \vol_M\\
		\leq \binom{n}{k}^{\frac{n}{2k}} K \int_N N(f, \cdot, E) 
			\abs{\omega}^{\frac{n}{k}} \vol_N.
	\end{multline*}
	
	In particular, suppose that $f \colon M \to N$ is a proper non-constant quasiregular map, and that $\omega \in L^{n/k}(\wedge^k N)$. Then $f^* \omega \in L^{n/k}(\wedge^k M)$, with
	\begin{multline*}
		\qquad \dfrac{(\deg f)^{\frac{k}{n}}}{\displaystyle\binom{n}{k}^{\frac{1}{2}} 
				K^{\frac{k(n-1)}{n}}} 
			\norm{\omega}_{\frac{n}{k}}
		\leq \norm{f^* \omega}_{\frac{n}{k}}
		\leq \binom{n}{k}^{\frac{1}{2}} K^{\frac{k}{n}} (\deg f)^{\frac{k}{n}} \norm{\omega}_{\frac{n}{k}}.\qquad
	\end{multline*}
\end{prop}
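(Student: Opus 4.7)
The plan is to derive the stated inequalities by combining the pointwise pullback estimate of Lemma \ref{lem:qr_lp_pointwise_estimate} with the Jacobian-vs-norm estimates of Lemma \ref{lem:qr_l_estimates}, and then applying the non-negative change of variables formula from Theorem \ref{thm:change_of_variables_non-neg}. First, I would raise the pointwise inequality of Lemma \ref{lem:qr_lp_pointwise_estimate} to the power $n/k$, which yields the a.e.\ bound
\[
    \binom{n}{k}^{-\frac{n}{2k}} l(Df(x))^n \abs{\omega_{f(x)}}^{\frac{n}{k}}
    \leq \abs{(f^*\omega)_x}^{\frac{n}{k}}
    \leq \binom{n}{k}^{\frac{n}{2k}} \abs{Df(x)}^n \abs{\omega_{f(x)}}^{\frac{n}{k}}.
\]
The point of exponent $n/k$ is precisely that on both sides the operator norm and $l$-operator appear to the $n$-th power, which is the only power comparable to $J_f$ through the $K$-quasiregularity hypothesis.

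Next I would invoke Lemma \ref{lem:qr_l_estimates}, which gives $\abs{Df(x)}^n \leq K J_f(x)$ and $l(Df(x))^n \geq K^{-(n-1)} J_f(x)$ almost everywhere, to convert the bound into
\[
    \binom{n}{k}^{-\frac{n}{2k}} K^{-(n-1)} J_f(x) \abs{\omega_{f(x)}}^{\frac{n}{k}}
    \leq \abs{(f^*\omega)_x}^{\frac{n}{k}}
    \leq \binom{n}{k}^{\frac{n}{2k}} K J_f(x) \abs{\omega_{f(x)}}^{\frac{n}{k}}.
\]
Integrating over a measurable set $E \subset M$ and applying Theorem \ref{thm:change_of_variables_non-neg} with the non-negative measurable function $g = \abs{\omega}^{n/k}$ immediately delivers the upper bound of the proposition and yields a lower bound of the same shape but with constant $K^{-(n-1)}$ in place of $K^{-(n+1)}$. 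Since $K \geq 1$ we have $K^{-(n+1)} \leq K^{-(n-1)}$, so the lower bound as stated is a weaker cosmetic simplification of what the above argument actually produces, and no extra work is needed.

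Finally, for the proper case, the idea is to specialize to $E = M$ in the local estimates. By the third item of Lemma \ref{lem:multiplicity_degree_branching}, $N(f, \cdot, M) = N(f, \cdot) = \deg f$ almost everywhere on $N$, so the integrals $\int_N N(f, \cdot, M) \abs{\omega}^{n/k} \vol_N$ collapse to $(\deg f) \norm{\omega}_{n/k}^{n/k}$. Raising the resulting two-sided estimate to the power $k/n$ and using the tighter lower constant $K^{-(n-1)}$ (rather than $K^{-(n+1)}$) gives exactly the displayed global inequality with the factor $K^{-k(n-1)/n}$.

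I do not anticipate any substantive obstacle: the pointwise estimate of Lemma \ref{lem:qr_lp_pointwise_estimate} already packages the linear-algebraic part, Lemma \ref{lem:qr_l_estimates} packages the quasiregularity estimate on $J_f$, and Theorem \ref{thm:change_of_variables_non-neg} packages the integral transformation. The only minor care needed is bookkeeping of the exponent $n/k$ when absorbing $\abs{Df}^k$ and $l(Df)^k$ into $J_f$, and noting that in the final statement the author has chosen the slightly looser symmetric constant $K^{-(n+1)}$ for the local lower bound, which poses no issue as it follows trivially from the sharper $K^{-(n-1)}$ produced by the proof.
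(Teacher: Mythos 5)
Your proposal follows the paper's proof essentially verbatim: raise the pointwise estimate of Lemma \ref{lem:qr_lp_pointwise_estimate} to the power $n/k$, trade $\abs{Df}^n$ and $l(Df)^n$ for $J_f$ via Lemma \ref{lem:qr_l_estimates}, and apply Theorem \ref{thm:change_of_variables_non-neg} with $g = \abs{\omega}^{n/k}$. Your observation about the constants is also correct: the argument produces $K^{-(n-1)}$ in the local lower bound, the stated $K^{-(n+1)}$ is a weaker consequence since $K \geq 1$, and the global estimate for proper maps indeed requires the sharper exponent, which is what yields $K^{-k(n-1)/n}$ after raising to the power $k/n$.

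The one step you skip is the deduction that $f^*\omega \in L^{n/k}_\loc(\wedge^k M)$ from the two-sided integral inequality. For an arbitrary measurable $E$ both sides can be infinite, so local integrability is not automatic; you need, for each $x \in M$, a neighborhood $E$ on which the right-hand side is finite. The paper does this by taking a precompact normal neighborhood $V = U_f(x,r)$ (Lemmas \ref{lem:preimage_normal_neighborhoods} and \ref{lem:preimage_normal_neighborhood_properties}) small enough that $\omega \in L^{n/k}(\wedge^k fV)$, on which $N(f,\cdot,V)$ is bounded by $i(f,x)$ and vanishes off $fV$, so that $N(f,\cdot,V)\abs{\omega}^{n/k}$ is integrable. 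This is routine given the machinery already in place, but it should be stated.
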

\begin{proof}
	By Lemma \ref{lem:qr_lp_pointwise_estimate}, we may estimate
	\begin{multline*}
		\binom{n}{k}^{-\frac{n}{2k}} 
			\int_E (\abs{\omega}^{\frac{n}{k}} \circ f) l(Df)^n \vol_M
		\leq \int_E \abs{f^* \omega}^{\frac{n}{k}} \vol_M\\
		\leq \binom{n}{k}^{\frac{n}{2k}} 
			\int_E (\abs{\omega}^{\frac{n}{k}} \circ f) \abs{Df}^n \vol_M.
	\end{multline*}
	By the definition of a quasiregular map as well as Lemma \ref{lem:qr_l_estimates}, we may further estimate
	\begin{multline*}
		\binom{n}{k}^{-\frac{n}{2k}} K^{-(n-1)}
			\int_E (\abs{\omega}^{\frac{n}{k}} \circ f) J_f \vol_M
		\leq \int_E \abs{f^* \omega}^{\frac{n}{k}} \vol_M\\
		\leq \binom{n}{k}^{\frac{n}{2k}} K
			\int_E (\abs{\omega}^{\frac{n}{k}} \circ f) J_f \vol_M.
	\end{multline*}
	Finally, by applying the change of variables formula for non-negative functions given in Theorem \ref{thm:change_of_variables_non-neg}, we obtain the desired estimate
	\begin{multline*}
		\binom{n}{k}^{-\frac{n}{2k}} K^{-(n+1)} \int_N N(f, \cdot, E)
			\abs{\omega}^{\frac{n}{k}} \vol_N
		\leq \int_E \abs{f^* \omega}^{\frac{n}{k}} \vol_M\\
		\leq \binom{n}{k}^{\frac{n}{2k}} K \int_N N(f, \cdot, E) 
			\abs{\omega}^{\frac{n}{k}} \vol_N.
	\end{multline*}
	
	In order to conclude local $L^{n/k}$-integrability of $f^* \omega$ from the above estimate, let $x \in M$, and using Lemmas \ref{lem:preimage_normal_neighborhoods} and \ref{lem:preimage_normal_neighborhood_properties}, select a normal neighborhood $U$ of $x$ for which $\omega \in L^{n/k}(\wedge^k fU)$. Then by Theorem \ref{thm:deg_i_sum_result} and Lemma \ref{lem:multiplicity_degree_branching}, we have that $N(f, \cdot, V) \equiv \deg f\vert_V$ almost everywhere on $fV$. Since $N(f, \cdot, V) \equiv 0$ on $N \setminus fV$, we obtain that $N(f, \cdot, V) \abs{\omega}^{n/k} \in L^{1}(\wedge^k N)$. Hence, by the above estivate with $E = V$, we obtain that $f^* \omega \in L^{n/k}(\wedge^k V)$, which implies local $L^{n/k}$-integrability.
	
	Finally, the version for proper $f$ follows by applying the integral estimate with $E = M$, raising to power $k/n$, and using the fact that $N(f, \cdot) \equiv \deg f$ almost everywhere due to Theorem \ref{thm:deg_i_sum_result} and Lemma \ref{lem:multiplicity_degree_branching}.
\end{proof}

\begin{rem}
	For $k = 0$, Proposition \ref{prop:qr_local_lp_estimate} has a natural $L^\infty$-counterpart. Namely, suppose that $f$ satisfies the assumptions of Proposition \ref{prop:qr_local_lp_estimate}, and let $\omega \in L^\infty(\wedge^0 N)$. Then $\omega$ is merely an essentially bounded measurable function $\omega \in L^\infty(N)$. The pull-back of $\omega$ by $f$ is $\omega \circ f$. By the Lusin properties of $f$ (see Theorem \ref{thm:QR_lusin_N_conditions}), a set $E \subset M$ is of measure zero if and only if $fE$ is of measure zero. Consequently, we have that
	\[
		\norm{\omega}_\infty = \norm{f^* \omega}_\infty,
	\]
	which is the omitted $k = 0$ case of the result of Proposition \ref{prop:qr_local_lp_estimate}.
\end{rem}

\begin{rem}
	Beyond Proposition \ref{prop:qr_local_lp_estimate}, the next step in obtaining $L^p$ results for quasiregular pull-backs of differential forms would be to take advantage of higher integrability of the Jacobian. Namely, for a non-constant quasiregular maps, the Jacobian $J_f$ is $L^r_\loc$ for some $r > 1$. See e.g.\ Elcrat and Meyers \cite{Meyers-Elcrat_HigherInt} or Martio \cite{Martio_HigherInt} for the Euclidean results, which can be locally transferred to manifolds using charts as we have already done with many results in these notes. We leave out an in-depth look at this topic in order to not make these notes grow too large. For the interested reader, see for example \cite[Lemma 3.1]{Kangasniemi-Pankka_PLMS} for a typical $L^p$ result using higher integrability, as well as the standard method used to obtain such results.
\end{rem}

\subsection{Sobolev forms and quasiregular maps}

In our discussion of quasi\-regular pull-backs of differential forms, the remaining main point of discussion is how much differentiability the pull-back preserves.

The main result we give here is on the interaction of quasiregular pull-back and the spaces $W^{d, p, q}(\wedge^k M)$. Namely, when the exponents of integrability $p$ and $q$ are the corresponding conformal exponents, then we obtain a well defined quasiregular pull-back map which commutes with taking weak differentials. We note that the quasiconformal case has been discussed in \cite[Lemma 2.22]{Donaldson-Sullivan_Acta} and \cite[Theorem 6.6]{Goldshtein-Troyanov_DeRham}, and the quasiregular case is not particularly different.

\begin{thm}\label{thm:qr_conformal_exponent_Sobolev_pullback}
	Let $f \colon M \to N$ be a non-constant quasiregular map between connected, oriented Riemannian $n$-manifolds. Let $k \in \{0, \dots, n-1\}$, and suppose that
	\[
		\omega \in W^{d, \frac{n}{k}, \frac{n}{k+1}}_\loc(\wedge^k N),
	\]
	where for $k = 0$, we interpret this as $\omega \in W^{d, \infty, n}_\loc(\wedge^k N)$. Then
	\[
		f^*\omega \in W^{d, \frac{n}{k}, \frac{n}{k+1}}_\loc(\wedge^k M),
	\]
	and the weak differential is given by
	\[
		d f^* \omega = f^* d\omega.
	\]
\end{thm}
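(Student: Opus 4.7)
The plan is a smooth approximation argument reducing to the smooth-form case already handled by Corollary \ref{cor:smooth_form_sobolev_pullback_cont}. The required local integrabilities $f^*\omega \in L^{n/k}_\loc(\wedge^k M)$ and $f^* d\omega \in L^{n/(k+1)}_\loc(\wedge^{k+1} M)$ follow directly from Proposition \ref{prop:qr_local_lp_estimate}, so the real content is the weak differential identity
\[
	\int_M f^*\omega \wedge d\eta = (-1)^{k+1} \int_M f^* d\omega \wedge \eta
\]
for every $\eta \in C^\infty_0(\wedge^{n-k-1} M)$.

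Fix such an $\eta$. I would first choose a bounded domain $U' \subset M$ compactly containing $\spt\eta$, and then a domain $V \subset N$ compactly containing the compact set $f(\overline{U'})$. On $V$, Corollary \ref{cor:W_dp_smooth_approx_mfld} supplies smooth compactly supported forms $\omega_j \in C^\infty_0(\wedge^k N)$ with $\omega_j \to \omega$ in $L^{n/k}(\wedge^k V)$ and $d\omega_j \to d\omega$ in $L^{n/(k+1)}(\wedge^{k+1} V)$. For each $\omega_j$, Corollary \ref{cor:smooth_form_sobolev_pullback_cont} applies (since $n \geq k+1$) and yields $d f^*\omega_j = f^* d\omega_j$ as a weak differential; in particular, the displayed identity holds with $\omega_j$ in place of $\omega$.

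The heart of the argument is passing to the limit as $j \to \infty$. By Proposition \ref{prop:qr_local_lp_estimate},
\[
	\int_{U'} \abs{f^*\omega - f^*\omega_j}^{n/k} \vol_M \leq C \int_V N(f,\cdot,U') \abs{\omega - \omega_j}^{n/k} \vol_N,
\]
and similarly for the $L^{n/(k+1)}$ difference of $f^* d\omega$ and $f^* d\omega_j$. The auxiliary fact I would need is that $N(f,\cdot,U')$ is essentially bounded: cover $\overline{U'}$ by finitely many normal neighborhoods $U_1, \ldots, U_m$ from Lemma \ref{lem:preimage_normal_neighborhoods}, and then estimate $N(f, y, U') \leq \sum_i N(f, y, U_i) \leq \sum_i i(f, x_i)$ via Lemma \ref{lem:multiplicity_degree_branching}, where $x_i$ is the center of $U_i$. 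Combined with the $L^{n/k}$-convergence on $V$, this gives $L^{n/k}(U')$-convergence of the pull-backs, and likewise for the differentials. Hölder's inequality on $\spt\eta \subset U'$ then allows me to pass to the limit in both integrals and conclude the identity for $\omega$.

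The case $k = 0$ needs a small adjustment since Corollary \ref{cor:W_dp_smooth_approx_mfld} requires finite exponents. Here I would use Lemma \ref{lem:sobolev_0_forms} to regard $\omega$ as an element of $W^{1,n}_\loc(N) \cap L^\infty_\loc(N)$, approximate $\omega$ on $V$ by smooth functions in the $W^{1,n}(V)$-norm via Theorem \ref{thm:sobolev_eucl_smooth_density} (in charts), and rerun the argument above; the essential boundedness of $f^*\omega = \omega \circ f$ is automatic from $\omega \in L^\infty_\loc(N)$ together with the Lusin $(N^{-1})$-property of $f$ (Theorem \ref{thm:QR_lusin_N_conditions}). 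The main anticipated obstacle is not any individual step but the clean packaging of the uniform bound on $N(f,\cdot,U')$, which is the sole place where the topological degree and index theory developed in Section \ref{sect:reshetnyaks_theorem} enters this otherwise purely analytic argument.
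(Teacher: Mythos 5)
For $k \geq 1$ your plan is sound and is in essence the paper's argument, just organized differently: the paper first proves a Euclidean version (Lemma \ref{lem:qr_Sobolev_pullback_eucl}) by localizing $\eta$ into normal domains and approximating there, and then reduces the manifold statement to it via bilipschitz charts, whereas you approximate directly on the manifold with Corollary \ref{cor:W_dp_smooth_approx_mfld} and control the multiplicity by covering $\overline{U'}$ with finitely many normal neighborhoods and invoking $N(f,U_i) = i(f,x_i)$. That uniform bound on $N(f,\cdot,U')$ is correct and is a perfectly good substitute for the paper's partition-of-unity localization; the limit passage via Proposition \ref{prop:qr_local_lp_estimate}, the wedge estimate and H\"older then goes through exactly as you describe.

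The genuine gap is the case $k = 0$, where ``rerun the argument above'' does not work. For $k \geq 1$ your limit passage in $\int f^*\omega_j \wedge d\eta$ rests on $f^*\omega_j \to f^*\omega$ in $L^{n/k}(U')$, which you get from Proposition \ref{prop:qr_local_lp_estimate} because $n/k < \infty$. For $k = 0$ the relevant exponent is $\infty$, and $W^{1,n}(V)$-approximation of $\omega$ gives you no convergence $\omega_j \to \omega$ in $L^\infty$, nor does the change-of-variables estimate control $\norm{\omega_j \circ f - \omega \circ f}_{L^p(U')}$ for any finite $p$ (the natural attempt produces the integral of $\abs{\omega_j - \omega}$ weighted by $J_f$, not the unweighted one). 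Your remark that $\omega \circ f$ is essentially bounded addresses only the integrability of the limit object, not the convergence needed to pass to the limit in the first integral. This is precisely the point the paper spends the second half of Lemma \ref{lem:qr_Sobolev_pullback_eucl} on: one works in a normal domain $U' = U_f(x,r)$ where $f$ is proper, takes $\omega_j' \in C^\infty_0(fU')$, notes that $f^*\omega_j'$ is then compactly supported in $U'$, and uses the Gagliardo--Nirenberg--Sobolev inequality (Theorem \ref{thm:sobolev_eucl_inequality}) to convert the $L^n$-convergence of the differentials $f^*d\omega_j'$ (which \emph{is} available, since $n/(k+1) = n < \infty$) into Cauchyness of $(f^*\omega_j')$ in $W^{1,n}(U')$, identifying the limit with $f^*\omega$ by a.e.\ convergence and the Lusin $(N^{-1})$ property. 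Alternatively, one could salvage your route by choosing the approximants by mollification so that $\norm{\omega_j}_\infty \leq \norm{\omega}_\infty$ uniformly and applying dominated convergence after extracting an a.e.\ convergent subsequence, but some such additional argument must be supplied; as written, the $k=0$ case is not proved.
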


We break down the proof to two lemmas. The requisite integrabilities have been handled in Proposition \ref{prop:qr_local_lp_estimate}, so the remaining work is to show that $f^* d\omega$ is a weak differential of $f^* \omega$. By Corollary \ref{cor:smooth_form_sobolev_pullback_cont}, we know that this holds if $f$ is between Euclidean domains and $\omega$ is smooth. Our strategy is to first show the result in the Euclidean setting, and then on manifolds.

We begin with the Euclidean version of the result.

\begin{lemma}\label{lem:qr_Sobolev_pullback_eucl}
	Let $U, V$ be open domains in $\R^n$. Let $f \colon U \to V$ be a non-constant quasiregular map, and let $\omega \in W^{d, n/k, n/(k+1)}_\loc(\wedge^k V)$, where $k \in \{0, \dots, n-1\}$. Then $f^* \omega \in W^{d, n/k, n/(k+1)}_\loc(\wedge^k U)$, and $d f^* \omega = f^* d \omega$.
\end{lemma}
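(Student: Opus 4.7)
The integrability $f^*\omega \in W^{d, n/k, n/(k+1)}_\loc(\wedge^k U)$ is immediate from Proposition \ref{prop:qr_local_lp_estimate} applied separately to $\omega \in L^{n/k}_\loc(\wedge^k V)$ and $d\omega \in L^{n/(k+1)}_\loc(\wedge^{k+1} V)$, so the entire task is to verify the weak differential relation $d f^*\omega = f^* d\omega$. The plan is to reduce to the smooth case, where the identity is provided by Corollary \ref{cor:smooth_form_sobolev_pullback_cont}, and to pass to the limit in the duality pairing defining the weak differential.

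Concretely, fix a test form $\eta \in C^\infty_0(\wedge^{n-k-1} U)$. Since $\spt\eta$ is compact in $U$, I would choose open sets $U'$ with $\spt\eta \subset U'$ and $\overline{U'}$ compact in $U$, and then pick a domain $V'$ compactly contained in $V$ containing the compact image $f(\overline{U'})$. Applying the smooth approximation result (Proposition \ref{prop:W_dp_smooth_approx}, or equivalently Corollary \ref{cor:W_dp_smooth_approx_mfld} with $V$ in place of $M$) produces a sequence $\omega_j \in C^\infty_0(\wedge^k V)$ such that $\omega_j \to \omega$ in $L^{n/k}(\wedge^k V')$ and $d\omega_j \to d\omega$ in $L^{n/(k+1)}(\wedge^{k+1} V')$. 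For each smooth $\omega_j$, Corollary \ref{cor:smooth_form_sobolev_pullback_cont} applied to the continuous map $f \in W^{1,n}_\loc(U,V)$ (and to $\omega_j$, extended by zero as a smooth compactly supported form on $V$) gives $d f^* \omega_j = f^* d\omega_j$ weakly on $U'$, so that
\[
	\int_{U'} f^* \omega_j \wedge d\eta = (-1)^{k+1} \int_{U'} f^* d\omega_j \wedge \eta.
\]

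To pass to the limit, I would invoke Proposition \ref{prop:qr_local_lp_estimate} twice: its local $L^{n/k}$-estimate converts $\omega_j \to \omega$ in $L^{n/k}(\wedge^k V')$ into $f^* \omega_j \to f^* \omega$ in $L^{n/k}(\wedge^k U')$, and similarly $f^* d\omega_j \to f^* d\omega$ in $L^{n/(k+1)}(\wedge^{k+1} U')$. Since $\eta$ and $d\eta$ are smooth and compactly supported in $U'$, hence in particular bounded, Hölder's inequality with the conjugate exponent pairs $(n/k, n/(n-k))$ and $(n/(k+1), n/(n-k-1))$ allows one to take the limit on both sides of the displayed identity. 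The resulting equation holds for every $\eta \in C^\infty_0(\wedge^{n-k-1} U)$, since any such $\eta$ is supported in a suitable $U'$, and this is precisely the statement $d f^* \omega = f^* d\omega$.

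The main obstacle is the case $k = 0$: here $\omega$ lives in $W^{1,n}_\loc \cap L^\infty_\loc$, and standard convolution produces $\omega_j \to \omega$ only in $L^n$ on $V'$, not uniformly, so Proposition \ref{prop:qr_local_lp_estimate} does not directly give the needed convergence of $\omega_j \circ f$. I would handle this by arranging the approximation so that $\norm{\omega_j}_{L^\infty(V')} \leq \norm{\omega}_{L^\infty(V')}$ and $\omega_j \to \omega$ pointwise almost everywhere on $V'$; then the Lusin $(N^{-1})$ property of $f$ (Theorem \ref{thm:QR_lusin_N_conditions}) yields $\omega_j \circ f \to \omega \circ f$ pointwise almost everywhere on $U'$ under a uniform $L^\infty$ bound, and dominated convergence gives $f^* \omega_j \to f^* \omega$ in $L^p(U')$ for every $p < \infty$, which is more than enough to pair against the smooth compactly supported form $d\eta$ via Hölder's inequality. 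The argument for $d f^*\omega_j \to f^* d\omega$ in $L^n$ is unchanged, since for that step one uses Proposition \ref{prop:qr_local_lp_estimate} with $k+1 = 1$.
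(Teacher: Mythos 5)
Your proof is correct, and for $k \geq 1$ it follows essentially the same route as the paper: cut off and mollify $\omega$ near the image of the support of $\eta$, use Corollary \ref{cor:smooth_form_sobolev_pullback_cont} for the smooth approximants, and pass to the limit via Proposition \ref{prop:qr_local_lp_estimate} and H\"older. One point you leave implicit but must state: converting $\omega_j \to \omega$ in $L^{n/k}(V')$ into $f^*\omega_j \to f^*\omega$ in $L^{n/k}(U')$ via Proposition \ref{prop:qr_local_lp_estimate} requires the multiplicity $N(f,\cdot,U')$ to be bounded. The paper arranges this by localizing $\eta$ (via a partition of unity) into normal domains $U_f(x,r)$, where $N(f,\cdot,U_f(x,r)) = i(f,x)$; in your setup it holds because $\overline{U'}$ is compact in $U$, so it can be covered by finitely many normal neighborhoods and Lemma \ref{lem:multiplicity_degree_branching} gives $N(f,\overline{U'}) < \infty$ --- but this needs to be said. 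Relatedly, Proposition \ref{prop:W_dp_smooth_approx} and Corollary \ref{cor:W_dp_smooth_approx_mfld} are stated for globally $W^{d,p,q}$ forms, so you should first multiply $\omega$ by a cutoff supported in $V$ and equal to $1$ on $V'$, as the paper does.

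Where you genuinely diverge is the case $k = 0$, and your route is a nice alternative. The paper cannot mollify in $W^{d,\infty,n}$ either, and instead approximates $\omega'$ in $W^{1,n}$, uses properness of $f$ on a normal domain to get compactly supported pull-backs, applies the Gagliardo--Nirenberg--Sobolev inequality to show $(f^*\omega_i')$ is Cauchy in $W^{1,n}$, and identifies the limit by completeness and the Lusin property. You instead exploit that a $0$-form pulls back by composition: choosing mollifiers with $\norm{\omega_j}_{L^\infty(V')} \leq \norm{\omega}_{L^\infty(V')}$ and $\omega_j \to \omega$ a.e., the Lusin $(N^{-1})$ condition (Theorem \ref{thm:QR_lusin_N_conditions}) transfers the a.e.\ convergence through $f$, and dominated convergence gives $f^*\omega_j \to f^*\omega$ in every $L^p(U')$, $p < \infty$, which suffices to pair against $d\eta$. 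This avoids the Sobolev inequality and the completeness argument entirely and is, if anything, more elementary; the paper's version has the advantage of not needing any pointwise control on the approximants, only norm convergence.
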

\begin{proof}
	By Proposition \ref{prop:qr_local_lp_estimate}, we only need to show that $f^* d\omega = d f^* \omega$. This amounts to fixing $\eta \in C^\infty_0(\wedge^{n-k-1} U)$ and showing that
	\[
		\int_U f^* \omega \wedge d\eta = (-1)^{k-1} \int_U f^* d\omega \wedge \eta.
	\]
	
	We may suppose that $\spt \eta \subset D$, where $D$ is compactly contained in a bounded normal domain $U' = U_f(x, r)$ of $f$. Indeed, we may cover $\spt \eta$ with finitely many such domains, and then decompose $\eta = \eta_1 + \dots + \eta_l$ using a smooth subordinate partition of unity. Showing the desired formula for $\eta_i$ then implies it for $\eta$. 
	
	We again have due to continuity and openness of $f$ that $fD$ is compactly contained in $f U'$. Hence, multiplying $\omega$ by a smooth cutoff function yields a form $\omega' \in W^{d, n/k, n/(k+1)}(\wedge^k \R^n)$ for which $\omega' = \omega$ on $fD$ and $\spt \omega' \subset f U'$.
	
	We consider first the case $k > 0$, leaving the case $k = 0$ for later. Since we supposed that $k > 0$, we may use Proposition \ref{prop:W_dp_smooth_approx} to approximate $\omega'$ by a sequence $\omega'_i \in C^\infty(\wedge^k \R^n)$ for which $\norm{\omega' - \omega'_i}_{n/k} \to 0$ and $\norm{d\omega' - d\omega'_i}_{n/(k+1)} \to 0$.
	
	Since $D$ is contained in the normal domain $U'$ of $f$, the multiplicity function $N(f, \cdot, D)$ is bounded. Hence, the estimate of Proposition \ref{prop:qr_local_lp_estimate} yields that $\norm{f^*\omega' - f^*\omega'_i}_{n/k} \to 0$ and $\norm{f^*d\omega' - f^*d\omega'_i}_{n/(k+1)} \to 0$. By Corollary \ref{cor:smooth_form_sobolev_pullback_cont}, $f^* \omega'_i$ has the weak derivative $f^* d\omega'_i$. Then, by using a similar Hölder estimate argument as the one in the proof of Proposition \ref{prop:wedges_of_sobolev_forms}, we obtain
	\begin{multline*}
		\abs{\int_U f^* \omega \wedge d\eta + (-1)^{k} \int_U f^* d\omega \wedge \eta}\\
		\leq C
			\left( \norm{f^*\omega' - f^*\omega'_i}_{\frac{n}{k}}
				\norm{d\eta}_{\frac{n}{n-k}} + 
			\norm{f^*d\omega' - f^*d\omega'_i}_{\frac{n}{k+1}}
				\norm{\eta}_{\frac{n}{n-k-1}}\right),
	\end{multline*}
	where the right hand side tends to zero. Hence, $f^* d\omega = d f^* \omega$.
	
	What remains is the more involved case $k = 0$. Here, the exponent $\infty$ stops us from taking a smooth approximation of $\omega'$ in $W^{d, \infty, n}(\wedge^0 \R^n)$. However, since in the case $k = 0$ the forms are in fact Sobolev functionals, there exists a workaround using Sobolev inequalities, presented by Heinonen, Kilpel\"ainen and Martio in \cite[Theorem 14.28]{Heinonen-Kilpelainen-Martio_book}. We explain this method also here. 
	
	Namely, we note that $\omega' \in W^{d, n}(\wedge^0 f U') = W^{1, n}(f U')$, and $\spt \omega'$ is compactly contained in $fU'$. We therefore may approximate $\omega'$ with $\omega'_i \in C^\infty_0(fU')$ in the $W^{1,n}$-norm. Now we have by Corollary \ref{cor:smooth_form_sobolev_pullback_cont} and Proposition \ref{prop:qr_local_lp_estimate} that $f^* \omega_i' \in W^{d, \infty, n}_\loc(\wedge^0 U')$ and $\norm{f^* d\omega_i' - f^* d\omega'}_n \to 0$.
	
	Since $U' = U_f(x, r)$, we have by Lemma \ref{lem:preimage_normal_neighborhood_properties} that $f \colon U' \to fU'$ is proper. Hence, $f^* d\omega_i'$ are compactly supported in $U'$, and it follows by the Gagliardo--Nirenberg Sobolev inequality given in Theorem \ref{thm:sobolev_eucl_inequality} that
	\[
		\norm{f^* \omega_i' - f^* \omega_j'}_n \leq C m_n(U')^\frac{1}{n} \norm{f^* d\omega_i' - f^* d\omega_j'}_n. 
	\]
	Thus, $(f^* \omega_i')$ is a Cauchy sequence in $W^{1,n}(U')$. Therefore, by the completeness of $W^{1, n}(D)$ stated in Theorem \ref{thm:sobolev_eucl_banach}, $(f^* \omega_i')$ converges to a 0-form $\tau \in W^{1, n}(U')$.
	
	However, since $\norm{\omega_i' - \omega'}_n \to 0$, we have $(\omega_i' - \omega')_x \to 0$ for almost every $x \in D$. Hence, by the Lusin property of $f$ given in Theorem \ref{thm:QR_lusin_N_conditions}, and since $\omega_i'$ and $\omega'$ are 0-forms, we have that $(f^*\omega_i' - f^*\omega')_x = (\omega_i' - \omega')_{f(x)} \to 0$ for almost every $x \in D$. Since also $f^*\omega_i' - \tau \to 0$ almost everywhere on $D$, we have $\tau = f^*\omega'$ almost everywhere on $D$. Moreover, since $\norm{f^* d\omega_i' - f^* d\omega'}_n \to 0$, we have $d\tau = f^* d\omega'$ on $D$. Hence,
	\begin{multline*}
	\int_U f^* \omega \wedge d\eta 
	= \int_D f^* \omega' \wedge d\eta 
	= \int_D \tau \wedge d\eta
	= (-1)^{k-1} \int_D d\tau \wedge \eta\\ 
	= (-1)^{k-1} \int_D f^* d\omega' \wedge \eta
	= (-1)^{k-1} \int_U f^* d\omega \wedge \eta.
	\end{multline*}
	The proof of the case $k = 0$ is therefore complete.
\end{proof}

We then finally finish the proof of Theorem \ref{thm:qr_conformal_exponent_Sobolev_pullback}, where what remains is transferring the results up until now from the Euclidean setting to the manifold setting.

\begin{proof}[Proof of Theorem \ref{thm:qr_conformal_exponent_Sobolev_pullback}]
	We have by Proposition \ref{prop:qr_local_lp_estimate} that $f^*\omega \in L^{n/k}_\loc(\wedge^k M)$ and $f^* d\omega \in L^{n/(k+1)}_\loc(\wedge^k M)$. The remaining step is therefore to show that $f^* d\omega$ is a weak differential of $f^* \omega$.
	
	It suffices to verify that $f^*\omega$ satisfies \eqref{eq:weak_differential_for_forms} for every such test form $\eta \in C^\infty_0(\wedge^{n-k-1} M)$ for which there exist positively oriented smooth bilipschitz charts $\phi \colon U \to \R^n$ and $\psi \colon V \to \R^n$ in $M$ and $N$, respectively, satisfying the following conditions: $fU \subset V$, and $\spt \eta \subset U$. Indeed, the argument is again to cover $\spt \eta$ with finitely many such sets $U_1, \dots, U_l$, and use a smooth subordinate partition of unity to decompose $\eta = \eta_1 + \dots + \eta_l$ where $\spt \eta_i \subset U_i$. Then showing the claim for $\eta_i$ again implies it for $\eta$.
	
	Suppose then that $\eta$ is as above, with charts $\phi \colon U \to \R^n$ and $\psi \colon V \to \R^n$. We wish to verify that
	\[
		\int_U f^*\omega \wedge d\eta = (-1)^{k+1} \int_U f^*d\omega \wedge \eta.
	\]
	A diffeomorphic change of variables with respect to $\phi^{-1}$ reduces this to verifying that
	\[
		\int_{\phi U} (f \circ \phi^{-1})^* \omega \wedge d \eta'
		= (-1)^{k+1} \int_{\phi U} (f \circ \phi^{-1})^* d\omega \wedge \eta',
	\]
	where $\eta' = (\phi^{-1})^* \eta \in C^\infty_0(\wedge^{n-k-1} \phi U)$. Moreover, since $fU \subset V$, we may further write
	\[
		\int_{\phi U} (\psi \circ f \circ \phi^{-1})^* ((\psi^{-1})^* \omega) \wedge d \eta'
		= (-1)^{k+1} \int_{\phi U} (\psi \circ f \circ \phi^{-1})^* ((\psi^{-1})^* d\omega) \wedge \eta'.
	\]
	However, since $\omega \in W^{d, n/k, n/(k+1)}_\loc(V)$ and $\psi^{-1}$ is smooth bilipschitz, we in fact have by Proposition \ref{prop:wdpqloc_bilip_pullback_formula} that $(\psi^{-1})^* \omega \in W^{d, n/k, n/(k+1)}_\loc(\psi V)$ and $d (\psi^{-1})^* \omega = (\psi^{-1})^* d\omega$.
	
	Hence, we have reduced the problem to showing that
	\[
		\int_{U'} F^* \omega' \wedge d \eta'
		= (-1)^{k+1} \int_{U'} F^* d\omega' \wedge \eta',
	\]
	where $U' = \phi U$ and $V' = \psi V$ are domains in $\R^n$, the form $\omega' = (\psi^{-1})^* \omega$ is in $W^{d, n/k, n/(k+1)}_\loc(\wedge^k V')$, the test form $\eta$ is in $C^\infty_0(\wedge^k U')$, and the map $F = \psi \circ f \circ \phi^{-1} \colon U' \to V'$ is non-constant quasiregular. This follows immediately from Lemma \ref{lem:qr_Sobolev_pullback_eucl}. The proof is therefore complete.
\end{proof}

\bibliographystyle{abbrv}
\bibliography{sources}

\begin{thebibliography}{10}

\bibitem{Adams-Fournier_Sobolev}
R.~Adams and J.~Fournier.
\newblock {\em Sobolev spaces}.
\newblock Elsevier, 2003.

\bibitem{Astala-Iwaniec-Martin_Book}
K.~Astala, T.~Iwaniec, and G.~Martin.
\newblock {\em Elliptic partial differential equations and quasiconformal
  mappings in the plane}.
\newblock Princeton university press, 2009.

\bibitem{Benilan-et-al}
P.~B{\'e}nilan, L.~Boccardo, T.~Gallou{\"e}t, R.~Gariepy, M.~Pierre, and J.~L.
  V{\'a}squez.
\newblock An $l^{1}$-theory of existence and uniqueness of solutions of
  nonlinear elliptic equations.
\newblock {\em Annali della Scuola Normale Superiore di Pisa-Classe di
  Scienze}, 22(2):241--273, 1995.

\bibitem{Bojarski-Iwaniec}
B.~Bojarski and T.~Iwaniec.
\newblock Analytical foundations of the theory of quasiconformal mappings in
  $\mathbb{R}^n$.
\newblock {\em Ann. Acad. Sci. Fenn. Ser. AI Math}, 8:257--324, 1983.

\bibitem{Bredon_book_algtopo}
G.~E. Bredon.
\newblock {\em Topology and geometry}.
\newblock Springer, 1993.

\bibitem{Convent-VanSchaftingen_Sobolev}
A.~Convent and J.~Van~Schaftingen.
\newblock Intrinsic co-local weak derivatives and {S}obolev spaces between
  manifolds.
\newblock {\em Ann. Sc. Norm. Super. Pisa Cl. Sci. (5)}, 16(1):97--128, 2016.

\bibitem{Donaldson-Sullivan_Acta}
S.~Donaldson and D.~Sullivan.
\newblock Quasiconformal 4-manifolds.
\newblock {\em Acta Math.}, 163(1):181--252, 1989.

\bibitem{Federer_book}
H.~Federer.
\newblock {\em Geometric meeasure theory}.
\newblock Springer, 1969.

\bibitem{Gehring-Martin-Palka_book}
F.~Gehring, G.~Martin, and B.~Palka.
\newblock {\em An introduction to the theory of higher-dimensional
  quasiconformal mappings}.
\newblock American Mathematical Society, 2017.

\bibitem{Goldshtein-Troyanov_DeRham}
V.~Gol’dshtein and M.~Troyanov.
\newblock A conformal de rham complex.
\newblock {\em J. Geom. Anal.}, 20(3):651--669, 2010.

\bibitem{Goldstein-Hajlasz-Pakzad}
P.~Goldstein, P.~Haj{\l}asz, and M.~R. Pakzad.
\newblock Finite distortion {S}obolev mappings between manifolds are
  continuous.
\newblock {\em Int. Math. Res. Not. IMRN}, 2019(14):4370--4391, 2017.

\bibitem{Guo-Williams}
C.-Y. Guo and M.~Williams.
\newblock Geometric function theory: the art of pullback factorization.
\newblock 2016.
\newblock Preprint, arxiv.org/abs/1611.02478.

\bibitem{Hajlasz-Iwaniec-Maly-Onninen}
P.~Haj{\l}asz, T.~Iwaniec, J.~Mal{\`y}, and J.~Onninen.
\newblock Weakly differentiable mappings between manifolds.
\newblock {\em Mem. Amer. Math. Soc.}, 192(899), 2008.

\bibitem{Hatcher_book}
A.~Hatcher.
\newblock {\em Algebraic Topology}.
\newblock Cambridge university press, 2002.

\bibitem{Heinonen_book}
J.~Heinonen.
\newblock {\em Lectures on analysis on metric spaces}.
\newblock Springer, 2001.

\bibitem{Heinonen-Kilpelainen-Martio_book}
J.~Heinonen, T.~Kilpel{\"a}inen, and O.~Martio.
\newblock {\em Nonlinear potential theory of degenerate elliptic equations}.
\newblock Dover, 2006.

\bibitem{Heinonen-Koskela-Shanmugalingam-Tyson}
J.~Heinonen, P.~Koskela, N.~Shanmugalingam, and J.~Tyson.
\newblock {\em Sobolev spaces on metric measure spaces: an approach based on
  upper gradients}.
\newblock Cambridge university press, 2015.

\bibitem{Hencl-Koskela_book}
S.~Hencl and P.~Koskela.
\newblock {\em Lectures on Mappings of Finite Distortion}.
\newblock Springer, 2014.

\bibitem{Holopainen-Pankka_Notes}
I.~Holopainen and P.~Pankka.
\newblock $p$-{L}aplace operator, quasiregular mappings, and {P}icard-type
  theorems.
\newblock In {\em Proceedings of the International Workshop on Quasiconformal
  Mappings and their Applications}, 2007.

\bibitem{Holopainen-Rickman_Picard}
I.~Holopainen and S.~Rickman.
\newblock A {P}icard type theorem for quasiregular mappings of $\mathbb{R}^n$
  into $n$-manifolds with many ends.
\newblock {\em Rev. Math. Iberoam.}, 8(2):131--148, 1992.

\bibitem{Iwaniec-Martin_book}
T.~Iwaniec and G.~Martin.
\newblock {\em Geometric function theory and non-linear analysis}.
\newblock Clarendon Press, 2001.

\bibitem{Iwaniec-Scott-Stroffolini}
T.~Iwaniec, C.~Scott, and B.~Stroffolini.
\newblock Nonlinear {H}odge theory on manifolds with boundary.
\newblock {\em Ann. Mat. Pura. Appl. (4)}, 177(1):37--115, 1999.

\bibitem{Kangaslampi_thesis}
R.~Kangaslampi.
\newblock Uniformly quasiregular mappings on elliptic {R}iemannian manifolds.
\newblock {\em Ann. Acad. Sci. Fenn. Math. Diss.}, 151, 2008.
\newblock Dissertation, Helsinki University of Technology, Espoo, 2008.

\bibitem{Kangasniemi-Pankka_PLMS}
I.~Kangasniemi and P.~Pankka.
\newblock Uniform cohomological expansion of uniformly quasiregular mappings.
\newblock {\em Proc. London Math. Soc.}, 118:701--728, 2019.

\bibitem{Kirsila_GenManifoldsMFD}
V.~Kirsil\"a.
\newblock Mappings of finite distortion from generalized manifolds.
\newblock {\em Conf. Geom. Dyn.}, 18:229--262, 2014.

\bibitem{Klingenberg_RiemannGeoBook}
W.~Klingenberg.
\newblock {\em Riemannian geometry}.
\newblock de Gruyter, 2011.

\bibitem{Lee_RiemGeo}
J.~M. Lee.
\newblock {\em Introduction to smooth manifolds}.
\newblock Springer, 2013.

\bibitem{Lindquist-Pankka}
J.~Lindquist and P.~Pankka.
\newblock Vertical quasi-isometries and branched quasisymmetries.
\newblock 2019.
\newblock Preprint, https://arxiv.org/abs/1911.12680.

\bibitem{Martio_HigherInt}
O.~Martio.
\newblock On the integrability of the derivative of a quasiregular mapping.
\newblock {\em Math. Scand.}, 35(1):43--48, 1975.

\bibitem{Meyers-Elcrat_HigherInt}
N.~G. Meyers and A.~Elcrat.
\newblock Some results on regularity for solutions of non-linear elliptic
  systems and quasi-regular functions.
\newblock {\em Duke Math. J}, 42(1):121--136, 1975.

\bibitem{Nash_embedding}
J.~F. Nash.
\newblock The imbedding problem for {R}iemannian manifolds.
\newblock {\em Ann. of Math.}, 63:20--63, 1956.

\bibitem{Okuyama-Pankka_measure}
Y.~Okuyama and P.~Pankka.
\newblock Equilibrium measures for uniformly quasiregular dynamics.
\newblock {\em J. Lond. Math. Soc.}, 89(2):524--538, 2014.

\bibitem{Pankka_Lectnotes_degreetheory}
P.~Pankka.
\newblock Degree theory and branched covers.
\newblock Lecture notes, 2018.

\bibitem{Pankka_Lectnotes_Fribourg}
P.~Pankka.
\newblock Six hours on quasiregular mappings: Fribourg lecture notes.
\newblock 2014.

\bibitem{Reshetnyak-book}
Y.~G. Reshetnyak.
\newblock {\em {Space mappings with bounded distortion}}, volume~73 of {\em
  {Translations of Mathematical Monographs}}.
\newblock American Mathematical Society, Providence, RI, 1989.

\bibitem{Reshetnyak_metric_Sobolev}
Y.~G. Reshetnyak.
\newblock Sobolev-type classes of functions with values in a metric space.
\newblock {\em Sib. Math. J.}, 38(3):567--583, 1997.

\bibitem{Rickman_book}
S.~Rickman.
\newblock {\em Quasiregular mappings}, volume~26.
\newblock Springer-Verlag, 1993.

\bibitem{Scott_HodgeTheory}
C.~Scott.
\newblock ${L^p}$-theory of differential forms on manifolds.
\newblock {\em Trans. Amer. Math. Soc.}, 347(6):2075--2096, 1995.

\bibitem{Suominen_QC-Manifold}
K.~Suominen.
\newblock Quasiconformal maps in manifolds.
\newblock {\em Ann. Acad. Sci. Fenn. Ser. AI Math}, 393:1--39, 1967.

\end{thebibliography}

\end{document}